\newtheorem{theorem}{Theorem}[section]
\newtheorem{prop}[theorem]{Proposition}
\newtheorem{lemma}[theorem]{Lemma}
\newtheorem{coro}[theorem]{Corollary}
\newtheorem{defi}[theorem]{Definition}
\newtheorem{rema}[theorem]{Remark}
\newenvironment{proof}{\par\indent\textit{Proof.\/}}{\raisebox{-.2ex}{$\Box$}}
\newcommand{\carre}{\raisebox{-.2ex}{$\Box$}}
\newcommand{\C}{\ensuremath{\mathbb C}}
\newcommand{\R}{\ensuremath{\mathbb R}}
\newcommand{\Q}{\ensuremath{\mathbb Q}}
\newcommand{\Z}{\ensuremath{\mathbb Z}}
\newcommand{\N}{\ensuremath{\mathbb N}}
\newcommand{\p}{\ensuremath{\mathbb P}}
\renewcommand\arraystretch{1.8}
\def\NM{{\bf N}}
\def\ZM{{\bf Z}}
\def\QM{{\bf Q}}
\def\RM{{\bf R}}
\def\CM{{\bf C}}
\def\definition{\noindent{\bf Définition}\ }
\def\definitions{\noindent{\bf Définitions}\ }
\def\point{\ \ $\bullet$\ }
\title{Dynamics of the fifth Painlevé foliation}
\author{E. Paul
\thanks{\noindent Institut of Mathematics of Toulouse, 118 route de Narbonne, 31062 Toulouse Cedex, France\\
emmanuel.paul math.univ-toulouse.fr}
\and J.P. Ramis\thanks{\noindent Institut of Mathematics of Toulouse, 118 route de Narbonne, 31062 Toulouse Cedex, France\\
Institut de France (Académie des Sciences), France\\ramis.jean-pierre@wanadoo.fr}}
\begin{document}

\maketitle

\textbf{Abstract.} \textsl{The leaves of the Painlevé foliations appear as the isomonodromic deformations of a rank 2 linear connection on a moduli space of connections. Therefore they are the fibers of the Riemann-Hilbert correspondence that sends each connection on its monodromy data, and this correspondence induces a conjugation between the dynamics of the foliation and a dynamic on a space of representations of some fundamental groupoid (a character variety). This one can be identified to a family of cubic surfaces through trace coordinates. We describe here the dynamics on the character variety related to the Painlevé V equation. We have here to consider irregular connections, and the representations of wild groupoids. We describe and compare all the dynamics which appear on this wild character variety: the tame dynamics, the confluent dynamics, the canonical symplectic dynamics and the wild dynamics.}

\tableofcontents

\newpage

\section*{Introduction}
\addcontentsline{toc}{section}{Introduction}

The transverse dynamics of an holomorphic foliation is usually defined by its holonomy groupoid. In general we use a specialization of this groupoid along a particular leaf $L$: the holonomy group of $L$. This one is a representation of the fundamental group of the leaf on a germ of transversal, defined by the analytic continuation of the solutions. It only describes the transverse structure of the foliation in a neigbourhood of $L$.

The Painlevé foliations are holomorphic foliations defined by the Painlevé equations. These ones are order two non linear ordinary differential equations. Their writings are available for example in \cite{GLS}.
We only recall here the Painlevé VI and Painlevé V families:\\
\begin{equation}\label{EVI} {\begin{aligned}\mathrm {(P_{VI})} 
\quad {\frac {\mathrm {d} ^{2}w}{\mathrm {d} t^{2}}}&={\frac {1}{2}}\left[{\frac {1}{w}}+{\frac {1}{w-1}}+{\frac {1}{w-t}}\right]\left({\frac {\mathrm {d} w}{\mathrm {d} t}}\right)^{2}-\left[{\frac {1}{w}}+{\frac {1}{t-1}}+{\frac {1}{w-t}}\right]{\frac {\mathrm {d} w}{\mathrm {d} t}}
\\&+{\frac {w(w-1)(w-t)}{2t^{2}(t-1)^{2}}}\left[(\alpha_4-1)^{2}-\alpha_{3}^{2}{\frac {t}{w^{2}}}+\alpha_{1}^{2}{\frac {t-1}{(w-1)^{2}}}+(1-\alpha_{2}^{2}){\frac {t(t-1)}{(w-t)^{2}}}\right],\end{aligned}}
\end{equation}

\begin{equation}\label{EV}\begin{aligned} (P_V)\ \frac{d^2w}{dt^2}&=\left(\frac{1}{2w}+\frac{1}{w-1}\right)\left(\frac{dw}{dt}\right)^2
-\frac{1}{t}\frac{dw}{dt}+
\frac{(w-1)^2}{t^2}\left(\frac{\alpha_1^2}{2}w-\frac{\alpha_3^2}{2w}\right)  \\&+  (1-\alpha_1-2\alpha_2-\alpha_3)\frac{w}{t}
-\frac{1}{2}\frac{w(w+1)}{w-1}.
\end{aligned}
\end{equation}
They define families of one dimensional foliation on $\{(t,w,w') \in \C^3\}$. They share with all the Painlevé differential equations type three properties, which will allow us to give a more global and explicit description of their dynamics. Let us detail these properties for the Painlevé VI foliation:
\bigskip\\
\textbf{1- The Painlevé property} : all the solutions $(w(t),w'(t))$ have a meromorphic continuation over the universal covering the time space $T=\p_1(\C)$ punctured at the fixed singularities 0, 1 and $\infty$. This property generalizes the property of holomorphic extension of the solutions of linear differential equations. The search of non linear differential equations satisfying this property was the initial motivation of Painlevé and then Gambier in order to construct new transcentendal functions. It gives rise to the six well known families of Painlevé equations.

According to this property, the movable singularities, i.e. the singularities of the solutions outside the fixed singularities defined by the equation itself, are only poles. K. Okamoto has introduced a semi-compactification process in order to get a foliation transverse to a fibration over the time space. 
After this process, one can define the dynamic of the Painlevé foliation by its non linear monodromy, which is defined by an action of the fundamental group of the time space $T$, over the Okamoto fiber (the space of initial values). We call it the \textsl{tame dynamics}.
\bigskip

\noindent\textbf{2- The isomonodromic property.} In order to study the non linear monodromy of this foliation, it is useful to introduce a conjugacy between this dynamical system and a simpler one: the Riemann-Hilbert map. Let us recall the different ingredients involved by this property for the Painlevé VI equation.
\medskip\\
\textit{$\bullet$ The  moduli space of connections $\mathcal{M}_{VI}$.} We consider the family of linear connections on the trivial bundle over $\p^1$ defined the family $\mathcal{S}_{VI}$ of rank 2 trace free differential systems
\begin{equation*}\label{syst_fuchs} (A):\ \frac{dY}{dx}=\left(\sum_{i=1}^4\frac{A_i}{x-p_i}\right)\cdot Y,\ p_i\neq p_j \mbox{ for }i\neq j,\ A_i\in sl_2(\C),\ \sum_{i=1}^4A_i=0,\ x\in \p^1,\end{equation*}
up to the global gauge action: $Y\mapsto Y\cdot P$, $P\in SL_2(\C)$, and to a change of the independent variable $x\mapsto\varphi(x)$, $\varphi\in Aut(\p_1)$. The extension of the system to $x=\infty\in \p^1$ is defined by the change of variable $z=x^{-1}$. The family $\mathcal{S}_{VI}$ is a 13 dimensional space (including the variables $p_i$), and the above quotient is a 7 dimensional space. The local parameter space is defined by 
\[\alpha(A)=(det(A_i)=\alpha_i^2/4,\ i=1,\ldots 4),\] 
where $\pm\alpha_i/2$'s are the eigenvalues of each residue matrix $A_i$. The \textsl{time parameter} is the cross ratio $t\in T=\p^1\setminus\{0,1,\infty\}$ of the configurations of the singular points $S=\{p_1,p_2,p_3,p_4\}$. Over a value $\alpha$, the fiber $\mathcal{M}_{VI}(\alpha)$ is a 3 dimension space, and $t(A)$ defines a fibration of $\mathcal{M}_{VI}(\alpha)$ over $T$.
\medskip\\
\textit{$\bullet$ The character variety $\chi_{VI}$ as representations of a fundamental groupoid.} For a system $A$ in $\mathcal{S}_{VI}$  we can define its monodromy representation by using the analytic continuation of a local matrix solution $Y_0$ along any element $\gamma$ of the fundamental group $\pi_1(\p^1\setminus S,x_0)$. We consider here another (equivalent) description of this monodromy, introduced by the authors in \cite{PR}, essential for what will follows.
Instead of considering as usually a fundamental group, we consider a \textit{fundamental groupoid}, which  allows to make use of several base points. A groupoid is a small category whose morphisms are invertible. For details on groupoids see \cite{Brown}. The fundamental groupoid $\pi_1(X)$ of a variety $X$ is the groupoid whose objects are the points of $X$ and the morphisms the paths between two points up to homotopy, with the obvious composition. In what follows, ``paths'' always means ``path up to an homotopy''. We consider the variety $X$ obtained by 4 real blowing up of the singular points $p_i$ in $\p^1$, and we choose a base point $s_i$ on each divisor (i.e. a direction out of $p_i$). Let $S=\{s_1,s_2,s_3,s_4\}$. The groupoid $\pi_1^{VI}(X,S)$ is the restriction of the fundamental groupoid $\pi_1(X)$ to this finite set of objects $S$.
$\pi_1^{VI}(X,S)$ has a presentation given by the following picture:
\begin{figure}[H]
\centering
\includegraphics[scale=0.7]{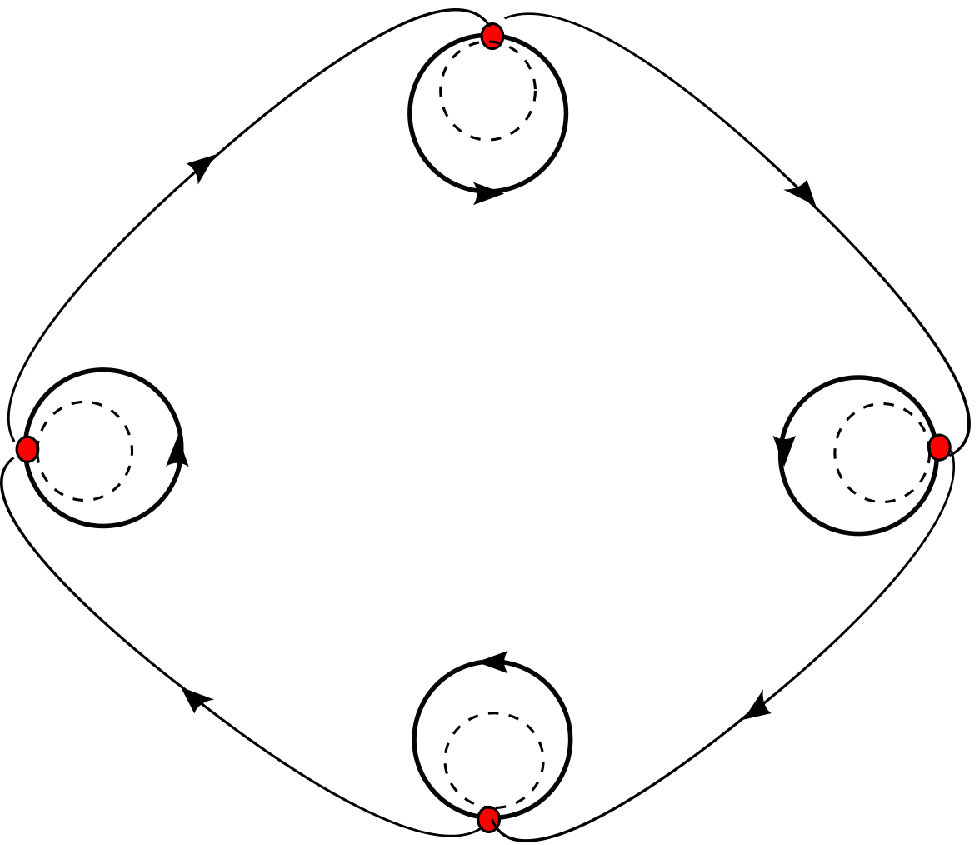}
\put(-210,80){$s_1$}
\put(-103,175){$s_2$}
\put(-103,-10){$s_4$}
\put(5,80){$s_3$}
\put(-155,77){$\gamma_{1,1}$}
\put(-105,120){$\gamma_{2,2}$}
\put(-60,77){$\gamma_{3,3}$}
\put(-105,45){$\gamma_{4,4}$}
\put(-155,130){$\gamma_{1,2}$}
\put(-60,130){$\gamma_{2,3}$}
\put(-65,30){$\gamma_{4,1}$}
\put(-155,30){$\gamma_{3,4}$}
\caption{A presentation of the groupoid $\pi_1^{VI}(X,S)$.}\label{pi1VI}
\end{figure}

\noindent The morphisms are generated by the local loops $\gamma_{i,i}$ based in $s_i$ homotopic to each divisor and the paths $\gamma_{i,i+1}$ from $s_i$ to $s_{i+1}$ satisfying the following relations:
\begin{equation*}
\begin{aligned}
&\mathcal{R}_{ext}:\ \gamma_{1,2}\gamma_{2,3}\gamma_{3,4}\gamma_{4,1}={\star}_1 \mbox{  (the trivial loop based in $s_1$)}\\
&\mathcal{R}_{int}:\ \gamma_{1,1}\gamma_{1,2}\gamma_{2,2}\gamma_{2,3}\gamma_{3,3}\gamma_{3,4}\gamma_{4,4}\gamma_{4,1}={\star}_1.\\
\end{aligned}
\end{equation*}
A linear representation $\rho$ of $\pi_1^{VI}(X,S)$ is a morphism of groupoid from $\pi_1^{VI}(X,S)$ into the category of vector spaces. In the Painlevé context, we only consider linear representations $\rho$ such that $\rho(s)=V_s$ is a 2-dimensional vector space and $\rho(\gamma_{s,t})$ belongs to the set of linear morphisms which preserve the area.\\
 A choice of a basis in each $V_s$ defines a map  $\rho(\gamma_{s,t})\simeq M_{\gamma_{s,t}}$ in $SL_2(\C)$. We obtain a representation of the groupoid $\pi_1^{VI}(X,S)$ in the group $SL_2(\C)$. If we change the basis in each $V_s$, we obtain another representation $\rho'$ of $\pi_1^{VI}(X,S)$ in the group $SL_2(\C)$ which is \emph{equivalent}, i.e. for any $s$, there exist matrices $P_s$ such that
\[M'_{\gamma_{s,t}}=P_s^{-1}\cdot M_{\gamma_{s,t}}\cdot P_t.\]
Therefore a rank two trace free linear representation of $\pi_1^{VI}(X,S)$ is also a class of equivalent representations of $\pi_1^{VI}(X,S)$ in $SL_2(\C)$. 
\begin{defi} The character variety $\chi_{VI}$ is the categorical quotient \footnote{This means that we consider the affine variety defined by the ring of invariants functions over the set of linear representations: see \cite{Brion} or \cite{GIT}.} of the set of rank 2 trace free linear representations up to the above equivalence.
\end{defi}
The local data of an element $\rho$ in $\chi_{VI}$ is the restriction of $\rho$ to the groupoid $\pi_1^{VI,loc}(X,S)$ obtained by forgetting the generators $\gamma_{i,i+1}$. It is characterized by the class of each $\rho(\gamma_{i,i})$, or by $a=(a_i=tr(\rho(\gamma_{i,i}),\ i=1,...,4)$. We denote by $\chi_{VI}(a)$ the corresponding fiber in $\chi_{VI}$.
\medskip\\
\textit{$\bullet$ The character variety as a family of cubic surfaces $\mathcal{C}_{VI}$.}
Given a presentation of the groupoid $\pi_1^{VI}(X,S)$ by figure (\ref{pi1VI}), we define the trace coordinates $(a,x)$  by
\[a_i(\rho)=tr(\rho(\gamma_{i,i})),i=1,\cdots 4,\ x_k(\rho)=tr(\rho(\gamma_{i,i}\gamma_{i,j}\gamma_{j,j})),\ \{i,j,k\}=\{1,2,3\}.\]
The trace coordinates do not change in a class of equivalent representations in $SL_2(\C)$. Therefore it can be convenient to make use of \emph{normalized} representations:
A representation $\rho$ of the groupoid $\pi_1^{VI}(X,S)$ in the group $SL_2(\C)$ is normalized if for any indexes $i$, $j$, $i\neq j$ we have: $\rho(\gamma_{i,j})=I.$
One can always choose the representation of the objects in order to get a normalized representation by choosing arbitrarily the representation of a first one and by representing the further consecutive ones such that $\rho(\gamma_{i,i+1})=I$.
A normalized representation is given by 4 matrices $M_i=\rho(\gamma_{i,i})$ unique up to a common conjugacy which satisfy, according to the relation $\mathcal{R}_{int}$, $M_1M_2M_3M_4=I$. We recover here the usual monodromy data.
The trace coordinates of a normalized representation $\rho$ are defined by
\[a_i=tr(M_i),i=1,\cdots 4,\ x_1=tr(M_2M_3),\ x_2=tr(M_3M_1),\ x_3=tr(M_1M_2).\]
\begin{prop} The trace map $Tr_{VI}$ defined by $(a,x)$ sends each $\chi_{VI}(a)$ on the cubic surface $\mathcal{C}_{VI}(\theta)$ in $\C^3$ defined by the equation $F_{VI}(x,\theta)=0$ with
\[F_{VI}(x,\theta)=x_1x_2x_3+x_1^2+x_2^2+x_3^2-\theta_1x_1-\theta_2x_2-\theta_3x_3+\theta_4,\]
and $\theta_i=a_ia_4+a_ja_k,\ \{i,j,k\}=\{1,2,3\}$, $\theta_4=a_1a_2a_3a_4+\sum_ia_i^2-4$.
\end{prop}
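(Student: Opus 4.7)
Since $a_i$ and $x_k$ are conjugation invariants of the representing matrices, they descend to well-defined functions on $\chi_{VI}$, and it suffices to verify the relation on a normalized representative. For such a $\rho$, one has $M_i := \rho(\gamma_{i,i})$, $\rho(\gamma_{i,j}) = I$ for $i \neq j$, and the internal relation $\mathcal{R}_{int}$ becomes $M_1 M_2 M_3 M_4 = I$, i.e.\ $M_4 = (M_1 M_2 M_3)^{-1}$. Combined with the invariance of the trace under inversion in $SL_2(\C)$, this gives the bridge identity
\[
a_4 = tr(M_4) = tr\bigl((M_1 M_2 M_3)^{-1}\bigr) = tr(M_1 M_2 M_3).
\]

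Introducing the companion quantity $\widetilde y := tr(M_1 M_3 M_2)$ alongside $y := tr(M_1 M_2 M_3)$, the plan is to establish that $(y,\widetilde y)$ are the two roots of a quadratic $T^2 - S\,T + P = 0$ whose coefficients $S, P$ lie in $\Z[a_1, a_2, a_3, x_1, x_2, x_3]$. Substituting $y = a_4$ will then produce a polynomial relation between the seven coordinates $(a, x)$ which, after grouping, is exactly $F_{VI}(x, \theta) = 0$.

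All required trace identities follow from the Cayley--Hamilton relation $A + A^{-1} = tr(A) \cdot I$ for $A \in SL_2(\C)$ and its immediate corollary $tr(AB) + tr(AB^{-1}) = tr(A)\,tr(B)$. A short manipulation yields the sum
\[
y + \widetilde y = a_1 x_1 + a_2 x_2 + a_3 x_3 - a_1 a_2 a_3 \;=:\; S,
\]
while a systematic reduction of $tr\bigl((M_1 M_2 M_3)(M_1 M_3 M_2)\bigr)$, using $M_i^2 = a_i M_i - I$ to eliminate each repeated factor, produces the classical Fricke--Vogt product identity
\[
y\,\widetilde y = x_1 x_2 x_3 + x_1^2 + x_2^2 + x_3^2 - a_2 a_3 x_1 - a_1 a_3 x_2 - a_1 a_2 x_3 + a_1^2 + a_2^2 + a_3^2 - 4 \;=:\; P.
\]
Putting $y = a_4$ into $y^2 - S y + P = 0$, the coefficient of $x_i$ becomes $-(a_i a_4 + a_j a_k) = -\theta_i$ for $\{i,j,k\} = \{1,2,3\}$, and the terms independent of $x$ assemble into $a_4^2 + a_1 a_2 a_3 a_4 + a_1^2 + a_2^2 + a_3^2 - 4 = a_1 a_2 a_3 a_4 + \sum_{i=1}^4 a_i^2 - 4 = \theta_4$. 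This is precisely $F_{VI}(x, \theta)$.

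The main computational obstacle is the derivation of the product identity: it is a classical but delicate calculation, and one must order the Cayley--Hamilton substitutions carefully to prevent the intermediate expression from ballooning before the target polynomial in $(a, x)$ finally emerges.
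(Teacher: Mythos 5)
The paper states this proposition without proof: it is the classical Fricke--Vogt relation, and the surrounding text merely cites Iwasaki for the separate statement that the trace map is a homeomorphism on an open set. Your plan is correct and is the standard symmetric-function derivation of that relation. The normalization step, the bridge identity $a_4 = tr(M_1M_2M_3)$ (via $tr(A^{-1}) = tr(A)$ in $SL_2$), the sum formula $y+\widetilde y = a_1x_1+a_2x_2+a_3x_3-a_1a_2a_3$, the product formula, and the final regrouping with $y=a_4$ into the coefficients $\theta_i$ and the constant $\theta_4$ all check out. Your derivation of the sum is sound: apply $tr(AB)+tr(AB^{-1})=tr(A)\,tr(B)$ with $A=M_1$, $B=M_2M_3$, then expand $M_3^{-1}M_2^{-1}$ by Cayley--Hamilton to convert $tr(M_1M_3^{-1}M_2^{-1})$ into $\widetilde y$ plus lower-order terms. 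The one place where the argument is sketched rather than carried out is the product identity for $y\,\widetilde y$; you state the correct answer but defer the Cayley--Hamilton reduction of $tr\bigl((M_1M_2M_3)(M_1M_3M_2)\bigr)$, which is exactly where the real work lives. Since that identity is classical and well-documented (Fricke, Vogt, Goldman), this is an acceptable deferral for a proof plan, but a self-contained proof would need to supply that computation rather than merely flag it as delicate.
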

From \cite{Iwa03}, this trace map is an homeomorphism only on an open set $\chi^0_{VI}(a)$ in the character variety.  For a generic value of the local data $a$, we have $\chi^0_{VI}(a)=\chi_{VI}(a)$. This condition implies that $M_i\neq \pm I$ for $i=1,2,3$.
\medskip\\
\textsl{$\bullet$ The Riemann-Hilbert correspondence $RH_{VI}: \ \mathcal{M}_{VI}\rightarrow \chi_{VI}$}. Let $A$ in $\mathcal{M}_{VI}(\alpha)$. We choose a representative of each base point $s_i$ by a fundamental system of solutions $Y_s$ of $A$. The representation of a path $\gamma$ from $s$ to $t$ is obtained by comparing the analytic continuation $\widetilde{Y_s}^\gamma$ of $Y_s$ along $\gamma$ with $Y_t$:
\[\rho_A(\gamma)=M_\gamma\Leftrightarrow Y_t=\widetilde{Y_s}^\gamma\cdot M_\gamma.\]
A change of representation of the objects, or a gauge action on $A$ gives an equivalent representation. 
The Riemann-Hilbert map $RH_{VI}$ is defined by $RH_{VI}(A)=[\rho_A]$. 
The map induced by $RH_{VI}$ on the local parameters is defined by 
\[RH_{VI}^{loc}(\alpha_i)=2\cos(\pi\alpha_i)=a_i.\]

We do not discuss here about the surjectivity of $RH_{VI}$ (the so called Riemann-Hilbert problem) or its properness. For a survey on this wide subject, see \cite{InIwSai2} for the Painlevé VI case, and \cite{VdPSai} for the other cases.
\medskip\\
\textsl{$\bullet$ Isomonodromic families and Painlevé VI equation.} 
An isomonodromic family on $\mathcal{M}_{VI}$ is a fiber of the map $RH_{VI}$, parametrized by a variable $t$, i.e. a family of connections defined by  
$\frac{dY}{dx}=A(x,t)\cdot Y$
such that the monodromy representation is locally constant in $\chi_{VI}$ along this family. The relationship with the Painlevé VI equation was discovered by R. Fuchs in 1907.
\begin{theorem}\label{isoVI}
There exists coordinates $w,w',t$ over a Zariski open set in $\mathcal{M}_V(\alpha)$ such that the isomonodromic families are the solutions the Painlevé VI equation (\ref{EVI}). 
\end{theorem}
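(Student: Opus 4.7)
The plan is to follow the classical route of R. Fuchs and L. Schlesinger, rephrased in the moduli-theoretic setting introduced above. First, I would use the $Aut(\p^1)$-action on $x$ to place three of the four singular points at $0, 1, \infty$, so that the cross-ratio $t$ becomes the position of the remaining pole, $p_3 = t$. A representative of a gauge class in $\mathcal{M}_{VI}(\alpha)$ is then a Fuchsian system
\[
\frac{dY}{dx} = \left(\frac{A_1}{x} + \frac{A_2}{x-1} + \frac{A_3}{x-t}\right) Y,\qquad A_i \in sl_2(\C),\qquad A_1+A_2+A_3+A_\infty=0,
\]
with prescribed eigenvalues $\pm\alpha_i/2$ on each $A_i$. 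Modulo the residual diagonal gauge, the fiber over fixed $(\alpha,t)$ is two-dimensional, so we need exactly two extra coordinates $(w,w')$ to complete the picture.

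To produce a canonical such pair, I would write $A(x,t) = \bigl(\begin{smallmatrix} a & b \\ c & -a \end{smallmatrix}\bigr)$ and observe that $x(x-1)(x-t)\,b(x,t)$ is a polynomial of degree one in $x$. Its unique root defines a meromorphic, gauge-equivariant function $w$, well-defined on the Zariski open set where $b \not\equiv 0$; the companion coordinate $w'$ can be taken as the value of the diagonal entry $a(x,t)$ at $x=w$, suitably rescaled. On the locus where $w \notin \{0,1,t\}$ the triple $(w,w',t)$ is an étale coordinate system on $\mathcal{M}_{VI}(\alpha)$, and after this choice every entry of the $A_i$ becomes a rational function of $(w,w',t)$ determined by the constraints $\det(A_i)=\alpha_i^2/4$ together with the fixed conjugacy class of $A_\infty$.

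Next, I would translate the isomonodromy condition $d\rho_A/dt = 0$ in $\chi_{VI}$ into the Schlesinger system. Extending the $x$-connection to a flat meromorphic connection $dY = A(x,t)Y\,dx + B(x,t)Y\,dt$ with $B(x,t) = -A_3(t)/(x-t)$, flatness yields
\[
\frac{dA_1}{dt}=\frac{[A_3,A_1]}{t},\qquad \frac{dA_2}{dt}=\frac{[A_3,A_2]}{t-1},\qquad \frac{dA_3}{dt}=-\frac{[A_3,A_1]}{t}-\frac{[A_3,A_2]}{t-1}.
\]
Differentiating the defining identity $b(w(t),t)=0$ and applying Schlesinger expresses $dw/dt$ in terms of the matrix entries; differentiating once more and substituting the parametrization of the entries obtained in the previous step gives $d^2w/dt^2$ as an explicit rational function of $(w,dw/dt,t,\alpha)$, which one then checks matches the right-hand side of (\ref{EVI}).

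The main obstacle is this final verification, which is algebraically heavy in its naive form. A clean way to organize it is through Okamoto's Hamiltonian formulation: one first checks that in the coordinates $(w,w')$ Schlesinger takes the canonical form $dw/dt = \partial H/\partial w'$, $dw'/dt = -\partial H/\partial w$ for an explicit Hamiltonian $H(w,w',t;\alpha)$ rational in $(w,w')$, and then eliminates $w'$ to obtain the scalar second-order equation. This reduces the entire derivation to identifying $H$ with the standard Painlevé VI Hamiltonian, a much lighter computation that also fixes, term-by-term, the dependence of the coefficients in (\ref{EVI}) on the $\alpha_i$.
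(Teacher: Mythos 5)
Your argument and the paper's sketch both follow the classical Fuchs--Schlesinger route, but they carry the weight in opposite places, so they are complementary rather than identical.

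The paper's sketch is almost entirely about \emph{deriving} the Lax pair from isomonodromy: given a fundamental solution $Y(x,t)$, isomonodromy makes the logarithmic derivative $B(x,t)=\frac{d}{dt}Y\cdot Y^{-1}$ single-valued in $x$, and \emph{regularity} of the singular points (moderate growth of the local solutions) then makes $B$ extend meromorphically, hence rationally; flatness of $d/dx-A$, $d/dt-B$ is then the Schlesinger system, and the identification with $P_{VI}$ via the coordinate change is deferred entirely to \cite{IKSY}. You do the opposite: you postulate the deformation matrix $B=-A_3/(x-t)$ outright (which is indeed the Schlesinger $B$, once $A_\infty$ has been gauged to a fixed diagonal form) and spend your effort on the explicit coordinates $w$ (zero of $x(x-1)(x-t)\,b(x,t)$), $w'$ (rescaled value of $a$ at $x=w$), the derivation of $d^2w/dt^2$ from the Schlesinger equations, and the Hamiltonian reorganization. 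Both routes are correct, and nothing you write is wrong. What you should notice, though, is that the step you skip — proving $B$ exists, is rational, and is the \emph{unique} (for irreducible $A$) deformation matrix, using regularity of the singularities — is precisely the conceptual pivot of the paper's argument: this is the step that fails at an irregular singularity and is the reason the paper later proves a separate lemma using parametric $1$-summability (Theorem~\ref{sommable_par}) to establish the analogous fact for $P_V$. Your proof, as written, would not reveal why the $P_V$ case needs more work, whereas the paper's does.
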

\textit{A sketch of proof \cite{IKSY}.} Given a fundamental system of solutions $Y(x,t)$, by isomonodromy, $\frac{d}{dt}Y(x,t)$ and 
$Y(x,t)$ have the same behaviour along any  continuation. Therefore the quotient 
\[B(x,t)=\frac{d}{dt}Y(x,t)\cdot Y(x,t)^{-1}\]
is univalued outside the fixed singular points. Since the singular points are regular singular points (see section 2 for the definition) $B$ extends to the singular set in a meromorphic way. Therefore $Y(x,t)$ satisfies
two rational linear systems $\frac{dY}{dx}=A(x,t)\cdot Y$ and $\frac{dY}{dt}=B(x,t)\cdot Y$. The compatibility condition requires that the 
two operators $d/dx-A$ and $d/dt-B$ must commute :
\begin{equation}\label{Schelsinger}\frac{dA}{dt}-\frac{dB}{dz}+[B,A]=0.\end{equation}
The pair $(A,B)$ is called an \textsl{isomonodromic Lax pair}, and the above equation the \textsl{Schelsinger equation}. If $A$ is irreducible, $B$ is unique. Now, one can find coordinates such that the Schelsinger equation (\ref{Schelsinger}) is equivalent to the Painlevé VI equation: see \cite{IKSY}. \carre
\bigskip\\
\textbf{3- The Hamiltonian property} :
Any Painlevé differential equation is equivalent to a (non autonomous) hamiltonian system. This fact was first remarked by J. Malmquist \cite{Malm}, and extended by K. Okamoto in \cite{Ok80}. For example the family of Painlevé VI equations
is equivalent to 
\begin{equation}\label{HVI}
\begin{aligned}
&\dot{p}=-\frac{\partial H_{VI}}{\partial q},\
\dot{q}=\frac{\partial H_{VI}}{\partial p},
\end{aligned}
\end{equation}
with
\[H_{VI}(\alpha)=\frac{q(q-1)(q-t)}{t(t-1)}\left(p^2-(\frac{\alpha_3}{q}+\frac{\alpha_1}{q-1}+\frac{\alpha_2-1}{q-t})p+
\frac{\beta}{4q(q-1)}\right).\]
with $\beta=(\alpha_1+\alpha_2+\alpha_3+\alpha_4-2)(\alpha_1+\alpha_2+\alpha_3-\alpha_4)$, and where the numbers $\pm\alpha_i/2$ are the eigenvalues of the residues matrices $A_i$\footnote{The numbering is chosen here such that the two first indices correspond to the singularities 1 and $t$ which are confluent in the usual confluent process of the Painlevé equations.}.

We denote by $P_{VI}(\alpha)$ the corresponding family of Painlevé vector fields, and by 
$\mathcal{F}_{VI}(\alpha)$ the family of dimension one holomorphic foliations on $\C^3_{(p,q,t)}\subset\C^2\times \p_1$ (where $\p^1$ is the complex one dimensional projective space). The fibers $t=0,1,\infty$ are singular and the foliation has 4 singular points on each of this fibers.

\begin{rema}
Painlev\'e sixth equation was not discovered by Painlev\'e and Gambier, using the Painlev\'e property, but by Richard Fuchs, using an isomonodromic deformation. R. Fuchs considered the monodromy preserving deformation of a second order Fuchsian differential equation with four regular singular points and an apparent singularity\footnote{Respectively $x=0,1,t,\infty$ and $x=q$.}~: $Dy=y''+a_1(x,t) y'+ a_2(x,t)=0$, where $a_1$ and $a_2$ are rational functions on $x$ depending holomorphically of $t$. We write the Riemann scheme of $Dy=0$~:
\[
\begin{Bmatrix}
x=0 & x=1 & x=t & x=q & x=\infty \\
0 & 0 & 0 & 0 & \kappa_0\\
\kappa_1 & \kappa_2 & \kappa_3 & 2 & \kappa_0+\kappa_4
\end{Bmatrix}.
\]
In this scheme the exponents $\kappa_i$, $i=0,1,2,3,4$ are complex parameters subject to the Fuchs relation~:
\[\kappa_1 + \kappa_2 + \kappa_3 + \kappa_4 + 2 \kappa_0=-1.\]
The  $\kappa_i$, $i=1,2,3,4$ are related to the eigenvalues $\pm\alpha_i/2$ of the Fuchsian systems $(A)$ by 
\[\kappa_1=\alpha_1$, $\kappa_2=\alpha_2$, $\kappa_3=\alpha_3$ and $\kappa_4=\alpha_4-1.\]
For a proof, see \cite{InIwSai2}.
It is easy to derive the Hamiltonian formulation (cf. \cite{NouYa}). The variable $q$ is the position of the apparent singularity, the variable $p$ is defined by $p=\mathrm{Res}_{x=q} a_2(x,t)dx$ and the hamiltonian by (\ref{HVI}).
\end{rema}

The family of Painlevé V equations (\ref{EV}) is equivalent to
\begin{equation}\label{HV}
\begin{aligned}
&\dot{p}=-\frac{\partial H_{V}}{\partial q},\ 
\dot{q}=\frac{\partial H_{V}}{\partial p},
\end{aligned}
\end{equation}
\[\mbox{with }H_{V}(\alpha)=t^{-1}\left(p(p+t)q(q-1)-\alpha_1p(q-1)+\alpha_2qt-\alpha_3pq\right).\]
We have used here the notations of \cite{TOS2005}. 
The values $\\alpha_i/2$ are not here eigenvalues of the residues matrices of the connection but are directly related to them by an affine invertible map
whose expression in available in the appendix C of \cite{JiMi1980}.

We denote by $P_V(\alpha)$ the corresponding families of Painlevé vector fields, and by 
$\mathcal{F}_{V}(\alpha)$ the family of dimension one holomorphic foliations on $\C^3_{(p,q,t)}\subset\C^2\times \p^1$. There is here only two singular fibers over $t=0$ and $t=\infty$, with 2 singular points over 0, and 5 singular points over $\infty$, 3 of them are of saddle-node type i.e. with a vanishing eigenvalue.

This construction provides a symplectic structure  $\Omega_{VI}(\alpha)$ on $\mathcal{M}_{VI}(\alpha)$ given by $dq\wedge dp$.
This Poisson structure on $\mathcal{M}_{VI}$ also comes from a general construction of Atiyah and Bott \cite{AtBo}: the symplectic structure on the infinite dimensional space of all the connections induces a Poisson structure by a symplectic reduction under the gauge action. There also exists several direct finite dimensional constructions, by using either ciliated fat graphs \cite{FoRo}, \cite{Aud}, or quasi-hamiltonian geometry and quivers varieties \cite{Boa07}, or symplectic reduction of multi-Poisson structures \cite{Chiba17}.

On the right-hand side, the space of linear representations $\chi_{VI}$ has also a Poisson structure which induces a symplectic form $\omega_{\chi_{VI}}(a)$ on each fiber $\chi_{VI}(a)$. This fact was first proved by Goldman in \cite{Gold}, and makes use of the Poincaré-Lefschetz duality on the tangent space. It can be extended to irregular cases by using the concept of decorated character variety : \cite{ChMR}, or quasi-hamiltonian geometry \cite{Boa14}.

There also exists a Poisson structure on the family of cubic surfaces $\mathcal{C}_{VI}(\theta)$ given by:
\[\omega_{VI}(\theta)=\frac{dx_1\wedge dx_2}{F_{VI,x_3}}=\frac{dx_2\wedge dx_3}{F_{VI,x_1}}=\frac{dx_3\wedge dx_1}{F_{VI,x_2}},\]
where $F_{VI,x_i}=\partial F_{VI}/\partial x_i=x_jx_k+2x_i-\theta_i=\partial F_{VI}/\partial x_i$ for i=1,2,3. This structure obtained by the Poincaré residue of the volume form coincide with the Goldman structure:
see \cite{Iwa03}.

Finally K.Iwasaki has proved that the Riemann-Hilbert map  $RH_{VI}$ is a Poisson morphism from $(\mathcal{M}_{VI},\Omega_{VI})$ to $(\mathcal{C}_{VI},2i\pi\omega_{VI})$: \cite{Iwa92}. One can also find a proof of this fact in \cite{Kl1} obtained from the Jimbo's asymptotic formula \cite{Jim82}.
This result has been extended for the Riemann-Hilbert map in a local irregular case by P. Boalch: see \cite{Boa01}.
\bigskip

One can summarize these three properties for the Painlevé foliation by the following statement: the dynamics of the Painlevé VI foliation $\mathcal{F}_{VI}(\kappa)$ is  conjugated through the Riemann-Hilbert map to the dynamics on $\mathcal{C}_{VI}(\theta)$ induces by the automorphisms of $\pi_1^{VI}(X,S)$. 

Since the inner automorphisms $\gamma_{i,j}\mapsto \alpha_{i,i}\gamma_{i,j}\alpha_{j,j}^{-1}$ acts trivially on the character variety, this action factorizes to the quotient 
\[Out(\pi_1^{VI}(X,S))=Aut(\pi_1^{VI}(X,S))/Inn(\pi_1^{VI}(X,S))\] 
which is defined by pure braids over $S$ in $X$. This braid group is generated by three elementary braids
$b_{1,2}$, $b_{2,3}$ and $b_{3,1}$ such that $b_{1,2}\cdot b_{2,3}\cdot b_{3,1}=1.$ The action of each generator on $\chi_{VI}$ has been first computed by Dubrovin and Mazzocco \cite{DubMa}. See also Iwasaki \cite{Iwa03}, or Cantat and Loray \cite{CanLo} or the authors in \cite{PR} for a simple computation using groupoids instead of groups. We have
\begin{prop}\label{dynamicPVI}
The action of $b_{i,j}$ is given by the automorphism:
\begin{equation*}
  h_{i,j}:\ \left\{
    \begin{aligned}
    &x_i\rightarrow -x_i +x_jx_k + x_ix_k^2 - \theta_j x_k + \theta_i  \\
    &x_j\rightarrow -x_j -x_ix_k + \theta_j \\
    &x_k\rightarrow x_k.
    \end{aligned}
    \right.
\end{equation*}
\end{prop}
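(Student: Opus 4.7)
The cleanest approach is to exhibit the formula $h_{i,j}$ as the composition of two natural involutions of the cubic surface $\mathcal{C}_{VI}(\theta)$. For each $l\in\{1,2,3\}$, the equation $F_{VI}(x,\theta)=0$ is quadratic in $x_l$ with the other two coordinates held fixed; by Vieta's formulas, the associated Galois involution is
\[
\iota_l:\ x_l\longmapsto \theta_l-x_mx_n-x_l,\qquad x_m,\,x_n\text{ fixed},\ \{l,m,n\}=\{1,2,3\}.
\]
A direct substitution shows $\iota_i\circ\iota_j=h_{i,j}$ (apply $\iota_j$ first, then $\iota_i$), and the identity $(\iota_1\iota_2)(\iota_2\iota_3)(\iota_3\iota_1)=\mathrm{id}$ (since each $\iota_l$ is an involution) is consistent with the relation $b_{1,2}b_{2,3}b_{3,1}=1$.

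It therefore suffices to identify the braid $b_{i,j}$ acting on $\chi_{VI}$ with $\iota_i\circ\iota_j$. I work with normalized representations, so the data reduces to four matrices $M_1,\dots,M_4\in SL_2(\C)$ satisfying $M_1M_2M_3M_4=I$ and $\mathrm{tr}(M_l)=a_l$. The pure braid $b_{i,j}$, being the square of the Hurwitz half-twist on the 4-punctured sphere, induces (for $\{i,j,k\}=\{1,2,3\}$)
\[
b_{i,j}:\ M_i\longmapsto M_iM_jM_iM_j^{-1}M_i^{-1},\quad M_j\longmapsto M_iM_jM_i^{-1},
\]
with $M_k$ and $M_4$ fixed. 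A direct check shows $M_i'M_j'=M_iM_j$, so the global relation persists and $x_k=\mathrm{tr}(M_iM_j)$ is invariant; since the matrices $M_l'$ are conjugate to $M_l$, the $a_l$ (and hence the $\theta_l$) are also preserved. The remaining trace coordinates $x_i'=\mathrm{tr}(M_j'M_k)$ and $x_j'=\mathrm{tr}(M_kM_i')$ are then evaluated by iterated applications of the Cayley--Hamilton identity $M^{-1}=\mathrm{tr}(M)\,I-M$ valid in $SL_2(\C)$, the Fricke identity $\mathrm{tr}(AB)+\mathrm{tr}(AB^{-1})=\mathrm{tr}(A)\mathrm{tr}(B)$, and the product relation $M_1M_2M_3=M_4^{-1}$.

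The main obstacle is the algebraic reduction of the length-six trace $\mathrm{tr}(M_kM_iM_jM_iM_j^{-1}M_i^{-1})$ representing $x_j'$: after substituting $M_i^{-1}=a_iI-M_i$ and $M_j^{-1}=a_jI-M_j$ and expanding, one encounters intermediate quartic traces such as $\mathrm{tr}(M_iM_jM_iM_k)$, which are handled by applying Fricke with $A=M_iM_j$, $B=M_iM_k$ and cyclically simplifying $\mathrm{tr}(M_iM_jM_k^{-1}M_i^{-1})=\mathrm{tr}(M_jM_k^{-1})=a_ja_k-x_i$. Since the whole setup is equivariant under the cyclic permutation $(1,2,3)\to(2,3,1)$, it is enough to verify one case (say $b_{1,2}$); the remaining two follow by symmetry, and the relation $b_{1,2}b_{2,3}b_{3,1}=1$ supplies a final consistency check.
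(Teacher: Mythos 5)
The paper does not actually prove Proposition~\ref{dynamicPVI}; it states it and refers the reader to Dubrovin--Mazzocco, Iwasaki, Cantat--Loray, and to the authors' own \cite{PR} (``a simple computation using groupoids instead of groups''). So the comparison is really against those cited sources, and what the paper highlights is that the groupoid presentation $\pi_1^{VI}(X,S)$ makes the computation much lighter than the classical group-level trace algebra you use.

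Your central algebraic observation --- that each $h_{i,j}$ factors as the composition $\iota_i\circ\iota_j$ of two Vieta involutions of the cubic, with $(\iota_1\iota_2)(\iota_2\iota_3)(\iota_3\iota_1)=\mathrm{id}$ giving the braid relation for free --- is correct and is exactly the mechanism exploited by Cantat--Loray and Iwasaki. I checked that $\iota_i\circ\iota_j$ reproduces the stated formula verbatim. The consistency identity and the preservation of $x_k$ and of the $\theta_l$ are also as you say.

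Two points deserve flagging. First, a convention issue: with the braid formula you write ($M_i\mapsto M_iM_jM_iM_j^{-1}M_i^{-1}$, $M_j\mapsto M_iM_jM_i^{-1}$), the trace coordinate that transforms \emph{simply} is $x_i=\mathrm{tr}(M_jM_k)$, and a short computation gives $x_i\mapsto\theta_i-x_jx_k-x_i$. But in the paper's $h_{i,j}$ the simple coordinate is $x_j$, not $x_i$. So your braid normalisation yields $\iota_j\circ\iota_i=h_{j,i}=h_{i,j}^{-1}$ rather than $h_{i,j}$; you need to reverse the sense of the twist (or swap the roles of $i$ and $j$ in the Hurwitz formula) to match the statement. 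Second, the appeal to ``cyclic symmetry'' to dispatch $b_{3,1}$ is where the group presentation bites: with a fixed base point the pure braid $b_{3,1}$ does not act by the same nice formula as the adjacent ones (the conjugator is $(M_1M_2)^{\pm1}$, not $M_3$ or $M_1$), and the relation $M_1M_2M_3M_4=I$ is not literally cyclically symmetric. You can still get $b_{3,1}$ from the relation $b_{1,2}b_{2,3}b_{3,1}=1$, but the ``symmetry'' shortcut is precisely what \cite{PR}'s groupoid with four base points buys you: there, the cyclic symmetry is manifest in the presentation and each generator is handled by the same two-line trace calculation. So your route is legitimate and matches the older references, but it is heavier than the proof the paper is advertising, and the braid-orientation convention must be fixed before the computation closes.
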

This dynamics has the remarkable ``tame property'': any element and its inverse is given by polynomials.

\bigskip

\textbf{Contents of this article. } 
Our aim is to present the main tools that will allow us to extend this description for all the Painlevé equations, by considering here the first case of $P_V$. 
In this case, which can be obtained from $P_{VI}$, by a confluent process, new difficulties arise:

-- The monodromy around the two singular fibers $0$ and $\infty$ do not describe the whole transverse structure of the foliation. This monodromy will only give a ``tame'' dynamics generated by just one element. This is a consequence of the ``irregular'' type of the singularities of the foliation over $\infty$ which are now saddle-nodes, with non linear Stokes phenomenon.

-- On the other side, the class of connections on which $P_V$ naturally arises is now a class of irregular connections. The representation of such a connection must take into account beyond the usual monodromy, new operators related to such singular points: the linear Stokes operators and exponential tori. 

We solve these problems by constructing a \textit{wild} fundamental groupoid $\pi_1^V(X,S)$ with its linear representations (modulo equivalence): the wild character variety $\chi_V$.
The action of $Out(\pi_1^V(X,S))$ allows us to recover the tame dynamics.

In order to complete this dynamics, one can construct a family of (non invertible) confluent morphisms from $\pi_1^V(X,S)$ to $\pi_1^{VI}(X,S)$. The key point is that these morphisms induce families of \textsl{birational maps} between the corresponding character varieties. This will allow us to define and compute a confluent dynamics $Conf(P_V)$ on $\chi_V$. This one was previously found by Martin Klimes in \cite{Kl1}. Our technique based here on wild fundamental groupoids allows us to expect a generalization for all the other Painlevé equations. Notice hat the dynamics that we obtain is no longer polynomial but only a rational one.

We will also discuss about a \textsl{canonical dynamic} which exists on the cubic surfaces $\mathcal{C}_V$, without any reference to a Painlevé equation. The central idea is that here the cubic surface is symplectically birationally equivalent to $\left(\C^2,\frac{du}{u}\wedge \frac{dv}{v}\right)$ by a sequence of log-canonical coordinates which satisfies some exchange relations, which appear in cluster algebra. The pull-back of the symplectic Cremona group $Symp\left(\C^2,\frac{du}{u}\wedge \frac{dv}{v}\right)$ delivers a very rich structure denoted by $Dyn(\mathcal{C}_V)$. We will compare $Conf(P_V)$ and $Dyn(\mathcal{C}_V)$.

The wild dynamics of the Painlevé V foliation is defined by the non linear monodromy, non linear Stokes operators and non linear tori in a neighborhood of a saddle-node singular point. Following M. Klimes \cite{Kl1}, we will recall the definition of this dynamics and its image through the Riemann-Hilbert map. We also recover here the canonical symplectic dynamics, thus confirming in this case the rationality conjecture of the second author. 

The last part is dedicated to open problems.

\bigskip

\textbf{Acknowledgments.} We would like to thank Martin Klimes for the discussions we had together on this subject. His work \cite{Kl1} is a primary source of inspiration for this article.

\pagebreak

\section{The moduli space  $\mathcal{M}_V$}
According to \cite{VdPSai}, the moduli space of connections on which lives the Painlevé V foliation is defined by:
\[\mathcal{S}_V=\left\{\frac{dY}{dx}=A(x)\cdot Y,\ A(x)=\frac{A_0}{x-s_0}+\frac{A_1}{x-s_1}+A_\infty,\ A_0,\ A_1,\ A_\infty\in sl_2(\C)\right\}\]
such that $A_\infty$ is a non trivial semi-simple element. The singularities, i.e. the points $s$ such that $A$ is not holomorphic around $s$ are $s_0$, $s_1$ and $\infty$.
We suppose that the eigenvalues $\pm t$ of $A_\infty$ are distinct ($t\neq 0$). 
One can extend the basis to $\p_1(\C)$ by setting $z=x^{-1}$:
\[\frac{dY}{dz}=-z^{-2}A(z^{-1})\cdot Y.\]
As explained in the introduction, we consider this family up to a gauge action $Y=PZ$, which acts on the system by 
$A\rightarrow A^P=P^{-1}AP-P^{-1}\frac{dP}{dx}$, and up to a change of independent variable $\varphi(x)$, $\varphi$ in the Möbius group $Aut(\p_1)$.

\subsection{The local classification}
Suppose that $x=0$ is a singular point of a germ of connection defined by
\[x^{p+1}\frac{dY}{dx}=A(x)\cdot Y,\ A\in sl_2(\C\{x\}),\ A_0=A(0)\neq 0,\ p\geq 0.\]
The \textit{local formal classification} is the classification of such a system up to a gauge transformation $Y=PZ$, with $P$ in $SL_2\left(\C((x))\right)$.
The integer $p$ is called the Poincaré rank of the system. It is not a local gauge invariant. If $p=0$ the singular point is called a \textit{fuchsian} singularity. Given an element of $\mathcal{S}_V$, the points $s_0$ and $s_1$ are fuchsian singularities. The local formal classification around such a point is given by the following general result (cf \cite{Wasow}):
\begin{prop} We consider the fuchsian system $xY'=A(x)\cdot Y,\ A_0=A(0)\neq 0$. 
\begin{enumerate}
\item Suppose that the eigenvalues of $A_0$ do not differ from a positive integer (the non resonant case). There exists a local meromorphic gauge transformation $P$ in $SL_2\left(\C((x))\right)$ which conjugates the fuchsian singular system to $xZ'=A_0\cdot Z$
\item In this case, the system (1) admits a local fundamental solution $Y=P(x)x^{A_0}$, $P$ in $SL_2\left(\C((x))\right)$. Furthermore if $A$ is a convergent data, then $P$ is also a convergent matrix.
\end{enumerate}
\end{prop}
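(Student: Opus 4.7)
The plan is to construct $P$ explicitly as a formal power series from a linear recursion, and then to establish its convergence by a majorant argument.

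Applying the gauge transformation $Y = P(x) Z$ to $xY' = A(x) Y$ yields $xZ' = P^{-1}(AP - xP') Z$, so the requirement that $Z$ satisfy $xZ' = A_0 Z$ is equivalent to the homological equation
\begin{equation*}
xP' = AP - PA_0.
\end{equation*}
Expanding $A(x) = \sum_{n \geq 0} A_n x^n$ and seeking $P(x) = I + \sum_{n \geq 1} P_n x^n$, the coefficient of $x^n$ gives the recursion
\begin{equation*}
(nI - \mathrm{ad}_{A_0})(P_n) = \sum_{k=1}^{n} A_k\, P_{n-k},
\end{equation*}
where $\mathrm{ad}_{A_0}(M) = A_0 M - M A_0$. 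The eigenvalues of $\mathrm{ad}_{A_0}$ acting on $2 \times 2$ matrices are $0$ (with multiplicity two) and $\pm(\lambda_1 - \lambda_2)$, where $\lambda_1, \lambda_2$ are the eigenvalues of $A_0$. The non-resonance hypothesis says precisely that no positive integer equals such an eigenvalue, so each $nI - \mathrm{ad}_{A_0}$ is invertible and the recursion uniquely determines every $P_n$. Taking the trace of the equivalent identity $xP'P^{-1} = A - P A_0 P^{-1}$ and using $\mathrm{tr}(A) = \mathrm{tr}(A_0) = 0$ yields $x(\det P)'/\det P = 0$, so the normalization $P(0) = I$ gives $\det P \equiv 1$, i.e. $P \in SL_2(\C[[x]])$.

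For the convergence in (2), the non-resonance yields the quantitative bound $\|(nI - \mathrm{ad}_{A_0})^{-1}\| \leq C/n$ for all $n \geq 1$, since $|n - \mu|$ grows linearly with $n$ for each nonzero eigenvalue $\mu$ of $\mathrm{ad}_{A_0}$. Consequently
\begin{equation*}
n\,\|P_n\| \leq C \sum_{k=1}^n \|A_k\|\,\|P_{n-k}\|.
\end{equation*}
I would then introduce a majorant series $\phi(x) = \sum \phi_n x^n$ with $\phi_0 = 1$ and $\phi_n \geq \|P_n\|$, whose coefficients satisfy the same recursion with scalar majorants $\alpha_k \geq \|A_k\|$ coming from a convergent $\alpha(x)$. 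This is equivalent to the linear ODE $x \phi'(x) = C(\alpha(x) - \alpha_0)\phi(x)$, whose solution
\begin{equation*}
\phi(x) = \exp\!\left(C \int_0^x \frac{\alpha(t) - \alpha_0}{t}\,dt\right)
\end{equation*}
is holomorphic at $0$ since $(\alpha(t) - \alpha_0)/t$ is. Hence $P$ converges. Finally, the model $xZ' = A_0 Z$ admits the fundamental solution $Z = x^{A_0} := \exp(A_0 \log x)$, so $Y = P(x)\, x^{A_0}$ is a fundamental solution of the original system.

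The main obstacle is the convergence step: the formal construction is forced by the recursion, but passing from formal to convergent hinges on the decay estimate $\|(nI - \mathrm{ad}_{A_0})^{-1}\| = O(1/n)$. This is exactly what distinguishes the Fuchsian case from the irregular one, where analogous recursions produce genuinely divergent formal series and motivate the Stokes analysis and wild groupoid framework developed later in the paper.
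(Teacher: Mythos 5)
Your proof is correct and is the standard Frobenius-type argument (homological equation, recursion made solvable by non-resonance of $\mathrm{ad}_{A_0}$, majorant series for convergence); the paper itself gives no proof and simply cites Wasow, whose treatment follows this same route. The only minor point worth noting is that you establish $\det P \equiv 1$ and $P\in SL_2(\C[[x]])$ holomorphic, which is slightly stronger than the $SL_2(\C((x)))$ claimed in the statement — no harm done, just a strengthening.
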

The general case reduces to the non resonant case by using shearing transformations, which can eventually modify the Poincaré rank. We also obtain a local fundamental system $Y=P(x)x^L$ for some constant matrix $L$. 
In any cases, the sectoral local solutions admits a moderate growth : fuchsian singularities are \textsl{regular singular points}.
The classification of non fuchsian singularities is given by the Hukuhara Turritin Theorem \cite{Balser} :
\begin{theorem}\label{fnf} We consider a $sl_2$-system $z^{p+1}Y'=A(z).Y,\ A_0=A(0)\neq 0,\ p\geq 1.$ 
There exists a formal meromorphic gauge transformation $Y=PZ$ and a ramification $z=y^k$ such that the differential system has the canonical form 
\[yZ'=(L+ D_1y^{-1}+\cdots +D_my^{-m})\cdot Z,\]
where the $D_i$'s are diagonal matrices, and $L$ commutes with the $D_i$'s. 
\end{theorem}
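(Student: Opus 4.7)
The strategy is to first reduce, through a ramification $z = y^k$ combined with elementary shearing transformations, to the case where the leading coefficient has two distinct eigenvalues, and then complete the formal diagonalization term by term.

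\textbf{Step 1: Non-degenerate leading term.} Suppose first that $A_0$ has distinct eigenvalues $\pm\lambda$ with $\lambda \neq 0$ (the generic situation, since $A_0\in sl_2$ and $A_0\neq 0$). After a constant gauge transformation, assume $A_0 = \mathrm{diag}(\lambda,-\lambda)$. Proceed by induction on the expansion $A(z)=\sum_{j\geq 0}A_j z^j$: given that the first $n$ coefficients $A_0,\ldots,A_{n-1}$ are diagonal, apply a formal gauge transformation $P_n = I + z^n N_n$ with $N_n\in sl_2(\C)$ off-diagonal. A direct computation shows that $A_n$ is modified by $-[A_0,N_n]$ up to terms that are already diagonal, and $\mathrm{ad}(A_0)$ acts on off-diagonal matrices as multiplication by $\pm 2\lambda\neq 0$. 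One may therefore choose $N_n$ to kill the off-diagonal part of the new $A_n$. The composition $P = \prod_{n\geq 1}P_n$ converges in the Krull topology and yields a formal gauge in $SL_2(\C[[z]])$ that makes every coefficient diagonal. Here $k=1$, and taking $L$ and $D_i$ to be the (diagonal) coefficients of the resulting expansion gives the canonical form, the commutation condition being automatic.

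\textbf{Step 2: Nilpotent leading term.} If $A_0$ is nilpotent, conjugate it to $A_0 = \left(\begin{smallmatrix}0 & 1 \\ 0 & 0\end{smallmatrix}\right)$. Writing $A = \left(\begin{smallmatrix}a & b \\ c & -a\end{smallmatrix}\right)$, the eigenvalues of $A(z)$ equal $\pm\sqrt{a(z)^2+b(z)c(z)}$. Since $b(0)=1$ and $c(0)=0$, this square root is a fractional power series in $z$ whose denominator divides some integer $k$. Perform the ramification $z=y^k$ (in practice $k=2$ suffices for $sl_2$), followed by a meromorphic shearing $P=\mathrm{diag}(y^\alpha,y^{-\alpha})$ with $\alpha$ chosen to balance the orders of the two eigenvalues. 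The new leading coefficient then has distinct eigenvalues and Step 1 applies. If a first shearing does not suffice (when $c$ has too high vanishing order), iterate the procedure on the nilpotent block that remains.

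\textbf{Main obstacle.} The delicate point is controlling the nilpotent case, where shearings can modify the Poincar\'e rank and several reductions may be required. Termination is governed by the Newton polygon of the system, a finite combinatorial object whose slopes lie in $\frac{1}{2}\Z$ for $sl_2$-systems; this yields the bound $k\leq 2$ on the ramification index. Once the distinct-eigenvalue case is reached, Step 1 produces the canonical form with diagonal $L, D_1,\ldots, D_m$, which commute automatically.
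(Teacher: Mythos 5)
The paper does not actually prove this statement; it cites Balser's book and leaves the Hukuhara--Turrittin theorem as a black box. Your outline follows the same classical route as that reference (shearing transformations plus a formal splitting lemma), so the comparison is really with the standard literature rather than with anything in this paper.

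Your Step~1, however, does not deliver the statement as written. Term-by-term diagonalization via gauges $P_n = I + z^n N_n$ produces a system $yZ'=D(y)Z$ in which \emph{every} coefficient is diagonal, but $D(y)$ is an infinite Laurent series with arbitrary nonnegative-power tail, whereas the canonical form asserted by the theorem is a \emph{finite} Laurent polynomial $L + D_1y^{-1}+\cdots+D_my^{-m}$. You then say ``taking $L$ and $D_i$ to be the (diagonal) coefficients of the resulting expansion gives the canonical form,'' which silently discards the coefficients of $y, y^2,\ldots$. A further (easy but non-trivial) step is required: decompose into rank-one diagonal blocks $yz_i'=d_i(y)z_i$ and apply a scalar gauge $z_i = g_i(y)w_i$ with $g_i\in 1+y\C[[y]]$, chosen so that $yg_i'/g_i$ cancels the positive-power part of $d_i$; this normalizes each $d_i$ to its residue plus principal part. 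Without this truncation step, the proof proves the formal splitting lemma but not the canonical form.

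A second, smaller issue: in Step~2 you assert that after shearing and ramification one always reaches a distinct-eigenvalue leading term and can invoke Step~1. For $sl_2$-systems the Newton-polygon reduction can also terminate at a formally \emph{regular} singular point (all $D_i=0$), in which case the leading coefficient is the residue and may have a repeated eigenvalue; the canonical form then has $L$ a single Jordan block, not diagonal, and it is precisely here that the commutation clause ``$L$ commutes with the $D_i$'' becomes nontrivial rather than automatic. You should note this alternative terminal case explicitly, since your argument as written only addresses the irregular (distinct-eigenvalue) outcome.
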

If the irregular part $D_1y^{-1}+\cdots +D_m y^{-m}$ vanishes, we still get a regular singular point. Otherwise, the singular point is \textit{irregular}, and the degree $r=m/k$ is called the Katz invariant of the irregular singular point. 

\begin{rema} If the eigenvalues of $A_0$ are distincts, we do not need to use a ramification $z=y^k$. Otherwise, for example if $A_0$ is a nilpotent element of $sl_2(\C)$, we first have to use shearing transformations in order to modify the leading term $A_0$, which may change the Poincaré rank and requires a ramification. This may occur for some Painlevé families.
\end{rema}

Given an element $A$ of $\mathcal{S}_V$, the singular point $x=\infty$ $(z=0)$ is an irregular singular point with Katz rank 1. Since the eigenvalues $\pm t$ of $A_\infty$ are distinct, we do not need here a ramification of $z$. The explicit computation of the residue matrix $L_\infty$ gives:
\[L_\infty=\left(\begin{array}{cc}a_0+a_1 & 0 \\ 0 & -a_0-a_1\end{array}\right).\]
Therefore the system $(A)$ has a formal fundamental matrix solution around $z=0$ given by 
\[\widehat{Y}(z)=P(z)z^{L_\infty}\exp Q(z),\ Q(z)=diag(\alpha z^{-1}, - \alpha z^{-1}).\]

\subsection{The global gauge moduli space}

We want to compute the categorical quotient $\mathcal{S}_V//Sl_2(\C)$, i.e the affine variety defined by the ring of invariant functions under the action of the constant gauge action of $Sl_2(\C)$. 
One can first use a constant gauge transformation in order to diagonalize $A_\infty$: 
\[A_\infty=\left(\begin{array}{cc}t/2 & 0 \\ 0 & -t/2\end{array}\right),\ A_i=\left(\begin{array}{cc}a_i & b_i \\ c_i & -a_i\end{array}\right),\ i=0,1.\]
Let $\mathcal{S}'_V$ be the subset of $\mathcal{S}_V$ defined be the above writing. For a fixed singular set $s_0$, $s_1$, $\infty$, it is a 7-1=6 dimensional variety. The remaining gauge action which normalizes the diagonal matrix $A_\infty$ is the group $N=<T,P>$ generated by \[T=\{T_m=\left(\begin{array}{cc}m & 0 \\ 0 & m^{-1}\end{array}\right),\ m\in\C^*\}\mbox{ and }
P=\left(\begin{array}{cc}0 & 1 \\ -1 & 0\end{array}\right).\] 
This group acts on $sl_2(\C)$ by
\[T_m\cdot \left(\begin{array}{cc}a & b \\ c & -a\end{array}\right)=\left(\begin{array}{cc}a & bm^2 \\ cm^{-2} & -a\end{array}\right),\
P\cdot\left(\begin{array}{cc}a & b \\ c & -a\end{array}\right)\cdot P^{-1}=\left(\begin{array}{cc}-a & -c \\ -b & a\end{array}\right)=-A^T.\]
After this first reduction,  $\mathcal{S}_V//SL_2(\C)=\mathcal{S}'_V//N$. The following functions are invariant by the action of $N$:
\begin{equation}
\begin{aligned}
&\alpha_0=a_0^2+b_0c_0\\
&\alpha_1=a_1^2+b_1c_1\\
&\alpha_\infty=(a_0+a_1)^2\\
&\tau=a_0t\\
&\beta_0=b_0c_1+b_1c_0\\
&\beta_1=t(b_0c_1-b_1c_0)
\end{aligned}
\end{equation}
Notice that the three coordinates $\alpha=(\alpha_i),\ i=0,1,\infty$ are local invariants around each singular point. 
As mentioned in the introduction the $\alpha_i$ parameters used in the expressions of the Painlevé V equation and in its hamiltonien $H_V$
are not exactly equal to the present parameters but only related to the eigenvalues of the $A_i$ by an affine invertible map : see \cite{JiMi1980}.

\begin{prop} $\mathcal{S}_V//SL_2(\C)=Spec(\alpha_0,\alpha_1,\alpha_\infty,\tau,\beta_0,\beta_1)$.\end{prop}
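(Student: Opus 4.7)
The plan is to verify that each of the six listed functions is $N$-invariant and then argue that they generate the full ring $\C[\mathcal{S}'_V]^N$. Invariance under $T_m=\mathrm{diag}(m,m^{-1})$ is immediate: the substitution fixes $a_0,a_1,t$, rescales $b_i\mapsto m^{-2}b_i$ and $c_i\mapsto m^{+2}c_i$, and each listed expression is a $T$-weight-zero polynomial. Invariance under $P$ follows from the computation $P^{-1}A_iP$ for $i=0,1$, which sends $(a_i,b_i,c_i)\mapsto(-a_i,-c_i,-b_i)$, together with the fact that $P^{-1}A_\infty P=-A_\infty$; to restore the standard form $\mathrm{diag}(t/2,-t/2)$ one must then compose with the rescaling $t\mapsto -t$. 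A direct substitution confirms invariance of $\alpha_0,\alpha_1,\alpha_\infty,\tau,\beta_0,\beta_1$.

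For generation I would decompose the computation along the exact sequence $1\to T\to N\to N/T\cong\Z/2\to 1$. First I would compute $\C[\mathcal{S}'_V]^T$: since $T\cong\C^*$ acts linearly with weights $0$ on $a_0,a_1,t$ and $\mp 2$ on $b_i,c_j$, the first fundamental theorem of invariant theory for $\mathrm{GL}_1$ gives
\[
\C[\mathcal{S}'_V]^T=\C\bigl[a_0,a_1,t,b_ic_j:i,j\in\{0,1\}\bigr]\big/\bigl((b_0c_0)(b_1c_1)-(b_0c_1)(b_1c_0)\bigr).
\]
Then I would take $\langle P\rangle$-invariants of this ring, where the residual involution sends $a_i\mapsto -a_i$, $t\mapsto -t$, swaps $b_0c_1\leftrightarrow b_1c_0$, and fixes $b_0c_0$ and $b_1c_1$. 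Standard $\Z/2$-invariant theory (splitting into $\pm$-eigenspaces) then produces an explicit finite generating set for $\C[\mathcal{S}'_V]^N$.

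The main obstacle is the final matching step: expressing each generator provided by the decomposition above as a polynomial in $\alpha_0,\alpha_1,\alpha_\infty,\tau,\beta_0,\beta_1$ and identifying the defining relations of $\mathcal{S}'_V//N$ as a subvariety of $\C^6$. The Pl\"ucker-type relation coming from the $T$-quotient, together with the symmetric role of $a_0$ and $a_1$ inside $\alpha_\infty=(a_0+a_1)^2$, should provide the identities needed to reduce the ad hoc generators produced by the $\Z/2$-calculation to the six listed ones. Writing out this translation carefully, and verifying that the surjection $\C[\alpha_0,\alpha_1,\alpha_\infty,\tau,\beta_0,\beta_1]\twoheadrightarrow\C[\mathcal{S}'_V]^N$ is the desired presentation of the categorical quotient, is where the bookkeeping becomes delicate.
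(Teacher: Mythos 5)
Your approach differs fundamentally from the paper's. The paper does not attempt to compute the ring of invariants; instead it directly constructs a generic inverse of the map $(\alpha_0,\alpha_1,\alpha_\infty,\tau,\beta_0,\beta_1)$ over the Zariski-open set where $a_0\neq 0$ and $b_0c_0\neq 0$: one fixes $t\neq 0$, recovers $a_0=\tau/t$, then $b_0c_0=\alpha_0-a_0^2$ (hence the class $[A_0]$ modulo $N$), solves the linear system given by $\beta_0,\beta_1$ for $(b_1,c_1)$, and finally recovers $a_1$ from $\alpha_\infty-\alpha_0-\alpha_1=2a_0a_1-b_0c_0-b_1c_1$. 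What the paper actually establishes, therefore, is that the six functions define a birational morphism onto $\C^6$, i.e.\ that they are a transcendence basis of the invariant field.

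Your plan aims at the stronger claim that the six functions \emph{generate} $\C[\mathcal{S}'_V]^N$, and here the ``delicate bookkeeping'' you defer would in fact fail. Carrying out your own two-step reduction, $\C[\mathcal{S}'_V]^T=\C[a_0,a_1,t]\otimes\C[b_ic_j]/(\text{Pl\"ucker})$ followed by $\Z/2$-invariants, one finds that the degree-$2$ graded piece of the invariant ring (naive grading where $a_i,b_i,c_i,t$ all have degree $1$) is $9$-dimensional, spanned by $a_0^2,a_1^2,t^2,a_0a_1,a_0t,a_1t,b_0c_0,b_1c_1,b_0c_1+b_1c_0$, whereas the degree-$2$ piece of the subring generated by the listed functions is only $5$-dimensional, spanned by $\alpha_0,\alpha_1,\alpha_\infty,\tau,\beta_0$. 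Concretely, $b_0c_0$, $b_1c_1$, $t^2$ and $a_1t$ are $N$-invariant but not polynomial in the six: knowing $\alpha_0=a_0^2+b_0c_0$ together with $\alpha_1,\alpha_\infty,\tau,\beta_0,\beta_1$ does not allow you to polynomially isolate $a_0^2$. So the six functions do not generate the invariant ring, only its fraction field, and the surjection you hope to establish does not exist. If you wish to push the invariant-theoretic route through, the realistic target is precisely the birational statement the paper proves; a genuine presentation of $\C[\mathcal{S}'_V]^N$ would require the fuller generator set that your $T$-then-$\Z/2$ decomposition produces, together with its relations (in particular the Pl\"ucker relation rewritten as $(b_0c_1-b_1c_0)^2=\beta_0^2-4(b_0c_0)(b_1c_1)$).
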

\begin{proof} It suffices to prove that the map $(\alpha_0,\alpha_1,\alpha_\infty,\tau,\beta_0,\beta_1):\ \mathcal{S'}_V//SL_2(\C)\rightarrow \C^6$ is invertible over the Zariski open set 
$a_0\neq 0,\ b_0c_0\neq 0$. Given a value $(\alpha_0,\alpha_1,\alpha_\infty,\tau,\beta_0,\beta_1)$, we first choose arbitrarily $t\neq 0$. Now $a_0$ is uniquely determined by $ta_0=\tau$, and $b_0c_0$ is uniquely determined by $\alpha_0-a_0^2$. The values $a_0^2$ and $b_0c_0$ determine a unique class $[A_0]$ modulo $N$. It suffices to prove that the choice of $A_0$ in this class determines in a unique way the triple $(A_0,A_1,A_\infty)$. $A_\infty$ is uniquely defined by $a_0t=\tau$. The two last equations define a linear system in $(b_1,c_1)$ which admits a unique solution for $b_0c_0\neq 0$.  The relation
\[2a_0a_1=\alpha_\infty^2-\alpha_0^2-\alpha_1^2+b_0c_0+b_1c_1\]
determines a unique value of $a_1$ for $a_0\neq 0$. Another choice of $A_0$ in $[A_0]$ will give
an equivalent triple  $(A_0,A_1,A_\infty)$ modulo $N$.
\end{proof}
\bigskip

This quotient is endowed with a Poisson structure whose Casimir functions are $\alpha_0$, $\alpha_1$, $\alpha_\infty$ and $\tau$. Therefore, each fiber has a canonical symplectic structure. See the introduction for references.

\subsection{The moduli space of connections $\mathcal{M}_V$}

We now consider the action of a global change of independent variable. We have previously fixed one singular point at $\infty$. The positions $s_0$ and $s_1$ are now free parameters.
One can use a translation $x\rightarrow x+\lambda$ in order to get $s_0=0$. This action do not modify $A_0$ $A_1$ and $A_\infty$. 
Now we want to normalize the singular point $s_1$ to the value 1, by using the last change of independent variable that we can use: $x\mapsto \mu x$. Since $s_1\neq s_0=0$, we can introduce the parameter $s_1^{-1}$. The linear system 
\[dY=(A_0+\frac{A_1}{1-s_1x^{-1}}+xA_\infty)\frac{dx}{x})\cdot Y\]
is changed in
\[dY=(A_0+\frac{A_1}{1-s_1\mu^{-1}x^{-1}}+x\mu A_\infty)\frac{dx}{x})\cdot Y.\]
This action do not modify $A_0$ and $A_1$ but modify $A_\infty$ in $\mu A_\infty$. Therefore it keeps invariant the local variables $\alpha_i$ and modify the others variables by:
\[(\tau,\beta_0,\beta_1,s_1^{-1})\rightarrow (\mu\beta_0,\beta_1,\mu\beta_2,\mu s_1^{-1}).\]
The quotient of this action is the weighted projective space $\p^3_{(1,0,1,1)}$. The usual choice of $\mu$ such that $s_1=1$, corresponds to a choice of chart in $\p^3_{(1,0,1,1)}$. Therefore
\begin{prop} The moduli space of connections $\mathcal{M}_V(\alpha) $ induced by $\mathcal{S}_V$ for fixed local invariants is isomorphic to $\p^3_{(1,0,1,1)}$. \end{prop}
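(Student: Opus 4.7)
The proof proposal is essentially to read off the result from the $\C^*$-action already computed in the excerpt. The plan is as follows.

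First, I fix the local invariants $\alpha=(\alpha_0,\alpha_1,\alpha_\infty)$. Using the previous proposition, the fiber of $\mathcal{S}_V//SL_2(\C)$ over $\alpha$ is parametrized by the four remaining coordinate functions $(\tau,\beta_0,\beta_1)$ together with the position $s_1$ (or its reciprocal $s_1^{-1}$) of the second finite singular point; the fixed singular point $\infty$ and the Möbius freedom of the independent variable remain to be handled.

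Second, I analyze the residual action of $\mathrm{Aut}(\p^1)$. Since $\infty$ has been distinguished as the irregular singular point, only the affine subgroup $x\mapsto \mu x+\lambda$ still acts. The translation $\lambda$ is absorbed by the normalization $s_0=0$ (which does not affect $A_0$, $A_1$, $A_\infty$, and hence does not change $(\tau,\beta_0,\beta_1,s_1^{-1})$). What remains is the one-parameter group $x\mapsto \mu x$.

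Third, I invoke the computation already performed in the excerpt: under $x\mapsto \mu x$, the coordinates transform as
\[(\tau,\beta_0,\beta_1,s_1^{-1})\longmapsto (\mu\tau,\beta_0,\mu\beta_1,\mu s_1^{-1}),\]
while the local invariants $\alpha_i$ are unchanged. This is a linear $\C^*$-action on $\C^4$ with weights $(1,0,1,1)$. Taking the categorical quotient of $\C^4\setminus V$ (where $V$ is the locus on which all three weight-one coordinates vanish simultaneously) by this action yields, by the very definition of a weighted projective space, $\p^3_{(1,0,1,1)}$. Since $\mathcal{M}_V(\alpha)$ is by construction the categorical quotient of the fiber of $\mathcal{S}_V//SL_2(\C)$ over $\alpha$ by the independent-variable action, this produces the desired identification.

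The one subtle point I would expect to be the main obstacle is making precise that the locus excluded by the GIT quotient (namely $\tau=\beta_1=s_1^{-1}=0$) really corresponds to degenerate configurations that do not occur in $\mathcal{S}_V$, or more precisely that the weighted projective space is the correct categorical quotient here. Concretely, $s_1^{-1}=0$ encodes the confluence of $s_1$ with the irregular point at infinity, and the simultaneous vanishing with $\tau=\beta_1=0$ forces $a_0t=0$, contradicting the standing hypothesis $t\neq 0$ combined with the generic choice of $a_0$; so this unstable stratum is indeed disjoint from the moduli space, and the quotient $\C^4\setminus V/\C^*=\p^3_{(1,0,1,1)}$ is an isomorphism on the relevant open set. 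I would verify this either by a direct chart-by-chart check (the usual chart $s_1^{-1}=1$ recovers the classical normalization $s_1=1$) or by appealing to the standard GIT description of $\p^3_{(1,0,1,1)}\simeq \C_{\beta_0}\times\p^2_{(\tau:\beta_1:s_1^{-1})}$.
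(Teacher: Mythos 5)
Your proof follows essentially the same route as the paper: normalize $s_0=0$ by translation, then compute the residual $\C^*$-action $x\mapsto\mu x$ on $(\tau,\beta_0,\beta_1,s_1^{-1})$ and read off the weights $(1,0,1,1)$. Your transformation law $(\tau,\beta_0,\beta_1,s_1^{-1})\mapsto(\mu\tau,\beta_0,\mu\beta_1,\mu s_1^{-1})$ is in fact the correct one (the paper's displayed formula contains an evident typo), and your extra care about the semistable locus---the observation that $s_1^{-1}\neq 0$ because $s_1$ is a finite Fuchsian point, so the unstable stratum $\tau=\beta_1=s_1^{-1}=0$ never occurs---fills a small gap that the paper leaves implicit when it asserts the quotient is $\p^3_{(1,0,1,1)}$.
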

\begin{rema}
\begin{enumerate}
	\item This space is identical to that obtained by H. Chiba in \cite{Chiba2} when he seeks a natural compactification on which lives the vector field $P_V$ by using its Newton polyhedra.
	\item Since one weight vanishes, the quotient space is isomorphic to $\p_2(\C)\times \C$, and it is not a compact space. This fact is shared with the quotient spaces corresponding to the equations $P_{VI}$ and $P_{III,D_6}$, while we obtain a compact weighted projective space for the other families $(I)$, $(II)$, $(IV)$, $(III,D_7)$, $(III,D_8)$: \cite{Chiba1}. To deal with this problem, we will use the trick introduced by H. Chiba, who introduces two copies of the previous space, glued by a convenient B\"acklund transformation.
\end{enumerate}
\end{rema}

\section{The wild fundamental groupoid $\pi_1^V(X,S)$}

The fundamental group of $\pi^1(\p^1\setminus\{s_0,s_1,s_\infty\},b_0)$ acts on a local fundamental system $Y_0$ around $b_0$ by analytic continuation of $Y_0$ along a path in $\p^1\setminus\{s_0,s_1,s_\infty\}$. We first enlarge this usual monodromy representation by considering new operators around the irregular point $s_\infty$: the formal monodromy, the Stokes operators and the exponential torus. We first describe theses operators in the present context, by using a point of view very similar to that introduced by Stokes himself in \cite{Sto}, starting from a formal solution and using the summability theory.

\subsection{Stokes operators, formal monodromy and exponential tori}\label{irregular sing}

\begin{defi} \label{gevrey}
\begin{enumerate}
	\item Let $k>0$. A formal power series $\widehat{f}=\sum_{i\geq 0} a_ix^i$ is $1/k$-Gevrey if there exists $M>0$ and $A>0$ such that for all $i$ 
$|a_i|\leq MA^i (i!)^{1/k}$.
  \item Let $V$ be a sector at the origin and $f$ an holomorphic function on $V$. $f$ is $1/k$-Gevrey asymptotic to $\widehat{f}$ if for every strict subsector $W$ in $V$, there exists $M_W>0$ and $A_W>0$ such that for all $n\geq 1$
	\[\mid (f(x)-\sum_{i=0}^{n-1} a_ix^i\mid \leq M_WA_W^n (n!)^{1/k}|x|^n.\]
\end{enumerate}
\end{defi}

We denote by $\C[[x]]_{1/k}$ the algebra of series which are $1/k$-Gevrey, $\mathcal{A}_{1/k}(V)$, the algebra of holomorphic functions on $V$ which are $1/k$-Gevrey asymptotic to some formal series.
We mention here the following facts (for the proofs see \cite{Lo3}):
\begin{itemize}
		\item[-] If $f$ is $1/k$-Gevrey asymptotic to $\widehat{f}$, then $\widehat{f}$ is a $1/k$-Gevrey series. Therefore the Taylor map $T:\ \mathcal{A}_{1/k}(V)\rightarrow \C[[x]]_{1/k}$ is well defined.
		\item[-] For $V$ ``narrow'' i.e. with an opening $\leq \pi/k$, the Taylor map $T$ is surjective: this is a Gevrey extension of the Borel-Ritt theorem.
		\item[-] For $V$ ``large'' i.e. with an opening $> \pi/k$, the Taylor map $T$ is injective: this is a Gevrey extension of the Watson Lemma. 
		\item[-] Any formal solution of any linear or non linear analytic differential equation is $1/k$-Gevrey for some $k$. This a theorem of Maillet. In the linear case, the second author gives the optimal Gevrey index by using the Newton polyhedra.
In particular, for a linear system $x^{k+1}dY/dx=A(x)\cdot Y$, this optimal order is the Katz rank of the system.
\end{itemize}

\begin{defi}
\begin{enumerate}
  \item Let $d\in S^1$ a direction. A formal power series $\widehat{f}=\sum_{k\geq 0} a_ix^i$ is $k$-summable in the direction $d$ if there exists a sector $V_d$ bisected by $d$, whose opening is greater than $\pi/k$ and an holomorphic function $f$ on $V$ which is $1/k$-Gevrey asymptotic to $\widehat{f}$.
	\item A formal power series $\widehat{f}$ is $k$-summable if $\widehat{f}$ is $k$-summable in any direction $d$ excepted a finite number of directions (the singular directions).
\end{enumerate}
\end{defi}
Let $\C\{x\}_{1/k,d}\subset\C[[x]]_{1/k}$ be the algebra of $k$-summable series in the direction $d$, and $\C\{x\}_{1/k}\subset\C[[x]]_{1/k}$ the algebra of $k$-summable series. 
Since $V_d$ is a large sector, the summation operator $\Sigma_d:\ \C\{x\}_{1/k,d}\rightarrow \mathcal{A}(V_d)$ is an injective morphism of \C-differential algebra.

\begin{theorem}\label{sommable} Let $x^2dY/dx=A(x)Y$, A(x) in $sl_2(\C\{x\})$, be a germ at $x=0$ of meromorphic differential system. We suppose that the eigenvalues of $A(0)$ are distincts (which avoids a ramification of the independent variable), and that the singularity is irregular, with a Katz rank equal to 1.
\begin{enumerate}
	\item The formal series appearing in a formal fundamental system of solutions $\widehat{Y}$ are 1-summable.
	\item For any non singular direction $d$, the operator $\Sigma_d$ extends to a unique differential morphism from the algebra $\C\{x\}_1[x^\lambda,x^{-\lambda},\exp t/x, \exp(-t/x)]$ into $\mathcal{O}(V_d)$, such that this morphism induces the identity map on $\C[x^\lambda,x^{-\lambda},\exp t/x, \exp(-t/x)]$. This allows us to define $\Sigma_d(\widehat{Y})$ for a formal matrix solution $\widehat{Y}$.
	\item Given a formal fundamental system of solutions 
	\[\widehat{Y}(x)=P(x)x^{L}\exp Q(x),\ Q(x)=diag(tx,-tx),\]
	the singular directions $d$ on which $\Sigma_d(\widehat{Y})$ is not defined are characterized by $\arg(x)=d$ if and only if $exp(2tx)$ has a maximal decay, i.e. by $tx\in\R^-$. 
\end{enumerate}
\end{theorem}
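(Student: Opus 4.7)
The plan is to reduce to the formal normal form provided by Theorem \ref{fnf}, prove $1$-summability of the formal gauge transform by the Borel--Laplace method, then extend $\Sigma_d$ by multiplicativity and finally identify the singular directions with the Stokes rays of the exponential differences.

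\textbf{Step 1 (formal preparation).} Because $A(0)$ has distinct eigenvalues, Theorem \ref{fnf} furnishes a formal gauge $\widehat{P}\in SL_2(\C[[x]])$ without ramification such that, in the new unknown $Z=\widehat{P}^{-1}Y$, the system reads $x^2 Z'=(L x+D_1)\,Z$ with $D_1$ diagonal of eigenvalues $\pm t$ and $L$ diagonal. Thus the formal fundamental solution is $\widehat{Y}(x)=\widehat{P}(x)\,x^{L}\exp Q(x)$. Conjugating the two systems gives a linear differential equation for $\widehat{P}$ with meromorphic coefficients whose associated Newton polygon has a single slope equal to the Katz rank $1$.

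\textbf{Step 2 ($1$-summability of $\widehat{P}$).} Maillet's theorem, sharpened by Ramis via the Newton polygon, shows that a formal solution of such a system is Gevrey of order exactly $1/k$ with $k$ the Katz rank; here $\widehat{P}\in SL_2(\C[[x]]_{1})$. To upgrade Gevrey to $1$-summable in a non-Stokes direction $d$ one applies the Borel--Laplace procedure: the formal Borel transform $\mathcal{B}\widehat{P}$ converges on a disk; the diagonal block structure of the normal form shows that $\mathcal{B}\widehat{P}$ satisfies a convolution equation whose characteristic exponentials are $e^{\pm 2t/x}$, and hence $\mathcal{B}\widehat{P}$ extends analytically along every ray that avoids the singular set $\{\pm 2t\}$ in the Borel plane. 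Along such a ray the Laplace transform $\mathcal{L}_d$ is defined, and $\Sigma_d(\widehat{P}):=\mathcal{L}_d\mathcal{B}\widehat{P}$ is a holomorphic function on a sector bisected by $d$ of opening $>\pi$, asymptotic to $\widehat{P}$ in the $1/1$-Gevrey sense. This gives (1).

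\textbf{Step 3 (extension of $\Sigma_d$).} On $\C\{x\}_{1,d}$ the Gevrey--Watson lemma asserts that $\Sigma_d$ is an injective morphism of differential $\C$-algebras. On the overring $\C\{x\}_{1,d}[x^{\lambda},x^{-\lambda},\exp(t/x),\exp(-t/x)]$ define $\Sigma_d$ by requiring that it restrict to the identity on $\C[x^{\lambda},x^{-\lambda},\exp(t/x),\exp(-t/x)]$ and be multiplicative. The symbols $x$, $x^{\lambda}$, $\exp(\pm t/x)$ are algebraically independent over $\C\{x\}_{1,d}$ (their logarithms $\log x$, $\lambda \log x$, $\pm t/x$ are $\C$-linearly independent modulo $\C\{x\}_{1,d}$, a Kolchin-type argument on differential transcendence), so the extension is well defined and unique. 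Applying it coefficientwise to the matrix $\widehat{Y}=\widehat{P}\cdot x^{L}\exp Q$ yields $\Sigma_d(\widehat{Y})$, proving (2).

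\textbf{Step 4 (location of the singular directions).} The rank-one Stokes theory (Ramis--Sibuya, see \cite{Balser}) states that the only possible singularities of $\mathcal{B}\widehat{P}$ in the Borel plane sit at the differences of the exponents of the formal solution, i.e.\ at $q_1-q_2=\pm 2t$. A direction $d$ is singular for $\Sigma_d$ if and only if the Laplace integral meets one of these points, which translates into $\exp(2t/x)$ being of maximal decay on the ray $\arg x=d$; this is the condition $tx^{-1}\in\R^{-}$ of item (3) (written in the paper as $\exp(2tx)$ having maximal decay on $tx\in\R^{-}$). The two singular rays are thus antipodal, as expected for a level-$1$ irregular singularity.

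The main obstacle is Step 2: giving a self-contained $1$-summability statement for $\widehat{P}$. The Gevrey estimate from the Newton polygon is standard, but the analytic continuation of $\mathcal{B}\widehat{P}$ along arbitrary non-Stokes rays, which is what distinguishes summability from mere Gevrey, relies on the Ramis--Sibuya theorem (or on the Braaksma theorem for rank-one systems). Once this is in hand, Steps 3 and 4 are essentially formal.
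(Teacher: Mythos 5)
Your proof is correct and essentially the standard one: the paper states Theorem \ref{sommable} without proof (citing Balser and Loday-Richaud for summability theory), and your sketch is precisely the Borel--Laplace/Improved-Splitting-Lemma argument those references use — formal reduction by Hukuhara--Turrittin, $1$-summability of the formal gauge in non-singular directions, extension of $\Sigma_d$ by differential-algebra multiplicativity, and identification of the Stokes rays with the directions of maximal decay of the exponential difference. You also correctly noticed that the exponential part should read $Q(x)=\operatorname{diag}(t/x,-t/x)$ and the singular-direction condition $t/x\in\R^-$ (consistent with the algebra $\C\{x\}_1[x^{\pm\lambda},\exp(\pm t/x)]$ in item 2); the paper's $\operatorname{diag}(tx,-tx)$ and $tx\in\R^-$ are typos.
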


Let $\Sigma$ be the set of singular directions. The Stokes operators $\{S_d,\ d\in\Sigma\}$ are defined in the following way. We choose a regular direction $d_0$. For any singular direction $d$, we choose two regular directions $d^-$ and $d^+$ such that the arc $(d^-,d^+)$ contains the only one singular direction $d$. We choose a determination $\widehat{Y}_0$ of $\widehat{Y}$ around a $d_0$. This choice induces a determination  $\widehat{Y}^-$ of 
$\widehat{Y}$ around the direction $d^-$ (resp. a determination $\widehat{Y}^+$ of $\widehat{Y}$ around  $d^+$) by analytic continuation of the logarithm along the positive arc $(d_0,d^-)$ (resp. $(d_0,d^+)$). The summations $Y^-=\Sigma_{d^-}(\widehat{Y}^-)$ and $Y^+=\Sigma_{d^+}(\widehat{Y}^+)$ are defined on large sectors $V_d^-$ and $V_d^+$ which intersects non trivially on an open sector around $d$. Therefore there exists some constant matrix $S_d$ such that $Y^+=Y^- S_d$. 
One can easily check that
$S_d$ does not depend on the choices of $d^+$ and $d^-$ around $d$. A change of choice for $\widehat{Y}$ or for its determination $\widehat{Y}_0$ around a $d_0$ will modify the family $\{S_ d,\ d\in\Sigma\}$ by a common conjugation.

\begin{rema}\label{pregroupoid} Starting from a formal solution $\widehat{Y}_0$, the Stokes operator $S_d$ are defined by the successive operations: analytic continuation of the formal solution $\widehat{Y}_0$ along an arc from $d_0$ to $d^-$, summation at $d^-$, analytic continuation of $Y^-$ until $d^+$ on $V^+\cap V^-$, anti-summation at $d^+$ (i.e. Taylor expansion of the actual solution $Y^+$) and finally analytic continuation of the formal solution $\widehat{Y}^+$ from $d^+$ to $d_0$. 
\end{rema}

\begin{defi}
\begin{enumerate}
	\item The Stokes operators at each singular direction are defined by the above composition of operators. Given a formal solution $\widehat{Y}_0$, they are characterized by matrices $U_d$, up to a common conjugation.
 \item The formal monodromy is the operator generated by a loop $\widehat{\gamma}$ around $0$ acting on a determination of $\widehat{Y}$. For a given determination $\widehat{Y}_0$, it is defined by a matrix denoted by $\widehat{M}$.
 \item The geometric monodromy is the operator generated by a loop $\gamma$ around $0$ acting on a determination of an actual solution $Y$. For the actual solution $Y_0$ obtained by summation of $\widehat{Y}_0$, it is defined by a matrix denoted by $M$.
 \item The (formal) exponential torus is an action of the algebraic group $\C^*$ on $\widehat{Y}_0$ induced by a rescaling of the exponential map: $\exp q\rightarrow \tau \exp q$. If $\widehat{Y}_0 =Px^L\exp Q$ is chosen such that $Q$ is diagonal, the action of $\tau$
is given by a diagonal matrix $T_\tau=diag(\tau,\tau^{-1})$. In the general case $T_\tau$ is a Cartan component of $SL_2(\C)$
\end{enumerate}
\end{defi}
The motivation for the introduction of the exponential torus action is given by the following heuristic interpretation in a one dimensional context: we consider the equation  $x^2 y'+y=0$.
\label{confluheuristic}
We introduce an \emph{unfolding} of the irregular singularity at the origin~: we replace 
$D=x^2d/dx+1$ by $D_\varepsilon :=x(x-\varepsilon) d/dx+1$
($\varepsilon\in \C^*$). Then the irregular singularity is replaced by two regular singularities at $0$ and $\varepsilon$, with respective exponents $1/\varepsilon$ and $-1/\varepsilon$. An important point to notice is the \emph{coupling} between the relative positions of the singularity and the exponents.
The general solution of $D_\varepsilon y=0$ is 
$y=Cx^{1/\varepsilon}(x-\epsilon)^{-1/\varepsilon}$. It is \emph{invariant} by the monodromy action of a simple loop around the two points $0$ and $\varepsilon$ but the monodromy action of a simple loop around $0$ passing between the two points will transform $f$ into $e^{-2i\pi/\varepsilon} f$. When $\varepsilon\rightarrow 0$, \emph{this monodromy action does not have a limit}.
However we can replace the continuous limit by a \emph{discrete} limit.
We fix $\varepsilon_0\in \C^*$ and we define a sequence
$(\varepsilon_n)_{n\in \N}$ by 
$\frac{1}{\varepsilon_n}:=\frac{1}{\varepsilon_0}+n$, $n\in \N$.
Then the sequence $(e^{-2i\pi/\varepsilon_n})_{n\in \N}$ is constant
and we can interpret $f\mapsto f\tau_0$, with 
$\tau_0:=e^{-2i\pi/\varepsilon_0}$ as the action of a simple loop passing between two \emph{infinitely near} points\footnote{The idea
of considering an irregular singularity as \emph{a pack of infinitely near regular singularities} is due to Ren\'e Garnier in $1919$ \cite{Gar}. It will be used as a guideline in our work.}. As the choice of 
$\varepsilon_0$ is arbitrary, then $\tau_0$ is arbitrary in $\C^*$ and we can consider the exponential torus as a \emph{generalized monodromy group}. This group is no longer discrete, it is an algebraic group of dimension one. The ``monodromy of a loop between the two infinitely near singularities" can be interpreted as a \emph{``random point"} in this group.
In the unfolding bifurcation $D_\varepsilon$ there is a \emph{breaking of symmetry}, the choice of $\varepsilon$ fix a point
$e^{-2i\pi/\varepsilon}\in \C^*$ and the exponential torus is replaced by the ordinary monodromy group generated by
$f\mapsto e^{-2i\pi/\varepsilon}f$. The \emph{random point} is replaced by a \emph{true point}.

\begin{prop}
\begin{enumerate}
	\item  Let $d_1,\ d_2=d_1+\pi$ be the 2 singular directions of the differential system. We have
$M=\widehat{M}\cdot U_{d_2}\cdot U_{d_{1}}$.
  \item  If $\widehat{Y}_0 =Px^L\exp Q$ is chosen such that $Q$ is diagonal, $U_{d_{1}}$ is a lower triangular unipotent matrix and $U_{d_{2}}$ is a upper triangular unipotent matrix. In the general case, $U_{d_{1}}$ and $U_{d_{2}}$ are in the two Borel components of the Cartan component of the exponential torus.
	\item In the present case (a non ramified case), the formal monodromy commutes with the exponential torus (and belongs to it). This fact is no longer true in the ramified case. 
\end{enumerate}
\end{prop}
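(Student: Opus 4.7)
The plan is to follow literally the composition described in the Remark preceding the proposition. I fix a formal solution $\widehat{Y}_0$ at a regular direction $d_0$, set $Y_0 = \Sigma_{d_0}(\widehat{Y}_0)$, and analytically continue $Y_0$ once around $0$. The closed loop meets the two singular directions in some order (say $d_1$ first, then $d_2$), decomposing into three arcs on which no Stokes line is crossed. On each such arc, Borel summation commutes with analytic continuation, so $Y_0$ remains the Borel sum of the continued formal series. At each crossing of $d_i$, the Borel sums on either side differ by the Stokes matrix, inserting a factor $U_{d_i}$ into the computation of the actual monodromy. After closing the loop, the formal series has been transported around $0$ once, which by definition produces the formal monodromy factor $\widehat{M}$. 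Collecting these contributions in the order they are encountered along the loop yields $M = \widehat{M}\cdot U_{d_2}\cdot U_{d_1}$.

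\textbf{Part (2).} I use the Hukuhara--Turrittin form $\widehat{Y}_0 = P(x)\, x^L \exp Q(x)$ with $Q=\mathrm{diag}(q(x),-q(x))$ diagonal, so that the two columns of $\widehat{Y}_0$ carry the exponentials $\exp(\pm q(x))$. By the previous theorem a singular direction is precisely one where $\exp(2q(x))$ achieves maximal decay; this forces one exponential to be maximally subdominant and the other maximally dominant near $d$. On the overlap of the two large sectors adjacent to $d$, the relation $Y^+ = Y^-\cdot U_d$ combined with the fact that both $Y^{\pm}$ are $1$-Gevrey asymptotic to the same $\widehat{Y}_0$ forces the off-diagonal entries of $U_d$ to survive only in the position that mixes the subdominant column into the dominant one; any entry in the opposite position would introduce a genuinely exponentially large term and destroy the common asymptotic expansion. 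The diagonal entries are then forced to equal $1$, giving unipotency. At $d_2 = d_1+\pi$ the roles of the two columns swap, so the single admissible off-diagonal entry switches to the opposite position. In the diagonal basis this reads as lower, respectively upper, triangular unipotent. If $Q$ has not been brought to diagonal form, the same statement after conjugation places $U_{d_1}$ and $U_{d_2}$ in the two opposite Borel subgroups of the Cartan torus hosting the exponential torus.

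\textbf{Part (3).} In the non-ramified case, the Hukuhara--Turrittin decomposition gives $\widehat{Y}_0 = P\, x^L \exp Q$ with $L$ and $Q$ simultaneously diagonal; this is the situation for $P_V$ at $\infty$ since the eigenvalues of $A_\infty$ are distinct. The formal monodromy $\widehat{M} = \exp(2i\pi L)$ is therefore diagonal and lies in the Cartan component $\{T_\tau = \mathrm{diag}(\tau,\tau^{-1})\}$ defining the exponential torus. Two diagonal matrices commute, which settles both assertions at once. In the ramified case $x = y^k$ with $k>1$, a single loop around $0$ in $x$ acts on $y$ by $y\mapsto y\,e^{2i\pi/k}$, and this cyclic action permutes the branches of the ramified formal solution, inserting a non-trivial permutation (Weyl group) factor in $\widehat{M}$. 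A matrix with such a permutation part does not commute with the diagonal torus, which is precisely the failure of commutativity in the ramified case.

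\textbf{Expected main difficulty.} I expect Part~(1) to be the only part with real bookkeeping subtleties: pinning down whether $U_d$ or $U_d^{-1}$ enters, the orientation of the loop, and the left-versus-right convention for the monodromy action. Once these choices are fixed consistently with the definition of $U_d$ from the Remark, Part~(2) is a routine dominance/asymptotics argument, and Part~(3) is essentially immediate from the normal form.
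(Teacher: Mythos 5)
The paper does not give an explicit proof of this proposition; it is stated as a summary of standard facts from the summability theory of linear systems of Katz rank one. The nearest thing to a ``paper proof'' is the operational description in Remark~\ref{pregroupoid} and, in the following subsection, the groupoid relation $\gamma_{1,1}=\sigma_1\cdot\widehat\gamma_{1,2}^r\cdot \sigma_2 \cdot \widehat\gamma_{2,1}^l$, which upon applying a representation $\rho$ encodes exactly the decomposition you write in Part~(1): the two Stokes factors and one full turn on the formal boundary (the formal monodromy). Your Part~(1) argument is the analytic version of this groupoid identity, and you are right that the only subtleties are bookkeeping ones (left/right action, orientation, whether one writes $U_d$ or $U_d^{-1}$); all of these are convention-dependent and your sketch produces the correct structure $\widehat{M}\cdot U_{d_2}\cdot U_{d_1}$ consistently with the paper's convention $Y^+=Y^-S_d$.

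Your Part~(2) is the standard dominance argument: on a singular direction one column is maximally recessive, so the only admissible off-diagonal entry in $U_d$ is the one feeding the recessive column into the dominant one, and the common Gevrey asymptotic to $\widehat{Y}_0$ forces the diagonal entries to be~$1$. The swap of dominance at $d+\pi$ produces the opposite triangular shape, and conjugation by $P$ carries these into the two Borel components of the Cartan subgroup of the exponential torus. This is exactly what is meant in the proposition. One small caution: the labels ``lower'' vs.~``upper'' are a choice of ordering of the diagonal entries of $Q=\mathrm{diag}(q,-q)$; your argument gives the opposition of the two Stokes matrices, and the specific assignment depends on the sign of $q$ and on the paper's convention $\rho(\sigma_1)\in U^-$, which you should cite rather than re-derive.

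Part~(3) is correct and matches the paper's explicit normal form for $P_V$ at infinity, where $L_\infty$ is diagonal, so $\widehat{M}=\exp(2i\pi L_\infty)$ is in the Cartan $\{T_\tau=\mathrm{diag}(\tau,\tau^{-1})\}$ and commutes with it. Your comment on the ramified case (a permutation/Weyl factor appears in $\widehat{M}$ because the loop $x\mapsto e^{2i\pi}x$ acts on $y$ with a nontrivial $k$-th root of unity, permuting the exponential parts) is the standard explanation and is what the paper alludes to. Overall the proposal fills in a reasonable and essentially correct proof for a statement the paper leaves unproved; your approach in Part~(1) is the analytic counterpart of the paper's groupoid picture, so the two routes are genuinely the same argument in different clothing.
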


A \textsl{Borel-Cartan configuration} is a triple of subgroups $(B^-,C,B^+)$ in $SL_2(\C)$ such that $C$ is a maximal torus, isomorphic to the algebraic group $\C^*$, and $B^-$ and $B^+$ are two maximal solvable subgroups which contains $C$. Any Borel-Cartan configuration is conjugated to the canonical one given by $(T^-,D,T^+)$ where $D$ is the subgroup of diagonal matrices, $T^-$ (resp. $T^+$) the subgroup of lower (resp. upper) triangular matrices. The unipotent subgroups of $B^-$ and $B^+$ 
are respectively denoted by $U^-$ and $U^+$.

The triple $(U_{d_1}, \widehat{M}, U_{d_2})$ and the triples $(U_{d_1}, T_\tau, U_{d_2})$ take their values in a unipotent Borel-Cartan configuration $(U^-,C,U^+)$.

The local wild dynamics of the irregular connection is the subgroup (well defined up to a conjugation) of $SL_2(\C)$ generated by the Stokes operators, the formal monodromy and the exponential torus. According to a result of the second author \cite{Ra5}, the Zariski closure of the local dynamics in $SL_2(\C)$ is the differential Galois group of the local linear differential system.

\bigskip

For the study of isomonodromic deformations, we need a parametric version of Theorem (\ref{sommable}).
They are parametric variants of Gevrey power series and of Gevrey expansions (with analytic parameters). One supposes that the estimates in definition (\ref{gevrey}) are uniform on the parameter space $U\subset \C^p$, or more generally on every compact $K$ of $U$~: one replaces $M_W$ and $A_W$ by $M_{W,K}$ and $A_{W,K}$ (resp. uniform on the parameter space). For an open set $U\subset \C^p$, we will denote $\mathcal{O}(U)[[x]]_{1/k}$ the algebra of Gevrey series of order $1/k$ uniformly on $U$. 
They are also a similar parametric variant of $k$-summability using the uniformly parametrized $1/k$-Gevrey expansions : the definition is the same mutatis mutandis. We will speak of uniform $k$-summability. Then the sums of the uniformly $k$-summable series 
in a direction $d$ are analytic in the variable and in the parameters.\footnote{when these sums exist for all values of the parameters. In general the singular directions move with the parameters and it is necessary to reduce the domain.}  For more information on these subjects cf. \cite{Sib}.

\begin{theorem} \label{sommable_par}
Let $U \subset \C^p$ be an open subset. Let $D\subset \C$ be an open disc centered at $0$. Let $[A]:\ x^2dY/dx=A(x,t)Y$, where $A\in sl_2\left(\mathcal{O}(D\times U)\right)$, be a parametrized meromorphic differential system. We suppose that, for all $t=(t_1,\ldots t_p)\in U$, the eigenvalues of $A(0,t)$ are distinct and that the singularity is irregular, with a Katz rank equal to $1$.
\begin{itemize}
\item[(i)]
Locally on $U$, the system $[A]$ admits a formal fundamental solution analytic in the parameters~:
$\hat F=P\widehat{H}x^Le^Q$, where $P\in SL_2\left(\mathcal{O}(V)\right)$, 
$\widehat{H}\in SL_2\left(\mathcal{O}(V)[[x]]_1\right)$, $\widehat{H}(0,t)=I$ (for every $t\in V$), 
$L\in sl_2(\C)$, $Q=(q,-q)$, with $q\in x^{-1}\mathcal{O}(V)[x^{-1}]$.
\item[(ii)]
Let $t_0\in U$ and $V\subset U$ a neighborhood of $t_0$ such that the system admits on $V$ a formal fundamental solution analytic in the parameters as above. Let 
$t_1\in U$ and $d$ a nonsingular direction for the system 
$x^2dY/dx=A(x,t_1)Y$. Then there exists a neighborhood $W\subset V$ of  
$t_1$ such that $P\widehat{H}$ is \emph{uniformly} $1$-summable in the direction $d$ on $W$.
\end{itemize}
\end{theorem}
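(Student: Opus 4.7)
The plan is to treat (i) by a parametric version of the Hukuhara--Turrittin normal form (Theorem \ref{fnf}), and then to obtain (ii) by a Borel--Laplace argument with estimates that are uniform in $t$ on a small polydisc around $t_1$.

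For part (i), I would first use the distinctness of the eigenvalues of $A(0,t)$ to build, on a small neighborhood $V$ of $t_0$, a holomorphic gauge transformation $P(x,t)\in SL_2(\mathcal{O}(V\times D))$ which diagonalizes $A(0,t)$ (spectral projectors $\tfrac{1}{2i\pi}\oint(\lambda-A(0,t))^{-1}d\lambda$ depend analytically on $t$, and a square-root of the associated splitting yields $P$). After this gauge and since the Katz rank is $1$, one reads off $Q(x,t)=\mathrm{diag}(q,-q)$ with $q\in x^{-1}\mathcal{O}(V)[x^{-1}]$ and a residue $L(t)$. One then looks for $\widehat{H}=I+\sum_{n\geq 1}H_n(t)\,x^n$ conjugating $PAP^{-1}-xdP/dx\cdot P^{-1}$ to the canonical form. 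Inserting the ansatz produces, order by order, an equation of the homological type
\begin{equation*}
[D_0(t),H_n(t)] = R_n(t,H_1,\ldots,H_{n-1}),
\end{equation*}
where $D_0=\mathrm{diag}(\lambda,-\lambda)$ has a nonzero eigenvalue gap. The operator $\mathrm{ad}(D_0)$ is invertible on off-diagonal matrices with norm $O(1/|\lambda|)$ uniform on $V$ (after shrinking), so each $H_n(t)$ is holomorphic on $V$. A standard Cauchy majorant / induction argument on the recursion gives $\|H_n\|_{K}\le M_K A_K^n\,n!$ on each compact $K\subset V$; this is the $1$-Gevrey estimate uniform on $V$.

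For part (ii), the tool is the Borel--Laplace $1$-summation. Write $\widehat{H}=I+\widehat{h}$ and form the formal Borel transform $\mathcal{B}\widehat{h}(\xi,t)$; by the uniform Gevrey bounds of (i) it is holomorphic in a disc $|\xi|<1/A_K$ and in $t\in V$. The formal system translates, under $\mathcal{B}$, into a convolution equation that $\mathcal{B}\widehat{h}$ satisfies; this equation has singular set in the Borel plane given (generically) by $\pm 2\,\mathrm{lead}\,q(t)\cdot\mathbb{Z}$, i.e. the locations where the exponentials of $Q$ resonate. These singular points depend continuously, in fact holomorphically, on $t$. The nonsingular directions for $A(x,t_1)Y$ are precisely those avoiding the rays through the singular set for $t=t_1$, so by continuity one can choose $W\Subset V$ small enough that the ray $e^{i d}\mathbb{R}_{>0}$ remains disjoint from the Borel singularities uniformly for $t\in W$, and moreover that $\mathcal{B}\widehat{h}(\xi,t)$ admits analytic continuation along the direction $d$ with exponential growth bounded by $Ce^{B|\xi|}$ uniformly for $t\in W$. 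Laplace-transforming along $d$ then produces $H(x,t)=I+\int_0^{\infty e^{id}}\mathcal{B}\widehat{h}(\xi,t)\,e^{-\xi/x}\,d\xi$, holomorphic on a sector $V_d$ of opening $>\pi$ bisected by $d$ and on $W$, and asymptotic to $\widehat{H}$ in the $1$-Gevrey sense uniformly in $t\in W$. This is the required uniform $1$-summability of $P\widehat{H}$.

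The genuine obstacle is in (ii): obtaining uniform exponential control of $\mathcal{B}\widehat{h}(\xi,t)$ along the whole ray $d$ with constants independent of $t\in W$. Existence of the continuation on a disc is automatic from (i); what is delicate is that the accessory singularities of the convolution equation (and their monodromies around the primary singular points) must stay at a positive distance from the ray as $t$ varies, and that the growth rate of $\mathcal{B}\widehat{h}$ along the ray is controlled uniformly. This is handled by shrinking $W$ around $t_1$ so that the finite configuration of Borel singularities moves only slightly, and by applying a parametric version of the classical estimates for convolution equations of Malgrange--Sibuya type; the precise argument is the parametric Borel--Laplace theorem developed in \cite{Sib}, to which we refer for the technical details.
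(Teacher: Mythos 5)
Your argument follows essentially the same route as the paper's proof. For (i), the paper first diagonalizes $A(0,t)$ holomorphically on a small neighborhood and then cites a parametrized variant of Balser's splitting lemma, whose content is exactly the order-by-order recursion with the homological operator $\mathrm{ad}(D_0)$ that you spell out; for (ii), the paper's first proof invokes a parametrized version of Balser's Improved Splitting Lemma, which in Balser's book is proved via the Borel--Laplace method with estimates in the Borel plane---i.e.\ the argument you sketch. Both you and the paper defer the delicate uniform Borel-plane estimate to a reference (Balser ``mutatis mutandis'' for the paper, Sibuya for you), so there is no additional gap on your side.

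Two minor inaccuracies worth fixing. The gauge of $A$ by $P$ (with $Y=PZ$) should read $P^{-1}AP - x^2 P^{-1}\,dP/dx$, not $PAP^{-1}-x\,dP/dx\cdot P^{-1}$. And the equation $[D_0(t),H_n(t)]=R_n$ determines only the off-diagonal part of $H_n$; the diagonal part of $H_{n-1}$ is then fixed by forcing the diagonal of $R_n$ to vanish, which is possible because of the $(n-1)H_{n-1}$ term coming from $x^2\,d\widehat H/dx$---this factor is precisely what produces the $n!$ in the Gevrey-$1$ estimate. Neither point affects the validity of your proposal.
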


\begin{proof}
(i) Let $t_0\in U$. If an open polydisc $V$ centered at 
$t_0$ is sufficiently small, then there exists $P\in GL\left(\mathcal{O}(V)\right)$ conjugating the restriction of $A_0=A(0,t)$ to $V$ to a diagonal matrix $B_0$. Therefore we are reduced to the case where $A_0$ is diagonal. Then we can use a parametrized variant of the splitting lemma of \cite{Balser}. It reduces the system to two parametrized formal one dimensional equations. The result follows easily.

(ii)
\emph{A first proof.} We use a parametrized generalization of the Improved Splitting Lemma of \cite{Balser} (8.1, Lemma $11$, page 124). The proof is the same mutatis mutandis\footnote{It uses the interpretation of $1$-summability in terms of the Borel-Laplace method and delicate estimates in the Borel plane.}.

\emph{A second proof.} We imitate a proof of the unparamatrized version based on Gevrey asymptotics (cf. \cite{Lo3}), replacing the Gevrey Main Asymptotic Existence Theorem by a parametrized version.
\end{proof}

\subsection{The local wild fundamental groupoid }

Following an idea which appears in a correspondence between P. Deligne B. Malgrange and the second author \cite{DMR}, the Stokes operators, formal and geometric monodromy around an irregular point appear as a representation of a ``local wild fundamental groupoid''. We want to encode all these operators by loops, or paths if we allow several base points, in a ``halo'' (see also \cite{Lo3}) around the singular point whose internal boundary describes the formal world (represented by determinations of formal solutions) and its exterior the analytic world (represented by sectoral holomorphic solutions).

We consider a small disc $\Delta_0$ around $x=0$, and we perform a real blowing up $E:\ X_0\rightarrow\Delta_0$ of $x=0$. Let $D=E^{-1}(0)\subset X_0$. We draw a second divisor $\widehat{D}$ inside the disc bounded by $D$.
We mark two opposite directions by drawing two points $p_1$ and $p_2$ in the annulus. Let $\mathcal{A}$ be the punctured annulus. 
We consider the groupoid $\pi_1(X_0,D\cup\widehat{D})$ whose objects are the points of $D\cup\widehat{D}$ and whose morphisms are the paths or loops up to homotopy between two objects inside the punctured annulus $\mathcal{A}$.
Let $S_0=\{s_1,s_2\}$ two opposite directions distinct from $p_1$ and $p_2$.

\begin{defi}  The local wild groupoid $\pi_1^\flat(X_0,S_0)$ is the restriction of $\pi_1(X_0,D\cup\widehat{D})$ over $S_0$.
\end{defi}
We define a presentation of $\pi_1^\flat(X_0,S_0)$ in the following way:
\begin{figure}[H]
\centering
\includegraphics[scale=0.6]{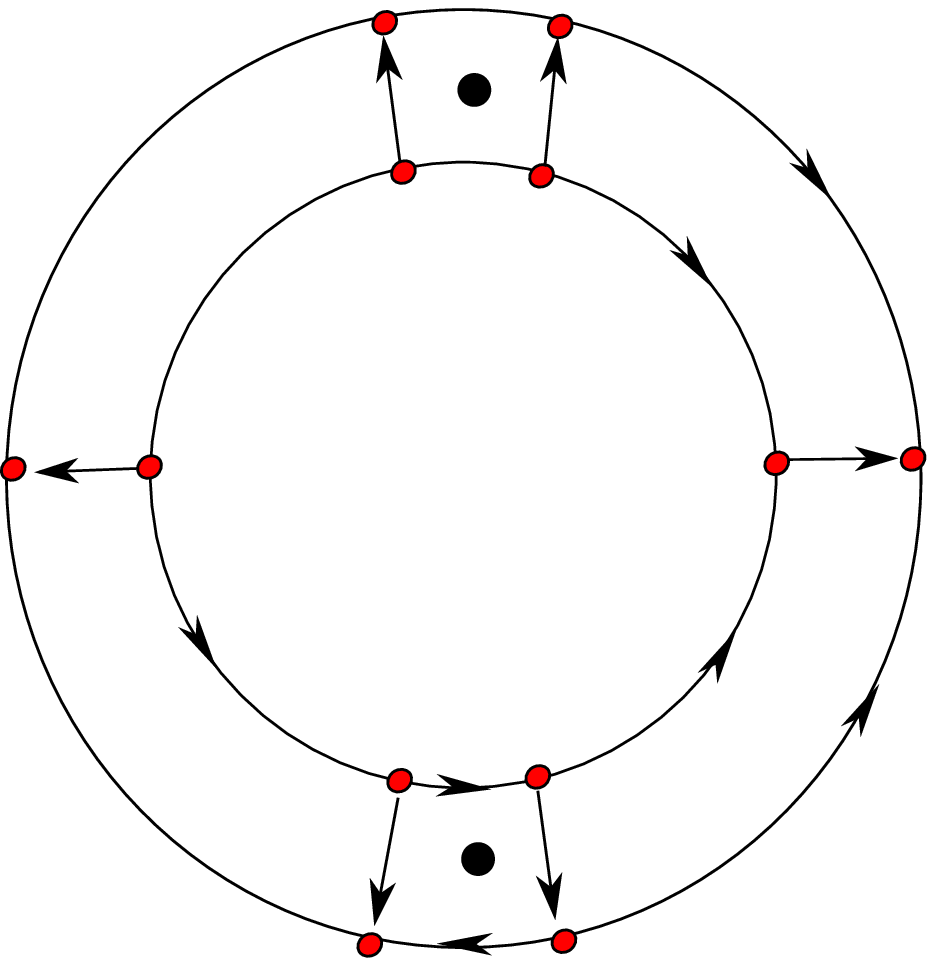}   
\put(-175,82){$s_1$}
\put(3,84){$s_2$}
\put(-84,142){$p_2$}
\put(-84,7){$p_1$}
\put(-107,15){$r_1^+$}
\put(-62,15){$r_1^-$}
\put(-84,-6){$\alpha_1$}
\put(-84,35){$\widehat{\gamma}_1^-$}
\put(-124,55){$\widehat{\gamma}_1^+$}
\put(-152,90){$r_1$}
\put(-18,92){$r_2$}
\put(-57,55){$\widehat{\gamma}_{1,2}^r$}
\put(-10,35){$\gamma_{1,2}^r$}
\put(-55,110){$\widehat{\gamma}_{1,2}^l$}
\put(-15,140){$\gamma_{1,2}^l$}

\caption{A presentation of $\pi_1^\flat(X_0,S_0)$.}
\label{pi1Vlocflat}
\end{figure}
We choose rays $r_i^-$, $r_i^+$ on the left and right side of $p_i$ for $i=1,2$. We choose two opposite base points $s_1$ and $s_2$ on $D$ on the orthogonal direction to $(p_1,p_2)$, endpoints of 2 rays $r_1$ and $r_2$.
Let $\widehat\gamma_i^-$ (resp.$\widehat\gamma_i^+$) be the arc from $s_i$ to the origin of $r_i^-$ (resp. $r_i^+$) in $\widehat{D}$, and $\alpha_i$ the arc on $D$ from the end of $r_i^-$ to the end of $r_i^+$ on $D$. The two Stokes loops are the loops based in $s_i$ (see figure above):
\[\sigma_i =r_i^{-1}\cdot\widehat\gamma_i^-\cdot r_i^-\cdot \alpha_i\cdot (r_i^+)^{-1}\cdot(\widehat\gamma_i^+ )^{-1}r_i,\ i=1,2.\]
Let $\gamma_{1,2}^r$ and $\gamma_{1,2}^l$ the lower and upper arcs from $s_1$ to $s_2$ on $D$, and $\widehat\gamma_{1,2}^r$ and $\widehat\gamma_{1,2}^l$ their analog on $\widehat D$ by using $r_1$, an arc on $\widehat D$, and $r_2^{-1}$.
The local wild groupoid is generated by $\sigma_1$, $\sigma_2$, $\gamma_{1,2}^r$, $\gamma_{1,2}^l$, $\widehat\gamma_{1,2}^r$ and $\widehat\gamma_{1,2}^l$.
Let $\gamma_{1,1}=\gamma_{1,2}^r\cdot \gamma_{1,2}^l$, $\widehat\gamma_{1,1}=\widehat\gamma_{1,2}^r\cdot \widehat\gamma_{1,2}^l$. From the above picture we have the relation
\[\gamma_{1,1}=\sigma_1\cdot\widehat\gamma_{1,2}^r\cdot \sigma_2 \cdot \widehat\gamma_{2,1}^l.\]
In order to include the exponential torus in a linear representation of this groupoid, we introduce a (non discrete) extension $\pi_1(X_0,S_0)$ of $\pi_1^\flat(X_0,S_0)$: we glue a representation of $(C^*,\times)$ by adding two collections of loops $\widehat t_{1,1}(\kappa)$, $\kappa\in\C^*$ based in $\widehat{s}_1$, and $\widehat t_{2,2}(\kappa)$, $\kappa\in\C^*$ based in $\widehat{s}_2$.
We denote:
\[t_{1,1}(\kappa)=r_1^{-1}\cdot \widehat t_{1,1}(\kappa)\cdot r_1 (\mbox{ based in } s_1),\
t_{2,2}(\kappa)=r_2^{-1}\cdot \widehat t_{2,2}(\kappa)\cdot r_2 (\mbox{ based in } s_2).\]
We require the relations $\mathcal{R}_{loc}$:
\begin{equation*}\label{Relations loc}
\begin{aligned}
&(i)\ \forall \kappa \in \C^*,\ \forall \kappa' \in \C^*,\ t_{i,i}(\kappa\kappa')= t_{i,i}(\kappa)\cdot t_{i,i}(\kappa'),\ i=1,2;\\
&(ii)\ \forall \kappa \in \C^*,\ [[\sigma_i, t_{i,i}(\kappa)],\sigma_i]=\star_i,\ i=1,2;\\
&(iii)\ \forall \kappa \in \C^*, \widehat{\gamma}_{i,i}\cdot t_{i,i}(\kappa) \cdot \widehat{\gamma}_{i,i}^{-1} t_{i,i}(\kappa)^{-1}=\star_i,\ i=1,2;\\
&(iv)\ t_{1,1}(\kappa)\cdot\widehat\gamma_{1,2}^l\cdot t_{2,2}(\kappa)\cdot \widehat\gamma_{2,1}^r=\star_1.
\end{aligned}
\end{equation*}
\begin{defi} \label{wlg}The (complete) wild local groupoid is the groupoid $\pi_1(X_0,S_0)$ generated by $\pi_1^\flat(X_0,S_0)$ and the families $T_{1,1}=\{t_{1,1}(\kappa),\ \kappa\in\C^*\}$ and $T_{2,2}=\{t_{2,2}(\kappa),\ \kappa\in\C^*\}$, with the above relations $\mathcal{R}_{loc}$.
\end{defi}
\begin{figure}[H]
\centering
\includegraphics[scale=0.6]{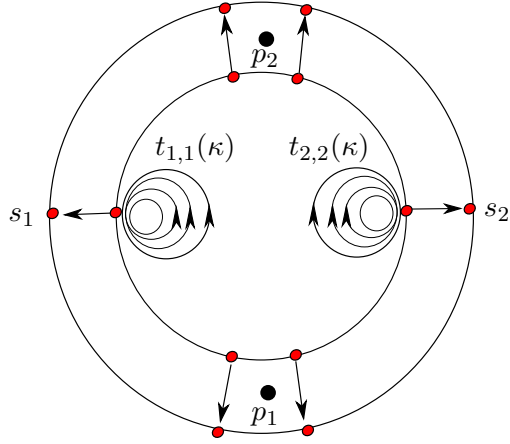}   
\put(-175,82){$s_1$}
\put(3,84){$s_2$}
\put(-84,142){$p_2$}
\put(-84,7){$p_1$}
\put(-120,107){$t_{1,1}(\kappa)$}
\put(-70,107){$t_{2,2}(\kappa)$}
\caption{A presentation of $\pi_1(X_0,S_0)$.}
\label{pi1Vloc}
\end{figure}

\subsection{The global wild fundamental groupoid }

The local wild fundamental groupoid suffices to deal with the Euler equation, or the Kummer equations by adding one base point \cite{MR}. Nevertheless for the Painlevé equations, we have to consider several regular and irregular singularities, with connecting data between them (and in some cases an order two ramification). 

In order to represent the global monodromy data together with the wild local dynamic of a connection in $\mathcal{M}_V$, we consider the following groupoid.
We begin with $B=\p^1\setminus\{p_0,p_1,p_\infty\}$. We perform real blowing up at each of these points, and we obtain a variety $X$ with divisors $D_0$, $D_1$ and $D_\infty$. We choose two opposite base points $s_1$ and $s_2$ on $D_\infty$, a base point $s_3$ on $D_0$, and $s_4$ on $D_1$. Let $S=\{s_1,s_2,s_3,s_4\}$. We consider the groupoid whose morphisms are the paths between the base points up to homotopy. Inside $(D_\infty,s_1,s_2)$, we glue the local wild groupoid $\pi_1^\flat(X_0,S_0)$ (resp. the complete local wild groupoid $\pi_1(X_0,S_0)$). We obtain a new groupoid denoted by $\pi_1^{V,\flat}(X,S)$ (resp. $\pi_1^{V}(X,S)$ for the complete version). A presentation of this groupoid is given by the figure:
\begin{figure}[H]
\centering
\includegraphics[scale=0.6]{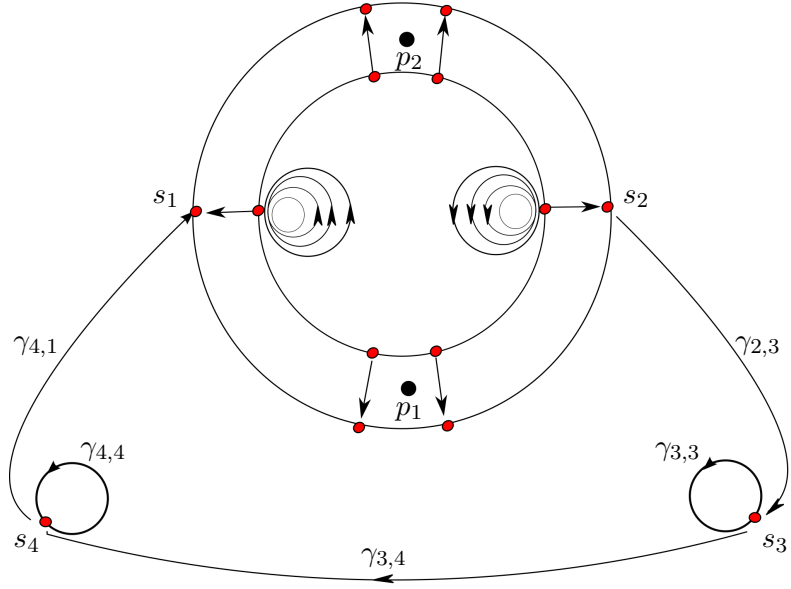}   
\put(-10,25){$s_3$}
\put(-290,25){$s_4$}
\put(-20,100){$\gamma_{2,3}$}
\put(-290,100){$\gamma_{4,1}$}
\put(-160,20){$\gamma_{3,4}$}
\put(-147,75){$p_1$}
\put(-147,207){$p_2$}
\put(-238,155){$s_1$}
\put(-62,155){$s_2$}
\put(-50,60){$\gamma_{3,3}$}
\put(-265,60){$\gamma_{4,4}$}
\caption{A presentation of $\pi_1^V(X,S)$.}
\label{pi1V}
\end{figure}
The relations of this presentation are generated by :

-- the local relations $\mathcal{R}_{loc}$ previously defined; 

--  $\mathcal{R}_{ext}$:\ $\gamma_{1,2}^l\cdot \gamma_{2,3}\cdot \gamma_{3,4}\cdot \gamma_{4,1}=\star_1$;

-- $\mathcal{R}_{int}$:\ $\gamma_{1,2}^r\cdot \gamma_{2,3}\cdot \gamma_{3,3}\cdot\gamma_{3,4}\cdot\gamma_{4,4}\cdot \gamma_{4,1}=\star_1$.

\bigskip
We denote by $\pi_1^{V,\kappa}(X,S)$ the fiber of the morphism $K:\ \pi_1^{V}(X,S)\rightarrow \C^*$ induced by 
$K(t_{1,1}(\kappa))=\kappa$. 

\subsection{A confluent morphism of groupoid}

The comparison of $\pi_1^{V,\kappa}(X,S)$ with $\pi_1^{VI}(X,S)$ is a central point in order to define the confluent process. For this purpose, we introduce another presentation of this groupoid, setting:
\begin{equation*}
\begin{aligned}
& t_{1,2}(\kappa)=t_{1,1}(\kappa^{-1})\cdot \widehat{\gamma}_{1,2}^r=\widehat{\gamma}_{1,2}^l\cdot t_{2,2}(\kappa^{-1}),\\
& \gamma_{2,3}(\kappa)=t_{2,2}(\kappa^{-1})\cdot\sigma_2^{-1}\cdot\gamma_{2,3}.
\end{aligned}
\end{equation*}
We obtain the following figure
\begin{figure}[H]
\hspace{-2cm}
    \begin{minipage}[l]{.45\linewidth}
        \includegraphics[scale=0.4]{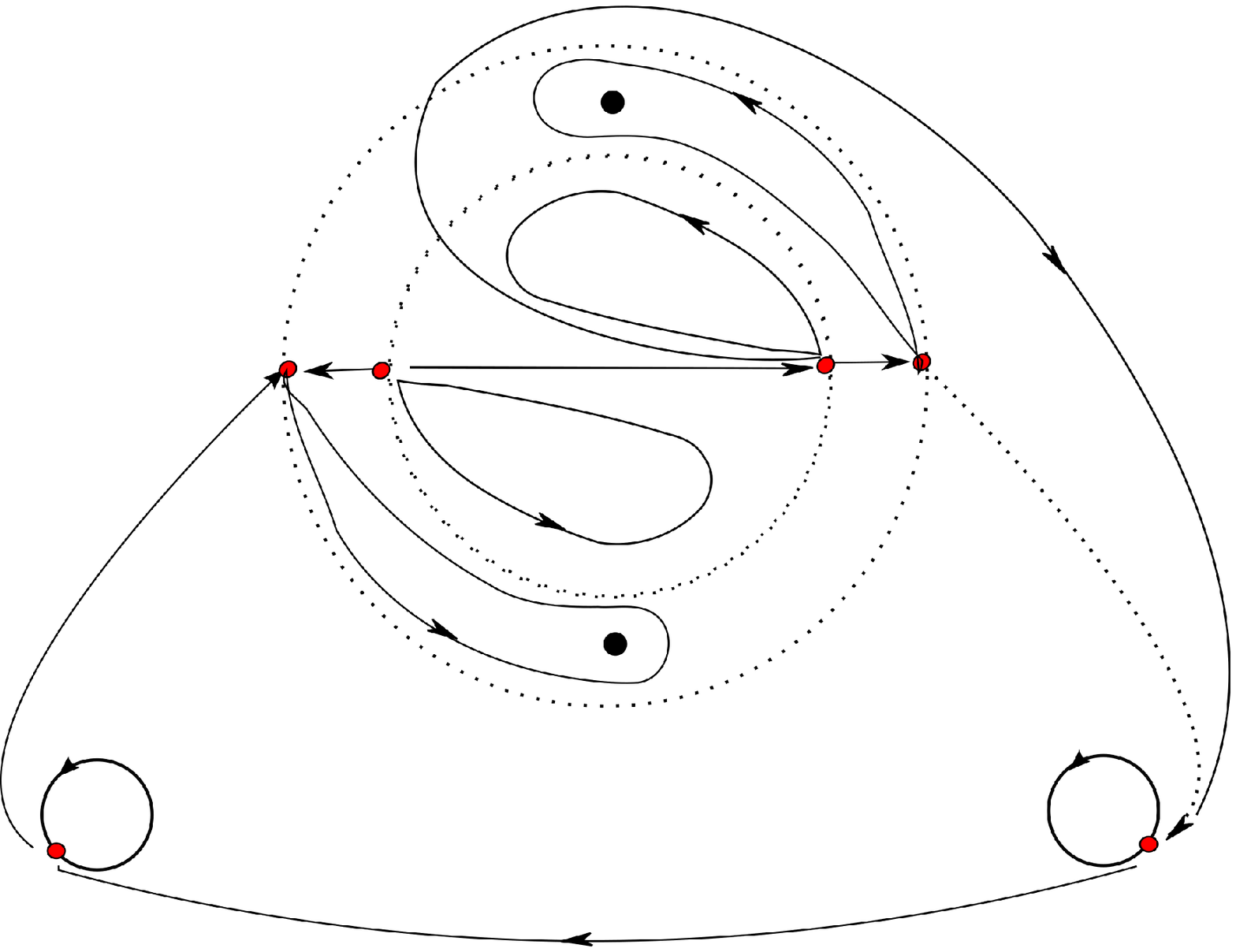}
    \end{minipage}
\put(60,25){$\simeq$}
\put(-100,30){$\scriptstyle{t_{1,2}(\kappa)}$}
\put(-85,10){$\scriptstyle{t_{1,1}(\kappa)}$}
\put(-65,40){$\scriptstyle{t_{2,2}(\kappa)}$}
\put(15,40){$\scriptstyle{\gamma_{2,3}(\kappa)}$}
\put(-50,-20){$\scriptstyle{\sigma_1}$}
\put(-85,70){$\scriptstyle{\sigma_2}$}
    \begin{minipage}[l]{.45\linewidth}
        \includegraphics[scale=0.4]{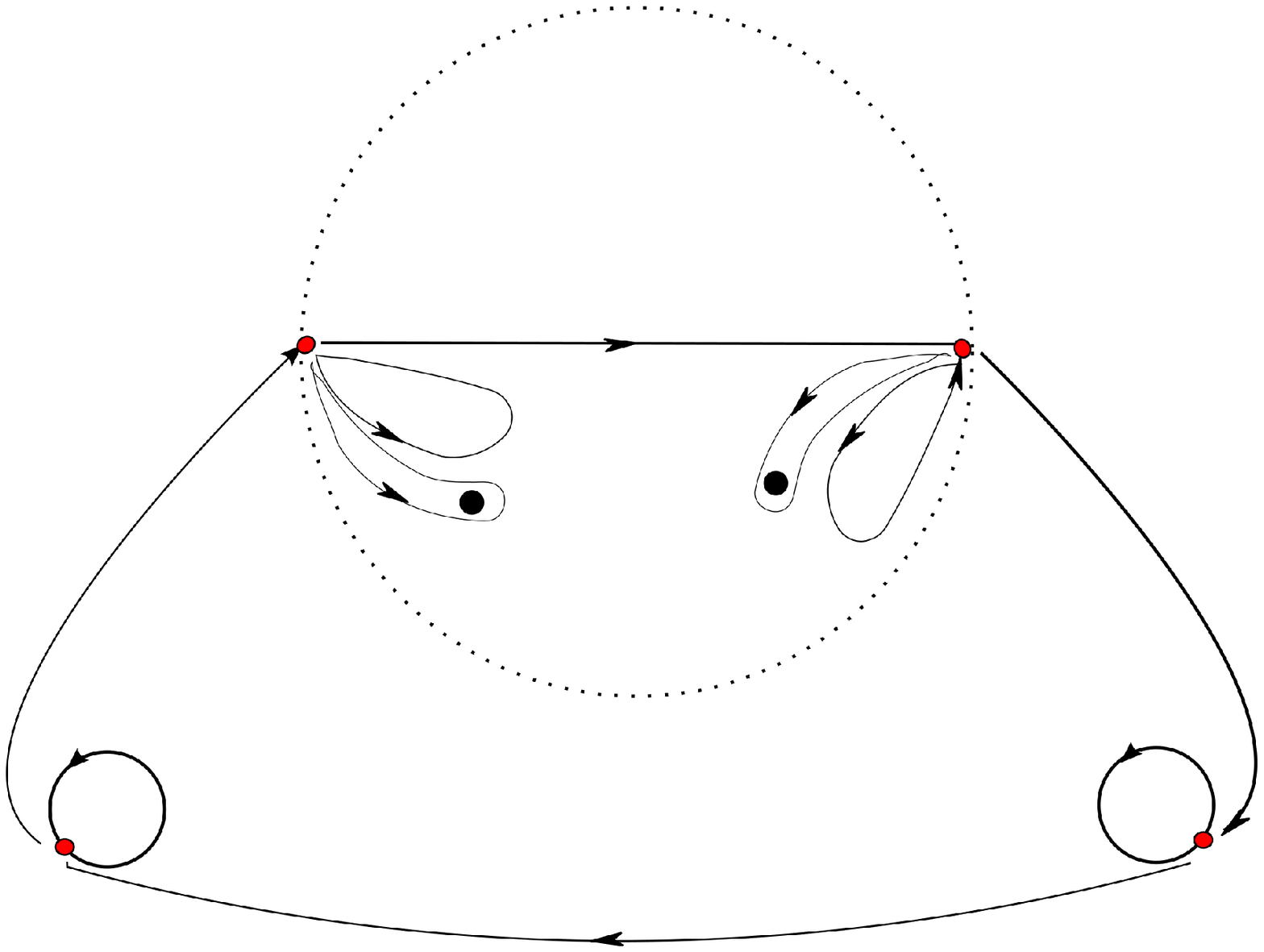}
    \end{minipage}
\put(-70,30){$\scriptstyle{t_{1,2}(\kappa)}$}
\put(-75,10){$\scriptstyle{t_{1,1}(\kappa)}$}
\put(-75,0){$\scriptstyle{\sigma_1}$}
\put(-50,-10){$\scriptstyle{t_{2,2}(\kappa)}$}
\put(-50,0){$\scriptstyle{\sigma_2}$}
\put(15,10){$\scriptstyle{\gamma_{2,3}(\kappa)}$}
    \hfill
		\caption{Another presentation for $\pi_1^{V,\kappa}(X,S)$.}
\end{figure}

This last figure looks like the figure (\ref{pi1VI}) for the groupoid $\pi_1^{VI}(X,S)$. More precisely, we denote by $s'_i$ the objects of $\pi_1^{VI}(X,S)$, and we consider a presentation $\{\gamma'_{i,j},\mathcal{R}'_{int},\mathcal{R}'_{ext}\}$ of $\pi_1^{VI}(X,S)$ given by figure (\ref{pi1VI}). We consider the groupoid morphism $\varphi_\kappa$ from $\pi_1^{VI}(X,S)$ to $\pi_1^{V,\kappa}(X,S)$ which sends $s'_i$ on $s_i$ and which is defined  by:
\begin{itemize}
\item $\varphi_\kappa(\gamma'_{3,3})={\gamma}_{3,3}$, $\varphi_\kappa(\gamma'_{4,4})={\gamma}_{4,4}$, $\varphi_\kappa(\gamma'_{3,4})={\gamma}_{3,4}$,
$\varphi_\kappa(\gamma'_{4,1})={\gamma}_{4,1}$,
\item $\varphi_\kappa(\gamma'_{1,1})=\sigma_1\cdot t_{1,1}{(\kappa)}$,
$\varphi_\kappa(\gamma'_{2,2})=\sigma_2\cdot t_{2,2}{(\kappa)}$,
\item $\varphi_\kappa(\gamma'_{1,2})=t_{1,2}{(\kappa)}$,
$\varphi_\kappa(\gamma'_{2,3})={\gamma}_{2,3}{(\kappa)}$.
\end{itemize}

This morphism $\varphi_\kappa$ is an injective morphim but not a surjective one: the pre-image of a Stokes loop or a loop of an exponential tori are not defined.

\begin{defi}\label{morphism_conf}
The confluent morphism from $\varphi:\ \pi_1^{VI}(X,S)\rightarrow\pi_1^{V}(X,S)$ is defined by the family
$\varphi=(\varphi_\kappa)_{\kappa\in\C^*}$ . 
\end{defi}

There exists a similar result in the context of the hypergeometric equation and the confluent hypergeometric equation in Kummer form.

\section{The character variety $\chi_V$ }

\subsection{Definition of $\chi_V$ }

The class of rank 2 linear representations of a fundamental groupoid in $SL_2(C)$ has been defined in the introduction. Here we are only interested in a subclass of linear representations $\rho$ which satisfy the following property
\begin{defi} A representation $\rho$ of $\pi_1^{V}(X,S)$ satisfies the property ($\star$) if there exists a Borel-Cartan configuration $(B^-,C,B^+)$ such that:
\begin{enumerate}
\item $\rho(t_{1,1}(\kappa),\ \kappa\in\C^*)=C$,
\item $\rho(\sigma_1)\in U^-,\ \rho(\widehat{\gamma}_{1,2}^l\sigma_2\widehat{\gamma}_{2,1}^l)\in U^+.$
\end{enumerate}
\end{defi}
If this property holds for $\rho$ it still holds for $\rho'$ equivalent to $\rho$.
\begin{defi}
The character variety $\chi_V$ is the categorical quotient of the set of linear representations $\pi_1^{V}(X,S)$ which satisfy the property $(\star)$ through the equivalence of representations. 
\end{defi}

A local representation is a representation over the sub-groupoid restricted to the local loops, generated by $\widehat{\gamma}_{1,1}$, $\gamma_{3,3}$, $\gamma_{4,4}$. It is characterized by $a=(a_0,a_3,a_4)$ with $a_0=tr(\rho(\widehat{\gamma}_{1,1}))=tr(\rho(\widehat{\gamma}_{2,2}))$, $a_3=tr(\rho(\gamma_{3,3}))$, $a_4=tr(\rho(\gamma_{4,4}))$. Any representation $\rho$ in $\chi_V$  determines a unique local representation. The fiber of this map is denoted by $\chi_V(a)$.

The inclusion of $\pi_1^{V,\flat}(X,S)$ in $\pi_1^{V}(X,S)$ induces by restriction a map $r$ whose image is denoted by $\chi_V^\flat$.

\begin{prop}\label{ext} For $a$ such that $a_0\neq \pm 2$, $r$ is a (2:1) map over the open set of $\chi_V^\flat$ defined by $\rho(\sigma_1)$ or $\rho(\sigma_2)$ is a non trivial element.\end{prop}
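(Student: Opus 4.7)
The plan is to reduce the extension problem to a single binary ($\Z/2$) choice, by showing that all other data of a lift $\rho \in \chi_V$ of a given $\rho^\flat \in \chi_V^\flat$ are intrinsically determined by $\rho^\flat$.

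First, I would fix $\rho^\flat$ in the asserted open set, and without loss of generality assume $\rho^\flat(\sigma_1) \neq I$ (the argument is symmetric if instead $\rho^\flat(\sigma_2) \neq I$). By condition $(\star)$, any lift $\rho$ comes with a Borel--Cartan configuration $(B^-, C, B^+)$, and the crucial observation is that the Cartan $C$ is intrinsic to $\rho^\flat$. Indeed, relation $\mathcal{R}_{loc}(iii)$ forces $\rho(\widehat{\gamma}_{1,1})$ to commute with every $\rho(t_{1,1}(\kappa))$, hence with the entire torus $C$. In $SL_2(\C)$ the centralizer of a regular maximal torus is itself, so $\rho^\flat(\widehat{\gamma}_{1,1}) \in C$. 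The hypothesis $a_0 \neq \pm 2$ ensures that $\rho^\flat(\widehat{\gamma}_{1,1})$ is regular semisimple, hence lies in a \emph{unique} maximal torus of $SL_2(\C)$; this torus must be $C$.

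Next, I would show that the Borels $B^\pm$ are also determined: $\rho^\flat(\sigma_1) \neq I$ is a non-trivial unipotent element, hence lies in a unique maximal unipotent subgroup of $SL_2(\C)$, which must be $U^-$. This fixes $B^-=N(U^-)$, and then $B^+$ is the other of the two Borels containing the already-determined Cartan $C$. The root-subgroup compatibility $C\subset N(U^-)$ is not an additional constraint on $\rho^\flat$ but is automatic on $\chi_V^\flat$ (which is defined as the image of $r$, so a lift exists), and is enforced at the level of generators by relation $\mathcal{R}_{loc}(ii)$.

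With $(B^-, C, B^+)$ pinned down, the remaining freedom in $\rho$ is the algebraic group isomorphism $t_{1,1}: \C^* \to C$; relation $\mathcal{R}_{loc}(iv)$ then determines $t_{2,2}$, and the family $(iii)$ is automatic since the torus is abelian. There are exactly two such isomorphisms, differing by the inversion automorphism $\kappa \mapsto \kappa^{-1}$ of $\C^*$. Finally, I would verify that the two lifts $\rho_1, \rho_2$ so obtained are inequivalent in $\chi_V$: an equivalence relating them would restrict to inversion on the Cartan $C$, hence would require the conjugating matrix at $s_1$ to lie in $N_{SL_2(\C)}(C) \setminus C$, that is, to represent the non-trivial Weyl element $w$. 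But $w$ swaps $U^-$ and $U^+$, so conjugation by $w$ sends $\rho(\sigma_1) \in U^-$ to an element of $U^+$, which cannot coincide with the non-trivial unipotent $\rho(\sigma_1) \in U^-$. Hence the two lifts are distinct points of $\chi_V$, and $r$ is $2{:}1$ on the described open set.

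The main obstacle I foresee is the rigidity argument of the second paragraph: one must invoke $a_0 \neq \pm 2$ at exactly the right moment, to pass from ``the formal monodromy commutes with $C$'' to ``there is a unique choice of $C$''. A secondary subtlety is checking that the root-subgroup compatibility between the Stokes data and the Cartan is automatic on $\chi_V^\flat$ and not an extra condition one needs to impose by hand.
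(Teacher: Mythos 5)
Your proposal is correct and follows essentially the same strategy as the paper's proof: pin down the Borel--Cartan configuration $(B^-,C,B^+)$ from the data of $\rho^\flat$ and reduce the ambiguity of a lift to the two algebraic group isomorphisms $\C^*\to C$. You usefully make explicit two points the paper's terse proof (which normalizes so that $\rho(\sigma_1)$ is lower unipotent and $C=D$, then reads off $f\in\mathrm{Aut}(\C^*)$) leaves implicit: that the hypothesis $a_0\neq\pm2$ is exactly what forces $C$ to be the unique Cartan through the regular semisimple element $\rho(\widehat\gamma_{1,1})$, and that the two resulting lifts are genuinely inequivalent because a conjugating matrix would have to represent the non-trivial Weyl element, which is excluded by the non-trivial Stokes datum (or, alternatively, by $a_0\neq\pm2$ again, since the Weyl element does not centralize $\rho(\widehat\gamma_{1,1})$).
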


\begin{proof}
If we suppose that $\sigma_1$ is non trivial, one can choose $\widehat{Y_0}$ such that 
\[\rho(\sigma_1)=\left(\begin{array}{cc} 1 & 0 \\  u_1 & 1\end{array}\right).\]
In this case, $\rho(t_{1,1}(\kappa),\kappa\in\C^*)=C$ is the diagonal subgroup $D$ of $SL_2(\C)$, and we have
\[\rho(t_{1,1}(\kappa))=\left(\begin{array}{cc} f(\kappa) & 0 \\ 0 & f(\kappa^{-1})\end{array}\right).\]
From the relation $t_{1,1}(\kappa)t_{1,1}(\kappa')=t_{1,1}(\kappa\kappa')$ we deduce that $f$ belongs to $Aut(\C^*,\times)$, and therefore 
$f(\kappa)=\kappa$ or $f(\kappa)=\kappa^{-1}$ .
\end{proof}

\begin{coro} \label{ext2}
 The character variety $\chi_V(a)=\chi_V^+(a)\cup\chi_V^-(a)$ with 
\begin{equation*}
\begin{aligned}
&\chi_V^+(a)=\{(U_1,M_0,U_2,M_3,M_4,D_\kappa)\}//SL_2(\C),\\ 
&\chi_V^-(a)=\{(U_1,M_0,U_2,M_3,M_4,D_\kappa^{-1})\}//SL_2(\C).
\end{aligned}
\end{equation*} 
The two copies $\chi_V^+(a)$ and $\chi_V^-(a)$ coincide over $a$ such that $e_0=e_0^{-1}$ i.e. such that $a_0=\pm 2$.
\end{coro}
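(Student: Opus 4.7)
The corollary is a direct packaging of Proposition \ref{ext}. For a class $[\rho]$ with $\rho(\sigma_1)$ nontrivial (the case of nontrivial $\rho(\sigma_2)$ being symmetric), I would use the normalization carried out in the proof of Proposition \ref{ext}: set $\rho(\sigma_1)=\begin{pmatrix}1&0\\u_1&1\end{pmatrix}$, which forces the Cartan $C=\rho(T_{1,1})$ to be the diagonal subgroup $D$ and the morphism $f\in\mathrm{Aut}(\C^*,\times)$ defined by $\rho(t_{1,1}(\kappa))=\mathrm{diag}(f(\kappa),f(\kappa)^{-1})$ to be either $\mathrm{id}$ or $\mathrm{inv}$, giving torus element $D_\kappa=\mathrm{diag}(\kappa,\kappa^{-1})$ or $D_\kappa^{-1}$. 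I would then define $\chi_V^+(a)$ and $\chi_V^-(a)$ to be the subsets of $\chi_V(a)$ consisting of classes admitting such a normal form with the corresponding sign, and read off the matrix parametrization $(U_1,M_0,U_2,M_3,M_4,D_\kappa^{\pm 1})$ modulo $SL_2(\C)$-conjugation, with $U_1=\rho(\sigma_1)\in U^-$, $U_2=\rho(\widehat\gamma_{1,2}^l\sigma_2\widehat\gamma_{2,1}^l)\in U^+$, $M_0=\rho(\widehat\gamma_{1,1})$ diagonal by relation $(iii)$ of $\mathcal{R}_{loc}$, and $M_3,M_4$ the geometric monodromies at $s_3,s_4$. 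The decomposition $\chi_V(a)=\chi_V^+(a)\cup\chi_V^-(a)$ is then immediate from Proposition \ref{ext}.

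For the coincidence statement at $a_0=\pm 2$: in this case $M_0$ has double eigenvalue $e_0=e_0^{-1}=\pm 1$, so $M_0=\pm I$ is central in $SL_2(\C)$. Conjugation by the Weyl element $P=\begin{pmatrix}0&1\\-1&0\end{pmatrix}\in N_{SL_2(\C)}(D)$ satisfies $PD_\kappa P^{-1}=D_\kappa^{-1}$, exchanges $U^-\leftrightarrow U^+$, and fixes the central $M_0$. Applied to a tuple $(U_1,\pm I,U_2,M_3,M_4,D_\kappa)$ in $\chi_V^+$-form, it produces a new tuple in which the torus element has been inverted and the two Stokes components have been mapped across the Cartan involution. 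After the natural relabeling induced by this swap of formal rays, the result provides a normal form in $\chi_V^-(a)$ of the original class in $\chi_V(a)$. Since $P$-conjugation is an equivalence of representations, every class in $\chi_V^+(a)$ lies also in $\chi_V^-(a)$, and conversely; the two subsets therefore coincide.

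The main obstacle is the bookkeeping of the $P$-conjugation step: one must verify that the relabeling of $U_1$ and $U_2$ induced by the swap of the two formal rays lands in a legitimate $\chi_V^-$-normal form (respecting the property $(\star)$ and the presentation of $\pi_1^V(X,S)$ rather than merely the matrix conventions), and that the centrality of $M_0=\pm I$ is precisely what makes this identification well-defined on equivalence classes rather than only on tuples. Away from $a_0=\pm 2$ the $P$-conjugate of a $\chi_V^+$-representative produces a formal monodromy with swapped eigenvalues, so this identification breaks and the two sheets remain genuinely disjoint.
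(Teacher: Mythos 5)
Your first paragraph is a correct packaging of Proposition~\ref{ext} and matches what the paper does (the paper gives no separate proof for the corollary, treating the decomposition as immediate).

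The second and third paragraphs, however, contain a genuine gap, and I think you already half-sense it: the ``relabeling'' step does not work as you hope. After conjugating a $\chi_V^+$-representative $(U_1,\pm I,U_2,M_3,M_4,D_\kappa)$ by $P$, you obtain $(PU_1P^{-1},\pm I,PU_2P^{-1},PM_3P^{-1},PM_4P^{-1},D_\kappa^{-1})$ in which $PU_1P^{-1}$ is \emph{upper} unipotent. But the normalized form used to define both $\chi_V^+(a)$ and $\chi_V^-(a)$ pins $U_1=\rho(\sigma_1)\in U^-$ (lower), and after fixing the Borel--Cartan configuration to the canonical one the only residual conjugation is by $D$, which preserves the triangular shape. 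Centrality of $M_0=\pm I$ does not help here: it is $U_1$, not $M_0$, that obstructs the normalization. The ``swap of formal rays'' you would need to restore $\rho(\sigma_1)\in U^-$ is an \emph{outer} automorphism of $\pi_1^V(X,S)$ exchanging $\sigma_1$ and $\sigma_2$ (it permutes base points), not an inner conjugation; applying it replaces $[\rho]$ by $[\rho\circ\alpha]$ and therefore does not show that the \emph{same} class lies in both sheets. Indeed, writing down what an $SL_2$-element identifying the two normal forms would have to satisfy, one finds $Q\in PD$ from the torus, and then $Q^{-1}U_1Q\in U^+\cap U^-=\{I\}$, so the two sheets can literally agree only on the reducibility locus $U_1=U_2=I$, regardless of $a_0$.

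What the paper actually means by ``coincide'' is weaker and lives at the level of the trace identification set up just after the corollary: $\chi_V^+(a)$ is identified via $Tr_V^+$ with the affine cubic $\mathcal{C}_V(\theta^+)$ and $\chi_V^-(a)$, after the $P$-conjugation that converts $D_\kappa^{-1}$ back to $D_\kappa$ in the opposite Borel normalization, is identified via $Tr_V^-$ with $\mathcal{C}_V(\theta^-)$. Here $\theta^\pm$ are explicit polynomials in $(e_0^{\pm 1},a_3,a_4)$, e.g.\ $\theta_3^\pm=e_0^{\pm 1}$, so $\theta^+=\theta^-$ as soon as $e_0=e_0^{-1}$, i.e.\ $a_0=\pm 2$. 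The coincidence is thus a purely formula-level observation about the parameters of the two cubic surfaces; it does not require any representation-level identification and in particular does not require (or yield) $\chi_V^+(a)=\chi_V^-(a)$ as subsets of $\chi_V(a)$. Your argument is aiming at the stronger set-theoretic statement, which does not hold.
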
%

\subsection{The character variety $\chi_V$ and cubic surfaces}

\begin{defi}
A representation of $\pi_1^{V,\flat}(X,S)$ in $SL_2(\C)$ is normalized if, 
\[\rho(\gamma^l_{1,2})=\rho(\gamma_{2,3})=\rho(\gamma_{3,4})=I,\]
where the $\gamma_{i,j}$ are generators of the presentation introduced in figure \ref{pi1V}.
\end{defi}
There always exists a normalized representation in each class of equivalent representations in $SL_2(\C)$, obtained by choosing a representation of an initial object --say $s_2$-- and by representing the successive others objects by analytic continuation along $\gamma_{2,3}$, $\gamma_{3,4}$ and $\gamma^l_{2,1}$. From the relation $\mathcal{R}_{ext}$, for a normalized representation we also have $\rho(\gamma_{4,1})=I$.

\begin{prop} A normalized representation of $\pi_1^{V,\flat}(X,S)$ is characterized by the data
\[{M}_0=\rho(\widehat{\gamma}_{1,1}),\ U_1=\rho(\sigma_1),\ U_2=\rho(\widehat{\gamma}_{1,2}^l\cdot\sigma_2\cdot\widehat{\gamma}_{2,1}^l),\  M_3=\rho(\gamma_{3,3}), M_4=\rho(\gamma_{4,4})\]
such that $U_1{M}_0U_2M_3M_4=I$, up to a common conjugacy.
\end{prop}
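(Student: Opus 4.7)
Given any representation $\rho$ of $\pi_1^{V,\flat}(X,S)$ in $SL_2(\C)$, I would first produce an equivalent normalized one by fixing an arbitrary basis of $V_{s_1}$ and transporting it successively to $V_{s_2}, V_{s_3}, V_{s_4}$ along $\gamma^l_{1,2}$, $\gamma_{2,3}$, $\gamma_{3,4}$. After this change of bases the images $\rho(\gamma^l_{1,2})$, $\rho(\gamma_{2,3})$, $\rho(\gamma_{3,4})$ are all $I$, and applying $\rho$ to the external relation $\mathcal{R}_{ext}:\ \gamma^l_{1,2}\gamma_{2,3}\gamma_{3,4}\gamma_{4,1}=\star_1$ forces $\rho(\gamma_{4,1})=I$ automatically. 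The only remaining gauge freedom is the common conjugation by the $SL_2(\C)$ that acts on the initial basis choice at $s_1$.

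Next I translate $\mathcal{R}_{int}:\ \gamma^r_{1,2}\gamma_{2,3}\gamma_{3,3}\gamma_{3,4}\gamma_{4,4}\gamma_{4,1}=\star_1$ under the normalization; it collapses to
\[\rho(\gamma^r_{1,2})\cdot M_3\cdot M_4=I,\]
so the task reduces to proving $\rho(\gamma^r_{1,2})=U_1M_0U_2$. From the local wild groupoid (cf.\ Figure~\ref{pi1Vlocflat}) I use the relation $\gamma_{1,1}=\sigma_1\cdot\widehat{\gamma}_{1,2}^r\cdot\sigma_2\cdot\widehat{\gamma}_{2,1}^l$ together with $\widehat{\gamma}_{1,1}=\widehat{\gamma}_{1,2}^r\cdot\widehat{\gamma}_{2,1}^l$ and $\widehat{\gamma}_{2,1}^l=(\widehat{\gamma}_{1,2}^l)^{-1}$. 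Substituting $\widehat{\gamma}_{1,2}^r=\widehat{\gamma}_{1,1}\cdot\widehat{\gamma}_{1,2}^l$ gives
\[\gamma_{1,1}=\sigma_1\cdot\widehat{\gamma}_{1,1}\cdot\widehat{\gamma}_{1,2}^l\cdot\sigma_2\cdot\widehat{\gamma}_{2,1}^l,\]
so $\rho(\gamma_{1,1})=U_1M_0U_2$. Since $\gamma_{1,1}=\gamma^r_{1,2}\cdot\gamma^l_{2,1}$ and the normalization forces $\rho(\gamma^l_{2,1})=\rho(\gamma^l_{1,2})^{-1}=I$, I conclude $\rho(\gamma^r_{1,2})=U_1M_0U_2$, hence the asserted relation $U_1M_0U_2M_3M_4=I$.

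For the converse direction I check that, starting from a 5-tuple $(M_0,U_1,U_2,M_3,M_4)$ satisfying the stated identity, one can define a representation on a generating set of $\pi_1^{V,\flat}(X,S)$ (sending the three normalized arcs to $I$, the local loops $\gamma_{3,3}, \gamma_{4,4}$ to $M_3, M_4$, $\sigma_1$ to $U_1$, $\widehat{\gamma}_{1,1}$ to $M_0$, and the composite $\widehat{\gamma}_{1,2}^l\sigma_2\widehat{\gamma}_{2,1}^l$ to $U_2$), and that this assignment extends consistently to the full presentation: $\mathcal{R}_{ext}$ is trivially satisfied and $\mathcal{R}_{int}$ reduces to the given identity via the computation above. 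Common conjugation by $P\in SL_2(\C)$ acts simultaneously on the five matrices, as required.

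\emph{Main obstacle.} The subtle point is the well-definedness of the extension at the end of Step~3: the internal arc $\widehat{\gamma}_{1,2}^l$ and the Stokes loop $\sigma_2$ are not separately specified by the data, only the combination $\widehat{\gamma}_{1,2}^l\sigma_2\widehat{\gamma}_{2,1}^l=U_2$ is. One must check that the residual freedom in splitting this product corresponds to an inner automorphism of the local wild groupoid that does not affect the equivalence class, i.e.\ is absorbed into the conjugation gauge on the class of representations; this is the only delicate bookkeeping step in the argument.
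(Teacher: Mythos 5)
Your forward argument is correct and in fact slightly cleaner than the paper's: by working directly with $\gamma_{1,1}=\gamma_{1,2}^r\cdot\gamma_{2,1}^l$, $\widehat\gamma_{1,1}=\widehat\gamma_{1,2}^r\cdot\widehat\gamma_{2,1}^l$ and the displayed relation $\gamma_{1,1}=\sigma_1\cdot\widehat\gamma_{1,2}^r\cdot\sigma_2\cdot\widehat\gamma_{2,1}^l$, you obtain $\rho(\gamma_{1,2}^r)=U_1M_0U_2$ without having to name the images of $\widehat\gamma_{1,2}^l$ and $\widehat\gamma_{1,2}^r$ separately. The paper reaches the same conclusion by first recording $\rho(\widehat\gamma_{1,2}^l)=U_2$, $\rho(\widehat\gamma_{1,2}^r)=M_0U_2$, $\rho(\gamma_{1,2}^r)=U_1M_0U_2$ and then feeding the last into $\mathcal R_{int}$; the two routes are substantively the same.

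The obstacle you flag at the end, however, is a genuine one, but your proposed resolution does not work. Once a representation is normalized, the only common conjugation left is by a matrix commuting with all of $U_1,M_0,U_2,M_3,M_4$, hence central in the generic (irreducible) case; so if two normalized representations had the same five-tuple but different values of $\rho(\widehat\gamma_{1,2}^l)$, they would be genuinely inequivalent, and the five-tuple would fail to ``characterize'' the representation. The ambiguity in splitting $U_2=\rho(\widehat\gamma_{1,2}^l)\,\rho(\sigma_2)\,\rho(\widehat\gamma_{1,2}^l)^{-1}$ cannot be absorbed into the conjugation gauge. What actually pins it down is a relation you have not used: the picture defining $\pi_1^\flat(X_0,S_0)$ (figure~\ref{pi1Vlocflat}) also encodes $\gamma_{1,2}^l=\widehat\gamma_{1,2}^l\cdot\sigma_2^{-1}$ (equivalently $\gamma_{1,2}^r=\sigma_1\cdot\widehat\gamma_{1,2}^r$), of which the stated product relation $\gamma_{1,1}=\sigma_1\cdot\widehat\gamma_{1,2}^r\cdot\sigma_2\cdot\widehat\gamma_{2,1}^l$ is merely a consequence. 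Combined with the normalization $\rho(\gamma_{1,2}^l)=I$, this forces $\rho(\sigma_2)=\rho(\widehat\gamma_{1,2}^l)$ and hence $\rho(\widehat\gamma_{1,2}^l)=U_2$, $\rho(\widehat\gamma_{1,2}^r)=M_0U_2$, so every generator is determined by the five-tuple. This is exactly what the paper's line ``Since $\rho(\gamma_{1,2}^l)=I$, we have $\rho(\widehat\gamma_{1,2}^l)=U_2,\ldots$'' is invoking; you should add this relation to your argument rather than appeal to inner automorphisms.
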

\begin{proof}
We have to prove that, for a normalized representation $\rho$, the images of all the generators of the groupoid by $\rho$ are well defined.
Since $\rho(\gamma^l_{1,2})=I$, we have
\[\rho(\widehat{\gamma}^l_{1,2})=U_2,\ \rho(\widehat{\gamma}^r_{1,2})=M_0U_2,\ \rho(\gamma^r_{1,2})=U_1M_0U_2.\]
The relation $\mathcal{R}_{int}$ gives $U_1{M}_0U_2M_3M_4=I$. A change of representation of the initial object will change this data by a common conjugacy. \end{proof}
\bigskip\\
One can suppose, by using a conjugacy, that the Cartan subgroup $C=\{\rho(t_{1,1}(\kappa)),\ \kappa\in\C^*\}\subset SL_2(\C)$ is the diagonal one $D$. From the relation (iii) of the local groupoid presentation, $\widehat{M_0}$ commutes with each element of $C$. Therefore from the relations (i) (ii) and (iii), we have:
\[U_1=\left(\begin{array}{cc}1 & 0\\ u_1 & 1 \end{array}\right),\
{M}_0=\left(\begin{array}{cc}e_0 & 0\\ 0 & e_0^{-1}\end{array}\right),\
U_2=\left(\begin{array}{cc}1 & u_2\\ 0 & 1 \end{array}\right),\]
\[M_3=\left(\begin{array}{cc}\alpha_3 & \beta_3\\ \gamma_3 & \delta_3 \end{array}\right),\
M_4=\left(\begin{array}{cc}\alpha_4 & \beta_4\\ \gamma_4 & \delta_4 \end{array}\right).\]
This data is now defined up to a conjugacy by an element of $D$. Finally we have
\[\chi_V^\flat=\{(U_1,M_0,U_2,M_3,M_4)\in (U^-,D,U^+)\times SL_2(\C)^2,\ U_1M_0U_2M_3M_4=I\}//D.\]
This algebraic quotient is a 5 dimensional space. The function $(a^+,x^+)=(e_0,a_3,a_4,x_1^+,x_2^+,x_3^+)$:
\begin{equation*}
\begin{aligned}
&e_0=M_0[1,1],\ a_3=tr(M_3),\ a_4=tr(M_4),\\
&x_1^+=M_3[2,2]=\delta_3,\ x_2^+=M_4[2,2]=\delta_4,\ x_3^+=tr(U_1M_0U_2)=e_0+e_0^{-1}+e_0u_1u_2,
\end{aligned}
\end{equation*}
is invariant under the action of $D$.

\begin{prop}\label{equation chiV0}
The coordinates $(a^+,x^+)$ define a map $Tr_V^+$, invertible for a generic $a$, from $\chi_V^\flat(a)$ to the family affine cubic surface $\mathcal{C}_V(\theta^+)$ defined by
\[F_V(\theta^+,x)=x_1x_2x_3+x_1^2+x_2^2-\theta_1^+x_1-\theta_2^+x_2-\theta_3^+x_3+\theta_4^+=0,\]
where $\theta_1^+=a_3+e_0 a_4,\ \theta_2^+=a_4+e_0 a_3,\ \theta_3^+=e_0,\ \theta_4^+=e_0^2+e_0a_3a_4+1.$
\end{prop}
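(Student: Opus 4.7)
The idea is to exploit the relation $\mathcal{R}_{\mathrm{int}}$ which, in normalized form, reads $U_1 M_0 U_2 M_3 M_4 = I$, hence $M_3 M_4 = (U_1 M_0 U_2)^{-1}$; the cubic equation then emerges by matching entries of this matrix identity together with the determinant conditions $\det M_3 = \det M_4 = 1$. Concretely, I would first compute
\[
(U_1 M_0 U_2)^{-1}=\begin{pmatrix} e_0 u_1 u_2 + e_0^{-1} & -e_0 u_2\\ -e_0 u_1 & e_0 \end{pmatrix},
\]
which in particular has trace $x_3^+ = e_0 + e_0^{-1} + e_0 u_1 u_2$. Writing $M_3=\begin{pmatrix}\alpha_3 & \beta_3\\\gamma_3 & \delta_3\end{pmatrix}$ and analogously for $M_4$, with $\delta_i = x_i^+$ and $\alpha_i = a_i - x_i^+$ forced by the trace coordinates, the $(2,2)$-entry of the identity gives $\gamma_3\beta_4 = e_0 - x_1^+ x_2^+$, the $(1,1)$-entry gives $\beta_3\gamma_4 = x_3^+ - e_0 - (a_3-x_1^+)(a_4-x_2^+)$, and $\det M_i = 1$ gives $\beta_3\gamma_3 = a_3 x_1^+ - (x_1^+)^2 - 1$ and $\beta_4\gamma_4 = a_4 x_2^+ - (x_2^+)^2 - 1$. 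The tautology $(\beta_3\gamma_3)(\beta_4\gamma_4)=(\gamma_3\beta_4)(\beta_3\gamma_4)$ then becomes, after substitution and routine expansion and cancellation, exactly $F_V(\theta^+,x^+)=0$ with the stated $\theta^+$.

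For the invertibility statement, I would construct an explicit inverse over a generic fiber. Given $(a^+,x^+)$ on $\mathcal{C}_V(\theta^+)$, the diagonal entries of $M_0$, $M_3$ and $M_4$ are prescribed. The residual $D$-conjugation freedom (which rescales $(\beta_i,\gamma_i)\mapsto(\lambda^2\beta_i,\lambda^{-2}\gamma_i)$ and $(u_1,u_2)\mapsto(\lambda^{-2}u_1,\lambda^2 u_2)$) is used to normalize, say, $\beta_3=1$; the remaining quantities $\gamma_3,\beta_4,\gamma_4$ are then uniquely determined by the four relations of the previous paragraph, provided the denominators $\beta_3\gamma_3$ and $e_0-x_1^+x_2^+$ are nonzero. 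This produces $M_3,M_4$, after which $U_1 M_0 U_2 := (M_3 M_4)^{-1}$ has $(1,1)$-entry equal to $e_0\ne 0$ by construction, so it admits a unique lower-diagonal-upper Borel factorization, reading off $u_1$ and $u_2$ and confirming $M_0=\mathrm{diag}(e_0,e_0^{-1})$.

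The main technical obstacle is to specify precisely the generic open set of $a$ over which $Tr_V^+$ is a bijection onto $\mathcal{C}_V(\theta^+)$. The nonvanishing conditions $e_0 - x_1^+ x_2^+\ne 0$ and $a_i x_i^+-(x_i^+)^2-1\ne 0$ correspond geometrically to $M_0\ne\pm I$ and to the $M_i$ being irreducible with $\delta_i=x_i^+$ not a repeated eigenvalue, in close analogy with the condition $M_i\ne\pm I$ appearing in the Painlev\'e VI case after Proposition \ref{dynamicPVI}. A careful book-keeping of these degenerations --- and a verification that on the complementary open set the reconstruction is consistent with the $D$-quotient structure defining $\chi_V^\flat(a)$ --- pins down the precise locus of generic $a$ for which the proposition holds.
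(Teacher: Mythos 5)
Your proof is correct, and it is close in spirit to the paper's but uses a different algebraic identity at the heart of the computation. The paper first establishes a small lemma expressing the three $D$-invariant products $e_0u_1u_2$, $e_0\gamma_3u_2$, $e_0\beta_3u_1$ in terms of $(a,x)$, taken from the diagonal entries of $U_1M_0U_2M_3=M_4^{-1}$; it then multiplies the single relation $\det M_3=\alpha_3\delta_3-\beta_3\gamma_3=1$ by $e_0^2u_1u_2$ so that every term becomes one of those $D$-invariant products, and substitution yields $F_V=0$. You instead read off two off-diagonal $D$-invariant products $\gamma_3\beta_4=e_0-x_1x_2$ and $\beta_3\gamma_4=x_3-e_0-(a_3-x_1)(a_4-x_2)$ from $M_3M_4=(U_1M_0U_2)^{-1}$, use \emph{both} determinant constraints $\det M_3=\det M_4=1$, and close the loop with the tautology $(\beta_3\gamma_3)(\beta_4\gamma_4)=(\beta_3\gamma_4)(\gamma_3\beta_4)$; I checked that the expansion does yield exactly $F_V(\theta^+,x^+)=0$. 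Your route is more symmetric in $M_3,M_4$ and avoids isolating the $u_i$ altogether, at the cost of using one more $SL_2$ constraint; the paper's route is slightly more economical and isolates a lemma (\ref{coeff-traces}) that is reused later (e.g.\ in the proof of Proposition \ref{dyn conf}), which is why it prefers to extract $e_0u_1u_2$, $e_0\gamma_3u_2$, $e_0\beta_3u_1$ explicitly. For the inverse, both proofs use the residual $D$-conjugation to pin down one scale (you normalize $\beta_3=1$; the paper speaks of recovering $\gamma_3u_2$ etc., which is the same quotient statement), and both leave the exact genericity locus implicit, so your closing caveat about book-keeping is an accurate description of what the paper itself does not spell out. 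One small point worth making explicit in your reconstruction: once you set $\beta_3=1$ and solve for $\gamma_3,\beta_4,\gamma_4$ from three of the four scalar relations, the fourth ($\beta_4\gamma_4=a_4x_2-x_2^2-1$) is automatically satisfied precisely because $(x,\theta^+)$ lies on the cubic --- this is where the equation $F_V=0$ enters the surjectivity direction, and it deserves a sentence.
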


\begin{lemma}\label{coeff-traces} We have:
\begin{equation*}
    \begin{aligned}
e_0u_1u_2&=x_3-e_0-e_0^{-1}\\
e_0\gamma_3s_2&=x_2-e_0a_3+e_0x_1\\
e_0\beta_3s_1&=-x_1x_3+e_0x_1-x_2+a_4
    \end{aligned}
\end{equation*}
\end{lemma}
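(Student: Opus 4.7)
The plan is to exploit the single structural relation $U_1M_0U_2M_3M_4=I$ from the previous proposition, which I would rewrite as $U_1M_0U_2M_3=M_4^{-1}$, and then read off four scalar equations by identifying matrix entries on both sides. In $SL_2(\C)$ the inverse is explicit: $M_4^{-1}=\bigl(\begin{smallmatrix}\delta_4&-\beta_4\\-\gamma_4&\alpha_4\end{smallmatrix}\bigr)$. Combined with the definitions $x_1=\delta_3$, $x_2=\delta_4$, and the identities $\alpha_3=a_3-\delta_3=a_3-x_1$, $\alpha_4=a_4-\delta_4=a_4-x_2$ (coming from $\mathrm{tr}(M_3)=a_3$, $\mathrm{tr}(M_4)=a_4$), this gives a purely computational path to the stated identities.

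The first identity $e_0u_1u_2=x_3-e_0-e_0^{-1}$ requires no use of the relation $M_4=(\cdots)^{-1}$: it is the very definition of $x_3^{+}=\mathrm{tr}(U_1M_0U_2)$. Indeed, a direct computation of $U_1M_0U_2$ yields
\[
U_1M_0U_2=\begin{pmatrix} e_0 & e_0u_2\\ e_0u_1 & e_0u_1u_2+e_0^{-1}\end{pmatrix},
\]
whose trace is $e_0+e_0^{-1}+e_0u_1u_2$, proving the first equality.

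Next I multiply the matrix above on the right by $M_3=\bigl(\begin{smallmatrix}\alpha_3&\beta_3\\\gamma_3&\delta_3\end{smallmatrix}\bigr)$ and compare with $M_4^{-1}$. Equating the $(1,1)$-entries gives
\[
e_0\alpha_3+e_0u_2\gamma_3=\delta_4=x_2,
\]
and substituting $\alpha_3=a_3-x_1$ immediately produces the second identity $e_0u_2\gamma_3=x_2-e_0a_3+e_0x_1$. Equating the $(2,2)$-entries gives
\[
e_0u_1\beta_3+(e_0u_1u_2+e_0^{-1})\delta_3=\alpha_4=a_4-x_2.
\]
Using the first identity to write $e_0u_1u_2+e_0^{-1}=x_3-e_0$, and $\delta_3=x_1$, this rearranges to $e_0u_1\beta_3=-x_1x_3+e_0x_1-x_2+a_4$, which is the third identity (with $s_1,s_2$ read as $u_1,u_2$).

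There is really no obstacle here: the lemma is a bookkeeping computation that unpacks the group-theoretic relation $U_1M_0U_2M_3M_4=I$ into the trace coordinates $(e_0,a_3,a_4,x_1,x_2,x_3)$. The only mild subtlety is the bookkeeping of signs and of the substitution $e_0u_1u_2+e_0^{-1}=x_3-e_0$, which is what makes the third identity look asymmetric with the second. These identities are of course exactly the intermediate relations one will need to eliminate $u_1,u_2,\beta_3,\gamma_3$ in favour of $(a^+,x^+)$ and thereby derive the cubic equation $F_V(\theta^+,x)=0$ of Proposition \ref{equation chiV0}.
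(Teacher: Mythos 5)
Your proof is correct and follows exactly the paper's route: read the first identity off the definition of $x_3^+=\mathrm{tr}(U_1M_0U_2)$, then compare the $(1,1)$ and $(2,2)$ entries of $U_1M_0U_2M_3=M_4^{-1}$ and substitute $\alpha_i=a_i-\delta_i$. (The $s_i$ in the lemma's statement are indeed typos for $u_i$, as you noted.)
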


\begin{proof} We have $x_3=e_0+e_0^{-1}+e_0u_1u_2$. From $U_1M_0U_2M_3=M_4^{-1}$ we obtain
\[\delta_4=\alpha_3e_0+\gamma_3e_0u_2,\ \alpha_4=\beta_3e_0s_1+\delta_3e_0u_1u_2+\delta_3e_0^{-1}.\]
Using $\alpha_i=a_i-\delta_i$, we obtain the two other equalities.
\end{proof}
\bigskip

\emph{Proof of Proposition (\ref{equation chiV0}).}
Since $\alpha_3\delta_3-\beta_3\gamma_3=1$, we have
\[(e_0^2u_1u_2)(\alpha_3\delta_3)-(e_0\beta_3u_1)(e_0\gamma_3u_2)-e_0^2u_1u_2=0.\]
If we replace $e_0s_1s_2$, $e_0\gamma_3s_2$ and $e_0\beta_3s_1$ by the above expressions, we obtain the equation $F_V(x,\theta^+)=0$.
This proves that the map $(a^+,x^+)$ takes its values in the cubic surface.
Given a point in the cubic surface, we recover
$\alpha_3=a_3-x_1$, $\alpha_4=a_4-x_2$, $\beta_3\gamma_3=1-x_1(a_3-x_1)$, $\beta_4\gamma_4=1-x_2(a_4-x_2)$, $\gamma_3s_2=e_0^{-1}x_2-a_3+x_1$, and therefore, for generic values, a unique point of $\chi_V^\flat(a)$.
\carre
\begin{rema}
It might seen more natural to consider the trace coordinates:
\begin{equation*}
\begin{aligned}
&y_1(\rho)=tr(\rho(\widehat\gamma_{1,1}\gamma_{1,3}\gamma_{3,3})),\
y_2(\rho)=tr(\rho(\widehat\gamma_{2,2}\gamma_{2,4}\gamma_{4,4})),\
y_3(\rho)=tr(\rho(\gamma_{3,3}\gamma_{3,4}\gamma_{4,4}))
\end{aligned}
\end{equation*}
which, for a normalized representation, give:
\begin{equation*}
\begin{aligned}
&y_1=tr({M}_0M_3), y_2= tr({M}_0M_4), y_3=tr(M_3M_4).
\end{aligned}
\end{equation*}
But clearly, the coordinates $y_1$ and $y_2$ degenerate if $M_0=\pm I$, since we have $y_1=a_3$ and $y_2=a_4$ in this case. The coordinates
\[x_1^+=\frac{y_1-e_0a_3}{e_0^{-1}-e_0}=\delta_3,\ x_2^+=\frac{y_2-e_0a_4}{e_0^{-1}-e_0}=\delta_4,\ x_3^+=y_3 \]
give an extension to this exceptional case $M_0=\pm I$.
This choice is also the usual one in the litterature: see \cite{VdPSai} or \cite{Kl1}. Since the coordinates $(a^+,x^+)$ are directly related to the above trace coordinates through an affine map, we still denote the corresponding map by $Tr_V^+$, and we call it a trace map.
\end{rema}
The normalization of the representation by $(U_1,M_0,U_2,M_3,M_4)$ used in order to construct $Tr_V^+$ requires that $U_1$ is a lower triangular matrix. If we require that $U_1$ must be an upper triangular matrix, we obtain the data $(U_1^-,M_0^-,U_2^-,M_3^-,M_4^-)$ which is equivalent to the first data by the conjugacy with the matrix \[P=\left(\begin{array}{cc}0 & 1\\ -1 & 0 \end{array}\right). \]
We define the trace map $Tr^-(\rho)=(a^-,x^-)=(e_0^{-1},a_3,a_4,x_1^-,x_2^-,x_3^-)$ with 
\[e_0^{-1}=M_0^-[1,1],\ x_1^-=M_3^-[1,1]=\alpha_3,\ x_2^-=M_4^-[1,1]=\alpha_4,\ x_3^-=tr(U_1^-M_0^-U_2^-)=x_3^+.\]
This map takes its values in the cubic surface $\mathcal{C}_V(\theta^-)$:
\[F_V(x,\theta^-)=x_1x_2x_3+x_1^2+x_2^2-\theta_1^-x_1-\theta_2^-x_2-\theta_3^-x_3+\theta_4^-=0,\]
where $\theta_1^-=a_3+e_0^{-1} a_4$, $\theta_2^-=a_4+e_0^{-1} a_3$, $\theta_3^-=e_0^{-1}$, $\theta_4^-=e_0^{-2}+e_0^{-1}a_3a_4+1$ 
which can also be identified to the categorical quotient $\chi_V^\flat(a)$. The points $p^+\in\mathcal{C}_V(\theta^+)$ and  $p^-=Tr_V^-\circ(Tr_V^+)^{-1}(p^+)$ correspond to the same representation in $\chi_V^\flat(a)$.
\bigskip

From (\ref{ext2}) we know that $\chi_V(a)=\chi_V^+(a)\cup\chi_V^-(a)$ where 
\begin{equation*}
\begin{aligned}
&\chi_V^+(a)=\{(U_1,M_0,U_2,M_3,M_4,D_\kappa)//SL_2(\C)\},\\ 
&\chi_V^-(a)=\{(U_1,M_0,U_2,M_3,M_4,D_\kappa^{-1})//SL_2(\C)\}.
\end{aligned}
\end{equation*}
Using a conjugacy with $P$
this second data is equivalent to $(U_1^-,M_0^-,U_2^-,M_3^-,M_4^-,\rho(t_{1,1}(\kappa))=D_\kappa)$, which define an element in $\mathcal{C}_V(\theta^-)$. Therefore $\chi_V(a)$ is identified through the trace coordinates to the union of the two affine surfaces $\mathcal{C}_V(\theta^-)\cup\mathcal{C}_V(\theta^+)$

We denote by
$Tr^{\pm ,\kappa}$ the restriction of $Tr^\pm$ over $\pi_1^{V,\kappa}(X,S)$.

\subsection{Lines and reducibility locus}

For what follows in this article, excepted (\ref{candyn}), we will suppose that the parameters satisfy the generic conditions:
\[e_0\neq\pm1,\ e_3\neq\pm1,\ e_4\neq\pm1,\ e_0e_3^{\varepsilon_3}e_4^{\varepsilon_4}\neq 1\mbox{ for }\varepsilon_3=\pm 1,\ \varepsilon_4=\pm 1.\]
The description of the lines in cubic surfaces can be found in \cite{Dolg}. In particular
the compactification of a generic element $\mathcal{C}_{VI}(\theta)$ in  $\p^3(\C)$ contains 27 lines.
In the figure below the central triangle is the union of the three lines at infinity:
\begin{figure}[H]
\centering
\hspace{2cm}\includegraphics[scale=0.6]{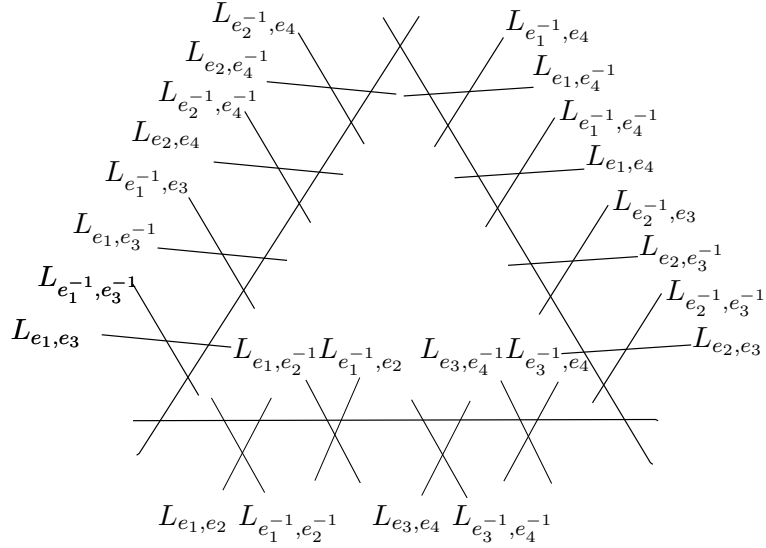}
\put(-200,-10){$L_{e_1,e_2}$}
\put(-170,-10){$L_{e_1^{-1},e_2^{-1}}$}
\put(-120,-10){$L_{e_3,e_4}$}
\put(-90,-10){$L_{e_3^{-1},e_4^{-1}}$}
\put(-172,53){$L_{e_1,e_2^{-1}}$}
\put(-140,53){$L_{e_1^{-1},e_2}$}
\put(-102,53){$L_{e_3,e_4^{-1}}$}
\put(-70,53){$L_{e_3^{-1},e_4}$}
\put(-255,60){$L_{e_1,e_3}$}
\put(0,57){$L_{e_2,e_3}$}
\put(-245,80){$L_{e_1^{-1},e_3^{-1}}$}
\put(-10,75){$L_{e_2^{-1},e_3^{-1}}$}
\put(-255,60){$L_{e_1,e_3}$}
\put(-245,80){$L_{e_1^{-1},e_3^{-1}}$}
\put(-232,100){$L_{e_1,e_3^{-1}}$}
\put(-220,120){$L_{e_1^{-1},e_3}$}
\put(-210,135){$L_{e_2,e_4}$}
\put(-200,150){$L_{e_2^{-1},e_4^{-1}}$}
\put(-190,165){$L_{e_2,e_4^{-1}}$}
\put(-180,180){$L_{e_2^{-1},e_4}$}
\put(-20,92){$L_{e_2,e_3^{-1}}$}
\put(-30,109){$L_{e_2^{-1},e_3}$}
\put(-40,126){$L_{e_1,e_4}$}
\put(-50,143){$L_{e_1^{-1},e_4^{-1}}$}
\put(-60,160){$L_{e_1,e_4^{-1}}$}
\put(-70,177){$L_{e_1^{-1},e_4}$}

\caption{The 24 lines in $\mathcal{C}_{VI}(\theta)$.}\label{droites chiVI}
\end{figure}
\noindent The equations of the lines are obtained from the following decomposition which appears in \cite{Kl1}:
\[F_{VI}(x,\theta)=(x_k-c_{e_i,e_j})(F_{VI, x_k}-x_k+c_{e_i,e_j})+l_{e_i,e_j}l_{e_i^{-1}e_j^{-1}},\]
where $F_{VI,x_k}$ is the partial derivative of $F_{VI}(x,\theta)$ with respect to the variable $x_k$, $c_{\alpha,\beta}=\alpha\beta^{-1}+\alpha^{-1}\beta$ and
\[l_{e_i,e_j}=e_ix_i+e_jx_j-a_ke_ie_j-a_4,\ l_{e_i^{-1},e_j^{-1}}=e_i^{-1}x_i+e_j^{-1}x_j-a_ke_i^{-1}e_j^{-1}-a_4.\]
Therefore the plane $x_k=c_{e_i,e_j}$ intersects the cubic surface along a degenerated conic, union of the two lines
\begin{equation*}
L_{e_i,e_j}:\ x_k=c_{e_i,e_j}=e_ie_j^{-1}+e_i^{-1}e_j \mbox{ and } l_{e_i,e_j}=0,\\
L_{e_i^{-1},e_j^{-1}}:\ x_k=c_{e_i^{-1},e_j^{-1}} \mbox{ and } l_{e_i^{-1},e_j^{-1}}=0.
\end{equation*}
The compactification of $\mathcal{C}_V(\theta)$ in  $\p^3(\C)$ has a singular point of type $A_1$ at infinity, and admits 21 lines, of which 18 are in the affine part:
\begin{figure}[H]
\centering
\hspace{2cm}\includegraphics[scale=0.5]{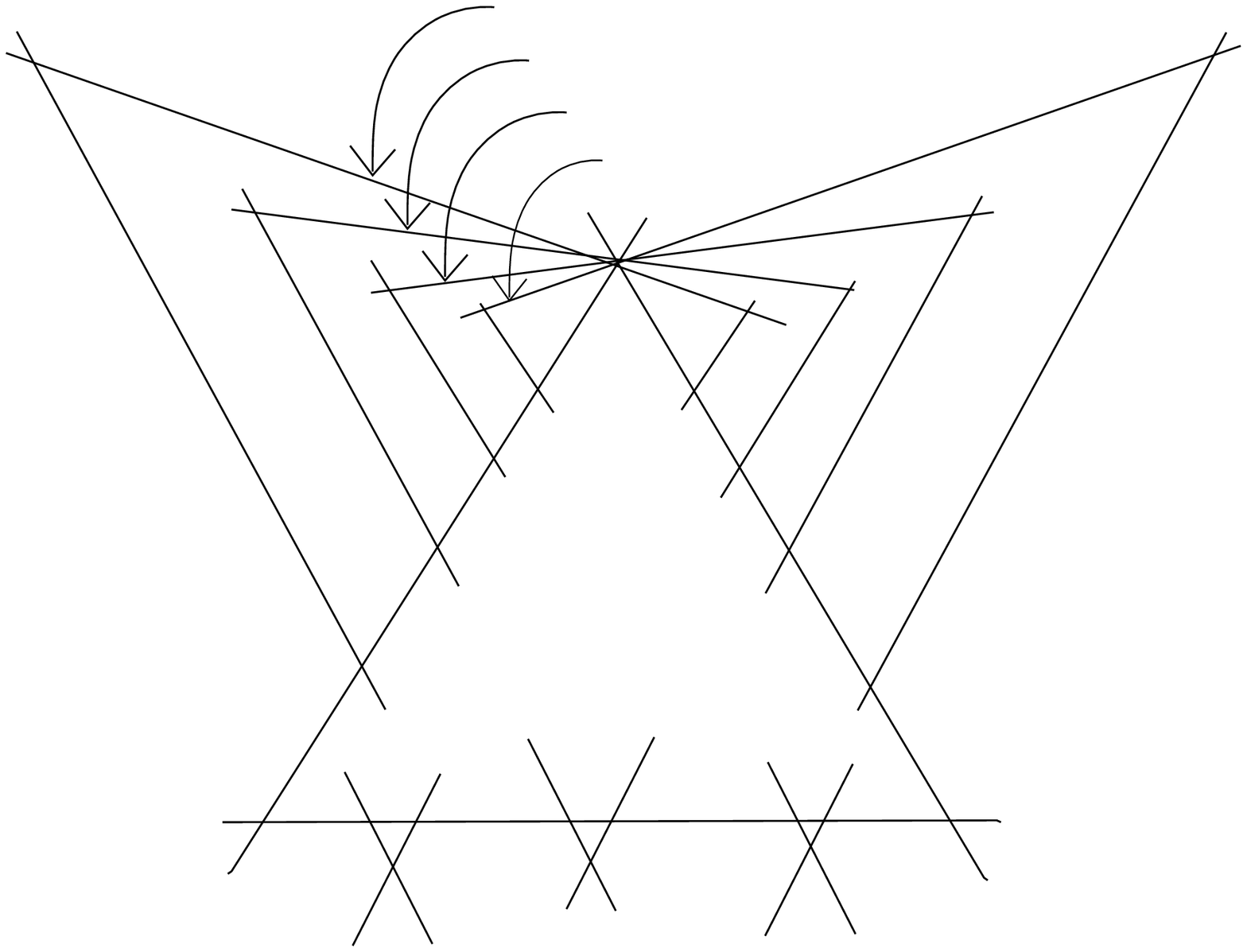}
\put(-153,198){$\Delta_{e_4}$}
\put(-148,188){$\Delta_{e_4^{-1}}$}
\put(-140,174){$\Delta_{e_3}$}
\put(-132,162){$\Delta_{e_3^{-1}}$}
\put(-220,80){$D_{e_4^{-1}}^l$}
\put(-62,75){$D_{e_3}^r$}
\put(-197,100){$D_{e_4}^l$}
\put(-82,95){$D_{e_3^{-1}}^r$}
\put(-181,112){$D_{e_3}^l$}
\put(-97,112){$D_{e_4^{-1}}^r$}
\put(-169,122){$D_{e_3^{-1}}^l$}
\put(-110,122){$D_{e_4}^r$}
\put(-210,0){$D_{e_3,e_4}$}
\put(-170,0){$D_{e_3^{-1},e_4^{-1}}$}
\put(-115,0){$D_{e_3,e_4^{-1}}$}
\put(-80,0){$D_{e_3^{-1},e_4}$}
\put(-170,50){$D_{f_0,f_0^{-1}}$}
\put(-130,50){$D_{f_0^{-1},f_0}$}
\caption{The 18 lines in $\mathcal{C}_{V}(\theta)$.}\label{droites-chiV}
\end{figure}

\begin{rema}\label{equations droites} The equations of these lines are given by
\begin{enumerate}
\item 
$z_0=-x_1^2x_3-x_1x_2+\theta_2x_1-e_0=0$ for $D_{e_3}^l\cup D_{e_3^{-1}}^l
\cup D_{e_4^{-1}}^l\cup D_{e_4}^l$;
\item
$z_1=x_1x_2-e_0=0$ for $\Delta_{e_3}\cup\Delta_{e_3^{-1}}\cup\Delta_{e_4}\cup\Delta_{e_4^{-1}}$;
\item 
$z_2=-x_2^2x_3-x_1x_2+\theta_1x_2-e_0=0$ for $D_{e_3^{-1}}^r\cup D_{e_3}^r
\cup D_{e_4}^r\cup D_{e_4^{-1}}^r$;
\item $x_3=c_{e_3,e_4}$ or $c_{e_3,e_4^{-1}}$ or $c_{e_0,e_0^{-1}}$ for the pairs of lines which cut the basis of the triangle.
\end{enumerate}
\end{rema}
In this section, we want to characterize the representations whose image is a point on a line.\footnote{We thanks Martin Klimes for discussions on this topics (see also \cite{Kl1}).}
The local loops in $\pi_1^{VI}(X,S)$ are the loops $\gamma_{i,i}$ based in $s_i$. The local loops in $\pi_1^{V}(X,S)$ are $\gamma_{3,3}$ in $s_3$, 
$\gamma_{4,4}$ in $s_4$, and a generic element of the torus $t_{1,1}(\kappa)$ in $s_1$ and $t_{2,2}(\kappa)$ in $s_2$. In any cases, the local loops at $s_i$ are denoted below by $\gamma_i$.

\begin{defi}\label{def_red locus} Let $\gamma$ be a morphism of the groupoid $\pi_1^{VI}(X,S)$ or $\pi_1^{V}(X,S)$, defined by some path from $s_i$ to $s_j$, $j\neq i$, $\gamma_i$ the local loop in $s_i$ and $\gamma_j$ the local loop in $s_i$. A linear representation $\rho$ of the groupoid in $SL_2(\C)$ is reducible along $\gamma$ if $\rho(\gamma^{-1}\gamma_{i}\gamma)$ and $\rho(\gamma_{j})$ have a common eigenvector.
\end{defi}

If $\rho$ is the linear representation associated with a regular connection $\nabla$, and $\gamma$ is represented by analytic continuation, it means that there exists some local solution at $s_i$ which is an eigenvector of the local monodromy around $s_i$ and whose analytic continuation along $\gamma$ is also an eigenvector of the local monodromy around $s_j$. This semi-local solution in a neighbourhood of $\gamma$ has an abelian monodromy.

\begin{rema}\label{critere red}
\begin{enumerate}
    \item If $\rho$ is reducible along the path $\gamma$, any equivalent representation is reducible along $\gamma$.
   \item The representation $\rho$ is reducible along $\gamma$ if and only $\rho$ is reducible along $\gamma^{-1}$.
   \item If $\rho(\gamma_{i})=\pm I$, $\rho$ is reducible along any path joining $s_i$ to another base point.
    \item Let $\rho$ be a representation in $SL_2(\C)$ given by a choice of basis of solutions $X_i$ on each $V_i$. We set $\rho(\gamma_{i})=M_i$, $\rho(\gamma)=M_\gamma$. $\rho$ is reducible on $\gamma$ if and only if the matrices $M_\gamma^{-1} M_i M_\gamma$ and $M_j$ have a common eigenvector.
    \item In particular, if $\rho$ is a normalized representation associated to the presentation given by figure (\ref{pi1VI}) of the groupoid $\pi_1^{VI}(X,S)$, with $\rho(\gamma_{i})=M_i$ and
$\rho(\gamma_{i,j})=I$ for $i\neq j$, $\rho$ is reducible on the generator path $\gamma_{i,j}$ if and only if $M_i$ and $M_{j}$ have a common eigenvector.
\end{enumerate}
\end{rema}
\begin{defi} A pair of matrices in $SL_2(\C)$ is reducible if and only if they have a common eigenvector.\end{defi}
\begin{defi}
\begin{enumerate}
\item The reducibility locus associated to a path $\gamma$ is the set $\mathcal{R}(\gamma)$ of linear representations $\rho$ in $\chi_{VI}$ which are reducible on $\gamma$.
\item Given a presentation of $\pi_1^{VI}(X,S)$, the generating reducibility locus is the union of the sets $\mathcal{R}(\gamma_{i,j})$ for all the generators $\gamma_{i,j}$, $i\neq j$.
\item The total reducibility locus is the set $\mathcal{R}$ of linear representations $\rho$ in $\chi_{VI}$ such that there exists a path between two distinct objects on which $\rho$ is reducible.
\end{enumerate}
\end{defi}

\begin{theorem} \label{reducibility locusVI} Let $\{\gamma_{i,j}),\mathcal{R}_i\}$ be the presentation of $\pi_1^{VI}(X,S)$ given by the figure (\ref{pi1VI}), and $Tr_{VI}:\ \chi_{VI}(a)\rightarrow \mathcal{C}_{VI}(\theta)$ the trace map associated to this presentation. The 24 lines in $\mathcal{C}_{VI}(\theta)$ are the reducibility locus of the 6 paths: $\gamma_{i,i+1}$, $i=1,..,4$ and $\gamma_{1,3}=\gamma_{1,2}\cdot\gamma_{2,3}$, $\gamma_{2,4}=\gamma_{2,3}\cdot\gamma_{3,4}$.
\end{theorem}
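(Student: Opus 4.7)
The plan is to reduce to normalized representations and then check the reducibility criterion pair-by-pair. First I would invoke Remark~(\ref{critere red})(1) to work only with normalized representations $\rho$ of $\pi_1^{VI}(X,S)$, so that $\rho(\gamma_{i,j})=I$ for every generator $\gamma_{i,j}$ of the presentation of Figure~(\ref{pi1VI}). Remark~(\ref{critere red})(5) then turns reducibility along each of the four generating paths $\gamma_{i,i+1}$ into: $M_i$ and $M_{i+1}$ share a common eigenvector. Since for a normalized representation one also has $\rho(\gamma_{1,3})=\rho(\gamma_{1,2}\cdot\gamma_{2,3})=I$ and $\rho(\gamma_{2,4})=\rho(\gamma_{2,3}\cdot\gamma_{3,4})=I$, the very same matrix criterion applies to the two ``diagonal'' paths, giving respectively a common eigenvector for the pair $(M_1,M_3)$ and for $(M_2,M_4)$. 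The problem thus reduces to: for each of the six pairs $\{i,j\}\subset\{1,2,3,4\}$, identify the image under $Tr_{VI}$ of the set of representations for which $M_i$ and $M_j$ share a common eigenvector.

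For a fixed pair $\{i,j\}$, sharing a common eigenvector is an $SL_2(\C)$-invariant condition, so I can conjugate both matrices to upper triangular form
\[
M_i=\begin{pmatrix}\epsilon_i&\ast\\0&\epsilon_i^{-1}\end{pmatrix},\qquad M_j=\begin{pmatrix}\epsilon_j&\ast\\0&\epsilon_j^{-1}\end{pmatrix},
\]
with $\epsilon_i\in\{e_i,e_i^{-1}\}$ and $\epsilon_j\in\{e_j,e_j^{-1}\}$. Direct computation gives $\mathrm{tr}(M_iM_j)=\epsilon_i\epsilon_j+\epsilon_i^{-1}\epsilon_j^{-1}$, and combining this with the Fricke-type identities forced by $M_1M_2M_3M_4=I$ (in particular $\mathrm{tr}(M_1M_2)=\mathrm{tr}(M_3M_4)$, $\mathrm{tr}(M_2M_3)=\mathrm{tr}(M_1M_4)$, and the symmetric polynomial expression of $\mathrm{tr}(M_1M_3)$, $\mathrm{tr}(M_2M_4)$ in $x_1,x_2,x_3,a_1,\ldots,a_4$) pins down the value of the appropriate coordinate $x_k$ and forces it to equal one of the two values $c_{e_i,e_j^{\pm1}}$ appearing in the author's factorisation. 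Thus the image lies in one of the two planes $x_k=c_{\epsilon_i,\epsilon_j}$.

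The remaining linear constraint $l_{\epsilon_i,\epsilon_j}=0$ comes from exploiting the off-diagonal entries of the triangular lift together with the relation $M_1M_2M_3M_4=I$: writing the complementary matrices in a compatible basis and computing the traces $\mathrm{tr}(M_pM_q)$ for the other two generating pairs produces a linear expression in $x_i,x_j$ whose coefficients involve $\epsilon_i,\epsilon_j,a_k,a_4$, and simplification returns exactly the linear form $l_{\epsilon_i,\epsilon_j}$ read off the factorisation
\[
F_{VI}(x,\theta)=(x_k-c_{\epsilon_i,\epsilon_j})(F_{VI,x_k}-x_k+c_{\epsilon_i,\epsilon_j})+l_{\epsilon_i,\epsilon_j}\,l_{\epsilon_i^{-1},\epsilon_j^{-1}}.
\]
So the image of the reducibility locus along $\gamma_{i,j}$ is contained in the union of the four lines $L_{\epsilon_i,\epsilon_j}$, $\epsilon_i=e_i^{\pm1}$, $\epsilon_j=e_j^{\pm1}$. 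The converse inclusion is obtained by an explicit triangular lift: for a generic point on $L_{\epsilon_i,\epsilon_j}$ one produces matrices $M_i,M_j$ with the prescribed eigenvalues on a common line and completes them into a normalized quadruple satisfying $M_1M_2M_3M_4=I$, whose trace invariants reproduce the given point. Finally the count $6\times 4=24$ exhausts the lines of $\mathcal{C}_{VI}(\theta)$, so no further path yields new reducibility lines.

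The main obstacle is the detailed step~3: extracting the linear form $l_{\epsilon_i,\epsilon_j}$ from the product relation, especially for the non-adjacent pairs $\{1,3\}$ and $\{2,4\}$, where $\mathrm{tr}(M_iM_j)$ is not one of the trace coordinates and has to be substituted using the cubic equation $F_{VI}(x,\theta)=0$ itself. A suitable parameterisation of the position of the common eigenvector (rather than a brute-force expansion) is what makes the six cases visibly symmetric and keeps the bookkeeping tractable.
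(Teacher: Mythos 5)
Your proposal follows the same backbone as the paper's proof -- normalize the representation, translate reducibility along each of the six paths into the condition that $(M_i,M_j)$ share an eigenvector, then read off the trace constraint -- but you create work for yourself in step~3 that the paper does not need. The key lemma of the paper (Lemma~\ref{paire reductible}) is stated as an equivalence: $(M_i,M_j)$ reducible $\iff$ $tr(M_iM_j)\in\{c_{e_i,e_j},\,c_{e_i,e_j^{-1}}\}$. It is proved by putting the pair in a \emph{mixed} basis ($M_i$ lower, $M_j$ upper triangular), where $tr(M_iM_j)=tr(D_iD_j)+f_if_j$ and reducibility is exactly $f_if_j=0$; this gives both directions at once, whereas your simultaneously-upper-triangular form only yields the forward implication and then forces you to argue the converse via an explicit lift. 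More importantly, your self-identified ``main obstacle'' -- extracting the linear form $l_{\epsilon_i,\epsilon_j}=0$ from the product relation -- is not a step of the actual proof at all. Once the trace is pinned to the two values $c_{e_i,e_j}$, $c_{e_i,e_j^{-1}}$, the reducibility locus in $\mathcal{C}_{VI}(\theta)$ is the intersection of those two planes with the cubic, and the displayed factorization
\[
F_{VI}(x,\theta)=(x_k-c_{e_i,e_j})\bigl(F_{VI,x_k}-x_k+c_{e_i,e_j}\bigr)+l_{e_i,e_j}\,l_{e_i^{-1},e_j^{-1}}
\]
(established \emph{before} the theorem, as pure algebra about the surface) already says that each such plane section is a degenerate conic, i.e.\ a union of two lines. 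So four lines per path drop out immediately, and $6\times 4=24$ finishes it. You do not need to re-derive $l_{\epsilon_i,\epsilon_j}=0$ from the representation side: the theorem only asks for the union, not for which of the two lines in a given plane a particular eigenvalue-matching lands on. If you drop that step and adopt the mixed-basis lemma, your argument collapses to the paper's.
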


\begin{lemma}\label{paire reductible}
 Let $M_1$ and $M_2$ be two matrices in $SL_2(\C)$ distinct from $\pm I$, $e_1$, $e_1^{-1}$, and $e_2$, $e_2^{-1}$ their eigenvalues (we may have $e_1=e_1^{-1}$ or $e_2=e_2^{-1}$). Let
 \[c_{\alpha,\beta}=\alpha\beta^{-1}+\alpha^{-1}\beta.\]
 The pair $(M_1,M_2)$ is reducible if and only if
\[tr(M_1M_2)=c_{e_1,e_2},\mbox{ or }tr(M_1M_2)=c_{e_1,e_2^{-1}}.\]

\end{lemma}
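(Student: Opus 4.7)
The strategy is to reformulate reducibility as a condition on the trace of the commutator $[M_1,M_2]$, then compare this condition with the stated trace equation using the Fricke--Vogt identity.

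\textbf{Step 1 (forward implication).} Suppose $(M_1,M_2)$ is reducible, so they share a common eigenvector. After a common conjugation we may assume both $M_i$ are upper triangular with diagonals $(\varepsilon_i,\varepsilon_i^{-1})$ for some $\varepsilon_i\in\{e_i,e_i^{-1}\}$. Then $M_1M_2$ is upper triangular and
\[
\operatorname{tr}(M_1M_2)=\varepsilon_1\varepsilon_2+\varepsilon_1^{-1}\varepsilon_2^{-1}.
\]
The two possible values of this expression, as $(\varepsilon_1,\varepsilon_2)$ runs over the four eigenvalue choices, are precisely $c_{e_1,e_2^{-1}}=e_1e_2+e_1^{-1}e_2^{-1}$ and $c_{e_1,e_2}=e_1e_2^{-1}+e_1^{-1}e_2$.

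\textbf{Step 2 (trace of the commutator).} Set $a_i=\operatorname{tr}(M_i)=e_i+e_i^{-1}$ and $x=\operatorname{tr}(M_1M_2)$. The Fricke--Vogt identity in $SL_2(\C)$ gives
\[
\operatorname{tr}[M_1,M_2]-2=x^2-a_1a_2\,x+a_1^2+a_2^2-4.
\]
A direct expansion shows $c_{e_1,e_2}+c_{e_1,e_2^{-1}}=a_1a_2$ and $c_{e_1,e_2}\cdot c_{e_1,e_2^{-1}}=a_1^2+a_2^2-4$, hence
\[
\operatorname{tr}[M_1,M_2]-2=(x-c_{e_1,e_2})(x-c_{e_1,e_2^{-1}}).
\]
Therefore the trace condition in the lemma is equivalent to $\operatorname{tr}[M_1,M_2]=2$.

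\textbf{Step 3 (the key equivalence: $\operatorname{tr}[M_1,M_2]=2\Leftrightarrow$ reducibility).} One direction is clear from Step 1: a common triangularization makes $[M_1,M_2]$ unipotent, hence of trace $2$. For the converse — the main substantive step — I would treat the two cases of the Jordan form of $M_1\neq\pm I$. If $M_1$ is diagonalizable, bring it to the form $\operatorname{diag}(e_1,e_1^{-1})$ with $e_1\neq e_1^{-1}$ and write $M_2=\bigl(\begin{smallmatrix}a&b\\c&d\end{smallmatrix}\bigr)$; a short computation yields
\[
\operatorname{tr}[M_1,M_2]-2=-bc\,(e_1-e_1^{-1})^2,
\]
which vanishes iff $b=0$ or $c=0$, i.e.\ iff $M_2$ preserves one of the two eigenlines of $M_1$. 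If $M_1$ is a non-trivial unipotent (up to sign) $\pm\bigl(\begin{smallmatrix}1&1\\0&1\end{smallmatrix}\bigr)$ with unique invariant line $\C\cdot e_1$, the analogous computation gives $\operatorname{tr}[M_1,M_2]-2=c^2$, which vanishes iff $c=0$, i.e.\ iff $M_2$ also fixes this line.

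\textbf{Conclusion.} Combining Steps 2 and 3, the reducibility of $(M_1,M_2)$ is equivalent to $(x-c_{e_1,e_2})(x-c_{e_1,e_2^{-1}})=0$, which is exactly the claim. The only delicate point is Step 3, where the assumption $M_i\neq\pm I$ is essential to exclude the degenerate situation in which $M_1$ is central and the commutator trace vanishes trivially without forcing invariance of a common line; this is precisely why the hypothesis $M_i\neq\pm I$ appears in the statement.
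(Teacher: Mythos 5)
Your proof is correct, and it takes a genuinely different route from the paper. The paper argues directly: it picks a \emph{mixed basis} (an eigenvector of $M_2$ and an eigenvector of $M_1$), writes $M_1$ lower-triangular and $M_2$ upper-triangular with off-diagonal entries $f_1$, $f_2$, and observes that $\operatorname{tr}(M_1M_2)=\operatorname{tr}(D_1D_2)+f_1f_2$, so reducibility ($f_1f_2=0$) is equivalent to $\operatorname{tr}(M_1M_2)=\operatorname{tr}(D_1D_2)\in\{c_{e_1,e_2},\,c_{e_1,e_2^{-1}}\}$. You instead reformulate everything through $\operatorname{tr}[M_1,M_2]$: the Fricke--Vogt identity factors $\operatorname{tr}[M_1,M_2]-2$ as $(x-c_{e_1,e_2})(x-c_{e_1,e_2^{-1}})$, reducing the lemma to the classical fact that $\operatorname{tr}[M_1,M_2]=2$ if and only if $M_1,M_2$ share an eigenline, which you then verify in both Jordan cases for $M_1$. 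What your approach buys: the Jordan case analysis makes the parabolic situation (where one or both $M_i$ have a single eigenline) completely explicit, whereas the paper's mixed-basis argument is slightly delicate there --- if both matrices are parabolic with the same eigenline, no mixed basis exists, and the paper elides this; you also expose the conceptual link to the Fricke discriminant, which is a reusable tool. What the paper's approach buys: the mixed-basis normal form $\operatorname{tr}(M_1M_2)=\operatorname{tr}(D_1D_2)+f_1f_2$ feeds directly into the identification of the lines $L_{e_i,e_j}$ via the factorization $F_{VI}=(x_k-c)(\cdots)+l\,l'$ of the cubic, and the paper reuses the same setup immediately afterward in Proposition (monodromie commutante) to identify the Kaneko points ($f_1=f_2=0$); so the more elementary route fits the surrounding argument better.
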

\begin{proof}
We choose a "mixed basis" taking an eigenvector of $M_2$, and an eigenvector of $M_1$. In such a basis, we have:
\begin{equation*}\label{base mixte} M_1=\left(\begin{array}{cc}e_1&0\\f_1&e_1^{-1}\end{array}\right),\ \
M_2=\left(\begin{array}{cc}e_2&f_2\\0&e_2^{-1}\end{array}\right)
\end{equation*}
or any similar writing obtained by changing $e_1$ with $e_1^{-1}$ or $e_2$ with $e_2^{-1}$. Let $D_i=diag(e_i,e_i^{-1})$. We have
\[tr(M_1M_2)=tr(D_1D_2)+f_1f_2.\]
Such a pair has a common eigenvector if and only if $f_1=0$ or $f_2=0$, i.e. if and only if $tr(M_1M_2)=tr(D_1D_2)$. We have
$tr(D_1D_2)=c_{e_1,e_2}$ or $tr(D_1D_2)=c_{e_1,e_2^{-1}}$. 
\end{proof}
\bigskip

\emph{Proof of Theorem (\ref{reducibility locusVI}).}
We consider a normalized representation $\rho$ and the six (non oriented) generating paths $\gamma_{i,j}$, $i\neq j$. For a normalized representation $\rho$, these paths satisfy: $\rho(\gamma_{i,j})=I$. Therefore $\rho$ is reducible over $\gamma_{i,j}$ if and only if the pair of matrices $(M_i,M_j)$ is reducible. For $\{i,j\}$ in \{1,2,3\}, according to Lemma (\ref{paire reductible}), $\rho$ is reducible on the path $\gamma_{i,j}$ if and only if
$x_k=c_{e_i,e_j}$ or $x_k=c_{e_i,e_j^{-1}}$.
Therefore the reducibility locus $\mathcal{R}({\gamma_{i,j}})$ is given by the four lines $L_{e_i^{\pm 1},e_j^{\pm 1}}$. \carre

\begin{defi} The 12 Kaneko points are the intersections
\[p_{e_i,e_j}=L_{e_i,e_j}\cap L_{e_i^{-1},e_j^{-1}},\ p_{e_i,e_j^{-1}}=L_{e_i,e_j^{-1}}\cap L_{e_i^{-1},e_j},\ \{i,j\}\subset\{1,2,3,4\}.\]
\end{defi}
\begin{prop}\label{monodromie commmutante} The Kaneko points $p_{e_i,e_j}$ and $p_{e_i,e_j^{-1}}$ correspond to a normalized monodromy representation such that  $M_i$ and $M_j$ commute.
\end{prop}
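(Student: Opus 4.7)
The plan is to exploit the fact that a Kaneko point lies on two distinct lines of the cubic surface, and via Theorem~\ref{reducibility locusVI} together with Lemma~\ref{paire reductible} this translates to the pair $(M_i,M_j)$ admitting two distinct common eigenvectors. The general principle I will follow is purely linear algebraic: in $SL_2(\C)$, two matrices with non-involutive eigenvalues commute if and only if they admit two linearly independent common eigenvectors.

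First, I would interpret each of the lines $L_{e_i,e_j}$ and $L_{e_i^{-1},e_j^{-1}}$ in terms of the shared eigenvalue structure of $(M_i,M_j)$. Both lines lie in the plane $x_k = c_{e_i,e_j}$, and by Lemma~\ref{paire reductible} a normalized representation lying on this plane admits a common eigenvector $v$ for $(M_i,M_j)$ with eigenvalue pairing in $\{(e_i,e_j^{-1}),(e_i^{-1},e_j)\}$ (these are the pairings which reproduce $tr(D_iD_j)=c_{e_i,e_j}$ in the proof of that lemma). Within the plane, the two components of the degenerate conic are distinguished by the extra linear equation $l_{e_i,e_j}=e_ix_i+e_jx_j-a_ke_ie_j-a_4=0$ versus $l_{e_i^{-1},e_j^{-1}}=0$; I would match each line with the corresponding pairing by computing the coordinates $x_i,x_j$ explicitly for an upper-triangular realization of $(M_i,M_j)$ with prescribed common eigenvalues, and checking which of the two linear equations vanishes.

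The main step is then automatic: at the Kaneko intersection $p_{e_i,e_j}=L_{e_i,e_j}\cap L_{e_i^{-1},e_j^{-1}}$, both linear equations vanish simultaneously, so by the above correspondence the pair $(M_i,M_j)$ admits two common eigenvectors $v_1,v_2$ corresponding to the two opposite pairings $(e_i,e_j^{-1})$ and $(e_i^{-1},e_j)$. Under the standing genericity assumption $e_i\neq\pm 1$ we have $e_i\neq e_i^{-1}$, so $v_1$ and $v_2$ are linearly independent; hence $M_i$ and $M_j$ are simultaneously diagonalisable in the basis $(v_1,v_2)$ and therefore commute. The Kaneko point $p_{e_i,e_j^{-1}}$ is handled identically, replacing the plane $x_k=c_{e_i,e_j}$ by $x_k=c_{e_i,e_j^{-1}}$ and the two pairings by $\{(e_i,e_j),(e_i^{-1},e_j^{-1})\}$.

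The step I expect to be the real obstacle is the bookkeeping of the first step: carefully matching each individual line $L_{e_i^{\pm 1},e_j^{\pm 1}}$ with exactly one of the two possible eigenvalue pairings, so that the intersection really records two genuinely distinct pairings rather than the same pairing appearing twice. Once this identification is done cleanly, the rest of the argument is just the elementary remark on simultaneous diagonalisation.
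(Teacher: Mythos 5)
Your proposal is correct and takes essentially the same route as the paper. The paper works directly in the mixed basis of Lemma~\ref{paire reductible}: inside the plane $x_k=c_{e_i,e_j}$ the two lines are cut out by the vanishing of the off-diagonal entries $f_i$ and $f_j$ respectively, so at the Kaneko intersection both vanish and $M_i$, $M_j$ become simultaneously diagonal --- which is precisely your two-independent-common-eigenvector observation expressed in coordinates; the ``bookkeeping'' you were worried about is handled automatically by that parametrisation.
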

\begin{proof} In a "mixed basis",
\begin{equation}\label{base mixte} M_i=\left(\begin{array}{cc}e_i&0\\f_i&e_i^{-1}\end{array}\right),\ \
M_j=\left(\begin{array}{cc}e_j&f_j\\0&e_j^{-1}\end{array}\right)
\end{equation}
or the same writing changing $e_j$ in $e_j^{-1}$.
The reducibility locus is given by $f_if_j=0$, which defines two components $L_{e_i,e_j}$ and $L_{e_i^{-1},e_j^{-1}}$ (or $L_{e_i,e_j^{-1}}$ and $L_{e_i^{-1},e_j}$). Therefore $p_{e_i,e_j}$ (or $p_{e_i,e_j^{-1}}$) is defined by $f_i=0$ and $f_j=0$.\end{proof}

\bigskip

The Painlevé VI foliation admits 12 singular points, 4 over each singular fiber. The local description of the foliation in a neighborhood of these non linear singularities of ``regular type'' can be found in the chapter 4 of \cite{IKSY}. The Painlevé V foliation also admits 3 non linear regular singular points over 0. The germ of Painlevé equation admits a unique meromorphic solution around such a singular point, defining an analytic leave for the germ of foliation: we call them the \textit{central solutions}. These solutions have been studied by K. Kaneko \cite{Ka4}. Furthermore, they appears as the intersection of two codim 1 germs of invariant analytic surfaces \cite{IKSY}.

\begin{theorem}\label{correspondence}
\begin{enumerate}
  \item The Riemann-Hilbert correspondence $RH_{VI}$ sends the 12 central solutions to the 12 Kaneko points.
  \item The Riemann-Hilbert correspondence $RH_{VI}$ sends the local invariant varieties near a singular point to the 2 germs of lines intersecting at the corresponding Kaneko point.
\end{enumerate}
\end{theorem}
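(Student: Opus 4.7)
The plan is to combine the local analytic model of the Painlevé VI foliation at a singular point described in Chapter~4 of \cite{IKSY} with the isomonodromic property (Theorem \ref{isoVI}), and then use Proposition \ref{monodromie commmutante} together with Theorem \ref{reducibility locusVI} to pin down the image in the cubic surface.

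First I would recall that each of the twelve singular points of $\mathcal{F}_{VI}$ lies over one of the three singular fibres $t_0\in\{0,1,\infty\}$, where two of the positions $p_1,\dots,p_4$ collide. Near such a point the foliation has, after Okamoto's semi-compactification, a regular singularity whose local model admits exactly two smooth codimension one invariant analytic hypersurfaces whose transverse intersection is the central solution. By isomonodromy the central solution is a single leaf, so $RH_{VI}$ sends it to a single point $[\rho]\in\chi_{VI}(a)$, and each of the two invariant hypersurfaces to a germ of analytic curve through $[\rho]$ in the cubic surface $\mathcal{C}_{VI}(\theta)$.

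Next, to identify $[\rho]$ as a Kaneko point, I would analyse the confluent limit $t\to t_0$ of the Schlesinger system along the central solution. Along such a path the monodromy of $A(x,t)$ is constant while the singularities $p_i,p_j$ merge. A solution extends meromorphically through $t_0$ only if the ``confluent monodromy'' $M_iM_j$ produced by the collision agrees with the monodromy of the limiting Fuchsian system with residue $A_i+A_j$; a direct computation in a basis where this residue is diagonal shows that this compatibility is equivalent to the vanishing of \emph{both} off-diagonal entries of $(M_i,M_j)$ in the mixed basis of Lemma \ref{paire reductible}, i.e.\ to $M_i$ and $M_j$ being simultaneously diagonalizable. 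By Proposition \ref{monodromie commmutante}, $[\rho]$ is then a Kaneko point $p_{e_i,e_j}$ or $p_{e_i,e_j^{-1}}$, the pairing of signs being dictated by the Okamoto chart around the given singular point of $\mathcal{F}_{VI}$. Running over the four local charts above each of the three singular fibres then produces the twelve Kaneko points.

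For the second claim, I would weaken the commutation to the existence of a single common eigenvector. An invariant analytic hypersurface through the central solution supports a one-parameter family of leaves, all of which, by isomonodromy, yield representations lying on a codimension one subset of $\chi_{VI}(a)$. The analytic condition for the existence of such a family at the collision of $p_i$ and $p_j$ is that $(M_i,M_j)$ shares exactly one eigenvector, which in the mixed basis of the proof of Proposition \ref{monodromie commmutante} reads $f_i=0$ for one hypersurface and $f_j=0$ for the other. By Theorem \ref{reducibility locusVI} these two reducibility loci are exactly the components $L_{e_i^{\pm},e_j^{\pm}}$ of $\mathcal R(\gamma_{i,j})$, and by construction their intersection is the Kaneko point obtained in part~(1). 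Hence $RH_{VI}$ sends the two germs of invariant analytic surfaces to the two germs of lines meeting at that Kaneko point.

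The main obstacle is the combinatorial bookkeeping in the second step: for each of the twelve singular points of $\mathcal{F}_{VI}$ one must identify the pair $\{i,j\}$ and the precise sign pairing $(e_i^{\epsilon_i},e_j^{\epsilon_j})$ prescribed by the Okamoto chart, and therefore select which two of the four lines $L_{e_i^{\pm},e_j^{\pm}}$ are the images of the invariant separatrices. Carrying this out requires an explicit local normal form of the Schlesinger system at the collision and a careful asymptotic comparison of a formal fundamental solution with the Okamoto coordinates, so that the global count $12=3\times 4$ on the foliation side matches the count $12=6\times 2$ on the Kaneko side.
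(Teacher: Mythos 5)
Your approach is genuinely different from the paper's. The paper identifies the image of a central solution purely dynamically: the non-linear monodromy of $\mathcal{F}_{VI}$ around the singular fibre is conjugated via $RH_{VI}$ to the polynomial automorphism $h_{i,j}$ of Proposition~\ref{dynamicPVI}, so central solutions must land on \emph{fixed points} of $h_{i,j}$. Since $h_{i,j}$ already fixes the four Kaneko points sitting in the invariant planes $x_k=\mathrm{const}$ and, being affine on each level $x_k=c$, admits at most four fixed points, the twelve central solutions are matched to the twelve Kaneko points by a counting argument. The separatrices are then handled by observing that the pair of lines $L_{e_i,e_j}\cup L_{e_i^{-1},e_j^{-1}}$ is $h_{i,j}$-invariant and by excluding any other invariant analytic curve transverse to $dx_k=0$ via a periodic-orbit argument on the affine slices. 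You instead take the analytic route through the confluent limit $t\to t_0$, attempting to deduce the commutation of $M_i,M_j$ directly from the meromorphic extendability of the central solution.

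The main difficulty with this route is that the analytic claim at its heart \emph{is} Kaneko's theorem, which the paper deliberately derives as a corollary of Theorem~\ref{correspondence} (see the remark following the theorem), not as an ingredient of its proof. The step you describe as ``a direct computation in a basis where this residue is diagonal shows that this compatibility is equivalent to the vanishing of both off-diagonal entries'' is not a one-line calculation: one must control the asymptotics of a fundamental solution of the Fuchsian system as $p_i$ and $p_j$ collide --- a confluent, Stokes-type analysis --- and show that the absence of logarithmic divergence is exactly the condition $f_i=f_j=0$ in the mixed basis. Simply asserting this makes the argument circular relative to the paper's logical structure. The same objection applies to the second part, where the claim that an invariant separatrix corresponds to exactly one common eigenvector is again asserted rather than proved, and where one must further show that the image fills the full \emph{germ} of line rather than merely lying inside the reducibility locus. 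Finally, even granting all analytic claims, your argument only shows that each central solution lands on \emph{some} Kaneko point; establishing that the resulting map from the twelve singular points to the twelve Kaneko points is a bijection is precisely what the paper's fixed-point count is designed to provide, and your proposal would need a comparable combinatorial step to close the loop.
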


\begin{proof}
We consider the 4 singular points over 0. The non linear monodromy generated by a simple loop around 0 will keep invariant each meromorphic solutions and the analytic invariant surfaces which intersect on it, around this singular point.
Through the Riemann-Hilbert correspondence this non linear monodromy is conjugated to one of the polynomial dynamics $h_{i,j}$ given by (\ref{dynamicPVI}). The central solutions are sent on fixed points of $h_{i,j}$.
Since each $h_{i,j}$ fixes 4 central points, and has no more than 4 fixed points, this proves the first point.

Now we search for invariant curves under the action of $h_{i,j}$ through the central point $p_{e_i,e_j}$. Clearly the two lines
$L_{e_i,e_j}$ and $L_{e_i^{-1},e_j^{-1}}$ are invariant since the union of the two lines is $(x_k=c_{e_i,e_j})\cap(F_{VI}(x,\theta)=0)$ which is invariant. Suppose that there exists another local analytic invariant curve transverse to $dx_k=0$. It cut the plane $(x_k=c)$ on a finite set of points, for any value $c$ near from $c_{e_i,e_j}$. This would create a periodic orbit on each $(x_k=c)$. The restriction of $h_{i,j}$ on each $(x_k=c)$ is a family of affine maps, and for a generic affine map, there doesn't exist such a periodic orbit. Therefore the local invariant surfaces are sent on the germ of two lines around each central point.
\end{proof}

\begin{rema} Using Proposition (\ref{monodromie commmutante}) and Theorem (\ref{correspondence}), we recover here a result of K. Kaneko \cite{Ka4}: the linear monodromy data preserved along a Kaneko solution is generated by 3 matrices with a pair of commuting matrices. In particular, it generates a solvable group.
Nevertheless, the theorem of K. Kaneko is much more precise: it describes \emph{explicitely} this linear solvable monodromy data.
\end{rema}

\noindent We have a result similar to Theorem (\ref{reducibility locusVI}) for the Painlevé V foliation.

\begin{theorem} \label{reducibility locusV} Let $\{\gamma_{i,j},\mathcal{R}\}$ be the presentation of $\pi_1^V(X,S)$ given by figure (\ref{pi1V}), and $Tr_V^+:\ \chi_{V}^+(a)\rightarrow \mathcal{C}_V(\theta^+)$ the trace map associated to this presentation.
Any line in $\mathcal{C}_V(\theta^+)$ is the image of a reducibility locus in $\chi_{V}(a)$ for some path in $\pi_1^V(X,S)$.

\end{theorem}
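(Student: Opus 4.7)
The proof strategy mirrors that of Theorem \ref{reducibility locusVI}: for each of the 18 lines of $\mathcal{C}_V(\theta^+)$ I would exhibit a path $\gamma$ in $\pi_1^V(X,S)$ such that $Tr_V^+\bigl(\mathcal{R}(\gamma)\bigr)$ is that line. The central tool remains Lemma \ref{paire reductible}, which converts the reducibility of a pair of $SL_2$-matrices with prescribed eigenvalues into an explicit trace equation.

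The first step is to identify, at each base point, a distinguished loop whose image under any representation in $\chi_V(a)$ has eigenvalues fixed by the local data $a=(a_0,a_3,a_4)$. At $s_3$ and $s_4$ these are the standard local loops $\gamma_{3,3}$ and $\gamma_{4,4}$, with eigenvalues $e_3^{\pm1}$ and $e_4^{\pm1}$. At $s_1$ and $s_2$, the purely geometric monodromies $\gamma_{1,1},\gamma_{2,2}$ have variable trace $x_3$, so I use instead the \emph{formal} monodromies $r_k^{-1}\widehat{\gamma}_{k,k}r_k$ transported to $s_k$ along the rays $r_k$. These are legitimate loops in $\pi_1^V(X,S)$ whose image has fixed eigenvalues $e_0^{\pm1}$, and this substitution (not available in the tame setting) is precisely what produces the extra families of lines specific to the wild situation.

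Next I would match the lines to pairs of distinguished loops and connecting paths. The four bottom lines $D_{e_3,e_4^{\pm}},D_{e_3^{-1},e_4^{\mp}}$ come from the pair $(\gamma_{3,3},\gamma_{4,4})$ along $\gamma_{3,4}$: the condition $tr(M_3M_4)=c_{e_3,e_4^{\pm}}$ translates via the identification $x_3^+ = y_3 = tr(M_3M_4)$ into $x_3=c_{e_3,e_4^{\pm}}$. The twelve side lines $\Delta_{e_i^{\pm}},D^l_{e_i^{\pm}},D^r_{e_i^{\pm}}$ arise from the four combinations of a formal monodromy at $s_k$ ($k\in\{1,2\}$) with $\gamma_{j,j}$ ($j\in\{3,4\}$), each yielding two reducibility values $c_{e_0,e_j^{\pm}}$. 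Using the normalized matrices and Lemma \ref{coeff-traces}, a direct computation gives for instance $tr(M_0 M_3) = e_0a_3 - x_1(e_0-e_0^{-1})$, so the two conditions $tr(M_0M_3)=c_{e_0,e_3^{\pm}}$ reduce to the planes $x_1=e_3^{\pm1}$. Each such plane cuts the cubic in a degenerate conic; the key factorization, illustrated for $x_1=e_3$, reads
\[
F_V(x,\theta^+)\big|_{x_1=e_3}=(x_2-e_0e_3^{-1})(e_3x_3+x_2-a_4-e_0e_3),
\]
the first factor giving $\Delta_{e_3}$ on $x_1x_2=e_0$ and the second a line on $z_0=0$, i.e.\ one of type $D^l$. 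Analogous factorizations handle the other side lines, with the $l/r$ dichotomy reflecting the two homotopy classes of connecting path around $p_\infty$. Finally the two middle lines $D_{f_0,f_0^{-1}},D_{f_0^{-1},f_0}$ on the plane $x_3=c_{e_0,e_0^{-1}}=e_0^2+e_0^{-2}$ arise from reducibility along a loop at $s_3$ conjugate to $\gamma_{3,3}$ by a path encircling the irregular point, which after the above substitution of formal data at $s_1$ produces a trace equation in $x_3$ alone.

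The main obstacle is the wild structure of the groupoid at $p_\infty$: several non-conjugate ``local loops'' coexist at $s_1$ (Stokes, formal, torus, geometric), and each connecting path between $\{s_1,s_2\}$ and $\{s_3,s_4\}$ admits two distinct homotopy classes around the irregular point, producing different conjugations and hence different reducibility conditions. Matching each of the 18 lines of Figure~\ref{droites-chiV} to a specific (loop, path, sign) triple -- and in particular partitioning the eight side lines into $l$ and $r$ families -- rests on a careful combinatorial bookkeeping of these extra choices introduced by the irregular singularity, which is the principal novelty with respect to the tame Painlev\'e VI case.
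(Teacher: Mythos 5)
Your overall strategy is the right one and, to a point, is more explicit than the paper's: the paper proves only one lemma (for the middle pair of lines) and then says the other cases are similar, whereas you carry out the bottom lines and the side lines via the trace identity $tr(M_0 M_3)=e_0 a_3 - x_1(e_0-e_0^{-1})$ and the factorization $F_V|_{x_1=e_3}=(x_2-e_0e_3^{-1})(e_3x_3+x_2-a_4-e_0e_3)$, both of which are correct.

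However, the one case the paper does work out is exactly the one you mishandle: the two middle lines. You attribute $D_{f_0,f_0^{-1}}$, $D_{f_0^{-1},f_0}$ to ``reducibility along a loop at $s_3$ conjugate to $\gamma_{3,3}$ by a path encircling the irregular point,'' but reducibility (Definition \ref{def_red locus}) is only defined along a path between two \emph{distinct} base points, and no loop at $s_3$ produces these lines. The correct path is $\gamma_{1,2}^r$, the arc from $s_1$ to $s_2$ passing through the interior of the irregular halo. For a normalized representation $\rho(\gamma_{1,2}^r)=U_1M_0U_2$ and $\rho(\widehat\gamma_{1,1})=\rho(\widehat\gamma_{2,2})=M_0$, so $\rho\in\mathcal{R}(\gamma_{1,2}^r)$ iff the pair $\bigl((U_1M_0U_2)^{-1}M_0(U_1M_0U_2),\,M_0\bigr)$ is reducible, which reduces to $u_1u_2=0$, i.e.\ $tr(U_1M_0U_2)=tr(M_0)$, i.e.\ $x_3=a_0$. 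Note moreover that the plane is $x_3=a_0=e_0+e_0^{-1}=f_0^2+f_0^{-2}$ (with $f_0^2=e_0$), that is $c_{f_0,f_0^{-1}}$, \emph{not} $c_{e_0,e_0^{-1}}=e_0^2+e_0^{-2}$ as you wrote; the ``$c_{e_0,e_0^{-1}}$'' in Remark \ref{equations droites} is a misprint, as the factorization $F_V=(x_3-e_0-e_0^{-1})(x_1x_2-e_0)+d_{f_0,f_0^{-1}}d_{f_0^{-1},f_0}$ makes plain.

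A second, smaller issue is your accounting of the twelve side lines. Using only the four pairs $\bigl(\{s_1,s_2\}\times\{s_3,s_4\}\bigr)$ with the trivial (normalized) connecting paths gives just the planes $x_1=e_3^{\pm1}$ and $x_2=e_4^{\pm1}$, i.e.\ $\Delta_{e_3^{\pm}},D^l_{e_3^{\pm}},\Delta_{e_4^{\pm}},D^r_{e_4^{\pm}}$, so $8$ of the $12$ lines. The remaining four ($D^l_{e_4^{\pm}}$ and $D^r_{e_3^{\pm}}$) lie on the planes $x_1=e_0e_4^{\mp1}$ and $x_2=e_0e_3^{\mp1}$, which one only reaches by choosing a connecting path that goes through $\gamma_{1,2}^r$ rather than $\gamma_{1,2}^l$; this conjugation by $U_1M_0U_2$ is exactly the ``$l/r$ dichotomy'' you mention, but it must be applied to the side pairs themselves, not just flagged abstractly. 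Unlike the paper, which waves this away with ``similar computations,'' you claim to have located each line, so those four computations do need to be done.
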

The proof is similar to the one of Theorem (\ref{reducibility locusVI}, but one needs to investigate different cases. We just mention here an example:
\begin{lemma}
The reducibility locus $\mathcal{R}(\gamma_{1,2}^r)$ is given by a pair of lines defined by $(x_3=a_0)\cap\mathcal{C}_{V}$. 
\end{lemma}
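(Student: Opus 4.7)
I will mirror the proof of Theorem \ref{reducibility locusVI}, but replace the commuting-pair criterion of Lemma \ref{paire reductible} with an argument about the local wild monodromy at the irregular point $\infty$.

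\textit{Step 1 (trace identity).} Work in a normalized representation, so the data is $(U_1, M_0, U_2, M_3, M_4)$ with $M_0 = \mathrm{diag}(e_0, e_0^{-1})$, $U_1 = I + u_1 E_{21} \in U^-$, $U_2 = I + u_2 E_{12} \in U^+$. The computation preceding Proposition \ref{equation chiV0} gives $\rho(\gamma_{1,2}^r) = U_1 M_0 U_2$, and the first identity of Lemma \ref{coeff-traces} yields $e_0 u_1 u_2 = x_3 - a_0$.

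\textit{Step 2 (reducibility $\iff x_3 = a_0$).} Both base points $s_1, s_2$ live on the divisor $D_\infty$ over the irregular singularity and $\gamma_{1,2}^r$ is the local path between them. Unwinding Definition \ref{def_red locus} in this wild context, reducibility along $\gamma_{1,2}^r$ translates into the existence of a line in $\mathbb{C}^2$ jointly preserved by $M_0$, $U_1$, and (the transport of) $U_2$, that is, by the local wild monodromy around $\infty$. The diagonal $M_0$ preserves both $\mathbb{C} e_1$ and $\mathbb{C} e_2$; the lower unipotent $U_1$ always preserves $\mathbb{C} e_2$, and additionally $\mathbb{C} e_1$ precisely when $u_1 = 0$; dually the upper unipotent $U_2$ always preserves $\mathbb{C} e_1$, and additionally $\mathbb{C} e_2$ precisely when $u_2 = 0$. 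Since $U^-$ and $U^+$ fix \textit{opposite} flags, a common invariant line exists iff $u_1 = 0$ or $u_2 = 0$. Combined with Step 1, this is equivalent to $x_3 = a_0$.

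\textit{Step 3 (factoring the planar section).} To identify $(x_3 = a_0) \cap \mathcal{C}_V(\theta^+)$ as a pair of lines, substitute $x_3 = a_0 = e_0 + e_0^{-1}$ into $F_V(x, \theta^+)$. Using $\theta_3^+ = e_0$ and $\theta_4^+ = e_0^2 + e_0 a_3 a_4 + 1$, the terms $-\theta_3^+ x_3 + \theta_4^+$ collapse to $e_0 a_3 a_4$, leaving
\[
F_V|_{x_3 = a_0} = x_1^2 + x_2^2 + (e_0 + e_0^{-1}) x_1 x_2 - (a_3 + e_0 a_4) x_1 - (a_4 + e_0 a_3) x_2 + e_0 a_3 a_4,
\]
which factors as
\[
F_V|_{x_3 = a_0} = (x_1 + e_0 x_2 - e_0 a_4)(x_1 + e_0^{-1} x_2 - a_3).
\]
Finally, via the last two identities of Lemma \ref{coeff-traces}, the condition $u_1 = 0$ is exactly $x_1 + e_0 x_2 - e_0 a_4 = 0$ (after plugging $x_3 = a_0$) and $u_2 = 0$ is $x_1 + e_0^{-1} x_2 - a_3 = 0$, so the two lines are precisely the two components of the reducibility locus.

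\textit{Main obstacle.} The delicate step is Step 2. In the regular (VI) setting reducibility was read off via commuting pairs (Lemma \ref{paire reductible}); here the local structure at $s_1, s_2$ is non-commutative, with Stokes matrices in opposite unipotent subgroups $U^\pm$. The key geometric input is that $U^-$ and $U^+$ stabilize opposite flags, so they share a common invariant line only when at least one of them is trivial; this is what cleanly selects the condition $u_1 u_2 = 0$ (equivalently $x_3 = a_0$) and prevents other candidate reducibility strata (such as the irreducible conic $x_3 = e_0$) from contributing.
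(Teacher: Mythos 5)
Your proof reaches the correct conclusion and follows the same three-step outline as the paper's: extract the condition $u_1u_2=0$ by an ``opposite unipotents'' observation, convert it to $x_3=a_0$ by the first identity of Lemma~\ref{coeff-traces}, and factor the plane section $F_V|_{x_3=a_0}$. Your Step~1 is exactly the paper's trace identity, and your Step~3 factorization $(x_1+e_0x_2-e_0a_4)(x_1+e_0^{-1}x_2-a_3)$ is the specialization of the paper's decomposition $F_V=(x_3-e_0-e_0^{-1})(x_1x_2-e_0)+d_{f_0,f_0^{-1}}d_{f_0^{-1},f_0}$ to $x_3=a_0$ (your two linear forms are $f_0^{-1}d_{f_0,f_0^{-1}}$ and $f_0\,d_{f_0^{-1},f_0}$). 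The identification of $u_1=0$, resp.\ $u_2=0$, with each line via the last two relations of Lemma~\ref{coeff-traces} is also correct.

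Where you genuinely deviate is Step~2. The paper gets $u_1u_2=0$ via a chain of ``reducible pair'' equivalences starting from $\bigl((U_1M_0U_2)^{-1}M_0(U_1M_0U_2),\,M_0\bigr)$ and passing to $\bigl(M_0^{-1}U_1^{-1}M_0U_1,\,U_2M_0U_2^{-1}M_0^{-1}\bigr)$; you instead ask for a line simultaneously invariant under $M_0$, $U_1$ and $U_2$ and then invoke the opposite-flags observation. Your criterion is cleaner, but it is \emph{strictly stronger} than a literal reading of Definition~\ref{def_red locus}: a short matrix computation shows that $(U_1M_0U_2)^{-1}M_0(U_1M_0U_2)$ and $M_0$ already share an eigenvector along the additional branch $u_1u_2=-e_0^{-2}$, where $(0,1)$ is mapped to $(1,0)$ by $U_1M_0U_2$; this branch lies on $x_3=e_0$, which is \emph{not} a pair of lines. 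The paper's second displayed pair does not admit this branch, so the asserted equivalence between the two pairs is exactly the delicate point, and your phrase ``unwinding Definition~\ref{def_red locus}'' glosses over the same gap. What actually selects $u_1u_2=0$ --- and what you in fact use --- is joint invariance of a line under the whole local wild monodromy $\langle M_0,U_1,U_2\rangle$ at $\infty$, not merely a common eigenvector of one transported loop and one local loop. This is the right notion for a path $\gamma_{1,2}^r$ with both endpoints on the irregular divisor, and it is the one both you and the paper implicitly intend, but you should say explicitly that you are strengthening the common-eigenvector criterion of Definition~\ref{def_red locus} to joint invariance and why this is the appropriate reading in the wild setting.
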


\textit{Proof.} For a normalized representation $\rho$ we have
\[\rho(\gamma_{1,2}^r)=U_1M_0U_2,\ \rho(\widehat\gamma_{1,1})=\rho(\widehat\gamma_{2,2})=M_0.\]
Therefore, according to Remark (\ref{critere red}), we have:
\begin{equation*}
\begin{aligned}
\rho\in\mathcal{R}(\gamma_{1,2}^r)&\Leftrightarrow ((U_1M_0U_2)^{-1}M_0(U_1M_0U_2),M_0) \mbox{ is a reducible pair}\\
&\Leftrightarrow (M_0^{-1}U_1^{-1}M_0U_1,U_2M_0U_2^{-1}M_0^{-1}) \mbox{ is a reducible pair}\\
&\Leftrightarrow u_1u_2=0\\
&\Leftrightarrow tr(U_1M_0U_2)=tr(M_0),\\
&\Leftrightarrow x_3=a_0=f_0^2+f_0^{-2}\mbox{ where }f_0^2=e_0.
\end{aligned}
\end{equation*}
The intersection $(x_3=a_0)\cap \mathcal{C}_{V}$ is given by one pair of lines. Indeed we have:  
\[F_V(x,\theta)=(x_3-e_0-e_0^{-1})(x_1x_2-e_0) +d_{f_0,f_0^{-1}}d_{f_0^{-1},f_0},\]
with
\[d_{f_0,f_0^{-1}}=f_0x_1+f_0^{-1}x_2-f_0a_3,\ d_{f_0^{-1},f_0}=f_0^{-1}x_1+f_0x_2-f_0a_4.\]
Therefore the reducibility locus $\mathcal{R}(\gamma_{1,2}^r)$ is given by the pair of lines:
\begin{equation*}
\begin{aligned}
D_{f_0,f_0^{-1}}:&\ x_3=e_0+e_0^{-1} \mbox{ and } d_{f_0,f_0^{-1}}=0,\ \
D_{f_0^{-1},f_0}:&\ x_3=e_0+e_0^{-1} \mbox{ and } d_{f_0^{-1},f_0}=0.\ \carre
\end{aligned}
\end{equation*} 
The others cases are treated with similar computations. Here we also have a correspondence between the 3 Kaneko solutions described by
K. Kaneko and Y. Ohyama in  and \cite{KaOh}, and the 3 Kaneko points $p_{e_3,e_4}$, $p_{e_3,e_4^{-1}}$ and $p_{e_0,e_0^{-1}}$ through $RH_V$.

The complete reducibility locus, defined by all the paths $\gamma$ of the fondamental groupoid, contains these lines and their images by the tame dynamics. We don't know if the lines and the tame action generate the complete reducibility locus (maybe we have to add some parabolas).

\subsection{Log-canonical coordinates on $\mathcal{C}_V(\theta)$ and cluster sequences}\label{logcan}
Let $\mathcal{C}_V(\theta)$ be the family of symplectic cubic affine surfaces in $\C^3$ defined by $F_V(x,\theta)=0$.
The cubic surfaces $\mathcal{C}_V(\theta)$ are birational to $\C^2$. Indeed, since the equation $F_V(\theta)$ is affine in $x_3$, the map $\pi_3:\ \mathcal{C}_V(\theta)\rightarrow \C^2$ induced by
$(x_1,x_2,x_3)\rightarrow (x_1,x_2)$
is rationally invertible and therefore birational. The polar locus of $\pi_3^{-1}$ is given by $F_{V,x_3}=x_1x_2-e_0=0$. Recall that $\mathcal{C}_V(\theta)$ has a symplectic structure defined by
\[\omega_V(\theta)=\frac{dx_i\wedge dx_j}{\partial F_{V,\theta}/\partial x_k},\]
for any ${i,j,k}={1,2,3}$. We want here to introduce \textit{symplectic} birational maps:
\bigskip\\
\textbf{Notations} (see Appendix):
\begin{description}
  \item[-] $\omega_{log}=\frac{du}{u}\wedge\frac{dv}{v}$ : the log-canonical symplectic form on $\C^2$;
  \item[-] $Bir(\C^2)$ : the group of the birational automorphisms of the complex plane;
  \item[-] $Symp(\C^2)=Symp^+(\C^2)$ : the subgroup of $Bir(\C^2)$ of the automorphisms which preserve $\omega_{log}$;
  \item[-] $Symp^-(\C^2)$ : the subset of $Bir(\C^2)$ of the automorphisms $\varphi$ such that
  $\varphi^*\omega_{log}=-\omega_{log}$;
  \item[-] $Symp^\pm(\C^2)$ : the subgroup of $Bir(\C^2)$ generated by $Symp^+(\C^2)\cup Symp^-(\C^2)$.
\end{description}
\begin{defi}
\begin{enumerate}
  \item A log-canonical system of coordinates on $\mathcal{C}_{V}(\theta)$ is a birational symplectic morphism from
  $\left(\mathcal{C}_{V}(\theta),\omega_{V}(\theta)\right)$ to $(\C^2,\omega_{log})$.
  \item A log-canonical function on $\mathcal{C}_{V}(\theta)$ is a component of some log-canonical system.
  \item A log-canonical sequence of coordinates is a sequence of coordinates such that two consecutive elements define a log-canonical system of coordinates.
  \item A log-anti-canonical system of coordinates on $\mathcal{C}_{V}(\theta)$ is a birational symplectic morphism from
  $\left(\mathcal{C}_{V}(\theta),\omega_{V}(\theta)\right)$ to $(\C^2,-\omega_{log})$.
\end{enumerate}
\end{defi}
\begin{rema}
\begin{enumerate}
\item
If $(x,y)$ is a log-anti-canonical system of coordinates, $(y,x)$ and $(x^{-1},y)$ are log-canonical systems of coordinates.
\item If $(x,y)$ is a log-canonical system of coordinates, for any $\lambda$, $\mu$ in $\C^*$, $(\lambda x,\mu y)$ is a log-canonical system of coordinates.
\end{enumerate}
\end{rema}
The cubic surface $\mathcal{C}_{V}(\theta)$ is symplectic birationally equivalent to $(\C^2,\omega_{log})$. Indeed
we have two first examples of log-canonical systems of coordinates on $\mathcal{C}_{V}(\theta)$:
\begin{prop}\label{z1} Let $y_1=x_1$, $y_2=x_2$ and $z_1=F_{V,x_3}=x_1x_2-e_0$. The pairs
$(y_1,z_1)$, and $(z_1,y_2)$
are log-canonical systems of coordinates on $\mathcal{C}_{V}(\theta)$. A map $z_1$ such that both  $(y_1,z_1)$, and $(z_1,y_2)$
are log-canonical systems of coordinates is unique up to a multiplicative constant.
\end{prop}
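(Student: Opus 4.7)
The plan is to do a direct verification by chain rule, using that the Poincar\'e residue formula identifies the symplectic form as $\omega_V(\theta)=\dfrac{dx_1\wedge dx_2}{F_{V,x_3}}$, where $F_{V,x_3}=x_1x_2-\theta_3^+=x_1x_2-e_0=z_1$. So the pairing with the log-canonical form reduces to computing one Jacobian.

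First I would write $dz_1=x_2\,dx_1+x_1\,dx_2$ and observe
\[
\frac{dy_1}{y_1}\wedge\frac{dz_1}{z_1}=\frac{dx_1\wedge(x_1\,dx_2)}{x_1(x_1x_2-e_0)}=\frac{dx_1\wedge dx_2}{x_1x_2-e_0}=\omega_V(\theta),
\]
and similarly
\[
\frac{dz_1}{z_1}\wedge\frac{dy_2}{y_2}=\frac{(x_2\,dx_1)\wedge dx_2}{x_2(x_1x_2-e_0)}=\omega_V(\theta).
\]
Birationality of each map is clear: from $(y_1,z_1)$ one recovers $x_1=y_1$, $x_2=(z_1+e_0)/y_1$, and then $x_3$ is the unique rational solution of the cubic $F_V=0$, which is affine in $x_3$ with leading coefficient $F_{V,x_3}=z_1\not\equiv 0$. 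The inverse from $(z_1,y_2)$ is analogous. Thus both pairs are log-canonical systems of coordinates.

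For uniqueness, suppose $\tilde z$ is a rational function on $\mathcal{C}_V(\theta)$ such that both $(y_1,\tilde z)$ and $(\tilde z,y_2)$ are log-canonical. Using $(y_1,z_1)$ as birational coordinates, write $d\log\tilde z-d\log z_1=A\,dy_1+B\,dz_1$. The first log-canonical condition gives
\[
0=\frac{dy_1}{y_1}\wedge\Bigl(\frac{d\tilde z}{\tilde z}-\frac{dz_1}{z_1}\Bigr)=\frac{B}{y_1}\,dy_1\wedge dz_1,
\]
hence $B\equiv 0$, so $\tilde z/z_1$ depends only on $y_1$, say $\tilde z=z_1\,h(y_1)$. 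Repeating the argument with the coordinates $(z_1,y_2)$ and the second condition yields $\tilde z=z_1\,k(y_2)$. Equating, $h(y_1)=k(y_2)$ as rational functions on $\mathcal{C}_V(\theta)$, and since $y_1=x_1$ and $y_2=x_2$ are algebraically independent on the surface, both functions must reduce to the same constant $c\in\mathbb{C}^*$, giving $\tilde z=c\,z_1$.

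The routine calculations are all very short; the only place where care is needed is the uniqueness argument, where one must work on a Zariski open set where $(y_1,z_1)$ and $(z_1,y_2)$ are honest coordinate systems and where none of the denominators vanish, but this causes no real difficulty since each pair is birational onto $\mathbb{C}^2$.
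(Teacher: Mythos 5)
Your proof is correct and follows essentially the same route as the paper: the same two-line Jacobian check that $\frac{dy_1}{y_1}\wedge\frac{dz_1}{z_1}=\frac{dz_1}{z_1}\wedge\frac{dy_2}{y_2}=\omega_V(\theta)$, and for uniqueness the same observation that the first condition forces $\tilde z=c(y_1)z_1$ while the second forces $\tilde z=k(y_2)z_1$, hence $c$ is constant. You spell out the birationality of $(y_1,z_1)$ and the algebraic independence of $x_1,x_2$ more explicitly than the paper does, but these are the same underlying ideas.
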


\begin{proof}
We have:
\[\frac{dy_1}{y_1}\wedge\frac{dz_1}{z_1}=\frac{dx_1}{x_1}\wedge\frac{x_1dx_2+x_2dx_1}{z_1}=\frac{dx_1\wedge dx_2}{x_1x_2-e_0}=\omega_V(\theta),\]
and we have a similar computation for $(z_2,y_2)$. All the maps such that $(y_1,z)$
is a log-canonical system of coordinates write $z=c(y_1)z_1$. Therefore the only maps such that $(y_1,z)$, and $(z,y_2)$
are log-canonical systems of coordinates are $z=cz_1$ for some $c$ in $\C^*$.
\end{proof}
\bigskip\\
Therefore $(y_1,z_1,y_2)$ is a log-canonical triple, which satisfies: $z_1=y_1y_2-e_0$.
In order to construct new log-canonical or log-anti-canonical systems of coordinates we consider the two maps:
\begin{equation*}
\begin{aligned}
&\sigma_1(x_1,x_2,x_3)=(x_1-F_{V,x_1},x_2,x_3)=(-x_1-x_2x_3+\theta_1,x_2,x_3)\\
&\sigma_2(x_1,x_2,x_3)=(x_1,x_2-F_{V,x_2},x_3)=(x_1,-x_2-x_1x_3+\theta_2,x_3).
\end{aligned}
\end{equation*}
\begin{lemma}\label{sigma}
$\sigma_1$ and $\sigma_2$ define polynomial involutive automorphims of $\mathcal{C}_V(\theta)$. Furthermore they are anti-symplectic automorphisms, i.e. $\sigma_i^*\omega_V(\theta)=-\omega_V(\theta)$.
\end{lemma}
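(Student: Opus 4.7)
The plan is to use the Vieta-type observation that the defining equation $F_V(x,\theta)=0$ is a monic quadratic in $x_1$ (for $\sigma_1$) and in $x_2$ (for $\sigma_2$), and that $\sigma_i$ is precisely the involution exchanging the two roots. Then the anti-symplectic statement will fall out of a one-line computation using the form of $\omega_V$ with the gradient component corresponding to the varying variable in the denominator.

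\textbf{Preservation and involutivity of $\sigma_1$.} Viewing $F_V$ as a polynomial in $x_1$ with $x_2,x_3$ as parameters, one has
\[F_V(x,\theta)=x_1^{2}+(x_2x_3-\theta_1)\,x_1+\bigl(x_2^{2}-\theta_2 x_2-\theta_3 x_3+\theta_4\bigr).\]
It is monic of degree two; by Vieta, if $x_1$ is a root then the other root is $x_1'=(\theta_1-x_2x_3)-x_1=-x_1-x_2x_3+\theta_1$. This is exactly the first coordinate of $\sigma_1(x)$. Hence $\sigma_1$ maps $\mathcal{C}_V(\theta)$ to itself, and since swapping the two roots twice returns to the start, $\sigma_1^{2}=\mathrm{Id}$. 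The argument for $\sigma_2$ is identical, the equation being equally monic quadratic in $x_2$.

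\textbf{Anti-symplecticity.} I would use the presentation $\omega_V(\theta)=\dfrac{dx_2\wedge dx_3}{F_{V,x_1}}$ for $\sigma_1$. Because $\sigma_1$ fixes $x_2$ and $x_3$ we have $\sigma_1^{*}(dx_2\wedge dx_3)=dx_2\wedge dx_3$, so everything reduces to computing $F_{V,x_1}\circ\sigma_1$. Differentiating, $F_{V,x_1}=2x_1+x_2x_3-\theta_1$, and substituting $x_1\mapsto -x_1-x_2x_3+\theta_1$ gives
\[F_{V,x_1}\bigl(\sigma_1(x)\bigr)=2(-x_1-x_2x_3+\theta_1)+x_2x_3-\theta_1=-(2x_1+x_2x_3-\theta_1)=-F_{V,x_1}(x).\]
Therefore $\sigma_1^{*}\omega_V(\theta)=-\omega_V(\theta)$. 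The calculation for $\sigma_2$ is the mirror image: use $\omega_V(\theta)=\dfrac{dx_3\wedge dx_1}{F_{V,x_2}}$ and verify $F_{V,x_2}\circ\sigma_2=-F_{V,x_2}$.

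There is no real obstacle here; the only thing to be a bit careful about is choosing, for each $\sigma_i$, the presentation of $\omega_V$ that places the relevant partial derivative $F_{V,x_i}$ in the denominator so that the two fixed coordinates appear in the numerator. This avoids having to use the relation $dF_V=0$ on the tangent space of $\mathcal{C}_V(\theta)$ and makes both steps into trivial substitutions.
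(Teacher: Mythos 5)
Your proof is correct and follows essentially the same route as the paper: the paper checks $F_V\circ\sigma_i=F_V$, notes involutivity by direct computation, and then uses the same presentation $\omega_V=dx_2\wedge dx_3/F_{V,x_1}$ (resp.\ $dx_3\wedge dx_1/F_{V,x_2}$) to get $\sigma_i^*\omega_V=-\omega_V$ from $F_{V,x_i}\circ\sigma_i=-F_{V,x_i}$. The Vieta-type packaging you use for the first part (swapping the two roots of a monic quadratic) is a slightly more conceptual way to see both preservation of the cubic and involutivity at once, but the underlying computation is the same.
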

\begin{proof} The involutive property is a direct computation. We have $F_V\circ\sigma_i=F_V$, and therefore the $\sigma_i$'s are polynomial automorphims. Finally, by using
$\omega_V(\theta)= \frac{dx_2\wedge dx_3}{x_2x_3+2x_1-\theta_1}$ (resp. $\omega_V(\theta)= \frac{dx_3\wedge dx_1}{x_3x_1+2x_2-\theta_2}$), 
we obtain $\sigma_1^*\omega_V(\theta)=-\omega_V(\theta)$ (resp. $\sigma_2^*\omega_V(\theta)=-\omega_V(\theta)$).
\end{proof}
\bigskip\\
We set:
\[g=\sigma_1\circ\sigma_2,\ \  g^{-1}=\sigma_2\circ\sigma_1.\]
From Lemma (\ref{sigma}), $g$ is a polynomial symplectic automorphisms of $(\mathcal{C}_V(\theta),\omega_V(\theta))$.
Therefore, starting from the log-canonical triple $(y_1,z_1,y_2)$, we obtain two other log-canonical triples by applying $\sigma_i^*$, $i=1,2$ and reversing the order of the triple. We set:
\begin{equation*}
\begin{aligned}
& (y_2,z_2,y_3):=(\sigma_1^*y_2,\sigma_1^*z_1,\sigma_1^*y_1);\\
& (y_0,z_0,y_1):=(\sigma_2^*y_2,\sigma_2^*z_1,\sigma_2^*y_1).
\end{aligned}
\end{equation*}
Therefore we obtain a log-canonical sequence of length seven:
\[\mathcal{H}:\ (y_0,z_0,y_1,z_1,y_2,z_2,y_3).\]
We call it the fundamental log-canonical heptuple. Note that we have:
\[(y_2,z_2,y_3)=g^{-1 *}(y_0,z_0,y_1).\]
We extend the fundamental log-canonical sequence to an infinite log-canonical sequence by setting for any $k$ in \Z:
\[(y_{2k},z_{2k},y_{2k+1},z_{2k+1},y_{2k+2},z_{2k+3},y_{2k+3}):=(g^{-k*})(y_0,z_0,y_1,z_1,y_2,z_2,y_3).\] 
This sequence satisfies the following relations:
\begin{prop}\label{exchange relations}[exchange relations] let $A_V=\Z[e_0^\pm,e_3^\pm,e_4^\pm]$. Let $P$, $Q_1$ and $Q_2$ in
$A_V[t]$ defined by
\begin{equation*}
\begin{aligned}
&P(t)=t+e_0,\\
&Q_1(t)=(t-e_0e_4^{-1})(t-e_0e_4)(t-e_3^{-1})(t-e_3),\\
&Q_2(t)=(t-e_0e_3^{-1})(t-e_0e_3)(t-e_4^{-1})(t-e_4).
\end{aligned}
\end{equation*}
For all $k$ in $\Z$ we have
\begin{equation*}
\begin{aligned}
&y_ky_{k+1}=P(z_k)\\
&z_{2k}z_{2k+1}=Q_1(y_{2k+1})\\
&z_{2k+1}z_{2k+2}=Q_2(y_{2k+2}).
\end{aligned}
\end{equation*}
\end{prop}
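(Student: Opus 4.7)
The plan is to verify the relations at a handful of base indices $k$ by direct polynomial computation on the ambient $\C^3$ (working modulo $F_V$ where needed), and then propagate them to all $k \in \Z$ via the symplectic polynomial automorphism $g=\sigma_1\circ\sigma_2$. The reduction to a finite check is possible because, by construction, $(y_{2k},z_{2k},\ldots,y_{2k+3}) = g^{-k*}(y_0,z_0,\ldots,y_3)$, and $g$ pulls back the cubic relation $F_V=0$ to itself (Lemma~\ref{sigma}), while the polynomials $P,Q_1,Q_2\in A_V[t]$ have constant coefficients so are $g^{*}$-invariant.

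For the $P$-relations, I only need to check the two cases $k=0$ and $k=1$: propagation by $g^{-m*}$ then yields $y_{2m}y_{2m+1}=P(z_{2m})$ and $y_{2m+1}y_{2m+2}=P(z_{2m+1})$ for all $m\in\Z$. Both base cases are immediate from the explicit formulas obtained by applying $\sigma_2^*$ and the identity to the seed triple $(y_1,z_1,y_2)=(x_1,\,x_1x_2-e_0,\,x_2)$: one gets $y_0y_1=x_1(-x_2-x_1x_3+\theta_2)=z_0+e_0$ and $y_1y_2=x_1x_2=z_1+e_0$.

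For the $Q$-relations, it suffices to verify $z_0z_1=Q_1(y_1)$ and $z_1z_2=Q_2(y_2)$ modulo $F_V$; applying $g^{-m*}$ then produces the full family. This is the genuine computational heart of the proposition. I would expand $z_0 z_1=(-x_1^2x_3-x_1x_2+\theta_2 x_1-e_0)(x_1x_2-e_0)$, then use the cubic relation in the form $x_1x_2x_3=-x_1^2-x_2^2+\theta_1x_1+\theta_2x_2+\theta_3x_3-\theta_4$ to eliminate the degree-$5$ term $x_1^3x_2x_3$. After simplification, the coefficient of $x_1^2 x_3$ becomes $e_0-\theta_3=0$ (since $\theta_3=e_0$), leaving a polynomial of degree $4$ in $x_1$ alone. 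Substituting $\theta_1=a_3+e_0a_4$, $\theta_2=a_4+e_0a_3$, $\theta_4=e_0^2+e_0a_3a_4+1$ and comparing with the expansion of $(x_1^2-e_0a_4x_1+e_0^2)(x_1^2-a_3x_1+1)=Q_1(x_1)$ gives the identity. The computation for $z_1z_2=Q_2(x_2)$ proceeds by the same scheme with the roles $(x_1,\theta_2)\leftrightarrow(x_2,\theta_1)$.

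The main obstacle is really this last step: one must be confident that the combinatorics of the substitution (the cancellation of the $x_3$-dependent term forced by $\theta_3=e_0$) lands in a polynomial purely in $x_1$ (respectively $x_2$). Conceptually this cancellation is the reason why $z_0z_1$ descends, on $\mathcal{C}_V(\theta)$, to a polynomial in $y_1=x_1$: the divisor of $z_0z_1$ on the cubic is supported on the eight lines among $\{D^{l}_{e_3^{\pm}},D^{l}_{e_4^{\pm}},\Delta_{e_3^{\pm}},\Delta_{e_4^{\pm}}\}$ enumerated in Remark~\ref{equations droites}, each of which lies in a fibre $x_1=\mathrm{const}$ whose constant value is precisely a root of $Q_1$, so the two sides share the same divisor and the identity is forced up to a scalar which is then pinned down by the leading coefficient $1$ in $x_1^4$.
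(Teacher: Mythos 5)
Your proof is correct and follows essentially the same route as the paper's: one verifies the seed identities $z_0z_1=Q_1(x_1)$, $z_1z_2=Q_2(x_2)$ by a direct expansion that uses the cubic relation $F_V=0$ to eliminate $x_3$ (your killing of the $x_1^3x_2x_3$ term and the cancellation $e_0-\theta_3=0$ is exactly the computation the paper packages as Lemma~\ref{lemmasigmaq}), and then propagates all indices via $g^{-k*}$ using the $g^*$-invariance of $P,Q_1,Q_2$. The only differences are cosmetic: you expand $z_0z_1$ directly rather than first computing $\sigma_1^*x_1$, you also spell out the easy $P$-relation which the paper's proof takes for granted, and you add a nice divisor-theoretic sanity check at the end.
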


\begin{lemma}
\label{lemmasigmaq}
We have~:
\begin{equation}
\label{equasigmaq}
\sigma_1^*x_1=x_2^{-1}\left(e_0+z_1^{-1}Q_2(x_2)\right), ~~~ \text{and} ~~ \sigma_2^*x_2=x_1^{-1}\left(e_0+z_1^{-1}Q_1(x_1)\right).
\end{equation}
with~:
\begin{equation}
\label{equaq2}
Q_2(t)=t^4-\theta_2t^3+\theta_4t^2-e_0\theta_1t+e_0^2=(x-e_0e_3^{-1})(x-e_0e_3)(x-e_4^{-1})(x-e_4),
\end{equation}
\begin{equation}
\label{equaq1}
Q_1(t)=t^4-\theta_1t^3+\theta_4t^2-e_0\theta_2t+e_0^2=(x-e_0e_4^{-1})(x-e_0e_4)(x-e_3^{-1})(x-e_3).
\end{equation}
\end{lemma}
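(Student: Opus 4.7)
The plan is to derive the identities by directly substituting the relation $F_V(x,\theta)=0$ into the definition of $\sigma_i$, and then interpret the resulting polynomial as a quartic in one variable whose roots can be read off from the local data.

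First I would solve the cubic equation $F_V=0$ for $x_3$, writing
\[
x_3\,(x_1x_2-e_0)=-x_1^{2}-x_2^{2}+\theta_1 x_1+\theta_2 x_2-\theta_4,
\]
i.e.\ $x_3\,z_1=-x_1^2-x_2^2+\theta_1x_1+\theta_2x_2-\theta_4$. Recall $\sigma_1^*x_1=-x_1-x_2x_3+\theta_1$. Multiplying both sides by $z_1$ and substituting $x_2x_3\cdot z_1$ from the previous display collapses all terms in $x_1^2x_2$ and yields
\[
(\sigma_1^*x_1)\,z_1=e_0(x_1-\theta_1)+x_2^{3}-\theta_2 x_2^{2}+\theta_4 x_2.
\]
Multiplying once more by $x_2$ and using $e_0x_1x_2=e_0z_1+e_0^{2}$ on the right-hand side then produces
\[
(\sigma_1^*x_1)\,x_2\,z_1=e_0 z_1+\bigl(x_2^{4}-\theta_2 x_2^{3}+\theta_4 x_2^{2}-e_0\theta_1 x_2+e_0^{2}\bigr),
\]
which is exactly equation \eqref{equasigmaq} with the stated polynomial part, provided we define $Q_2(t):=t^{4}-\theta_2 t^{3}+\theta_4 t^{2}-e_0\theta_1 t+e_0^{2}$, i.e.\ \eqref{equaq2}.

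It then remains to factor $Q_2$. Since $a_3=e_3+e_3^{-1}$ and $a_4=e_4+e_4^{-1}$, the parameters read $\theta_1=(e_3+e_3^{-1})+e_0(e_4+e_4^{-1})$, $\theta_2=(e_4+e_4^{-1})+e_0(e_3+e_3^{-1})$ and $\theta_4=e_0^{2}+e_0(e_3+e_3^{-1})(e_4+e_4^{-1})+1$. I would then check the four Vieta relations for the proposed roots $\{e_0e_3^{-1},e_0e_3,e_4^{-1},e_4\}$: the sum is $e_0(e_3+e_3^{-1})+(e_4+e_4^{-1})=\theta_2$, the product is $e_0^{2}$, the sum of pairwise products is $e_0^{2}+e_0(e_3+e_3^{-1})(e_4+e_4^{-1})+1=\theta_4$, and the sum of triple products is $e_0^{2}(e_4+e_4^{-1})+e_0(e_3+e_3^{-1})=e_0\theta_1$. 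All four match the coefficients of $Q_2$, so the claimed factorization holds.

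For the second identity, I would not redo the computation: the cubic $F_V$, the symplectic form and the parameters are symmetric under the simultaneous exchange $(x_1,\theta_1,a_4,e_4)\leftrightarrow(x_2,\theta_2,a_3,e_3)$ (the coefficient $\theta_3=e_0$ being invariant), and $\sigma_2$ is the image of $\sigma_1$ under this involution. Applying the involution to \eqref{equasigmaq} therefore yields the formula for $\sigma_2^*x_2$ together with the polynomial $Q_1(t)=t^{4}-\theta_1 t^{3}+\theta_4 t^{2}-e_0\theta_2 t+e_0^{2}=(t-e_0e_4^{-1})(t-e_0e_4)(t-e_3^{-1})(t-e_3)$, which is \eqref{equaq1}. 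No single step is a genuine obstacle here; the only bookkeeping care required is in the first substitution, where one must remember that $x_2 x_3$ is not a coordinate on $\mathcal{C}_V(\theta)$ and must be replaced using $F_V=0$ before clearing the denominator $z_1$.
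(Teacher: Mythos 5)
Your proof is correct and follows essentially the same route as the paper's: eliminate $x_3$ via $F_V=0$, simplify using $x_1x_2=z_1+e_0$, and verify the factorization by matching symmetric functions of the proposed roots against the coefficients of $Q_2$. One very minor aside: the form $\omega_V$ is in fact \emph{anti}-invariant under $x_1\leftrightarrow x_2$, not invariant as you state, but this has no bearing on the argument since the lemma is a purely polynomial identity and your symmetry argument only needs the invariance of $F_V$, $z_1$ and the swap $\theta_1\leftrightarrow\theta_2$, $e_3\leftrightarrow e_4$.
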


\textsl{Proof.}
 \begin{enumerate}
\item 
We have~:
\begin{align*}
x_2\sigma_1^*x_1 &=-x_1x_2-x_2^2x_3+\theta_1x_2  \\
&=-e_0-z_1+x_2^2z_1^{-1}(x_1^2+x_2^2-\theta_1x_1-\theta_2x_2+\theta_4)+\theta_1x_2 \\
&=-e_0-z_1+z_1^{-1}\left((e_0+z_1)^2+x_2^4-\theta_1x_2(e_0+z_1)-\theta_2x_2^3+\theta_4x_2^2\right)+\theta_1x_2 \\
&=e_0+z_1^{-1}\left(e_0^2+x_2^4-\theta_1x_2-\theta_2x_2^3 +\theta_4x_2^2\right)=e_0+z_1^{-1}Q_2(x_2).
\end{align*}
Hence $\sigma_1^*x_1=e_0x_2^{-1}+x_2^{-1}z_1^{-1}Q_2(x_2)$.

The proof of the other equality is similar.
\item
If we develop the right term of (\ref{equaq2}), we get~:
\begin{align*}
& t^4-(a_4+e_0a_3)t^3+(e_0a_3a_4+e_0^2+1)t^2-e_0(a_3+e_0a_4)t+e_0^2 \\
&= t^4-\theta_2t^3+\theta_4t^2-e_0\theta_1t+e_0^2.
\end{align*}
If we develop the right term of (\ref{equaq1}), we get~:
\begin{align*}
& t^4-(a_3+e_0a_4)t^3+(e_0a_3a_4+e_0^2+1)t^2-e_0(a_4+e_0a_3)t+e_0^2  \\
& =t^4-\theta_1t^3+\theta_4t^2-e_0\theta_2t+e_0^2.\ \carre
\end{align*}
\end{enumerate}

\textsl{Proof of Proposition (\ref{exchange relations}).}
From lemma \ref{lemmasigmaq}, we get~:
$z_2=x_2\sigma_1^*x_1-e_0=z_1^{-1}Q_2(x_2)$. Hence
$z_1z_2=Q_2(x_2)$.
Similarly~:
$z_0=\sigma_2^*x_2-e_0=z_1^{-1}Q_1(x_1)$. Hence
$z_0z_1=Q_1(x_1)$.
The others relations are obtained by applying $g^{-k*}$ to the previous one.
\carre

\begin{rema}\label{Q1Q2}
\begin{enumerate}
	\item We have $Q_1=e_0^{-2}t^4Q_2(e_0t^{-1})$. 
	\item According to Remark (\ref{equations droites}), $z_{2k}=0$ are the equations of the lines $D_{\cdot}^r$, $D_{\cdot}^l$, or their images by the dynamic $<g>$, and $z_{2k+1}=0$ are the equations of the lines $\Delta_{\cdot}$ or their images by the dynamic $<g>$.
\end{enumerate}
\end{rema}

\subsection{The Laurent property}

The exchange relations obtained at Proposition (\ref{exchange relations}) suggests that the sequence of log-canonical systems of coordinates is related to a structure of generalized cluster algebra. Without going into the theory of cluster algebras (for details see \cite{Kell}), we just recall here the fact that under some hypothesis they satisfy the ``Laurent property'' (see also \cite{ChMR}):

\begin{defi} 
\begin{enumerate}
	\item A rational map $r\in\C(x,y)$ satifies the Laurent property if its polar set is included in $xy=0$. 
	\item A birational map $r$ satifies the Laurent property if both $r$ and $r^{-1}$ have the Laurent property.
	\item Let $X$ be an affine surface, and let $(y_n,z_n)$ be a sequence of algebraic morphisms from $X$ to $\C^2$. This sequence satisfies the Laurent property, if given an element $(y_n,z_n)$, any other regular function $y_m$ (or $z_m$) $=r(x_n,y_n)$ satisfies the Laurent property. 
\end{enumerate}
\end{defi}

We must remark that the Laurent property is not stable by composition or inversion. It turns out that in a cluster sequence some simplications arise from the exchange relations and give this property. 
A direct proof of this fact needs heavy computations.
We can prove here the Laurent property for the canonical sequence by an argument which avoid these computations with the exchange relations.
\begin{prop}\label{Laurent_property} The log-canonical sequence satisfies the Laurent property.
\end{prop}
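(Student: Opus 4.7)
The plan is to convert the Laurent property into a geometric statement about the exceptional loci of certain birational morphisms, and then to exploit the fact that the whole sequence is generated by a polynomial automorphism of $\mathcal{C}_V(\theta)$ acting on a small finite set of base pairs; this avoids all bookkeeping with the exchange relations of Proposition \ref{exchange relations}.

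First I would observe that every function $y_k, z_k$ in the sequence is regular -- in fact polynomial -- on $\mathcal{C}_V(\theta)$. The defining septuple $(y_0, z_0, y_1, z_1, y_2, z_2, y_3)$ consists of polynomial expressions in $x_1, x_2, x_3$ by direct inspection, and by Lemma \ref{sigma} the map $g = \sigma_1 \circ \sigma_2$ is a polynomial automorphism of the affine surface $\mathcal{C}_V(\theta)$ with polynomial inverse $g^{-1} = \sigma_2 \circ \sigma_1$. Since the rest of the sequence is obtained by iterating $g^{\pm k\,*}$, regularity propagates to all indices. Consequently, for any log-canonical pair $\phi = (u,v): \mathcal{C}_V(\theta) \to \C^2$ in the sequence and any other member $f$ of the sequence, $f \circ \phi^{-1}$ is a rational function on $\C^2$, and its polar set is contained in the exceptional divisor of the birational map $\phi^{-1}: \C^2 \dashrightarrow \mathcal{C}_V(\theta)$. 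The Laurent property for this pair reduces to showing that this exceptional divisor lies in the coordinate cross $\{uv = 0\}$.

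Next I would verify this geometric statement for the four base log-canonical pairs $(y_0, z_0)$, $(z_0, y_1)$, $(y_1, z_1)$, $(z_1, y_2)$. For $(y_1, z_1) = (x_1,\, x_1 x_2 - e_0)$, a direct computation inverts $\phi_1$ as $(u,v) \mapsto (u,\, (v+e_0)/u,\, R(u,v)/v)$, where $R$ is the polynomial arising from solving $F_V(x,\theta) = 0$ for $x_3$; its polar locus is thus contained in $\{uv = 0\}$. The other three base pairs are handled by analogous elementary computations, using the involutive polynomial automorphisms $\sigma_1, \sigma_2$ to swap the roles of the variables. Then, since any consecutive log-canonical pair in the infinite sequence is of the form $\phi_0 \circ g^{-k}$ for some base $\phi_0$ and some $k \in \Z$, and $g^{\pm k}$ are biregular automorphisms of $\mathcal{C}_V(\theta)$, its inverse factors as $g^k \circ \phi_0^{-1}$, whose exceptional divisor in $\C^2$ coincides (after the natural relabelling of coordinates) with that of $\phi_0^{-1}$. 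The Laurent property follows globally from the four base verifications.

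The main obstacle is bookkeeping: the consecutive log-canonical pairs alternate between the types $(y_n, z_n)$ and $(z_n, y_{n+1})$, which do not lie in a single $g$-orbit, so one must carefully partition them into the four orbits and check that the relabelling $(u,v) = (y_n, z_n)$ (or $(z_n, y_{n+1})$) really identifies the coordinate cross in $\C^2$ with the expected vanishing locus. Once this is organised, the argument is purely geometric and bypasses the Laurent-cancellation phenomena that would otherwise have to be established by heavy recursive use of the exchange relations.
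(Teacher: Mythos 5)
Your proposal is correct and follows essentially the same route as the paper's proof: both first observe that every $y_k,z_k$ is a polynomial on $\mathcal{C}_V(\theta)$ (using that $\sigma_1,\sigma_2,g$ are polynomial automorphisms), reduce the Laurent property to showing the polar locus of the inverse of each log-canonical pair lies in $\{uv=0\}$, verify this on a small number of base pairs, and propagate to the whole sequence via $g^{\pm k}$ and the involutions $\sigma_i$. The paper checks $p_1,q_1$ directly and derives $q_0,p_2$ via $\sigma_1,\sigma_2$ composed with the swap, whereas you propose verifying four base pairs; also a minor slip, the function $R$ appearing in your inversion of $(y_1,z_1)$ has $u^{-1},u^{-2}$ terms so it is not itself a polynomial, though its polar locus still lies in $\{uv=0\}$ as required.
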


\begin{proof} We first prove that all the log-canonical variables are polynomials in $(x_1,x_2,x_3)$. This is true for $(y_1,z_1,y_2)=(x_1,x_1x_2-e_0,x_2)$. Recall that $\sigma_1$, $\sigma_2$ (and therefore $g$) are polynomials in $(x_1,x_2,x_3)$. All the other canonical variables are obtained by the action of a word in $\sigma_1^*$, $\sigma_2^*$ from a variable in this triple. Therefore they are still polynomials in $(x_1,x_2,x_3)$. We set:
\begin{equation*}
\begin{aligned}
&p_n:\ \mathcal{C}_{V}(\theta)\rightarrow\C^2, u=y_n(x), v=z_n(x),\\
&q_n:\ \mathcal{C}_{V}(\theta)\rightarrow\C^2, u=z_n(x), v=y_{n+1}(x).
\end{aligned}
\end{equation*}                                                           
We claim that $p_n$ and $q_n$ satisfy the Laurent property. Since $p_n$ is polynomial, we only have to prove that the polar set of $p_n^{-1}$ is included in $uv=0$. This is true for $p_1=(y_1,z_1)$:
\begin{equation*}
\begin{aligned}
&x_1=y_1\\
&x_2=(z_1+e_0)y_1^{-1}\\
&x_3=z_1^{-1}(-y_1^2-(z_1+e_0)y_1^{-2}+\theta_1y_1+\theta_2(z_1+e_0)y_1^{-1}-\theta_4.
\end{aligned}
\end{equation*}
This is also true for $q_1=(z_1,y_2)$ by using a similar computation. Let
\[\iota:\ (u,v)\rightarrow (v,u).\]
We have :
\[q_0=\iota\circ p_1\circ \sigma_2,\ p_2=\iota\circ q_1\circ \sigma_1.\]
Therefore, $q_0$ and $p_2$ also satisfy the Laurent property. Since the property is true for $p_1$ and $p_2$, and since the tame dynamic $g$ is a polynomial automorphism, it remains true for any $p_n=g^{-1*}p_{n-2}$. Since the property is true for $q_0$ and $q_1$ it remains true for any $q_n=g^{-1*}q_{n-2}$.
Finally, since $p_n$ satisfies the Laurent property and $p_m$ is polynomial, the map
\[p_m\circ p_n^{-1}:\ (y_m,z_m)=(r(y_n,z_n),s(y_m,z_m))\]
satisfies the Laurent property.\end{proof}\bigskip\\
From the above proof, one can remark that the Laurent property comes from the fact that the antisymplectic tame dynamics ($\sigma_1$ and $\sigma_2$) and the symplectic dynamics ($g$) are automorphisms of the cubic surface.

\subsection{Families of confluent and diffluent morphisms}

These families were first discovered by M. Klimes in \cite{Kl1} through an analytic confluent process, both on Painlevé equations and on character varieties. We recover them by using a groupoid point of view.
In definition (\ref{morphism_conf}) we have introduced a family of morphisms $\varphi_\kappa:\ \pi_1^{VI}(X,S)\rightarrow\pi_1^{V,\kappa}(X,S)$.  Therefore we obtain a family 
\begin{equation*}
\begin{aligned}
\varphi_\kappa^*:\ &\chi_V^\kappa(a)\rightarrow \chi_{VI}(a_\kappa)\\
&\rho\rightarrow \rho\circ\varphi_\kappa.
\end{aligned}
\end{equation*}
On the restriction over $\chi_V^\pm$ we have 2 families $\varphi_\kappa^{\pm *}$.

\begin{prop}\label{Invertible} 
The morphisms $\varphi_\kappa^{\pm *}$ are invertible on the open set in $\chi_{VI}(a_\kappa)$ defined by the elements $(M_1,M_2,M_3,M_4)$ such that $(M_1,M_2)$ is a non reducible pair.
\end{prop}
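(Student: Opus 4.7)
The idea is to write the map $\varphi_\kappa^{+*}$ explicitly in standard Bruhat-normalized coordinates and then invert it as a Gauss-type decomposition. Fix a normalized representative $(U_1, M_0, U_2, M_3, M_4)$ of $\rho_V \in \chi_V^{+,\kappa}(a)$ with $\rho_V(t_{1,1}(\kappa)) = D_\kappa = \mathrm{diag}(\kappa,\kappa^{-1})$. Using the local relation~(iv) of $\mathcal{R}_{loc}$ and choosing the basis at~$s_2$ so that $\rho_V(\widehat{\gamma}_{1,2}^l) = I$ (which forces $\rho_V(t_{2,2}(\kappa)) = D_\kappa^{-1}$ and, by property~$(\star)$, $\rho_V(\sigma_2) = M_0 U_2$ after this transport), the image $\rho_V \circ \varphi_\kappa$, realized as a normalized $\chi_{VI}$-datum, is
\[
(M_1, M_2, M_3, M_4) \;=\; \bigl(U_1 D_\kappa,\; D_\kappa^{-1} M_0 U_2,\; M_3,\; M_4\bigr),
\]
and the defining relation of $\chi_{VI}(a_\kappa)$ holds automatically because $M_1 M_2 = U_1 M_0 U_2$.

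For the inverse, start from $(M_1,M_2,M_3,M_4) \in \chi_{VI}(a_\kappa)$ with the pair $(M_1,M_2)$ non-reducible. The local data prescribe the spectra $\mathrm{Sp}(M_1) = \{\kappa, \kappa^{-1}\}$ and $\mathrm{Sp}(M_2) = \{\kappa^{-1} e_0, \kappa e_0^{-1}\}$. Pick $w$ an eigenvector of $M_1$ for $\kappa^{-1}$ and $v$ an eigenvector of $M_2$ for $\kappa^{-1} e_0$; non-reducibility ensures $v$ and $w$ are linearly independent, so the mixed basis $(v,w)$ is well defined up to rescaling of each vector. In this basis, $M_1$ is lower triangular with diagonal $(\kappa, \kappa^{-1})$ and $M_2$ is upper triangular with diagonal $(\kappa^{-1} e_0, \kappa e_0^{-1})$; one then reads off
\[
U_1 := M_1 D_\kappa^{-1} \in U^-, \qquad M_0 := \mathrm{diag}(e_0, e_0^{-1}) \in C, \qquad U_2 := (D_\kappa^{-1} M_0)^{-1} M_2 \in U^+,
\]
and verifies $U_1 M_0 U_2 M_3 M_4 = M_1 M_2 M_3 M_4 = I$. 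The residual ambiguity in the basis is precisely the Cartan action $(v,w) \mapsto (\lambda v, \lambda^{-1} w)$, which matches the remaining equivalence on $\chi_V^{+,\kappa}(a)$, so the construction descends to a well-defined rational inverse. For $\varphi_\kappa^{-*}$ the same argument applies after swapping $\kappa \leftrightarrow \kappa^{-1}$ in the spectral choices.

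The main obstacle is the opening step: carefully tracking local-to-global normalizations to confirm that $\varphi_\kappa^{+*}$ is given by the clean formula above. This rests on relation~(iv) of $\mathcal{R}_{loc}$ together with property~$(\star)$, which together force $\rho_V(\sigma_2)$ into the opposite Borel after transport along $\widehat{\gamma}_{1,2}^l$. Once that form is in hand, the inversion reduces to linear algebra, and the non-reducibility of $(M_1,M_2)$ is exactly the necessary and sufficient hypothesis: a common eigenvector would collapse $v$ and $w$ and prevent the simultaneous triangularization required by the decomposition.
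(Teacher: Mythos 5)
Your proof is correct and takes essentially the same approach as the paper: explicit matrix form of $\varphi_\kappa^{+*}$ on normalized Borel--Cartan representatives, then inversion via a mixed eigenbasis simultaneously triangularizing $M_1$ and $M_2$. The paper phrases the inversion as an LDU decomposition of $Q^{-1}M_1M_2Q$, while you build $U_1=M_1D_\kappa^{-1}$, $M_0$, $U_2=(D_\kappa^{-1}M_0)^{-1}M_2$ directly from the triangular forms of $M_1$ and $M_2$ in the mixed basis, but since $U_1M_0U_2=M_1M_2$ this is the same Gauss decomposition.

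One small overstatement at the end: you assert that non-reducibility of $(M_1,M_2)$ is "exactly the necessary and sufficient hypothesis," but it is only sufficient. Your construction fails precisely when the two \emph{chosen} eigendirections (for $\kappa^{-1}$ of $M_1$ and for $\kappa^{-1}e_0$ of $M_2$) coincide, which is a single line $L_{e_1,e_2}$ out of the four components of the reducibility locus $\mathcal{R}(\gamma_{1,2})$; a commuting diagonal pair, for example, is reducible yet still admits your decomposition. The paper makes this sharper observation explicitly at the close of its proof, noting that the other components of $\mathcal{R}(\gamma_{1,2})$ correspond to the other choices of $\kappa$ or $e_0$.
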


\begin{proof}
The matrix expression of $\varphi_\kappa^{+*}$ is
\[[U_1,M_0,U_2,M_3,M_4]_{D}\rightarrow 
[U_1D_\kappa, D_{\kappa^{-1}}M_0U_2,M_3,M_4)]_{SL_2(\C)}.\]
Given $(M_1,M_2,M_3,M_4)$, and a parameter $\kappa$, we search for matrices $M_0$, $U_1$, $U_2$, $M'_3$, $M'_4$, $(U_1,M_0,U_2)$ in $U^-\times D\times U^+$ and a matrix $Q$ in $SL_2(\C)$ such that
\begin{equation}\label{preimage}
\left\{
\begin{aligned}
&U_1D_\kappa=Q^{-1}M_{1}Q\\
&D_\kappa^{-1}M_0U_2=Q^{-1}M_{2}Q\\
&M'_3=Q^{-1}M_{3}Q\\
&M'_4=Q^{-1}M_{4}Q
\end{aligned}\right.
\end{equation}
For a given local data $(a_1,a_2,a_3,a_4)$, and a value $\kappa$, we have only two choices for the eigenvalues $(e_0,e_0^{-1})$ of $M_0$, solutions of
$\kappa^{-1} e_0+\kappa e_0^{-1}=a_2$, one centered around $a_2\kappa$ and the other around $a_2\kappa^{-1}$, induced by the change $\kappa\rightarrow \kappa^{-1}$. Let $e_1^\pm1$, $e_2^\pm$ be the eigenvalues of $M_1$ and $M_2$. The two solutions for $M_0$ are:
$M_0=diag(e_{1}e_{2},e_{1}^{-1}e_2^{-1})$, or $M_0=diag(e_{1}^{-1}e_{2}^{-1},e_{1}e_{2})$. We choose the first one (the second will give a pre-image for $\varphi_\kappa^{-*}$).

We recall the LDU decomposition in $SL_2(\C)$:
\begin{lemma}\label{LDU}
Let $M$ be an element of $SL_2(\C)$ such that $M[1,1]\neq 0$. There exists a unique triple $(L,D,U)$ in $U^-\times D\times U^+$ such that $M=L\times  D\times U$.
\end{lemma}
\begin{proof} We have
\begin{equation*}
\left(\begin{array}{cc}a & b\\ c & d \end{array}\right)=
\left(\begin{array}{cc}1 & 0\\ l & 1 \end{array}\right)\times
\left(\begin{array}{cc}e & 0\\ 0 & e^{-1} \end{array}\right)\times
\left(\begin{array}{cc}1 & u\\ 0 & 1 \end{array}\right)
\Leftrightarrow
\left\{\begin{aligned}
&a=e\\&b=eu\\&c=el\\&d=e^{-1}+elu
\end{aligned}\right.
\end{equation*}
which system has the unique solution $(l,e,u)=(c/a,a,b/a)$ if $a\neq 0$. \end{proof}\bigskip\\
We call the above decomposition the $LDU$-decomposition of $M$ and we denote:
\[LDU(M)=(L,D,U).\]
\begin{lemma}\label{central lemma} Let $M_1$ and $M_2$ be two non vanishing matrices in $SL_2(\C)$, $M_1\neq\pm I$, $M_2\neq\pm I$, with eigenvalues $(e_1,e_1^{-1})$ and $(e_2,e_2^{-1})$. Suppose that the eigenvectors related to $e_2^{-1}$ and to $e_1$ are independent. There exists a matrix $Q$ in $SL_2(\C)$ such that the diagonal component of the decomposition $LDU$ of $Q^{-1}M_1M_2Q$ is $diag(e_1e_2,e_1^{-1}e_2^{-1})$.
\end{lemma}
\begin{proof}
The hypothesis of the Lemma gives the existence of a "mixed basis" $(u,v)$ given by an eigenvector $u$ of $M_2$ related to the eigenvalue $e_2^{-1}$, and an eigenvector $v$ of $M_1$ related to the eigenvalue $e_1$. Let $Q$ be the matrix of this change of basis. We have:
\[Q^{-1}M_1Q=\left(\begin{array}{cc}e_1 & 0\\ c_1 & e_1^{-1} \end{array}\right),\ Q^{-1}M_2Q=\left(\begin{array}{cc}e_2 & b_2\\ 0 & e_2^{-1} \end{array}\right).\]
Therefore
\[Q^{-1}M_1M_2Q=\left(\begin{array}{cc}e_1e_2 & e_1b_2\\ e_2c_1 & e_1^{-1}e_2^{-1}+b_2c_1 \end{array}\right)=
\left(\begin{array}{cc}1 & 0\\ c_1 & 1 \end{array}\right)\cdot
\left(\begin{array}{cc}e_1e_2 & 0\\ 0 & e_1^{-1}e_2^{-1}\end{array}\right)\cdot
\left(\begin{array}{cc}1 & b_2\\ 0 & 1 \end{array}\right)
.\]
\end{proof}
\bigskip\\
\emph{End of the proof of Proposition (\ref{Invertible}).}
Suppose that the pair of matrices $(M_{1,\kappa},M_{2,\kappa})$ satisfies the hypothesis of Lemma (\ref{central lemma}). We choose the matrix $Q$ given by this Lemma and we obtain a solution of (\ref{preimage}):
\[(U_1,M_0,U_2)=LDU(Q^{-1}M_{1,\kappa}M_{2,\kappa}Q), M'_3= Q^{-1}M_3Q, M'_4=Q^{-1}M_4Q.\]
A change of mixed basis will modify $Q$ by a multiplication on the righta side with a diagonal matrix. Therefore the class $[(U_1,M_0,U_2,M'_3,M'_4)]_D$ is unique.
The representations $\rho$ for which the hypothesis of Lemma (\ref{central lemma})
is not satisfied are the representations for which the eigen-directions related to $e_{1,\kappa}$ and $e_{2,\kappa}^{-1}$ are the same. This defines a component --here the line $L_{e_1,e_2}$-- of the reducibility locus along the path $\gamma_{1,2}$ in $\pi_1^{VI}(X,S)$. Therefore $\Phi_\kappa$ is invertible outside $L_{e_1,e_2}$.
Note that for the other choices of $\kappa$ or $e_0$ 
we shall find one of the other components of $\mathcal{R}(\gamma_{1,2})$.\ \end{proof}

\bigskip

Using the trace maps, $\varphi_\kappa^{\pm *}$ induces two families of morphisms of cubic surfaces:
\[\Phi_\kappa^\pm=(Tr_V^{\pm\kappa})^{-1}\circ \varphi_\kappa^{\pm *}\circ  Tr_{VI}:\  \mathcal{C}_V(\theta^\pm)\rightarrow \mathcal{C}_{VI}(\kappa(\theta^\pm))\]
where $\kappa(\theta^+)=\kappa(e_0,a_3,a_4)=(\kappa+\kappa^{-1},e_0\kappa^{-1}+e_0^{-1}\kappa,a_3,a_4)$ and $\kappa(a^-)=\kappa(e_0^{-1},a_3,a_4)=(\kappa+\kappa^{-1},e_0^{-1}\kappa^{-1}+e_0\kappa,a_3,a_4)$.

\begin{theorem}[Confluent and diffluent morphisms]\label{birational map} The morphisms $\Phi_\kappa^\pm$ are symplectic birational morphisms from $\mathcal{C}_V(\theta^\pm)$ to $\mathcal{C}_{VI}(\kappa(\theta^\pm)$.  
\end{theorem}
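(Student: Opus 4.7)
The plan is to separate the two claims. For birationality, Proposition \ref{Invertible} already supplies the essential ingredient: the groupoid-level pullback $\varphi_\kappa^{\pm*}$ is invertible outside the single reducibility component $L_{e_1,e_2}$ along $\gamma_{1,2}$, since the $LDU$-decomposition of Lemma \ref{central lemma} reconstructs a representative $(U_1,M_0,U_2,M'_3,M'_4)$ of a preimage uniquely modulo the residual diagonal gauge. Combining this with the fact that the trace maps $Tr_{VI}$ and $Tr_V^{\pm,\kappa}$ are isomorphisms on their open strata (Proposition \ref{equation chiV0} on the $P_V$ side, and the analogue for $P_{VI}$), the composition $\Phi_\kappa^\pm=(Tr_V^{\pm\kappa})^{-1}\circ\varphi_\kappa^{\pm*}\circ Tr_{VI}$ is biholomorphic on a Zariski open subset of $\mathcal{C}_V(\theta^\pm)$, with inverse extending rationally; hence it is birational.

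For the symplectic property, the natural strategy is to work directly on character varieties. The Goldman form on $\chi_V$ and $\chi_{VI}$ is constructed from the combinatorial groupoid data (by Poincar\'e--Lefschetz duality, or equivalently via the Fock--Rosly/quasi-Hamiltonian description referenced in the introduction), and any morphism of groupoids that respects the cyclic arrangement of the boundary points induces a Poisson map between the corresponding character varieties. The confluent morphism $\varphi_\kappa$ of Definition \ref{morphism_conf} has precisely this property: it identifies a loop around a Fuchsian singularity in $\pi_1^{VI}(X,S)$ with the product $\sigma_i\cdot t_{i,i}(\kappa)$ encoding a Stokes operator and an element of the exponential torus, without altering the global combinatorics on $X$. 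Since the Poincar\'e-residue forms $\omega_V(\theta^\pm)$ and $\omega_{VI}(\kappa(\theta^\pm))$ coincide (up to the common factor $2i\pi$) with the Goldman forms transported through the trace maps, one obtains $(\Phi_\kappa^\pm)^*\omega_{VI}(\kappa(\theta^\pm))=\omega_V(\theta^\pm)$.

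The main obstacle is to make the previous paragraph rigorous in the wild setting, since the classical Goldman construction is written for purely Fuchsian monodromy and treating Stokes and exponential-torus loops as ``boundary generators'' of the groupoid is delicate. A pragmatic way to bypass this is to verify the identity in the log-canonical coordinates of Section \ref{logcan}: it suffices to exhibit a log-canonical pair on $\mathcal{C}_{VI}(\kappa(\theta^\pm))$ (for example built from the families of lines $L_{e_i,e_j}$ of Figure \ref{droites chiVI}) whose pullback by $\Phi_\kappa^\pm$ yields a log-canonical pair on $\mathcal{C}_V(\theta^\pm)$ matching the fundamental triple $(y_1,z_1,y_2)$ of Proposition \ref{z1}. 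This reduces the symplectic check to two explicit trace computations using the matrix formulas of Lemma \ref{central lemma}; the $\kappa$-factors coming from $M_0=\mathrm{diag}(e_1e_2,e_1^{-1}e_2^{-1})$ must cancel against the change of parameters $\theta^\pm\mapsto\kappa(\theta^\pm)$, which is the only truly delicate bookkeeping step.
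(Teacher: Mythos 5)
Your treatment of the birationality claim is in the same spirit as the paper's, but you stop a step short. The paper's actual argument is \emph{computational}: Lemma~\ref{VI-V} derives the explicit rational formulas for $\Phi_\kappa^+$ in trace coordinates (via Lemma~\ref{coeff-traces}), and a companion lemma derives the explicit rational inverse, whose polar locus is $x_{3,\kappa}=c_{e_{1,\kappa},e_{2,\kappa}}$, i.e.\ the lines $L_{e_{1,\kappa},e_{2,\kappa}}\cup L_{e_{1,\kappa}^{-1},e_{2,\kappa}^{-1}}$. Your route via Proposition~\ref{Invertible} together with the trace isomorphisms establishes a \emph{set-theoretic} bijection on a Zariski-open subset, which is necessary but not sufficient for birationality: you still need to observe that the LDU reconstruction of the preimage (Lemma~\ref{LDU}, which gives $(l,e,u)=(c/a,\,a,\,b/a)$) expresses everything as rational functions of the trace coordinates, so that both $\Phi_\kappa^\pm$ and its inverse are rational maps. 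That point is true and easy, but it is exactly where the paper puts its effort, and it should be made explicit rather than inferred from set-theoretic invertibility.

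For the symplectic claim, you yourself flag that your primary route is unavailable: the paper does not develop a Goldman/Fock--Rosly/quasi-Hamiltonian Poisson structure on $\chi_V$ in which Stokes loops and exponential-torus loops are treated as boundary generators, and asserting functoriality of a pairing that has not been constructed in this setting is a genuine gap, not a bookkeeping issue. Your fallback --- pulling back a log-canonical pair on $\mathcal{C}_{VI}(\kappa(\theta^\pm))$ and matching it to $(y_1,z_1,y_2)$ --- is a genuinely different route from the paper, which instead verifies $(\Phi_\kappa^\pm)^*\omega_{VI}=\omega_V$ by a direct computation from the explicit formulas of Lemma~\ref{VI-V} (deferring details to \cite{Kl1}). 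Your alternative is plausible, but it forces you to compute $\Phi_\kappa^\pm$ in trace coordinates anyway in order to identify the pulled-back log-canonical pair, so it does not avoid the explicit computation; and you would additionally have to produce and justify a log-canonical pair on $\mathcal{C}_{VI}$, which the paper never constructs (the log-canonical machinery of Section~\ref{logcan} is only set up on $\mathcal{C}_V$). In short, the explicit trace-coordinate formulas are unavoidable, and once they are in hand both parts of the theorem follow by direct computation, which is how the paper proceeds.
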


\textit{Proof.} We have to prove that $\Phi_\kappa^+$ has rational expression in trace coordinates and is invertible in the class of birational transformations.

\begin{lemma}\label{VI-V} The map $\Phi_\kappa^+:\ \mathcal{C}_V(\theta^+)$ to $\mathcal{C}_{VI}(\kappa(\theta^+)$  is defined by
\begin{equation*}
  \left\{
    \begin{aligned}
&x_{1,\kappa}=e_0^{-1}\kappa x_1+\kappa^{-1}x_2\\
&x_{2,\kappa}=-e_0^{-1}\kappa x_1x_3 +\kappa^{-1}x_1-e_0^{-1}\kappa x_2+a_3\kappa+a_4e_0^{-1}\kappa\\
&x_{3,\kappa}= x_3.
    \end{aligned}
  \right.
\end{equation*}
\end{lemma}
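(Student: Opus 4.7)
The map $\varphi_\kappa^{+\ast}:\chi_V^+(a)\to\chi_{VI}(\kappa(\theta^+))$ already has an explicit matrix presentation obtained in the proof of Proposition~\ref{Invertible}: the gauge orbit $[U_1,M_0,U_2,M_3,M_4]_D$ of $\chi_V^+(a)$ is sent to the $SL_2(\C)$-orbit $[U_1D_\kappa,\,D_\kappa^{-1}M_0U_2,\,M_3,\,M_4]$. Setting $M_1:=U_1D_\kappa$ and $M_2:=D_\kappa^{-1}M_0U_2$, the constraint $U_1M_0U_2M_3M_4=I$ defining $\chi_V^+$ collapses, via $D_\kappa D_\kappa^{-1}=I$, into $M_1M_2M_3M_4=I$. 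So the quadruple $(M_1,M_2,M_3,M_4)$ is already a \emph{normalised} representative of its image class, and the trace coordinates on $\mathcal{C}_{VI}(\kappa(\theta^+))$ are read off directly:
\[
x_{1,\kappa}=\mathrm{tr}(M_2M_3),\qquad x_{2,\kappa}=\mathrm{tr}(M_3M_1),\qquad x_{3,\kappa}=\mathrm{tr}(M_1M_2).
\]

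The third identity is essentially free: $M_1M_2=U_1D_\kappa D_\kappa^{-1}M_0U_2=U_1M_0U_2$, so $x_{3,\kappa}=\mathrm{tr}(U_1M_0U_2)=x_3$ by the very definition of $x_3$ on $\mathcal{C}_V(\theta^+)$. As a sanity check, one verifies that $\mathrm{tr}(M_1)=\kappa+\kappa^{-1}$ and $\mathrm{tr}(M_2)=e_0\kappa^{-1}+e_0^{-1}\kappa$, which are precisely the local coordinates $\kappa(\theta^+)$ of the image cubic.

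The remaining two formulas are obtained by a direct expansion of products of the triangular/diagonal matrices
\[
U_1=\begin{pmatrix}1&0\\u_1&1\end{pmatrix},\quad M_0=\begin{pmatrix}e_0&0\\0&e_0^{-1}\end{pmatrix},\quad U_2=\begin{pmatrix}1&u_2\\0&1\end{pmatrix},\quad M_3=\begin{pmatrix}\alpha_3&\beta_3\\ \gamma_3&\delta_3\end{pmatrix},
\]
combined with the dictionary already established in Lemma~\ref{coeff-traces}: $\delta_3=x_1$, $\alpha_3=a_3-x_1$, $e_0\gamma_3u_2=x_2-e_0a_3+e_0x_1$, $e_0\beta_3u_1=-x_1x_3+e_0x_1-x_2+a_4$. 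Multiplying out, one finds $\mathrm{tr}(M_3U_1D_\kappa)=(\alpha_3+\beta_3u_1)\kappa+\delta_3\kappa^{-1}$; substituting the dictionary, the two $x_1\kappa$-contributions cancel, yielding the announced expression for $x_{2,\kappa}$. Symmetrically, $\mathrm{tr}(D_\kappa^{-1}M_0U_2M_3)=\kappa^{-1}e_0\alpha_3+\kappa^{-1}e_0u_2\gamma_3+\kappa e_0^{-1}\delta_3$, and after substitution the two $\kappa^{-1}$-terms involving $e_0a_3$ cancel, as do the two involving $e_0x_1$, leaving the remarkably simple $\kappa e_0^{-1}x_1+\kappa^{-1}x_2$.

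There is no real conceptual obstacle beyond Proposition~\ref{Invertible}: the whole proof is a book-keeping exercise, and the only care required is to apply the correct line of Lemma~\ref{coeff-traces} to each monomial produced by the expansion.
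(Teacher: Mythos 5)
Your proof is correct and follows the paper's own route exactly: set $M_{1,\kappa}=U_1D_\kappa$, $M_{2,\kappa}=D_\kappa^{-1}M_0U_2$, compute the traces $\mathrm{tr}(M_{2,\kappa}M_3)$, $\mathrm{tr}(M_3M_{1,\kappa})$, $\mathrm{tr}(M_{1,\kappa}M_{2,\kappa})$ by direct matrix multiplication, and substitute via Lemma~\ref{coeff-traces}. The only difference is cosmetic: you silently corrected the paper's notational slip ($s_1,s_2$ in Lemma~\ref{coeff-traces} should indeed read $u_1,u_2$), and your sanity check of the local parameters $\mathrm{tr}(M_{1,\kappa})$, $\mathrm{tr}(M_{2,\kappa})$ is a useful addition that the paper omits.
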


\begin{proof}
We set $M_{1,\kappa}=U_1D_\kappa$, $M_{2,\kappa}=D_{\kappa^{-1}}M_0U_2$. We have
\begin{equation*}
  \left\{
    \begin{aligned}
&x_{1,\kappa}=tr(M_{2,\kappa}M_3)=\alpha_3e_0\kappa^{-1}+e_0\gamma_3s_2\kappa^{-1}+\delta_3e_0^{-1}\kappa\\
&x_{2,\kappa}=tr(M_3M_{1,\kappa})=\alpha_3\kappa+\beta_3s_1\kappa+\delta_3\kappa^{-1}\\
&x_{3,\kappa}=tr(M_{1,\kappa}M_{2,\kappa})=e_0s_1s_2+e_0+e_0^{-1}.
    \end{aligned}
  \right.
\end{equation*}
Using Lemma (\ref{coeff-traces}), we obtain the expressions of $\Phi_\kappa$ in trace coordinates.
\end{proof}

\begin{lemma} This map is invertible outside the lines $L_{e_{1,\kappa},e_{2,\kappa}}\cup L_{e_{1,\kappa}^{-1},e_{2,\kappa}^{-1}}$ in $\chi_{VI}(a_\kappa)$ 
and $\Phi_\kappa^{-1}$ is given by
\begin{equation*}
  \left\{
    \begin{aligned}
&x_1=(-\kappa x_{1,\kappa}-e_0\kappa^{-1}x_{2,\kappa}+a_3e_0+a_4)(x_{3,\kappa}-c_{e_{1,\kappa},e_{2,\kappa}})^{-1}\\
&x_2=(\kappa x_{1,\kappa}x_{3,\kappa} -e_0\kappa^{-1}x_{1,\kappa} + \kappa x_{2,\kappa} -a_3\kappa^2-a_4\kappa^2e_0^{-1}) (x_{3,\kappa}-c_{e_{1,\kappa},e_{2,\kappa}})^{-1}\\
&x_3=x_{3,\kappa}.
    \end{aligned}
  \right.
\end{equation*}
\end{lemma}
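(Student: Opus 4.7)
The plan is to observe that the third formula of Lemma~\ref{VI-V} is already the identity $x_3=x_{3,\kappa}$, so inverting $\Phi_\kappa^+$ amounts to solving the two remaining equations as a $2\times 2$ linear system in $(x_1,x_2)$ whose coefficients are constants in $\kappa,e_0,a_3,a_4$ together with the scalar parameter $x_{3,\kappa}$ (obtained after substituting $x_3=x_{3,\kappa}$ into the expression for $x_{2,\kappa}$). Concretely, writing $\alpha=e_0^{-1}\kappa$, $\beta=\kappa^{-1}$ and $c=(a_3+a_4e_0^{-1})\kappa$, the system becomes
\[
\begin{pmatrix} \alpha & \beta \\ \beta-\alpha x_{3,\kappa} & -\alpha \end{pmatrix}
\begin{pmatrix} x_1 \\ x_2 \end{pmatrix}
=\begin{pmatrix} x_{1,\kappa} \\ x_{2,\kappa}-c \end{pmatrix}.
\]

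Next, I would compute the determinant of the coefficient matrix and identify its factorisation as the inversion obstruction. A direct computation gives
\[
\Delta=-\alpha^2-\beta^2+\alpha\beta x_{3,\kappa}=e_0^{-1}\bigl(x_{3,\kappa}-(e_0^{-1}\kappa^2+e_0\kappa^{-2})\bigr),
\]
so the key step is to recognise the quantity $e_0^{-1}\kappa^2+e_0\kappa^{-2}$ as $c_{e_{1,\kappa},e_{2,\kappa}}$. By construction in the proof of Proposition~\ref{Invertible}, $M_{1,\kappa}=U_1D_\kappa$ has eigenvalues $\{\kappa,\kappa^{-1}\}$ and $M_{2,\kappa}=D_{\kappa^{-1}}M_0U_2$ has eigenvalues $\{e_0^{-1}\kappa,e_0\kappa^{-1}\}$, hence taking $e_{1,\kappa}=\kappa$ and $e_{2,\kappa}=e_0\kappa^{-1}$ we indeed get $c_{e_{1,\kappa},e_{2,\kappa}}=e_{1,\kappa}e_{2,\kappa}^{-1}+e_{1,\kappa}^{-1}e_{2,\kappa}=e_0^{-1}\kappa^2+e_0\kappa^{-2}$. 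Cramer's rule then gives $x_1$ and $x_2$ as explicit rational functions with common denominator proportional to $x_{3,\kappa}-c_{e_{1,\kappa},e_{2,\kappa}}$; after multiplying both numerator and denominator by $-e_0$ to normalise the denominator (and carrying through the constants $-e_0\cdot e_0^{-1}\kappa=-\kappa$, $-e_0\cdot\kappa^{-1}c=-(e_0a_3+a_4)$, etc.), we recover exactly the two displayed formulas. This part is routine bookkeeping.

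Finally, it remains to identify the vanishing locus of the denominator on the target cubic surface. Intersecting $\mathcal{C}_{VI}(\kappa(\theta^+))$ with the hyperplane $x_3=c_{e_{1,\kappa},e_{2,\kappa}}$ and using the factorisation of $F_{VI}$ recalled in Section~3.3, one obtains the union $L_{e_{1,\kappa},e_{2,\kappa}}\cup L_{e_{1,\kappa}^{-1},e_{2,\kappa}^{-1}}$, which is reassuringly the same component of the reducibility locus $\mathcal{R}(\gamma_{1,2})$ that appeared in the proof of Proposition~\ref{Invertible} as the exceptional set where Lemma~\ref{central lemma} fails. The main potential obstacle is not conceptual but computational: one must track the sign conventions and the multiplicative factors carefully so as to obtain the denominator in precisely the normalised form $x_{3,\kappa}-c_{e_{1,\kappa},e_{2,\kappa}}$ required by the statement; writing everything in terms of the symmetric quantities $\alpha,\beta,\alpha\beta=e_0^{-1}$ keeps this step transparent.
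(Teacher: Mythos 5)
Your proposal is correct and follows essentially the same route as the paper: fix $x_3=x_{3,\kappa}$, solve the resulting $2\times2$ linear system for $(x_1,x_2)$, and recognise the determinant as $e_0^{-1}\bigl(x_{3,\kappa}-c_{e_{1,\kappa},e_{2,\kappa}}\bigr)$, whose vanishing locus on the cubic is the pair of lines $L_{e_{1,\kappa},e_{2,\kappa}}\cup L_{e_{1,\kappa}^{-1},e_{2,\kappa}^{-1}}$. Your additional cross-check that $e_{1,\kappa}=\kappa$ and $e_{2,\kappa}=e_0\kappa^{-1}$ (so that $e_0=e_{1,\kappa}e_{2,\kappa}$, matching Lemma~\ref{central lemma}) is a helpful consistency verification, and the Cramer's-rule bookkeeping reproduces the stated formulas.
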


\begin{proof}
In order to find $\Phi_\kappa^{-1}$ we solve the system
\begin{equation*}
  \left\{
    \begin{aligned}
&e_0^{-1}\kappa x_1+\kappa^{-1}x_2=x_{1,\kappa}\\
&(-e_0^{-1}\kappa x_{3,\kappa}) +\kappa^{-1})x_1-e_0^{-1}\kappa x_2=x_{2,\kappa}-a_3\kappa-a_4e_0^{-1}\kappa
    \end{aligned}
  \right.
\end{equation*}
For a fixed value of $x_3=x_{3,\kappa}$ this system is linear and invertible outside the set
\[e_0^{-1}(x_{3,\kappa}-e_0^{-1}\kappa^2-e_0\kappa^{-2})=0.\]
This set is the pair of lines $x_{3,\kappa}=e_{1,\kappa}e_{2,\kappa}^{-1}+e_{1,\kappa}e_{2,\kappa}^{-1}=c_{e_{1,\kappa},e_{2,\kappa}}$, i.e. the pair of lines $L_{e_{1,\kappa},e_{2,\kappa}} \cup L_{e_{1,\kappa}^{-1},e_{2,\kappa}^{-1}}$ in $\chi_{VI}(a_\kappa)$.
Solving this system, we obtain the expression of $\Phi_\kappa^{-1}$ given in the statement of the Lemma.\end{proof}

\bigskip

Finally one can prove by using the above expressions that $\Phi^+_\kappa$ is a symplectic morphism (see also \cite{Kl1}) with respects to the symplectcic forms $\omega_V^\kappa(\theta^+)$ and $\omega_{VI}(\theta_\kappa)$. We have a similar result for $\Phi^-_\kappa$.

\section{The Painlevé V vector field on $\mathcal{M}_V$.}

\subsection{The Riemann-Hilbert map $RH_V$}

\begin{prop} \label{representation}
\begin{enumerate}
\item Any local connection $\nabla$, defined by a germ of irregular $sl_2$-system of Katz rank 1 induces a representation $\rho_\nabla$ of the local wild groupoid $\pi_1(X_0,S_0)$ in $SL_2(\C)$.
\item Any connection $A$ in $\mathcal{C}_V$ induces a representation $\rho_A$ of $\pi_1^{V}(X,S)$ in $SL_2(\C)$.
\end{enumerate}
\end{prop}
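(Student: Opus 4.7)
The plan is to build both representations directly from the analytic data of the connection, using the summability and Stokes theory developed in Section~2.

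For part~1, start from a formal fundamental solution $\widehat{Y}_0 = P(x)\, x^L \exp Q(x)$ with $Q=\operatorname{diag}(tx^{-1},-tx^{-1})$, which exists by Theorem~\ref{fnf}. Since the singularity has Katz rank $1$ with distinct leading eigenvalues, there are exactly two opposite singular directions $d_1,d_2$, and Theorem~\ref{sommable} provides the $1$-sums $\Sigma_d(\widehat{Y}_0)$ in every other direction. Assign to each $\widehat{s}_i \in \widehat{D}$ the formal solution space equipped with the determination of $\widehat{Y}_0$ obtained by transporting the logarithm along the ray to $\widehat{s}_i$, and to each $s_i \in D$ the actual sectoral solution obtained by $1$-summation in that direction. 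Then represent the generators as follows: the radial rays $r_i,r_i^\pm$ by the summation/anti-summation operators (identity in the matched bases); the formal arcs $\widehat{\gamma}_i^\pm$ and $\widehat{\gamma}_{1,2}^{l,r}$ by analytic continuation of the logarithm factor; the arc $\alpha_i$ on $D$ by the continuation of the sectoral solution across $p_i$; the Stokes loop $\sigma_i$ by the Stokes matrix $U_{d_i}$, exactly as sketched in Remark~\ref{pregroupoid}; and the torus element $t_{i,i}(\kappa)$ by the rescaling $\exp Q \mapsto T_\kappa \exp Q$ with $T_\kappa = \operatorname{diag}(\kappa,\kappa^{-1})$.

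The verification of $\mathcal{R}_{loc}$ goes as follows. Relation~(i) is the group morphism $\kappa \mapsto T_\kappa$. Relation~(ii) holds because, in the basis diagonalising $Q$, $\sigma_i$ lies in the unipotent subgroup of a Borel containing the image of the torus, and the unipotent radical of a Borel in $SL_2$ is abelian; hence $[\sigma_i,t_{i,i}(\kappa)]$ is itself unipotent and commutes with $\sigma_i$. Relation~(iii) is the statement that the formal monodromy and the exponential torus commute in the non-ramified case, part of the Proposition following Remark~\ref{pregroupoid}. Relation~(iv) expresses that the torus action is a global symmetry of the formal solution: the rescaling of $\exp Q$ is independent of the chosen determination, so after transporting $T_\kappa$ from $\widehat{s}_1$ to $\widehat{s}_2$ and back via $\widehat{\gamma}_{1,2}^l \cdot \widehat{\gamma}_{2,1}^r$ one recovers $T_\kappa$ conjugated by the formal monodromy, which lies in the Cartan and fixes $T_\kappa$; the composite loop in~(iv) is thus trivial.

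For part~2, glue the previous local picture at $\infty$ with the standard monodromy representations at the two regular singularities $s_0$ and $s_1$. At each regular point, a local Fuchsian fundamental solution $P(x)\,x^{A_0}$ yields the monodromy matrices $\rho_A(\gamma_{3,3})$ and $\rho_A(\gamma_{4,4})$. The connecting morphisms $\gamma_{2,3},\gamma_{3,4},\gamma_{4,1}$ are represented by the basis-comparison cocycles obtained by analytic continuation along any representative path in $\p^1\setminus\{s_0,s_1,\infty\}$. The relations $\mathcal{R}_{ext}$ and $\mathcal{R}_{int}$ then hold automatically, since they reflect the composition law for paths in the underlying fundamental groupoid applied to this cocycle. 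The hardest point of the whole construction is relation~(iv): it is the step at which the non-ramified hypothesis is essential, for otherwise $Q$ would not be diagonal and the torus rescaling would not descend to a globally defined Cartan element attached to the formal solution.
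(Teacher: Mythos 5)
Your construction is the same as the paper's: represent each object of the groupoid by a chosen formal or sectoral fundamental solution, and represent each generating path by the associated comparison cocycle (analytic continuation, $1$-summation/anti-summation across the rays $r_i$, logarithmic transport on $\widehat D$, Stokes matrix for $\sigma_i$, Cartan element for $t_{i,i}(\kappa)$), then glue with the ordinary monodromy at the two regular points. The paper's own proof stops at defining the representation on the generators, stating ``therefore on $\pi_1(X_0,S_0)$'' and discussing only why changes of choice give equivalent representations; it leaves the verification of $\mathcal{R}_{loc}$, $\mathcal{R}_{ext}$, $\mathcal{R}_{int}$ implicit. You make that verification explicit, which is a genuine improvement in rigor. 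Your checks of (i), (ii) and (iii) are correct; in particular the observation for (ii) that $[\rho(\sigma_i),D_\kappa]$ lands back in the abelian unipotent radical, and for (iii) the appeal to the commutation of formal monodromy and torus in the non-ramified case, match the paper's preparatory Proposition exactly.

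The one weak spot is your treatment of (iv). You phrase it as: transport $T_\kappa$ out and back along $\widehat{\gamma}_{1,2}^l\cdot\widehat{\gamma}_{2,1}^r$ and get $T_\kappa$ conjugated by the formal monodromy, which fixes it. But $\widehat{\gamma}_{2,1}^r$ is not the inverse of $\widehat{\gamma}_{1,2}^l$ (it is the opposite half-arc), so $\widehat{\gamma}_{1,2}^l\cdot\widehat{\gamma}_{2,1}^r$ is itself (an inverse of) the formal loop $\widehat{\gamma}_{1,1}$; relation (iv) is therefore equivalent to the identity $\rho(\widehat{\gamma}_{1,2}^l)\,\rho(t_{2,2}(\kappa))\,\rho(\widehat{\gamma}_{1,2}^l)^{-1}=\rho(t_{1,1}(\kappa^{-1}))\,\rho(\widehat{\gamma}_{1,1})$, not a pure conjugation that collapses by commutativity. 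The content is that the Cartan cocycles attached to $s_1$ and $s_2$ by the same formal solution differ exactly by the formal monodromy picked up along the half-loop, which does use diagonality of $L$ and $Q$; your intuition is right, but as written the argument does not track the extra $\widehat{\gamma}_{1,1}$ factor. Also, the paper already attributes the need for the non-ramified hypothesis to relation (iii) (and you yourself invoke it there), so the closing emphasis on (iv) as ``the step at which the non-ramified hypothesis is essential'' is at best redundant. None of this affects the correctness of the overall construction, which coincides with the paper's.
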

\begin{proof} The representation of the local wild fundamental groupoid induced by a local irregular system is defined in the following way. We first consider an extension of the set of base points to any point of $D$ or $\widehat{D}$ which is an extremity of the rays $r_i$, $r_i^\pm$, $i=1,2$. The points $p_1$ and $p_2$ are defined by the singular directions of the connection. Any point of $\widehat{D}$ is arbitrarily represented by a determination in the direction $d$ of a formal fundamental system of solutions $\widehat{Y}_d$ of the connection and any point of $D$ is represented by an actual holomorphic fundamental system of solutions $Y_d$ on a small sector around $d$. Any arc $\gamma$ on $D$  with origin $a$ and end point $b$ is represented, as usually by the comparison between the analytic continuation $\widetilde {Y_a}^{\gamma}$ of $Y_a$ along $\gamma$ with of $Y_b$, i.e. by the matrix $M_\gamma$ such that 
\[Y_b=\widetilde {Y_a}^{\gamma}\cdot M_\gamma.\]
Any arc on $\widehat{D}$ is represented in the same way by using the formal representations of its extremities, and the analytic continuation of the logarithm.
Let $r_d$ be a regular ray whose origin and end-point are represented by $\widehat{Y_d}$ and $Y_d$. The morphism $r_d$ is represented by the comparison between the summation of $\widehat{Y_d}$ (the summation replace here he analytic continuation) with $Y_d$ : \[Y_d=S_d(\widehat{Y_d})\cdot M_{r_d}.\] 
We represent the loops $t_{i,i}(\kappa)$ in the following way. One can suppose that the matrix $\widehat{Y}_{s_i}$ is a diagonal matrix.
Then the action of $\widehat{t}_{i,i}(\kappa)$ on $\widehat{Y}_{s_i}$ is given by the product with the diagonal matrix $diag(\kappa^{-1},\kappa)$.Therefore $\rho_A\left(t_{i,i}(\kappa)\right)=D_\kappa$. We have defined the representation $\rho_\nabla$ on all the generators of $\pi_1(X_0,S_0)$ and therefore on $\pi_1(X_0,S_0)$.

Given a global system $A$ in $\mathcal{S}_V$, we complete the previous local wild representation to a representation of $\pi_1^{V}(X,S)$ by defining $\rho_\nabla(\gamma_{i,j})$, $(i,j)=(2,3),\ (3,4),\ (4,1),\ (3,3),\ (4,4)$ in the usual monodromic way, by analytic continuation along these paths or loops.
A change of representations of the objects, or a gauge action on the linear system, induce equivalent representations, which end the proof. \end{proof}
\bigskip\\
We have defined a Riemann-Hilbert map
\[RH_V:\ \mathcal{M}_V(\alpha)\rightarrow \chi_V(a).\]
The general Riemann-Hilbert problem discusses about the surjectivity of $RH$: in the present context, for any \textit{irreducible} representation $\rho$ in $\chi_V$, there exists a unique connection in $\mathcal{M}_V$ on the trivial bundle whose linear representation is $\rho$: for details see \cite{VdPSai}, \cite{BoMaMi}.

\subsection{Isomonodromic families on $\mathcal{M}_V$}

Any isomonodromic family on $\mathcal{M}_V(\alpha)$ is a fiber of the map $RH_V$, parametrized by a variable $t$, i.e. a family of connections defined by  
\[\frac{dY}{dx}=A(x,t)\cdot Y\]
such that the monodromy and the Stokes operators are locally constant in $\chi_V(a)$. 

\begin{theorem}
There exists an open Zariski set  in $\mathcal{M}_V(\alpha)$ and coordinates $p,q$ on the fiber $\mathcal{M}_V(\alpha)$ such that the isomonodromic families are solutions the Painlevé V hamiltonian differential system (\ref{HV}).
\end{theorem}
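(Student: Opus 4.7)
The plan is to follow the same scheme as in the sketch of Theorem \ref{isoVI} for the Painlev\'e VI case, but with the irregular singularity at $z=\infty$ requiring the parametric summability machinery of Section \ref{irregular sing}. So I would start with a fundamental matrix solution $Y(x,t)$ of $dY/dx = A(x,t)\cdot Y$ depending on the isomonodromic parameter $t$, and try to show that
\[B(x,t) := \frac{\partial Y}{\partial t}(x,t)\cdot Y(x,t)^{-1}\]
extends meromorphically on $\mathbb{P}^1\times U$ for $U$ an open subset of the parameter space. The key point, exactly as in the Fuchsian case, is that at any regular point of $A$ the quotient $B$ is univalued because the monodromy is locally constant in $t$. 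At the two Fuchsian points $s_0$ and $s_1$ the classical Schlesinger argument gives meromorphic extension since $Y=P(x,t)(x-s_i)^{L_i}$ with $P$ holomorphic (Proposition 2.1 applied with parameters).

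The main work is at the irregular point $z=0$. Here I would invoke Theorem \ref{sommable_par} to get, locally on the parameter $t$, a parametric formal fundamental solution $\widehat{Y}(x,t)=P(x,t)\widehat{H}(x,t)x^{L_\infty(t)}\exp Q(x,t)$ with $\widehat{H}$ uniformly $1$-Gevrey and uniformly $1$-summable in every nonsingular direction. Then I would observe that the isomonodromy hypothesis is the statement that the matrices representing the Stokes operators, the formal monodromy, and the exponential torus characters are locally constant in $t$ when read on this parametric formal solution (this is where we use that an isomonodromic family is a fiber of $RH_V$, not just of the ordinary monodromy map). Applying the summation operator $\Sigma_d$ in a nonsingular direction $d$ and differentiating in $t$, one deduces that $B=(\partial_t Y)Y^{-1}$ has the same sectoral expression on every Stokes sector, hence is uniformly asymptotic to a formal power series in $z$ times a matrix of the form predicted by differentiating $\widehat{Y}$ in $t$. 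The singular directions cause no branching of $B$ precisely because the $S_d$ are locally constant in $t$, and the torus part contributes nothing because $L_\infty(t)$ and $Q(x,t)$ depend only on the invariants $\alpha_\infty$ and $\tau$. Hence $B(x,t)$ is univalued and has moderate growth at $x=\infty$, so it extends as a rational function in $x$ of bounded degree, polynomial of degree $1$ at $\infty$ and with simple poles at $s_0$, $s_1$.

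Once $B$ is constructed, the commutation of $d/dx-A$ and $d/dt-B$ gives the Schlesinger-type system
\[\frac{\partial A}{\partial t}-\frac{\partial B}{\partial x}+[B,A]=0,\]
which is a nonlinear system of ODEs on the 6-dimensional space $\mathcal{S}_V//SL_2(\mathbb{C})$ from Proposition 1.3, restricted to a fiber of the local invariants $(\alpha_0,\alpha_1,\alpha_\infty)$. Uniqueness of $B$ follows, in an open set of irreducible $A$'s, from the fact that the commutant of an irreducible representation is reduced to scalars. At this point I would invoke the explicit change of coordinates of Jimbo–Miwa (appendix C of \cite{JiMi1980}), which expresses the entries of $(A_0,A_1,A_\infty)$ modulo gauge action in the coordinates $(p,q,t)$ on a Zariski open set of $\mathcal{M}_V(\alpha)$. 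A direct substitution turns the Schlesinger system into the Hamiltonian system (\ref{HV}) with Hamiltonian $H_V(\alpha)$, yielding the Painlev\'e V vector field as claimed.

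The hard part is definitely the extension of $B$ across $z=0$: one needs the parametric summability statement (Theorem \ref{sommable_par}(ii)) together with the observation that constancy of the full wild monodromy data, not merely of the ordinary monodromy, is what kills the would-be jumps of $B$ across singular directions. Everything else is either classical (the Fuchsian part), a uniqueness statement coming from irreducibility, or a routine computation from \cite{JiMi1980} and \cite{TOS2005}.
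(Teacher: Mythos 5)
Your proposal follows essentially the same scheme as the paper's own sketch: form $B=(\partial_t Y)Y^{-1}$, use Theorem \ref{sommable_par} and the constancy of the Stokes data to show $B$ extends meromorphically across the irregular point $z=0$, then pass to the compatibility (Schlesinger) equation and cite the known coordinate computation to reach the Hamiltonian form (\ref{HV}). The small elaborations you add (explicit mention of the Fuchsian points $s_0,s_1$, the degree bound on $B$, and the observation that the torus part contributes nothing) are consistent with and implicit in the paper's argument.
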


\textit{A sketch of proof.} 
We follow here an argument of \cite{VdPSai} wich extends the argument of Schlesinger for fuchsian connections. 
Given a fundamental system of solutions $Y(x,t)$, by isomonodromy, $\frac{d}{dt}Y(x,t)$ and 
$Y(x,t)$ have the same behaviour by analytic continuation or under a Stokes operator. Therefore the quotient 
\[B(x,t)=\frac{d}{dt}Y(x,t)\cdot Y(x,t)^{-1}\]
is univalued outside the fixed singular points.

\begin{lemma} $B(x,t)$ extends to the singular set in a meromorphic way.
\end{lemma}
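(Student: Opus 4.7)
The plan is to treat each of the three singular points $s_0=0$, $s_1=1$ and $x=\infty$ separately, using at each one a local normal form of the fundamental solution $Y(x,t)$. In the present Painlev\'e V setting the singularities $s_0,s_1$ are fixed (do not move with $t$), and isomonodromy on the parametric family selects, at each singular point, a normalization whose non-analytic factors are constant in $t$. More precisely: at the regular singular points the exponent matrix $L_i$ is determined modulo integers by the monodromy, and at $\infty$ the irregular invariants (the residue matrix $L_\infty$ and the exponential part $Q$) are determined by the local formal data. Since the local data $\alpha=(\alpha_0,\alpha_1,\alpha_\infty)$ is invariant under isomonodromic deformation (these are Casimir functions of the Poisson structure), we may arrange $L_0,L_1,L_\infty$ to be independent of $t$. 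Under this normalization, the $t$-dependence of $Y$ is entirely concentrated in the convergent (or parametrically summable) factor and in the explicit polynomial $Q=\mathrm{diag}(tx,-tx)$.

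At a regular singular point $x=s_i$ ($i=0,1$), Proposition 2.1 provides a local representative $Y(x,t)=P_i(x,t)(x-s_i)^{L_i}$, with $P_i$ holomorphic and invertible in $(x,t)$ in a neighborhood of $x=s_i$ and $L_i$ constant. Since $s_i$ does not depend on $t$, one computes
\[
B(x,t)=\partial_t P_i\cdot P_i^{-1},
\]
which is holomorphic at $x=s_i$. Hence $B$ extends holomorphically at the two fuchsian singular points.

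At the irregular point $z=x^{-1}=0$, by Theorem 2.2 there is a formal fundamental solution $\widehat Y=\widehat P(z,t)z^{L_\infty}\exp Q(z,t)$ with $Q=\mathrm{diag}(t/z,-t/z)$ and $L_\infty$ diagonal, hence commuting with $Q$, $\partial_tQ$ and $z^{L_\infty}$. The parametric version of 1-summability (Theorem 3.6) allows us to replace $\widehat P$ by its Borel sum $P^d(z,t)$ in any nonsingular direction $d$, depending holomorphically on $(z,t)$ on $V_d\times W$ and still uniformly 1-Gevrey asymptotic to $\widehat P$. Using the commutation properties, a direct computation gives
\[
B(x,t)=\partial_tP^d\cdot(P^d)^{-1}+P^d\cdot\partial_tQ\cdot(P^d)^{-1},
\]
with $\partial_tQ=\mathrm{diag}(x,-x)$. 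The first term is bounded on $V_d$ (its asymptotic expansion is a formal power series in $z$ with no pole), while the second has at most a simple pole at $z=0$. Because $B$ is univalued on a punctured neighborhood of $\infty$ (by the preceding paragraph of the paper), this asymptotic bound on each sector forces $B$ to be meromorphic at $\infty$, with a pole of order at most one.

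The main obstacle is the analysis at $\infty$: one has to justify differentiating the Borel sum $P^d$ with respect to the parameter $t$, and to ensure that the asymptotic expansion obtained is itself differentiable term by term so that the resulting principal part is exactly $P^d\partial_tQ(P^d)^{-1}$. This is precisely what the parametric summability statement of Theorem 3.6 delivers: uniform 1-Gevrey asymptotics on compact subsets of the parameter space turn termwise $t$-differentiation into an analytic operation, and the isomonodromy assumption (constancy of the Stokes data in $t$) is what makes the univaluedness of $B$ compatible with the sectorial formulas, preventing any spurious non-analytic term from entering.
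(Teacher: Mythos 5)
Your proof is correct and follows essentially the same route as the paper's: it rests on the parametric $1$-summability statement (Theorem \ref{sommable_par}) to produce a formal/sectoral fundamental solution analytic in $t$, uses isomonodromy to fix the exponential and exponent data (constant $L_\infty$, $t$-independent Stokes multipliers), and observes that after forming $B=\partial_t Y\cdot Y^{-1}$ the exponential factors cancel and only a finite-order pole survives at $\infty$. The only genuine difference is that you also spell out the easy fuchsian case at $s_0,s_1$ (the classical Schlesinger computation with $B=\partial_t P_i\cdot P_i^{-1}$), which the paper's proof omits and implicitly takes for granted; your version is therefore slightly more complete but not different in substance. One small point worth being careful about: you assert the first term $\partial_t P^d\cdot(P^d)^{-1}$ is bounded because "its asymptotic expansion is a formal power series in $z$ with no pole" — strictly, you need that $\partial_t P^d$ is again uniformly $1$-Gevrey asymptotic to $\partial_t \widehat P$, which indeed follows from the uniform estimates on compacts in $t$ together with Cauchy's formula; you flag this correctly as the content of the parametric theorem, so the gap is closed, but it is the one place where the argument could have been glossed too quickly.
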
 
\begin{proof} Let $U \subset \C$ be an open subset, and let $D$ be an open disc centered at $\infty$. Let 
$dY/dx=A(x^{-1},t)Y$, where $A\in sl_2(\mathcal{O}(D\times U)$, be a parametrized meromorphic differential system. We suppose that, for all $t\in U$, the eigenvalues of $A(0,t)$ are distinct and that the singularity is irregular, with a Katz rank equal to $1$.
From Theorem (\ref{sommable_par}), the system $dY/dx=AY$ admits, in a small neighborhood of each $t_0\in U$ a formal fundamental solution \emph{analytic in  $t$}~: 
$\widehat{Y}=\widehat{\Phi} x^Le^Q$. The polynomials $Q$ and the matrix $L$ are analytic in $t$ and the entries of $\widehat{\Phi}$ belongs to $\mathcal{O}(V)[[x^{-1}]]$, where $V$ is a small open disc centered at $t_0$. Moreover these coefficients are $1$-summable \emph{uniformly} in  $t$. For a direction $d$ non-singular at $t_0$, if $\vert t-t_0\vert$ is small, then $d$ remains non-singular and the $1$-sum is analytic in 
$t$. Multiplying on the right by an invertible diagonal matrix analytic in $t$, we get a fundamental solution with similar properties.

If the system is isomonodromic, then the matrix $L$ is constant and it is possible to choose 
$\widehat{Y}$ in such a way that the Stokes multipliers does not depend on $t$.

By a simple calculation, we see that 
$\widehat{B}(x,t)=\frac{d}{dt}\widehat{Y}(x,t)\cdot \widehat{Y}(x,t)^{-1}$ does not contain exponentials, and is invariant by the formal monodromy and by the (constant) Stokes multipliers. Therefore its entries belongs to $\mathcal{O}(D)((x^{-1}))$.
These entries are $1$-summable \emph{uniformly} in  $t$ and are invariant by the Stokes multipliers, therefore they are meromorphic in $x$ (with a pole at $\infty$) and analytic in $t$. \end{proof}\bigskip

Therefore $Y(x,t)$ satisfies
two rational linear systems $\frac{dY}{dx}=A(x,t)\cdot Y$ and $\frac{dY}{dt}=B(x,t)\cdot Y$. The compatibility condition requires that the 
two operators $d/dx-A$ and $d/dt-B$ have to commute :
\begin{equation}\label{compatibility}\frac{dA}{dt}-\frac{dB}{dz}+[B,A]=0.\end{equation}
The pair $(A,B)$ is called an isomonodromic Lax pair. If $A$ is irreducible, $B$ is unique. In \cite{VdPSai}, the computation of $B$ and the equivalence of the compatibility condition (\ref{compatibility}) with the hamiltonian system of $P_V$ have been performed. \carre

\subsection{Singularities of the Painlevé V vector field in the Okamoto's compactification of $\mathcal{M}_V(\alpha)$}\label{SingPV}

\textbf{The Okamoto's compactification} \cite{Ok79}, \cite{InIwSai2}. All the Painlevé foliations satisfy the Painlevé property: any solution admits an extension along any path is the basis given by the time variable punctured by the fixed singular points, as \textsl{a meromorphic} function.

In a first step, we search for a compactification on which appear the ``polar'' singular set, which corresponds to the poles of the meromorphic solutions. Here, following H. Chiba \cite{Chiba1} and \cite{Chiba2}, we have to distinguish the equations $P_{IV}$, $P_{II}$ and $P_I$ from $P_{VI}$, $P_V$ and $P_{III}$. For the first triple we obtain this compactification by a convenient weighted projective space. For the second one, the natural weighted projective space presents a vanishing weight and we can't catch all the polar set. In this case we need to glue two copies of this weighted projective space by a convenient B\"acklund transformation.

In a second step one can remove the polar singular set by a convenient sequence of blowing up's. In the version of H. Chiba, it suffices to perform only one weighted blowing up whose weights are given by the eigenvalues of the polar singular point, which are positive integers.

The first step creates new singular points outside the polar singular set, over $z=\infty$, which are saddle-nodes. We describe here this set for the $P_V$ foliation. In this case,  the weights are $\omega=(1,0,1,1)$. Since the second weight vanishes, this space $\p^3_{(1,0,1,1)}$ is not compact and is isomorphic to $\p^2\times\C$. This space is here a manifold (an orbifold for other Painlevé equations: [Chiba1-2-4]), covered by 3 charts:
\begin{equation*}
    \begin{aligned}
    &U_1=\{(X_1:X_2:X_3:X_4),\ X_1\neq 0\},\ (X_1:X_2:X_3:X_4)= (1:u_1:u_2:x);\\
    &U_3=\{(X_1:X_2:X_3:X_4),\ X_3\neq 0\},\ (X_1:X_2:X_3:X_4)=(v_1:v_2:1:y);\\
    &U_4=\{(X_1:X_2:X_3:X_4),\ X_4\neq 0\},\ (X_1:X_2:X_3:X_4)=(w_1:w_2:z:1).
    \end{aligned}
\end{equation*}
The change of charts are given by
\[w_1=v_1y^{-1}=x^{-1},\ w_2=v_2=u_1,\ z=y^{-1}=u_2x^{-1}.\]
\textbf{Singular points. }
The chart $U_4$ is the "initial chart", before compactification. In this chart the vector field corresponding to the hamiltonian $zH_V$ is defined by
\begin{equation*}
  \left\{
    \begin{aligned}
    z\dot{w_1}&=-2w_1^2w_2+w_1^2+w_1z-2w_1w_2z+(\alpha_1+\alpha_3)w_1-\alpha_2z\\
    z\dot{w_2}&=2w_1w_2^2-2w_1w_2-w_2z+w_2^2z-(\alpha_1+\alpha_3)w_2+\alpha_1\\
    \dot{z}&=z
    \end{aligned}
  \right.
\end{equation*}
For each vector field,
the plane $z=0$ is invariant and the singular points of the restriction of the vector field to this plane are given by
\begin{equation*}
  \left\{
    \begin{aligned}
    &(-2w_1w_2+w_1+(\alpha_1+\alpha_3))w_1=0\\
    &2w_1w_2^2-2w_1w_2-(\alpha_1+\alpha_3)w_2+\alpha_1=0\\
    &z=0
    \end{aligned}
  \right.
\end{equation*}
For $\alpha_1\neq \pm\alpha_3$, we obtain 2 singular points $r_1$ and $r_2$:
\[(w_1,w_2,z)=\left(0,\frac{\alpha_1}{\alpha_1+\alpha_3},0\right),\ \  (w_1,w_2,z)=\left(\alpha_1-\alpha_3,\frac{\alpha_1}{\alpha_1-\alpha_3},0\right).\]
\noindent Now we search for singular points over $z=\infty$. In the first chart the Painlevé vector field is (see \cite{Chiba2}):
\begin{equation*}
  \left\{
    \begin{aligned}
    \dot{u_1}&=-2u_1+2u_1^2-u_1u_2+u_1^2u_2+\alpha_1x-(\alpha_1+\alpha_3)u_1x\\
    \dot{u_2}&=-u_2+2u_1u_2-u_2^2+u_2x+2u_1u_2^2-(\alpha_1+\alpha_3)u_2x+\alpha_2u_2^2x\\
    \dot{x}&=x(-1+2u_1-u_2+2u_1u_2-(\alpha_1+\alpha_3)x+\alpha_2u_2x)
    \end{aligned}
  \right.
\end{equation*}
The plane $(x=0)$ is invariant and $P_V|_{(x=0)}$ is the autonomous hamiltonian vector field
\begin{equation*}
  \left\{
    \begin{aligned}
    \dot{u_1}&=-2u_1+2u_1^2-u_1u_2+u_1^2u_2=u_1(u_1-1)(u_2+2)\\
    \dot{u_2}&=-u_2+2u_1u_2-u_2^2+2u_1u_2^2=u_2(2u_1-1)(u_2+1)
    \end{aligned}
  \right.
\end{equation*}
We have 5 singularities $p_1$, $p_2$, $s_1$, $s_2$, $s_3$ in this first chart, which do not depend on the parameters $\alpha_i$:
\[(u_1,u_2,x)=(0,0,0),\ (1,0,0),\ (0,-1,0),\ (\frac{1}{2},-2,0),\ (1,-1,0).\]
The singularities $p_1$ and $p_2$ are \textsl{polar} singular points: H. Chiba has proved in \cite{Chiba2} that they correspond to solutions given by Laurent series at a movable pole $z=z_0$. One can remove these singularities by a weighted blowing up. The eigenvalues of linear part of $P_V$ are  $(-2,-1,-1)$ and $(2,1,1)$ respectively. They are analytically linearizable. 
\bigskip\\
In the third chart --the \emph{Boutroux chart}--, the Painlevé vector field is given by
\begin{equation*}
  \left\{
    \begin{aligned}
    \dot{v_1}&=v_1(1+v_1-2v_2-2v_1v_2+(\alpha_1+\alpha_3-1)y)-\alpha_2y\\
    \dot{v_2}&=v_2(-1+v_2-2v_1+2v_1v_2-(\alpha_1+\alpha_3)y)+\alpha_1y\\
    \dot{y}&=-y^2
    \end{aligned}
  \right.
\end{equation*}
The plane $y=0$ is invariant and the restriction of the vector field at infinity is
\begin{equation*}
  \left\{
    \begin{aligned}
    \dot{v_1}&=v_1(1+v_1-2v_2-2v_1v_2)=v_1(v_1+1)(1-2v_2)\\
    \dot{v_2}&=v_2(-1+v_2-2v_1+2v_1v_2)=v_2(v_2-1)(2v_1+1)
    \end{aligned}
  \right.
\end{equation*}
There are five singular points $s_1$, $s_2$, $s_3$, $s_4$ and $s_5$ in $y=0$ given by    
\[(v_1,v_2,y)=(-1,0,0),\ (-\frac{1}{2},\frac{1}{2},0),\ (-1,1,0),\ (0,0,0),\ (0,1,0).\]
The polar singular set of $P_V$ contains 4 points. In order to catch the two missing polar singular points, H. Chiba introduces a second copy of $\widetilde{\p^3}_{(1,0,1,1)}$ glued to the first one by the B\"acklund transformation $\pi$, given in each chart by
\begin{equation*}
\begin{aligned}
		\pi: &(\tilde{w}_1,\tilde{w}_2,\tilde{z},\tilde{\alpha}_0,\tilde{\alpha}_1,\tilde{\alpha}_2,\tilde{\alpha}_3)=
(z(w_2-1),-w_1z^{-1},z,\alpha_1,\alpha_2,\alpha_3,\alpha_0)\\
& (\tilde{v}_1,\tilde{v}_2,\tilde{y},\tilde{\alpha}_0,\tilde{\alpha}_1,\tilde{\alpha}_2,\tilde{\alpha}_3)=
(v_2-1,-v_1,y,\alpha_1,\alpha_2,\alpha_3,\alpha_0)\\
& (\tilde{u}_1,\tilde{u}_2,\tilde{x},\tilde{\alpha}_0,\tilde{\alpha}_1,\tilde{\alpha}_2,\tilde{\alpha}_3)=
(-u_2^{-1},(u_1-1)^{-1},(u_1-1)^{-1}u_2^{-1}x,\alpha_1,\alpha_2,\alpha_3,\alpha_0)\\
\end{aligned}
\end{equation*}
By computing the singularities of the vector field in the chart $(\tilde{w}_1,\tilde{w}_2,\tilde{z})$, we find two singular points $r_3$ and $r_4$ given by:
\[(\tilde{w}_1,\tilde{w}_2,\tilde{z})=(\tilde{\alpha}_1-\tilde{\alpha}_3+1,\frac{\tilde{\alpha}_1}{\tilde{\alpha}_1-\tilde{\alpha}_3+1},0),\ (\tilde{w}_1,\tilde{w}_2,\tilde{z})=(0,-\frac{\tilde{\alpha}_1}{\tilde{\alpha}_0+\tilde{\alpha}_2},0).\]
We have $r_4=r_1$, and therefore we obtain 3 singular points of regular type.
In the chart $(\tilde{u}_1,\tilde{u}_2,\tilde{x})$, we find five singular points:
\begin{equation*}
\begin{aligned}
&p_3:\ (\tilde{u}_1,\tilde{u}_2,\tilde{x})=(0,0,0),\\
&p_4:\ (\tilde{u}_1,\tilde{u}_2,\tilde{x})=(1,0,0),\\
&s_1:\ (\tilde{u}_1,\tilde{u}_2,\tilde{x})=(1,-1,0),\\
&s_2:\ (\tilde{u}_1,\tilde{u}_2,\tilde{x})=(\frac{1}{2},-2,0),\\
&s_4:\ (\tilde{u}_1,\tilde{u}_2,\tilde{x})=(0,-1,0).
\end{aligned}
\end{equation*}
According to  \cite{Chiba2}, the singular points $p_3$ and $p_4$ are the two missing polar singular points. The singularities $s_1$, $s_2$ and $s_4$ was yet detected in the first copy.
The singular points in the Boutroux chart $(\tilde{v}_1,\tilde{v}_2,y)=(v_2-1,-v_1,y)$ are $s_1$, $s_2$, $s_3$, $s_4$ and $s_5$ without new singular point (the change of chart is polynomial).
We have obtained:
\begin{prop}
The Painlevé vector field admits 3 singular points $r_1,\ r_2,\ r_3$ of regular type over $z=0$, 4 polar singular points $p_i$, $i=1...4$ over $z=\infty$, which can be removed by a weighted blowing-up, and 5 singular points $s_i$ over $z=\infty$, of saddle-node type.
\end{prop}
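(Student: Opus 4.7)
The strategy is essentially a bookkeeping of the computations already sketched in Section 4.3 above, reorganized to isolate the claims about the number and types of singular points. All the local computations are routine once the vector field is written in each chart; the content of the proposition is the identification of which points coincide under the B\"acklund gluing and the classification of their linear parts.

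The plan is as follows. First, I would work systematically in the three charts $U_1,U_3,U_4$ of the first copy of $\widetilde{\p^3}_{(1,0,1,1)}$, and in the corresponding three charts of the second copy glued by $\pi$. In each chart, the Painlev\'e $V$ hamiltonian vector field has already been written down explicitly (this uses the Hamiltonian formulation \eqref{HV} together with the change-of-chart formulas). The exceptional divisor of interest is either $\{z=0\}$, $\{x=0\}$, or $\{y=0\}$, which is invariant in each case; hence the singular points of $P_V$ on the divisor are the zeros of the restriction of the vector field to that divisor, together with the transverse linear factor $\pm z$ (resp.\ $\pm x$, $\pm y$).

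Second, I would enumerate the singular points chart by chart, exactly as carried out in the excerpt. In the chart $U_4$ of the first copy one gets two points $r_1,r_2$ over $z=0$; in the chart $U_4$ of the second copy (variables $\tilde w_i,\tilde z$) one gets two more points, one of which coincides with $r_1$ under $\pi$, leaving a single new regular singular point $r_3$. In $U_1$ one finds five points over $x=0$, two of which ($p_1,p_2$) are the polar singularities; in the $\pi$-dual chart one finds five more points, two of which ($p_3,p_4$) are the missing polar singularities, while the other three coincide with points already listed in the first copy. In the Boutroux chart $U_3$ one finds the five points $s_1,\dots,s_5$ but nothing new after applying $\pi$ (the change of chart is polynomial). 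Collecting, one obtains exactly $3+4+5=12$ singular points.

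Third, I would classify the linear types. For each $r_i$ one computes the Jacobian of $P_V$ at the point and checks that all eigenvalues are non-zero (``regular type'' in the sense of \cite{IKSY}); this comes down to verifying that $\alpha_1\pm\alpha_3$ and the similar combinations appearing in the $\tilde w$-chart do not vanish under our genericity assumption. For each $p_i$ one computes the eigenvalues and finds the triples $(-2,-1,-1)$ or $(2,1,1)$ (up to sign depending on the chart), so the singular point has positive integer weights and can be removed by the Chiba weighted blow-up, which is the content of \cite{Chiba2}. For each $s_i$, a direct eigenvalue computation of the Jacobian shows that one eigenvalue in the direction transverse to the exceptional divisor vanishes, so the singularity is of saddle-node type.

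The main obstacle is really only notational: the vector fields in several charts are long and one must be careful to identify which points in the two copies of $\widetilde{\p^3}_{(1,0,1,1)}$ are glued by $\pi$ (so as not to over-count). Once this bookkeeping is done, the linear classification is a direct Jacobian computation. No new ideas beyond those of Chiba \cite{Chiba2} are needed; the novelty lies entirely in packaging them into the statement separating the three intrinsic types of singularity.
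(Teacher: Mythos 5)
Your proposal reproduces the paper's own argument almost verbatim: work chart by chart in the two copies of $\widetilde{\p^3}_{(1,0,1,1)}$, solve for singular points on the invariant divisor, identify coincidences under the B\"acklund gluing $\pi$, and read off the type from the linear part. The only difference is one of emphasis: the paper states the eigenvalue data for the polar points $(\pm 2,\pm1,\pm1)$ and asserts the regular and saddle-node types without spelling out the transverse eigenvalue computation, whereas you propose to make explicit that the transverse factor vanishes at each $s_i$ (which is immediate from $\dot x = x(\cdots)$ with the bracket vanishing at $s_1,s_2,s_3$, and from $\dot y = -y^2$ in the Boutroux chart) and that the regular $r_i$ have no zero eigenvalue under the genericity assumption. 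This is a welcome clarification but not a different route.
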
 
By a local study around each saddle-node singularity, one can recover the 5 solutions of A. Parusnikova in Laurent series around $z=\infty$ for the Painlevé V equation: \cite{Parus}.

\section{Dynamics on $\chi_V$}

\subsection{The tame dynamics}

As mentionned in the introduction, the dynamic of the Painlevé VI equation on $\chi_{VI}$ is induced by the automorphisms of the groupoid, given by braids over 3 points. Here since the singular set reduces to 3 points, the braid group over two points has only one generator, which can be represented by the pure braid $b$ over $s_3$ and $s_4$. Furthermore one can consider the ``half-monodromy'' defined the (non pure) braid $b_{3,4}$ which permutes the positions of $s_3$ and $s_4$ and induces an involution on the local parameters. These actions induced by the trace coordinates on $\mathcal{C}_V(\theta^+)$ has been computed by the authors in \cite{PR}
and M. Klimes in \cite{Kl1}.

\begin{prop} The action of the braid $b_{3,4}:$ $\mathcal{C}_V(\theta_1^+,\theta_2^+,e_0)\rightarrow\mathcal{C}_V(e_0^{-1}\theta_2^+,e_0^{-1}\theta_1+,e_0^{-1})$ is defined by:
\begin{equation*}
\left\{
    \begin{aligned}
   & x'_1=e_0^{-1}(-x_2-x_1x_3+\theta_2^+)\\
   & x'_2=e_0^{-1}x_1\\
   & x'_3=x_3
   \end{aligned}
    \right.
		\mbox{ and }\left\{
    \begin{aligned}
   & e'_0=e_0^{-1}\\
   & \theta_1^{+}\,'=e_0^{-1}\theta_2^+\\
   & \theta_2^{+}\,'=e_0^{-1}\theta_1^+
   \end{aligned}
    \right.
\end{equation*}
The action of the pure braid $b_{3,4}\circ b_{3,4}:$ $\mathcal{C}_V(\theta^+)\rightarrow\mathcal{C}_V(\theta^+)$ is defined by:
\begin{equation*}
  g_{3,4}:\ \left\{
    \begin{aligned}
   & x'_1=x_1x_3^2+x_2x_3-x_1-\theta_2^+x_3+\theta_1^+\\
   & x'_2=-x_1x_3-x_2+\theta_2^+\\
   & x'_3=x_3
    \end{aligned}
    \right.
\end{equation*}
 \end{prop}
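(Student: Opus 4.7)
The plan is to carry out the braid action in two stages: first on the groupoid $\pi_1^V(X,S)$, then on normalized representations, and finally pass to trace coordinates through Proposition \ref{equation chiV0} and Lemma \ref{coeff-traces}. Topologically, the half-braid $b_{3,4}$ that swaps $s_3$ and $s_4$ is represented by a path in the configuration space which, since $s_\infty$ is the only other marked point, acts on the wild structure at infinity by a half-rotation. Consequently it exchanges the two unipotent Borel components of the Borel-Cartan configuration and inverts the Cartan, which accounts for the parameter change $e_0 \mapsto e_0^{-1}$ and $(\theta_1^+,\theta_2^+) \mapsto (e_0^{-1}\theta_2^+, e_0^{-1}\theta_1^+)$; the local traces $a_3,a_4$ are preserved since they depend only on the fuchsian conjugacy classes at the fixed points $s_3,s_4$.

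Concretely, starting from a normalized representative $(U_1,M_0,U_2,M_3,M_4)$ satisfying $U_1M_0U_2M_3M_4=I$, I would use a Hurwitz-type move to write the new tuple $(U'_1,M'_0,U'_2,M'_3,M'_4)$ in terms of the old one, followed by a global conjugation by $P=\bigl(\begin{smallmatrix} 0 & 1 \\ -1 & 0\end{smallmatrix}\bigr)$ (cf.\ Corollary \ref{ext2}) to bring the Cartan back to the standard diagonal form $D$, followed by an $LDU$-renormalization of the product in the spirit of Lemma \ref{LDU}. This last step is precisely the source of the $e_0^{-1}$ factor in the formula $x'_2=e_0^{-1}x_1$: the renormalization absorbs a diagonal matrix whose entries are powers of $e_0$ into the redefinition of $U'_1,U'_2$.

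The passage to $(x'_1,x'_2,x'_3)$ is then a direct trace manipulation using Lemma \ref{coeff-traces} together with the $SL_2(\C)$ identity $\mathrm{tr}(AB)+\mathrm{tr}(A^{-1}B)=\mathrm{tr}(A)\,\mathrm{tr}(B)$. Verifying that the image satisfies $F_V(x',\theta^{+\prime})=0$ is automatic, since $b_{3,4}$ acts between well-defined character varieties. The formula for the pure braid $b_{3,4}\circ b_{3,4}$ then follows by direct composition: the parameter $e_0^{-1}$ reverts to $e_0$ and $(\theta_1^+,\theta_2^+)$ returns to itself, so the action is an automorphism of $\mathcal{C}_V(\theta^+)$, and substitution in the half-braid formula yields the stated $g_{3,4}$.

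The main obstacle will be tracking the precise sequence of renormalizations so that the scalar prefactors come out right, in particular the relative $e_0^{\pm 1}$ factors between $x'_1$ and $x'_2$ and between $\theta_1^{+\prime}$ and $\theta_2^{+\prime}$. As an independent cross-check I would use the confluent morphism $\Phi_\kappa^+\colon\mathcal{C}_V(\theta^+)\to\mathcal{C}_{VI}(\kappa(\theta^+))$ of Theorem \ref{birational map}, which should intertwine $b_{3,4}^{(V)}$ with the corresponding half-braid in $\pi_1^{VI}(X,S)$; the resulting commutative square, combined with Proposition \ref{dynamicPVI} for the pure-braid square $h_{i,j}$, provides a second derivation of $g_{3,4}$ which must agree with the one obtained above.
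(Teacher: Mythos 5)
The paper does not give a self-contained proof of this proposition; it is referred to \cite{PR} and \cite{Kl1}. Your outline is therefore assessed on its own merits rather than against an argument in the text.

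The skeleton is right. The half-braid rotates the Stokes rays at $\infty$ by $\pi$ and so exchanges the lower/upper unipotent factors; the conjugation by $P$ (cf.\ Corollary \ref{ext2}) needed to restore the standard normalization sends $M_0\mapsto PM_0P^{-1}=M_0^{-1}$, whence $e_0\mapsto e_0^{-1}$. Your claim that $a_3,a_4$ are individually preserved is likewise consistent with the stated parameter map: imposing $e_0'=e_0^{-1}$, $\theta_1^{+'}=e_0^{-1}\theta_2^+=a_3+e_0^{-1}a_4$ and $\theta_2^{+'}=e_0^{-1}\theta_1^+=a_4+e_0^{-1}a_3$ forces $a_3'=a_3$, $a_4'=a_4$. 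The composition computation for the pure braid is also correct, and the confluent cross-check for $g_{3,4}$ is legitimate; in fact the paper performs exactly this check in Proposition \ref{tame}, where $\phi_\kappa^{-1}\circ h_{3,4}\circ\phi_\kappa$ is shown to be $\kappa$-independent and to generate the tame dynamic.

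The genuine gap is that the proposal halts precisely where the substance begins. The sequence ``Hurwitz move on $(U_1,M_0,U_2,M_3,M_4)$, conjugation by $P$, $LDU$-renormalization via Lemma \ref{LDU}, then read off traces through Lemma \ref{coeff-traces}'' is named but never executed, so the $e_0^{\pm1}$ prefactors in $x_1',x_2'$ — which you yourself flag as the main obstacle — are never derived; they are only made plausible by reverse-engineering from the stated answer. Note also that Proposition \ref{dynamicPVI} records only pure-braid formulas on $\chi_{VI}$, so conjugating it through $\Phi_\kappa$ pins down $g_{3,4}=b_{3,4}^2$ but not the half-braid itself: that requires either the explicit matrix computation or an analogous half-braid formula on $\chi_{VI}$ together with a verification that $\Phi_\kappa$ intertwines the two half-braids (with the correct $\kappa$-shift), neither of which is supplied. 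To turn the sketch into a proof, write out the renormalized tuple $(U_1',M_0',U_2',M_3',M_4')$ and compute $\delta_3'$, $\delta_4'$, $tr(U_1'M_0'U_2')$ explicitly; the pure-braid formula then follows by the composition you already carried out.
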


As for the dynamics on $\mathcal{C}_{VI}(\theta)$, this dynamics and its inverse are \textsl{polynomial} dynamics. This is the only part of the dynamics on $\chi_{VI}$ which do not degenerate through the confluent morphisms from $\chi_{VI}$ to $\chi_{V}$  (see next paragraph).

\begin{defi} The dynamics generated by $g_{3,4}$ is called the tame dynamics, and denoted by $Tame(\mathcal{C}_V)$ .\end{defi}

\subsection{The confluent dynamic}\label{confluent dynamics}

Using the birational map $\Phi_\kappa=\Phi_\kappa^+$, each dynamic $h_{i,j}$ on $\mathcal{C}_{VI}(\theta_\kappa)$ defined by the braids induces a family of birational dynamics $g_{i,j}(\kappa)=\phi_\kappa^{-1}\circ h_{i,j}\circ\phi_\kappa$ on $\chi_V^+(a)$.
This one was previously obtained by M. Klimes by  using an analytic confluent process in the spaces of connections.
\begin{defi}
The confluent dynamic $Conf(P_V)$ is the dynamic generated by $g_{1,2}(\kappa)$, $g_{2,3}(\kappa)$ and $g_{3,1}(\kappa)$ for $\kappa$ is $\C^*$.
\end{defi}
We still have the relation $g_{1,2}(\kappa)\circ g_{2,3}(\kappa)\circ g_{3,1}(\kappa)=id.$ Therefore it suffices to compute $g_{1,2}(\kappa)$ and $g_{2,3}(\kappa)$. Notice that, from the braid group relations, $g_{1,2}(\kappa)=g_{3,4}(\kappa)^{-1}$.
By a direct computation using proposition (\ref{VI-V}), it can be checked that
\begin{prop}\label{tame}
The dynamic $g_{3,4}(\kappa)=\phi_\kappa^{-1}\circ h_{3,4}\circ\phi_\kappa$ on $\chi_V(a)$ do not depend on $\kappa$ and generates the tame dynamic.
\end{prop}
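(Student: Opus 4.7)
The plan is a direct computation, guided by the observation that the braid $b_{3,4}$ swaps $s'_3$ and $s'_4$ while fixing $s'_1, s'_2$. This is reflected at the level of $\pi_1^{VI}(X,S)$ by the fact that only the generators $\gamma'_{3,3}, \gamma'_{4,4}, \gamma'_{3,4}, \gamma'_{4,1}$ (and $\gamma'_{2,3}$) are moved, and $\varphi_\kappa$ sends the first four of them identically to their Painlev\'e V analogues. Heuristically, this is why the braid action should survive the confluent limit without $\kappa$-dependence; the task is to verify this at the level of trace coordinates.

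First, I would use Proposition~\ref{dynamicPVI} to write down $h_{3,4}$ on $\mathcal{C}_{VI}(\theta_\kappa)$. Since the braid fixes the loop $\gamma'_{1,2}\gamma'_{2,3}\cdots$ and hence the trace coordinate $x_{3,\kappa}=\mathrm{tr}(M_1M_2)$, the formula for $h_{3,4}$ takes the form of Proposition~\ref{dynamicPVI} with the fixed variable being $x_{3,\kappa}$, the other two variables being mapped by quadratic polynomial expressions in $(x_{1,\kappa},x_{2,\kappa},x_{3,\kappa})$ with coefficients in $\theta_{i,\kappa}$. The easiest coordinate of the composition is $x_3$: since Lemma~\ref{VI-V} gives $x_{3,\kappa}=x_3$ and $h_{3,4}$ fixes $x_{3,\kappa}$, we immediately obtain $g_{3,4}(\kappa)^*x_3=x_3$ with no $\kappa$ appearing.

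For the remaining coordinates, I would substitute the explicit expressions of $\Phi_\kappa$ from Lemma~\ref{VI-V} and of $\Phi_\kappa^{-1}$ from the subsequent lemma, together with the values $\theta_{1,\kappa}=\kappa+\kappa^{-1}$, $\theta_{2,\kappa}=e_0\kappa^{-1}+e_0^{-1}\kappa$, $\theta_{3,\kappa}=a_3$, $\theta_{4,\kappa}=a_4$. The pre-factor $(x_{3,\kappa}-c_{e_{1,\kappa},e_{2,\kappa}})^{-1}$ appearing in $\Phi_\kappa^{-1}$ is the only potentially dangerous denominator; one checks that after applying $h_{3,4}$ the numerator acquires a matching factor, so the composition is regular off the locus where $\Phi_\kappa$ itself is non-invertible. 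All the monomials in $\kappa$ then combine and cancel.

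The cleanest bookkeeping is to compare the output with the pre-computed formula for $g_{3,4}$ in the proposition above this one, or equivalently, with $\sigma_1\circ\sigma_2$ from Section~\ref{logcan}, since $\sigma_1$ and $\sigma_2$ are manifestly $\kappa$-free polynomial involutions of $\mathcal{C}_V(\theta^+)$. This gives the second half of the statement (that $g_{3,4}(\kappa)$ coincides with the tame dynamic) at the same stroke. The main obstacle is purely the bookkeeping of the powers of $\kappa$ and of $e_0$: the $\kappa$-cancellation is not transparent from the raw formulas, and in practice one simplifies coordinate by coordinate, using the relation $F_V(x,\theta^+)=0$ when needed to reduce quartic expressions in $x_1,x_2,x_3$ to the normal form appearing in $\sigma_1\circ\sigma_2$.
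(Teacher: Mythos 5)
Your approach coincides with the paper's: the paper's proof of Proposition~\ref{tame} is merely the assertion that the result follows ``by a direct computation using Lemma~\ref{VI-V},'' and your outline (substitute the explicit formulas for $\Phi_\kappa$, $\Phi_\kappa^{-1}$ and $h_{3,4}$, observe that $x_3$ is trivially preserved, cancel the powers of $\kappa$, and match against the polynomial formula for $g_{3,4}$) is precisely that computation spelled out. One minor notational slip: the quantities you write as $\theta_{i,\kappa}$ are in fact the local trace data $a_{i,\kappa}=(\kappa+\kappa^{-1},\,e_0\kappa^{-1}+e_0^{-1}\kappa,\,a_3,\,a_4)$, whereas the coefficients $\theta_{i,\kappa}$ of $F_{VI}$ are the quadratic expressions in these $a$'s given in the introduction.
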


\begin{rema}\label{corr-dyn} Since the braid between $s_1$ and $s_2$ (or between $s_3$ and $s_4$) corresponds to a loop around $0$ in the time space, $g_{3,4}$ is conjugated through the Riemann-Hilbert correspondence to the non linear monodromy of the foliation $\mathcal{F}_V$ over $z=0$.
\end{rema}

Contrary to the previous case, the dynamics $g_{2,3}(\kappa)$ and $g_{3,1}(\kappa)$ are not generated by an automorphism of $\pi_1^{V}(Y,S)$: indeed on the groupoid level, $\varphi_\kappa$ is not an isomorphism. Nevertheless they induce families of \textit{birational} dynamics on $\mathcal{C}_V(\theta)$:

\begin{prop}\label{dyn conf}
The family of dynamics $g_{2,3}(\kappa)$ is defined  by:
\begin{equation*}
\begin{aligned}
&X_1= \frac{e_0}{x_2}\\
&X_2=x_2-\frac{\kappa^2}{x_2}+e_0^{-1}\kappa^2x_1\\
&X_3= \kappa^{-2}x_2^2x_3-(e_0^{-2}\kappa^2-\kappa^{-2})x_1x_2-2e_0^{-1}x_2^2+(e_0^{-1}\theta_2+\kappa^{-2}\theta_1)x_2+(e_0^{-1}\kappa^2+e_0\kappa^{-2})
\end{aligned}
\end{equation*}
The second family $g_{3,1}(\kappa)$ is given by $g_{1,3}(\kappa)=g_{1,2}\circ g_{2,3}(\kappa).$
\end{prop}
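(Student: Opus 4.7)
The plan is to compute $g_{2,3}(\kappa) = \Phi_\kappa^{-1}\circ h_{2,3}\circ \Phi_\kappa$ directly as a composition of three explicit birational maps, using the formulas already established in Lemma \ref{VI-V} (for $\Phi_\kappa$ and $\Phi_\kappa^{-1}$) and in Proposition \ref{dynamicPVI} (for $h_{2,3}$ acting on $\mathcal{C}_{VI}$). Concretely: first substitute $(x_1,x_2,x_3)\in\mathcal{C}_V(\theta^+)$ into the formulas of Lemma \ref{VI-V} to obtain the triple
\[
(\xi_1,\xi_2,\xi_3):=(x_{1,\kappa},x_{2,\kappa},x_3)\in\mathcal{C}_{VI}(\kappa(\theta^+)),
\]
with local data $a_\kappa=(\kappa+\kappa^{-1},\,e_0\kappa^{-1}+e_0^{-1}\kappa,\,a_3,\,a_4)$ and corresponding $\theta$-parameters deduced from Proposition \ref{equation chiV0} applied to the Painlev\'e VI cubic. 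Then apply the case $(i,j,k)=(2,3,1)$ of Proposition \ref{dynamicPVI}, which reads $\xi_1\mapsto\xi_1$, $\xi_2\mapsto -\xi_2+\xi_1\xi_3+\xi_1^2\xi_2-\theta_3\xi_1+\theta_2$, $\xi_3\mapsto -\xi_3-\xi_1\xi_2+\theta_3$. Finally feed the resulting triple $(\xi_1,\xi_2',\xi_3')$ into the formulas of $\Phi_\kappa^{-1}$ given at the end of the proof of Theorem \ref{birational map}, and simplify.

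The main obstacle is the algebraic simplification. The numerators coming out of $\Phi_\kappa^{-1}$ are polynomials of mixed degree in $(x_1,x_2,x_3,\kappa^{\pm 1},e_0^{\pm 1})$ and the denominator $\xi_3'-c_{e_{1,\kappa},e_{2,\kappa}}$ is \emph{a priori} non-trivial; the announced concise form only appears after systematic use of the cubic relation $F_V(x,\theta^+)=0$ (which, for instance, allows the replacement of $x_1x_2x_3$ by $-x_1^2-x_2^2+\theta_1^+x_1+\theta_2^+x_2+\theta_3^+x_3-\theta_4^+$) and after recognizing that $\xi_3'-c_{e_{1,\kappa},e_{2,\kappa}}$ factors out of each of the three numerators. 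I would verify $X_1=e_0/x_2$ first, since that identity pins down the cancellation pattern; once the denominator is understood to reduce to a simple monomial in $x_2$, the computations of $X_2$ and $X_3$ become a matter of collecting terms. The cleanest bookkeeping is probably to carry $z_1=x_1x_2-e_0$ as an auxiliary variable (cf.\ Subsection \ref{logcan}), since the cubic relation and several of the exchange relations of Proposition \ref{exchange relations} become linear in $z_1$.

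For the second formula, observe that the braid relation $b_{1,2}\cdot b_{2,3}\cdot b_{3,1}=1$ recalled in the introduction transports by conjugation with $\Phi_\kappa$ into
\[
g_{1,2}(\kappa)\circ g_{2,3}(\kappa)\circ g_{3,1}(\kappa)=\mathrm{id}.
\]
Taking inverses and using $g_{1,3}(\kappa)=g_{3,1}(\kappa)^{-1}$ yields $g_{1,3}(\kappa)=g_{1,2}(\kappa)\circ g_{2,3}(\kappa)$. The factor $g_{1,2}(\kappa)$ is, by the same sort of argument as in Proposition \ref{tame}, independent of $\kappa$ (the braid $b_{1,2}$ swaps two base points which remain ``visible'' after confluence, so it lifts to an honest automorphism of $\pi_1^V(X,S)$, and $\varphi_\kappa$ conjugates $h_{1,2}$ to this automorphism regardless of $\kappa$), which is why it is written $g_{1,2}$ in the statement. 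This gives the claimed expression for $g_{3,1}(\kappa)$.
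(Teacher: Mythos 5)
Your plan is a valid alternative route, but it is genuinely different from the one the paper follows, and I want to flag both the difference and one small imprecision. The paper does \emph{not} compose the three explicit rational maps in trace coordinates. It works at the matrix level: it first pushes the matrix data $[U_1,M_0,U_2,M_3]_D$ through $\Phi_\kappa$ (getting $M_1=U_1D_\kappa$, $M_2=D_\kappa^{-1}M_0U_2$), applies the matrix form of $h_{2,3}$ from \cite{PR}, and then inverts $\Phi_\kappa$ by constructing explicitly the conjugating matrix $P$ from the mixed basis of Lemma \ref{central lemma} and extracting the $LDU$-decomposition (Lemma \ref{LDU}). Only at the very end are traces taken: $X_1=N_3[2,2]$ is computed directly to be $1/(\alpha_3+u_2\gamma_3)=e_0/x_2$ by Lemma \ref{coeff-traces}, $X_2=N_4[2,2]$ is worked out, and $X_3$ is recovered from the cubic relation. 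The point is that passing through the mixed basis makes the denominator appear \emph{once}, as $\det P \propto \alpha_3+u_2\gamma_3$, so the simplification you correctly anticipate ($x_2$ as the sole polar factor) is built in from the start instead of needing to be rediscovered by collecting terms in a three-fold composition of rational maps. Your brute-force strategy in the coordinates $(x_1,x_2,x_3)$ should give the same answer, and your observation that the computation should be done modulo $F_V=0$ and preferably with $z_1=x_1x_2-e_0$ as a bookkeeping variable is sound, but you have not carried the algebra out, and the intermediate expressions (degree $\geq 4$ in mixed variables) are considerably heavier than what the matrix route produces. One concrete slip: the $\theta$-parameters of $\mathcal{C}_{VI}(\kappa(\theta^+))$ are not obtained from Proposition \ref{equation chiV0} (that formula is for the $P_V$ cubic); you want the $P_{VI}$ formulas $\theta_i=a_ia_4+a_ja_k$, $\theta_4=a_1a_2a_3a_4+\sum a_i^2-4$ from the unnumbered proposition in the introduction, evaluated at $a_\kappa=(\kappa+\kappa^{-1},e_0\kappa^{-1}+e_0^{-1}\kappa,a_3,a_4)$.

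For the second assertion your argument agrees with the paper: the braid relation $b_{1,2}b_{2,3}b_{3,1}=1$ transports to $g_{1,2}(\kappa)\circ g_{2,3}(\kappa)\circ g_{3,1}(\kappa)=\mathrm{id}$, whence $g_{1,3}(\kappa)=g_{3,1}(\kappa)^{-1}=g_{1,2}(\kappa)\circ g_{2,3}(\kappa)$, and $g_{1,2}(\kappa)$ is $\kappa$-independent and equal to the tame generator (the paper justifies this via $g_{1,2}(\kappa)=g_{3,4}(\kappa)^{-1}$ and Proposition \ref{tame}; your groupoid-automorphism heuristic is a reasonable gloss on the same fact, and is vindicated by Proposition \ref{conf_canonique}(i), which shows $g_{1,2}=g=\sigma_1\circ\sigma_2$).
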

We first search for the matrix expressions of the confluent dynamics :
\begin{prop} The confluent dynamic $(\varphi_\kappa^*)^{-1}\circ\ h_{2,3}\circ\varphi_\kappa^*:\ \chi_V^\kappa(a) \rightarrow \chi_V^\kappa(a)$ is defined by $([U_1,M_0,U_2,M_3]_D\rightarrow [V_1,M_0,V_2,N_3]_D $ with
\begin{equation*}
\begin{aligned}
&V_1=\left(\begin{array}{cc}
1 & 0 \\
\kappa^{-1}e_0s_1\theta_3+\kappa^{-1}e_0^{-1}\gamma_3-\kappa e_0^{-1}\gamma_3+\kappa^{-1}e_0s_1s_2\gamma_3 & 1
\end{array}\right),  \\
&V_2=\left(\begin{array}{cc}
1 & \kappa^{-1}e_0s_2\theta_3+\kappa^{-1}e_0s_2^2\gamma_3-\kappa^{-1}e_0s_2\delta_3-\kappa^{-1}e_0\beta_3+\kappa e_0^{-1}\beta_3+\kappa e_0^{-1} s_2\delta_3  \\  0 & 1 \end{array}\right), \\
&N_3=\frac{1}{\theta_3+s_2\gamma_3}\left(\begin{array}{cc}
\theta_3^2+\theta_3\gamma_3s_2+\beta_3\gamma_3+\gamma_3\delta_3s_2  & -e_0\kappa^{-1}(\theta_3s_2+\gamma_3s_2^2\beta_3-\delta_3s_2) \\
\kappa e_0^{-1}\gamma_3 & 1
\end{array}\right).
\end{aligned}
\end{equation*}
\end{prop}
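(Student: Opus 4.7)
The plan is to compose the three maps $\varphi_\kappa^*$, $h_{2,3}$, and $(\varphi_\kappa^*)^{-1}$ at the matrix level, following the setup of Proposition \ref{Invertible}. Starting from a representative $[U_1,M_0,U_2,M_3,M_4]_D$ of a class in $\chi_V^\kappa(a)$, Proposition \ref{Invertible} gives the image under $\varphi_\kappa^*$ as the $SL_2(\C)$-tuple with $M_1 = U_1 D_\kappa$, $M_2 = D_{\kappa^{-1}} M_0 U_2$, together with $M_3$ and $M_4$, satisfying $M_1 M_2 M_3 M_4 = I$ by the relation $\mathcal{R}_{int}$.

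Next, I apply the pure braid $b_{2,3}$. By Proposition \ref{dynamicPVI} its trace-level action is $h_{2,3}$; this lifts to a standard Hurwitz-type action on the pair $(M_2,M_3)$ leaving $M_1$ and $M_4$ fixed, whose explicit form is pinned down (up to sign convention) by the requirement that it recovers $h_{2,3}$ on the three trace coordinates $x_1,x_2,x_3$.

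Finally, I apply $(\varphi_\kappa^*)^{-1}$ to the braided tuple, following the recipe in the proof of Proposition \ref{Invertible}: pick a mixed basis $Q$ adapted to eigenvectors of the first two braided matrices, normalized so that the diagonal factor of the resulting $LDU$ decomposition is exactly $M_0$ (the eigenvalue choice $e_1 e_2 = e_0$, which keeps us in $\chi_V^+$ rather than $\chi_V^-$). Then $(V_1, M_0, V_2)$ is extracted as $LDU(Q^{-1}(M_1 \cdot M_2^{\mathrm{new}})Q)$ where $M_2^{\mathrm{new}}$ is the first braided matrix, while $N_3$ is the direct $Q$-conjugate of the second braided matrix (and $N_4 = Q^{-1} M_4 Q$).

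The main obstacle is the bookkeeping in the last step: expanding the $Q$-conjugated product block by block, performing the $LDU$ decomposition, and matching the off-diagonal entries of $V_1$, $V_2$ and the four entries of $N_3$ with the displayed closed forms, using only the $SL_2$ relations $\alpha_3 \delta_3 - \beta_3 \gamma_3 = 1$ and $\alpha_3 + \delta_3 = \theta_3$ together with the bookkeeping of the $\kappa^{\pm 1}$ factors arising from $D_\kappa$ and $D_{\kappa^{-1}}$. Lemma \ref{coeff-traces} does not enter at this matrix-level stage; it will be used only later, when Proposition \ref{dyn conf} rewrites the result in trace coordinates.
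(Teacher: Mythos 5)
Your proposal matches the paper's proof in all essentials: push forward along $\varphi_\kappa^*$ to the tuple $(U_1 D_\kappa,\, D_{\kappa^{-1}} M_0 U_2,\, M_3,\, M_4)$, apply the matrix lift of the pure braid $b_{2,3}$, then come back via the recipe of Proposition \ref{Invertible} (mixed basis adapted to the first two matrices, $LDU$ to isolate $(V_1, M_0, V_2)$, direct conjugation for $N_3$, with the eigenvalue choice $e_1 e_2 = e_0$ picking out $\chi_V^+$). The only variation is your choice of representative for the braid lift: you fix $M_1, M_4$ and braid $(M_2,M_3)$ via the Hurwitz action, whereas the paper uses the $M_2$-conjugate form $M_1' = M_2^{-1} M_1 M_2$, $M_2' = M_3 M_2 M_3^{-1}$, $M_3' = M_3$, $M_4' = M_2^{-1} M_4 M_2$; these are related by a common conjugation (by $M_2$), so the resulting class modulo $D$ is the same and your mixed-basis matrix would be $M_2 P$ where $P$ is the paper's. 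One small remark: the paper does invoke Lemma \ref{coeff-traces} briefly inside this proof, to identify the non-invertibility locus of $P$ (namely $\theta_3 + u_2\gamma_3 = 0$) with the polar set $x_2 = 0$; this is not needed to obtain the displayed matrices, but it is part of the paper's proof of the proposition, so your statement that the lemma enters "only later" is slightly too strong.
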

\begin{proof}
We consider an element $\rho$ of $\chi_V(a)$ given by its normalized representation in the canonical Cartan-Borel decomposition:
\[\rho=[U_1,M_0,U_2,M_3,M4],\ (U_1,M_0,U_2,M_3)\in U^-\times D\times U^+\times SL_2(\C),\ M_4=(U_1M_0U_2M_3)^{-1}.\]
We set:
\[U_1=\left(\begin{array}{cc}1 & 0\\ u_1 & 1 \end{array}\right),\
{M}_0=\left(\begin{array}{cc}e_0 & 0\\ 0 & e_0^{-1}\end{array}\right),\
U_2=\left(\begin{array}{cc}1 & u_2\\ 0 & 1 \end{array}\right),\
M_3=\left(\begin{array}{cc}\theta_3 & \beta_3\\ \gamma_3 & \delta_3 \end{array}\right).\]
The map $\Phi_\kappa$ is defined by:
\[\Phi_\kappa([U_1,M_0,U_2,M_3,M_4])=[M_1,M_2,M_3,M_4]\mbox{ with }M_1=U_1D_\kappa,\ M_2=D_\kappa^{-1}M_0U_2.\]
Therefore
\[M_1=\left(\begin{array}{cc}\kappa & 0\\ \kappa u_1 & \kappa^{-1} \end{array}\right),\
M_2=\left(\begin{array}{cc}\kappa^{-1}e_0 & \kappa^{-1}e_0u_2\\ 0 & \kappa e_0^{-1} \end{array}\right),\
M_3=\left(\begin{array}{cc}\theta_3 & \beta_3\\ \gamma_3 & \delta_3 \end{array}\right).\]
According to \cite{PR}, the dynamic of $h_{2,3}$ is given by
\begin{equation*}
\begin{aligned}
&M_1\rightarrow (M_2M_3)^{-1}M_1(M_2M_3)\\
&M_2\rightarrow M_2\\
&M_3\rightarrow M_3\\
&M_4\rightarrow (M_2M_3)^{-1}M_4(M_2M_3)
\end{aligned}
\end{equation*}
Since each data is given up to a common conjugacy, we can also use:
\begin{equation*}
\begin{aligned}
&M_1\rightarrow M'_1={M_2}^{-1}M_1M_2\\
&M_2\rightarrow M'_2=M_3M_2{M_3}^{-1}\\
&M_3\rightarrow M'_3=M_3\\
&M_4\rightarrow M'_4={M_2}^{-1}M_4M_2
\end{aligned}
\end{equation*}
Now, in order to compute $\phi_\kappa^{-1}(M'_1,M'_2,M'_3,M'_4)$,
we make use of the matrix $P$ given by Lemma (\ref{central lemma}), defined by a mixed basis for the pair $(M'_1,M'_2)$.
Let $(u,v)$ a basis of the representation of the initial object such that the representation $\Phi_\kappa(\rho)$ is given by the matrices $M_i$: $u$ is an eigenvector of $M_2$ for the eigenvalue $e_2=\kappa^{-1} e_0$ and $v$ is an eigenvector of $M_1$ for the eigenvalue ${e_1}^{-1}=\kappa^{-1}$. The vectors
\[u'=M_3\cdot u',\ v'={M_2}^{-1}\cdot v\]
are eigenvectors for $M'_2$ (with eigenvalue $\kappa^{-1}e_0$) and for $M'_1$ (with eigenvalue $\kappa^{-1}$). The change of basis is given by the matrix
\[P=\left(\begin{array}{cc}\theta_3 & -\kappa^{-1}e_0u_2\\ \gamma_3 & \kappa^{-1}e_0 \end{array}\right)\]
which is invertible outside
\[\kappa^{-1}e_0(b_3+u_2\gamma_3)=0.\]
Using Lemma (\ref{coeff-traces}), this locus is given by $x_2=0$. Therefore the dynamic $g_{2,3}(\kappa)$ is a \textsl{rational} dynamic with polar set $x_2=0$. We have:
\[P^{-1}(M'_1M'_2)P=
\left(\begin{array}{l}
e_0\ \ \ \ \kappa^{-1}e_0^2u_2b_3+\kappa^{-1}e_0^2u_2^2\gamma_3-\kappa^{-1}e_0^2u_2\delta_3-\kappa^{-1}e_0^2\beta_3+\kappa\beta_3+\kappa u_2\delta_3\\
\kappa^{-1}e_0^2u_1b_3+\kappa^{-1}\gamma_3-\kappa\gamma_3+\kappa^{-1}e_0^2u_1u_2\gamma_3 \hspace{2cm} \star
\end{array}\right).\]
where the coefficient $\star$ is not specified.
The LDU-decomposition of this matrix is given by $(V_1,M_0,V_2)$ with
\begin{equation*}
\begin{aligned}
&V_1=\left(\begin{array}{cc}
1 & 0 \\
\kappa^{-1}e_0u_1b_3+\kappa^{-1}e_0^{-1}\gamma_3-\kappa e_0^{-1}\gamma_3+\kappa^{-1}e_0u_1u_2\gamma_3 & 1
\end{array}\right), \\
&V_2=\left(\begin{array}{cc}
1 & \kappa^{-1}e_0u_2b_3+\kappa^{-1}e_0u_2^2\gamma_3-\kappa^{-1}e_0u_2\delta_3-\kappa^{-1}e_0\beta_3+\kappa e_0^{-1}\beta_3+\kappa e_0^{-1} u_2\delta_3  \\  0 & 1 \end{array}\right).
\end{aligned}
\end{equation*}
we have:
\[N_3=P^{-1}M'_3P=\frac{1}{b_3+u_2\gamma_3}\left(\begin{array}{cc}
b_3^2+b_3\gamma_3u_2+\beta_3\gamma_3+\gamma_3\delta_3u_2  & -e_0\kappa^{-1}(b_3u_2+\gamma_3u_2^2\beta_3-\delta_3u_2) \\
\kappa e_0^{-1}\gamma_3 & 1
\end{array}\right)\]
which proves the proposition. \end{proof}
\bigskip\\
\emph{Proof of Proposition (\ref{dyn conf}).}
The image of $x_1=\delta_3$ is given by $X_1=N_3[2,2]=\frac{1}{b_3+u_2\gamma_3}$. According to Lemma (\ref{coeff-traces}), we have:
\[N_3[2,2]=\frac{1}{b_3+u_2\gamma_3}=\frac{1}{(a_3-x_1)+e_0^{-1}(x_2-e_0a_3+e_0x_1)}=\frac{e_0}{x_2}.\]
The image of $x_2=\delta_4$ is given by $N_4[2,2]$ where $N_4=P^{-1}M'_4P$. We have
\[N_4[2,2]=\frac{-e_0^{-2}\kappa^2\gamma_3\beta_4+b_3\delta_4+u_2\gamma_3\delta_4}{b_3+u_2\gamma_3}.\]
From $M_4=(U_1M_0U_2M_3)^{-1}$ we obtain that $\beta_4=-e_0(\beta_3+u_2\delta_3)$. Therefore,
\begin{equation*}
\begin{aligned}
\gamma_3\beta_4&=-e_0\gamma_3\beta_3-e_0u_2\gamma_3\delta_3\\
&=-e_0(b_3\delta_3-1)-e_0u_2\gamma_3\delta_3\\
&=-e_0((a_3-x_1)x_1-1)-x_1(x_2+e_0x_1-e_0a_3)\\
&=e_0-x_1x_2.
\end{aligned}
\end{equation*}
From Lemma (\ref{coeff-traces}) and the above equality we obtain
\[X_2=N_4[2,2]=x_2-\kappa^2x_2^{-1}+e_0^{-1}\kappa^2x_1.\]
We can obtain the last component by computing
$X_3=tr(M''_1M''_2)=tr({M'_2}^{-1}M'_1M'_2M'_3M'_2{M'_3}^{-1})$, or by using the equation of the cubic surface:
\[X_3=\frac{-X_1^2-X_2^2+\theta_1X_1+\theta_2X_2-\theta_4}{X_1X_2-e_0}.\]
\carre

\subsection{The canonical dynamics on $\mathcal{C}_V(\theta)$}\label{candyn}

In \cite{Kl1}, Martin Klimes has computed the dynamics obtained on $\mathcal{C}_V(\theta)$ from the wild dynamics (non linear stokes operators and non linear exponential tori of one irregular singularity of the Painlevé foliation) through the Riemann-Hilbert morphism. It turns out that this dynamics is a rational one on $\mathcal{C}_V(\theta)$. We enhance here that this dynamics was already present on the cubic surface in a canonical way, using the log-canonical coordinates introduced in subsection \ref{logcan}. 
This section is completely independent of the other ones excepted \ref{logcan}, and only deals with dynamics on a singular cubic surface. Nevertheless the terminology (canonical Stokes operators etc...) is deeply influenced by the confluent dynamics. We do not need here any restriction on the parameters. 
 
\bigskip

Let $y$ be a log-canonical function on $\mathcal{C}_V(\theta)$: there exists $z$ such that $(y,z)$ --or $(z,y)$-- is a log-canonical system of coordinates.
We consider the logarithmic hamiltonian function $H_y$ on $\mathcal{C}_V(a)$ such that \[dH_y=\frac{dy}{y}.\]
Let $X_y$ be the hamiltonian vector field related to $H_y$ for the symplectic form $\omega_V$:
\[\omega_V(X_y,\cdot)=-\frac{dy}{y}.\]
Through the symplectic morphism $(y,z)$, the image of this vector field is $z\frac{\partial}{\partial z}$ (or $-z\frac{\partial}{\partial z}$ if $y$ is completed in log-canonical system of coordinates by the left side). Therefore its flow $z(t)=z_0e^{\pm t}$ is globally defined on $\C$, and it can be factorized through a multiplicative action of $\C^*$ by setting $\lambda=e^{t}$. Let $T_y$ be this multiplicative family of (rational symplectic) automorphisms on $\mathcal{C}_V$.
\begin{defi} 
$T_y=\{t_\lambda:\ (z',y)\mapsto(\lambda^{-1} z',y)\}=\{t_\lambda:\ (y,z)\mapsto(y,\lambda z)\}$ is the exponential torus related to the log canonical function $y$. 
\end{defi}
\begin{rema}
\begin{enumerate}
	\item Each element of $T_y$ keeps invariant $y$, and the set of rational invariant functions of $T_y$ is $\C(y)$.
	\item Consider the log-canonical pentuple  $(z_0,y_1,z_1,y_2,z_2)$. The elements of $T_{y_1}$ keep invariant the reducible locus $z_0=0$ and $z_1=0$, and the elements of $T_{y_2}$ keep invariant the reducible locus $z_1=0$ and $z_2=0$.    
\end{enumerate}
\end{rema}

The subgroups of $Symp=Symp(\C^2,\omega_{log})$: $B_i$, $B^\natural_i$, $U_i$, $i=1,2$ are defined in the Appendix.
They induce subgroups of $Symp(\mathcal{C}_V,\omega_V)$ by using a system of log-canonical coordinates. Given a log canonical function $z=z_k$ in $\mathcal{S}$, one can complete $z$ into two log-canonical systems, either by the left side: $(y,z)$ or by the right side: $(z,y')$.
\begin{lemma} \label{BzBy}
\begin{enumerate}
	\item We fix a canonical triple $(y,z,y')$. The pull-back of $B_2$ (resp. $B^\natural_2$, $U_2$) by $(y,z)$ and the pull-back 
of $B_1$ (resp. $B^\natural_1$, $U_1$) by $(z,y')$ define the same subgroup of $Sym(\mathcal{C}_V,\omega_V)$. 
  \item We fix a canonical triple $(z,y,z')$. The pull-back of $B_2$ (resp. $B^\natural_2$, $U_2$) by $(z,y)$ and the pull-back 
of $B_1$ (resp. $B^\natural_1$, $U_1$) by $(y,z')$ define the same subgroup of $Sym(\mathcal{C}_V,\omega_V)$.
\end{enumerate}
\end{lemma}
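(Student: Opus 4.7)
The strategy is to identify each of the two pull-backs intrinsically as a subgroup of $Symp(\mathcal{C}_V, \omega_V)$ which does not depend on the choice of complementary coordinate, and then to observe that both constructions produce the same intrinsic subgroup.

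First I would recall from the Appendix the characterization of $B_1, B_2 \subset Symp(\C^2, \omega_{log})$: the subgroup $B_2$ is the stabilizer of the second coordinate $v$, consisting of symplectic maps $(u,v) \mapsto (u\, g(v), v)$ (this form being forced by $T^* \omega_{log} = \omega_{log}$ together with $T^* v = v$), and symmetrically $B_1$ is the stabilizer of the first coordinate $u$. For part (1), conjugation by $(y,z)$ identifies the pull-back of $B_2$ with the subgroup $G_z \subset Symp(\mathcal{C}_V, \omega_V)$ of symplectic birational maps fixing the function $z$ (the second coordinate of $(y,z)$); conjugation by $(z,y')$ identifies the pull-back of $B_1$ with the same subgroup $G_z$ (this time $z$ playing the role of the first coordinate). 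Hence the two pull-backs coincide set-theoretically. The compatibility between the two parametrizations, $g$ on the $B_2$ side and $f$ on the $B_1$ side, follows by applying $\sigma^*$ to the exchange relation $y y' = P(z)$ of Proposition \ref{exchange relations}: if $\sigma \in G_z$ is written as $\sigma^* y = y \cdot g(z)$ and $\sigma^* y' = y' \cdot f(z)$, then $y y' \, f(z) g(z) = P(z) = y y'$, so $f = g^{-1}$.

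To pass from $B_i$ to the refined subgroups $B_i^\natural$ and $U_i$, I would verify that the additional normalization conditions defining these subgroups (typically a unipotence requirement of the form $g(0)=1$ or the analogous condition) are invariant under the involution $g \leftrightarrow g^{-1}$ imposed by the exchange relation. Part (2) is then entirely symmetric: $y$ replaces $z$ as the preserved function, the two charts $(z,y)$ and $(y,z')$ replace $(y,z)$ and $(z,y')$, and the relevant exchange relation becomes $z z' = Q_1(y)$ or $Q_2(y)$ according to the parity. The main, and essentially the only substantive, step is this compatibility check for $B_i^\natural$ and $U_i$; everything else reduces to the intrinsic characterization of $B_1$ and $B_2$ as coordinate stabilizers inside $Symp(\C^2, \omega_{log})$ together with the exchange relations of Proposition \ref{exchange relations}.
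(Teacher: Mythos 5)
Your plan rests on an intrinsic characterization of $B_1$ and $B_2$ that is not correct, and this gap would derail the argument if you tried to carry it out. From the Appendix, $B_2$ is the group of symplectic de Jonqui\`eres maps of the form $(u,v)\mapsto(r(v)u,\mu v)$ with $\mu\in\C^*$ and $r\in\C(v)^*$ — not the stabilizer of $v$. In particular $B_2$ contains the full two-dimensional torus $T$ (this is needed for $B_1\cap B_2=T$), and those elements scale $v$ rather than fix it. So your "intrinsic subgroup $G_z$ of maps fixing $z$" is a proper subgroup of the pullback of $B_2$ by $(y,z)$, and likewise for the other side. Your check using the exchange relation ("$f=g^{-1}$") is correct but only handles this strict subgroup (essentially the $U$-type elements with $\lambda=1$); you never see the torus factor, and "the two pull-backs coincide set-theoretically" does not follow from what you wrote.

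The idea is salvageable and would actually give a cleaner argument than the paper's direct coordinate chase, provided you replace "fixing $z$" by "scaling $z$ by a nonzero constant." One can show, using only $\sigma^*\omega_V=\omega_V$ written as $\frac{dy}{y}\wedge\frac{dz}{z}$, that the pullback of $B_2$ by $(y,z)$ is exactly $\{\sigma : \sigma^*z=\mu z \text{ for some }\mu\in\C^*\}$ (the symplectic condition then forces $\sigma^*y=r(z)y$); and symmetrically the pullback of $B_1$ by $(z,y')$ is the same set. That gives the first assertion intrinsically. The exchange relation $yy'=P(z)$ is what you need to translate the $\C(z)^*$-cocycle $r$ between the two charts: if $\sigma^*z=\lambda z$, $\sigma^*y=r(z)y$, then $\sigma^*y'=\frac{\lambda z+e_0}{(z+e_0)r(z)}\,y'$, which is the paper's formula and reduces to your $r'=1/r$ only when $\lambda=1$. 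That general formula is also needed for the $U_i$ and $B_i^\natural$ refinements, since the normalization $r(0)=1$ must be checked against $r'(0)=\frac{e_0}{e_0\,r(0)}$, not merely against $1/r(0)$; fortunately both give the same condition, but one has to note it. Part (2) goes through the same way with $zz'=Q_1(y)$ or $Q_2(y)$ in place of $yy'=P(z)$.
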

The subgroups obtained at the first point of Lemma (\ref{BzBy}) are denoted by 
$B_z$, $B^\natural_z$, $U_z$, and the subgroups obtained at the second point are denoted by
$B_y$, $B^\natural_y$, $U_y$. 
\begin{defi} Given a canonical system of coordinates $(y,z)$ we will say that $B_y$ and $B_z$ are the opposite Borel subgroups associated to $(y,z)$.\end{defi}

\textit{Proof of Lemma (\ref{BzBy}).}
1- The elements of the pullback of $B_2$ by $(y,z)$ are given by
\[(y,z)\rightarrow (r(z)y,\lambda z),\ r\in\C(z)^*,\ \lambda\in\C^*,\]
and the elements of the pullback of $B_1$ by $(z,y')$ are given by
\[(z,y')\rightarrow (\lambda z,r(z)y'),\ r\in\C(z)^*,\ \lambda\in\C^*.\]
Therefore using $yy'=z+e_0$, the elements of the first group also write
\[(z,y')\rightarrow (\lambda z,r'(z)y'),\mbox{  with }r'(z)=\frac{\lambda z+e_0}{(z+e_0)r(z)},\]
which proves the first point.

2- The elements of the pullback of $B_2$ by $(z,y)$ are given by
\[(z,y)\rightarrow (r(y)z,\lambda y),\ r\in\C(y)^*,\ \lambda\in\C^*,\]
and the elements of the pullback of $B_1$ by $(y,z')$ are given by
\[(y,z')\rightarrow (\lambda y,r(y)z'),\ r\in\C(y)^*,\ \lambda\in\C^*.\]
Therefore using $zz'=Q(y)$, where $Q=Q_1$ or $Q_2$ is a polynomial, the elements of the first group also write
\[(y,z')\rightarrow (\lambda y,r'(y)z'),\mbox{  with }r'(y)=\frac{Q(\lambda y)}{Q(y)r(y)},\]
which proves the second point.
\carre

\begin{prop}\label{centralisateur} We have $Z(T_y)=B_y$, $N(Y_y)=B_y\cup B_y^-$, where $B_y^-=\{(y,z')\rightarrow (\lambda y,r(y)z'^{-1}),\ r\in\C(y)^*,\ \lambda\in\C^*\}$.   \end{prop}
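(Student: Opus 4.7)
The plan is to work in fixed log-canonical coordinates $(y,z)$ on $\mathcal{C}_V(\theta)$, in which, by the definition of $T_y$, every $t_\mu\in T_y$ acts as $(y,z)\mapsto(y,\mu z)$, and in which $B_y$ and $B_y^-$ (read off from Lemma \ref{BzBy}) take the forms
\[B_y=\{(y,z)\mapsto(\lambda y,r(y)z):\lambda\in\C^*,\,r\in\C(y)^*\},\quad B_y^-=\{(y,z)\mapsto(\lambda y,r(y)/z):\lambda\in\C^*,\,r\in\C(y)^*\}.\]
Both inclusions of the proposition then reduce to direct commutation identities together with a classification of the possible forms of a birational symplectic map commuting with, or normalizing, $T_y$.

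For $Z(T_y)=B_y$, the inclusion $B_y\subseteq Z(T_y)$ is immediate from
$(y,z)\mapsto(\lambda y,r(y)z)\mapsto(\lambda y,\mu r(y)z)=t_\mu\circ\psi(y,z)$.
For the reverse inclusion, a generic $\varphi=(Y(y,z),Z(y,z))$ commuting with every $t_\mu$ satisfies $Y(y,\mu z)=Y(y,z)$ and $Z(y,\mu z)=\mu Z(y,z)$. The first equation forces $Y=Y(y)$. Fixing generic $y_0$, the second says the rational function $f(z):=Z(y_0,z)$ satisfies $f(\mu z)=\mu f(z)$, so $f(z)=f(1)z$ and therefore $Z(y,z)=r(y)z$. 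Substituting $\varphi(y,z)=(Y(y),r(y)z)$ in $\varphi^*\omega_{log}=\omega_{log}$ gives $\frac{Y'(y)}{Y(y)}\,\frac{1}{z}\,dy\wedge dz=\frac{1}{yz}\,dy\wedge dz$, hence $Y'/Y=1/y$ and $Y=\lambda y$, so $\varphi\in B_y$.

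For $N(T_y)=B_y\cup B_y^-$, the inclusion $B_y^-\subseteq N(T_y)$ is again a direct calculation: for $\psi=(\lambda y,r(y)/z)\in B_y^-$ one finds $\psi\circ t_\mu\circ\psi^{-1}=t_{\mu^{-1}}$, a Weyl-type inversion of $T_y$. Conversely, any $\varphi$ normalizing $T_y$ induces a group homomorphism $\alpha:\C^*\to\C^*$ by $\varphi t_\mu\varphi^{-1}=t_{\alpha(\mu)}$. The same analysis as before gives $Y=Y(y)$ and $Z(y,\mu z)=\alpha(\mu)Z(y,z)$; evaluating the latter at a generic $(y_0,z_0)$ exhibits $\alpha(\mu)=Z(y_0,\mu z_0)/Z(y_0,z_0)$ as a rational function of $\mu$, and a rational group homomorphism $\C^*\to\C^*$ is of the form $\mu\mapsto\mu^n$ with $n\in\Z$. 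The functional equation then forces $Z(y,z)=r(y)z^n$, and birationality of $\varphi$ forces $n\in\{+1,-1\}$. The case $n=+1$ recovers the centralizer $B_y$; for $n=-1$, the computation $\varphi^*\omega_{log}=-\frac{Y'(y)}{Y(y)}\cdot\frac{1}{z}\,dy\wedge dz$ combined with the (anti-)symplectic condition forces $Y'/Y=1/y$, so $Y=\lambda y$, placing $\varphi$ in $B_y^-$.

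The hard part is the classification of rational group homomorphisms $\C^*\to\C^*$ as $\mu\mapsto\mu^n$: once this is extracted from the group relation satisfied by $\alpha$ together with rationality, everything else is routine bookkeeping with the symplectic form. A minor but real subtlety is fixing the ambient group: the same functional derivation would a priori also admit Weyl-like ``inversions'' $Y(y)=\lambda/y$ (anti-symplectic when paired with $Z=r(y)z$, symplectic when paired with $Z=r(y)/z$), and one must check that these are excluded in the ambient group intrinsic to the log-canonical direction $y$ fixed in the cluster sequence---equivalently, that one computes the (anti)-normalizer inside $B_y\cdot B_y^-$, where $B_y$ and $B_y^-$ are the natural ``opposite Borel'' subgroups attached to $y$.
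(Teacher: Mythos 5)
Your proof follows essentially the same route as the paper's: extract the functional equations $Y(y,\mu z)=Y(y,z)$ and $Z(y,\mu z)=\alpha(\mu)Z(y,z)$, solve them by the scaling argument, and pin down the first coordinate via the (anti)symplectic condition. What you add relative to the paper is a justification of the step the paper simply asserts, namely that a normalizing element either commutes or anti-commutes with every $t_\mu$: you show that $\alpha$ is a rational homomorphism of $\C^*$, hence $\mu\mapsto\mu^n$, and that birationality of $z\mapsto r(y)z^n$ forces $n=\pm 1$ (equivalently, $\alpha$ is an algebraic automorphism of the torus, of which there are only two). Filling this gap is genuinely worthwhile. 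The ``Weyl-inversion'' subtlety you flag at the end is also real, and the paper does not address it: for $n=-1$ the \emph{symplectic} choice $\varphi^*\omega_{log}=+\omega_{log}$ yields $Y=\lambda/y$ rather than $Y=\lambda y$, so that $(y,z')\mapsto(\lambda/y,\,r(y)/z')$ is a symplectic normalizer of $T_y$ not lying in $B_y\cup B_y^-$ as written. The paper's conclusion ``$\rho:(y,z')\to(\mu y,r(y)z'^{-1})$'' in the inverting case tacitly assumes that an element inverting $T_y$ must be anti-symplectic, which is not argued; and note that the $B_y^-$ of the statement (with $\lambda y$) is anti-symplectic, whereas the appendix's $B_1^-=B_1\circ\sigma$ with $\sigma:(u,v)\mapsto(u^{-1},v^{-1})$ is symplectic, so there is a sign inconsistency built into the statement itself. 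You are honest that your proposed fix --- restricting to an ambient group ``intrinsic to $y$'' --- is not a proof and is close to circular. A clean resolution would be either to fix the ambient group as $Symp^{\pm}$ and record all four sign combinations $(Y,Z)=(\lambda y^{\pm 1},r(y)z'^{\pm 1})$, or to keep the ambient group $Symp$ and read $B_y^-$ as the symplectic set $\{(y,z')\mapsto(\lambda y^{-1},r(y)z'^{-1})\}$; your argument correctly detects the need for this clarification but, like the paper, does not supply it.
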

\begin{proof} Let $t_\lambda:\ (y,z')\rightarrow (y,\lambda z').$ An element $\rho$ of $Symp(\mathcal{C}_V)$, given  by
\[\rho:\ (y,z')\rightarrow (Y(y,z'),Z'(y,z'))\] 
commutes with $t_\lambda$ if and only if, for any $\lambda$ in $\C^*$,
\begin{align*}
&Y(y,\lambda z')=Y(y,z'),\\
&Z'(y,\lambda z')=\lambda Z'(y,z').
\end{align*}
We claim that any rational function $r$ in $K(x)$ over a field $K\supset \C$ which satisfies $r(\lambda x)=\lambda r(x)$ for any $\lambda$ in $\C^*$ is a constant function. Indeed if it was not a constant function it would admit an infinite number of zeroes or poles in $K$. By applying this remark to 
$z'\mapsto Y(y,z')$ and to $z'\mapsto Z'(y,\lambda z')z'^{-1}$, we conclude that $Y$ do not depend on $z'$ and that $Z'$ is linear in $z'$. There exists two rational functions $q(y)$ and $r(y)$ such that
\begin{align*}
&Y(y,\lambda z')=q(y),\\
&Z'(y,\lambda z')=r(y) z'.
\end{align*}
We have
\[\frac{dY}{Y}\wedge\frac{dZ'}{Z'}=q'(y)\frac{dy}{y}\wedge(\frac{r'(y)dy}{y}+\frac{dz}{z})=q'(y)\frac{dy}{y}\wedge\frac{dz}{z}.\]
Therefore $q'(y)=1$, and $q(y)=\mu$ in $\C^*$. The elements of $Z(T_y)$ write
\[(y,z')\rightarrow (\mu y,r(y)z')\] 
which prove that $Z(T_y)=B_y$.\\
Suppose now that an element $\rho$ is in the normalizer of $T_y$. Then either it commutes whith each element of $T_y$, or it anti-commutes with each element of $T_y$. In this last case a similar computation proves that 
$\rho:\  (y,z')\rightarrow (\mu y,r(y)z'^{-1}).$
\end{proof}

\begin{prop}\label{stokes alg} Let $\mathfrak{T}=(y,z,y')$ be a log-canonical triple in the cluster sequence $\mathfrak{S}$.
 There exists a unique element $s_\mathfrak{T}$ of $U_z$ such that $s_\mathfrak{T}T_ys_\mathfrak{T}^{-1}=T_{y'}$.
 This operator is given by:
\begin{equation*}
\begin{aligned}
 s_\mathfrak{T}:\ &(y,z)\rightarrow (y(1+e_0^{-1}z),z)
\mbox{   or by }\\ &(z,y')\rightarrow (z, y'(1+e_0^{-1}z)^{-1})=(z,e_0y^{-1})
\end{aligned}
\end{equation*}
and we have:
$s_\mathfrak{T}^{-1}:\ (y,z)\rightarrow (y(1+e_0^{-1}z)^{-1},z)=(e_0y'^{-1},z).$
\end{prop}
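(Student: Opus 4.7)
The strategy is a direct computation in log-canonical coordinates $(y,z)$ that reduces the conjugation condition to a one-variable functional equation for the rational function $r(z)$ defining the candidate element of $U_z$.

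\emph{Step 1 (the actors in coordinates $(y,z)$).} By definition $T_y$ acts by $t_\lambda^y:(y,z)\mapsto(y,\lambda z)$. From the exchange relation $yy'=P(z)=z+e_0$ of Proposition~\ref{exchange relations}, one has $y=(z+e_0)/y'$, and thus $T_{y'}=\{t_\nu^{y'}:(z,y')\mapsto(\nu z,y')\}$ reads, in the coordinates $(y,z)$,
\[
t_\nu^{y'}:(y,z)\;\longmapsto\;\Bigl(y\,\frac{\nu z+e_0}{z+e_0},\,\nu z\Bigr).
\]
By Lemma~\ref{BzBy}, a general element of $U_z$ has the form $u_r:(y,z)\mapsto(r(z)y,z)$ with $r\in\mathbb{C}(z)^*$; I normalize so that $r(0)=1$, i.e.\ so that $u_r$ restricts to the identity on the reducibility locus $z=0$, which is the natural choice making $u_r$ canonically the unipotent part of $B_z$.

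\emph{Step 2 (functional equation).} A one-line computation gives
\[
u_r\circ t_\lambda^y\circ u_r^{-1}:(y,z)\longmapsto\Bigl(y\,\frac{r(\lambda z)}{r(z)},\,\lambda z\Bigr),
\]
so the condition $s_\mathfrak{T}T_y s_\mathfrak{T}^{-1}=T_{y'}$ amounts to
\[
\frac{r(\lambda z)}{r(z)}=\frac{\lambda z+e_0}{z+e_0},\qquad \forall\,\lambda,z\in\mathbb{C}^*.
\]
Differentiating at $\lambda=1$ gives the linear ODE $r'(z)/r(z)=1/(z+e_0)$, whose general solution is $r(z)=C(z+e_0)$. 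The normalization $r(0)=1$ forces $C=e_0^{-1}$, hence $r(z)=1+e_0^{-1}z$, and one checks directly that this $r$ satisfies the full functional equation (not only its derivative at $\lambda=1$). This proves existence and uniqueness of $s_\mathfrak{T}$ and gives the formula $s_\mathfrak{T}:(y,z)\mapsto\bigl(y(1+e_0^{-1}z),z\bigr)$.

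\emph{Step 3 (second expression and inverse).} To translate to $(z,y')$ it suffices to use $yy'=z+e_0$: the image of $y'=(z+e_0)/y$ under $s_\mathfrak{T}$ is $(z+e_0)/\bigl(y(1+e_0^{-1}z)\bigr)=e_0/y$, which yields $s_\mathfrak{T}:(z,y')\mapsto\bigl(z,e_0/y\bigr)=\bigl(z,y'(1+e_0^{-1}z)^{-1}\bigr)$. Inverting the map, or repeating the same computation, gives $s_\mathfrak{T}^{-1}:(y,z)\mapsto\bigl(y(1+e_0^{-1}z)^{-1},z\bigr)$, and the identity $y(1+e_0^{-1}z)^{-1}=e_0 y^{-1}(z+e_0)^{-1}(z+e_0)=e_0\,y'^{-1}$ is again the exchange relation.

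The only delicate point is pinning down the normalization of $U_z$: without it the ODE leaves a free constant $C\in\mathbb{C}^*$, and this scalar is killed precisely by the condition $r(0)=1$ (triviality on the reducibility locus). The rest of the proof is routine once one has translated the torus actions into the same coordinate chart.
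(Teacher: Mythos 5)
Your proof is correct, and the existence part follows the same computational route as the paper (translate $T_{y'}$ into $(y,z)$ coordinates via the exchange relation $yy'=z+e_0$, then compare with $u_r\circ t_\lambda^y\circ u_r^{-1}$). Where you genuinely diverge is uniqueness. The paper argues abstractly: if $s'\in U_z$ also conjugates $T_y$ onto $T_{y'}$, then $s^{-1}s'$ lies in $N(T_y)\cap U_z$, and this intersection is trivial by Proposition \ref{centralisateur}. You instead turn the conjugation identity into the functional equation $r(\lambda z)/r(z)=(\lambda z+e_0)/(z+e_0)$, differentiate at $\lambda=1$ to obtain the ODE $r'/r=1/(z+e_0)$, solve to get $r(z)=C(z+e_0)$, and kill the constant with the normalization $r(0)=1$ built into $U_z$. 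This is more elementary and has the advantage of deriving the formula for $s_\mathfrak{T}$ rather than just verifying it; the paper's normalizer argument is more structural and reuses earlier work. One small gap in your write-up worth closing: you pass from the group equality $s_\mathfrak{T}T_y s_\mathfrak{T}^{-1}=T_{y'}$ to the pointwise identity without comment. The justification is that the action on the $z$-coordinate ($z\mapsto\lambda z$ on one side, $z\mapsto\nu z$ on the other) is injective in the parameter, so the set equality forces $\nu=\lambda$ and hence the pointwise equality of the $y$-actions; this one line is what makes the functional equation the correct reduction.
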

\begin{proof} 
We have:
\begin{equation*}
\begin{aligned}
T_y&=\{t_y(\lambda):\ (y,z)\rightarrow(y,\lambda z)\},\\
T_{y'}&=\{t_{y'}(\lambda):\ (z,y')\rightarrow(\lambda^{-1} z,y')\}\\
&=\{t_{y'}(\lambda):\ (y,z)\rightarrow(y\frac{1+e_0^{-1}\lambda^{-1}z}{1+e_0^{-1}z},\lambda^{-1}z)\},\\
U_z&=\{u_r:\ (y,z)\rightarrow(r(z)y,z), r\in\C(z)^*, r(0)=1\}.
\end{aligned}
\end{equation*}
Therefore the element $s$ of $U_z$:
$(y,z)\rightarrow (y(1+e_0^{-1}z),z)$ satisfies
 for all $\lambda$ in $\C^*$
\[s\circ t_y(\lambda)\circ s^{-1}=t_{y'}(\lambda^{-1}).\]
In order to prove the unicity of $s$, suppose that there exist another $s'$ in $U_z$ which conjugates $T_y$ and $T_{y'}$. We have : 
$s^{-1}\circ s'(T_y)=T_y$ i.e. $s^{-1}\circ s'$ belongs to the normalizer $N(T_y)$ of $T_y$. We have 
$N(T_y)\cap U_z=\{id\}:$
this is a direct consequence of Proposition (\ref{centralisateur}).\end{proof}

\begin{defi} The automorphism $s_\mathfrak{T}$ is the canonical Stokes operator induced by the triple $\mathfrak{T}=(y,z,y')$.
\end{defi}
\begin{prop}\label{properties stokes}
\begin{enumerate}
\item The automorphism $s_\mathfrak{T}$ writes in the coordinates $h=\log y, l=\log (1+e_0^{-1}z)$:
$(h,l)\rightarrow (h+l,l).$
\item $s_\mathfrak{T}$ is a \textit{pseudo-generator} of $U_z$, that is to say the family $\{t_y(\lambda)\circ s_\mathfrak{T}\circ t_y(\lambda)^{-1},\ \lambda\in\C^*\}$ generates $U_z$. Consequently, $B_z^\natural:=<U_z,T_y>=<s_\mathfrak{T},T_y>$.
\item Let $s_k$ be the canonical Stokes operator related to the triple $(y_k,z_k,y_{k+1})$. We have:
	$\sigma_1\circ s_1 \circ \sigma_1^{-1}=s_2^{-1}$; $\ g\circ s_k \circ g^{-1}=s_{k+2}.$
\item For $i=1,2$,\ $s_i$ keeps invariant the lines $z_i=0$, and the restriction of $s_i$ on each line is a translation. More generally, $s_k$ keeps invariant $z_k=0$, and the restriction of $s_k$ to the set of rational curves $z_k=0$ has no fixed point.
\end{enumerate}
\end{prop}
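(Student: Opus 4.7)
The proof splits naturally along the four items of the proposition.

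\textbf{Item (1)} is a direct substitution. If $y' = y(1+e_0^{-1}z)$ and $z' = z$, then $h' = \log y' = h + \log(1+e_0^{-1}z) = h + l$ and $l' = l$, so $(h,l) \mapsto (h+l,l)$.

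For \textbf{Item (2)}, I would first compute the conjugate explicitly. Following $(y,z)$ through $t_y(\lambda)^{-1}$, then $s_\mathfrak{T}$, then $t_y(\lambda)$ gives $(y,z) \mapsto (y,\lambda^{-1}z) \mapsto (y(1+e_0^{-1}\lambda^{-1}z),\lambda^{-1}z) \mapsto (y(1+e_0^{-1}\lambda^{-1}z),z)$, so the conjugate is the element $u_\lambda \in U_z$ sending $y$ to $y(1+e_0^{-1}\lambda^{-1}z)$. Products and inverses of the $u_\lambda$'s produce all maps $(y,z) \mapsto (r(z)y,z)$ with $r(z) = \prod_k (1+c_kz)^{\varepsilon_k}$, $c_k \in \C^*$, $\varepsilon_k = \pm 1$. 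Every $r \in \C(z)^*$ with $r(0)=1$ admits such a factorization: writing $r = P/Q$ with $P,Q$ coprime polynomials, the condition $r(0)=1$ forces $P(0)=Q(0)\neq 0$ so that neither has $z$ as a root, and factoring each into linear factors $(1-\alpha z)$ with $\alpha \in \C^*$ expresses $r$ in the required form. Hence the conjugates of $s_\mathfrak{T}$ by $T_y$ generate all of $U_z$, and consequently $\langle s_\mathfrak{T}, T_y \rangle \supset \langle U_z, T_y \rangle = B_z^\natural$; the reverse inclusion is obvious since $s_\mathfrak{T} \in U_z \subset B_z^\natural$.

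For \textbf{Item (3)}, I would argue by uniqueness (Proposition~\ref{stokes alg}) combined with the equivariance built into the cluster sequence. The automorphism $g$ is symplectic and by construction satisfies $g^*(y_{k+2},z_{k+2},y_{k+3}) = (y_k,z_k,y_{k+1})$, so it conjugates $T_{y_k}$ to $T_{y_{k+2}}$, $T_{y_{k+1}}$ to $T_{y_{k+3}}$ and $U_{z_k}$ to $U_{z_{k+2}}$. Therefore $g\, s_k\, g^{-1}$ lies in $U_{z_{k+2}}$ and conjugates $T_{y_{k+2}}$ onto $T_{y_{k+3}}$, and by uniqueness equals $s_{k+2}$. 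For $\sigma_1$: the involution fixes $y_2$ and swaps $(y_1,z_1) \leftrightarrow (y_3,z_2)$, so $\sigma_1 s_1 \sigma_1^{-1} \in U_{z_2}$ conjugates $T_{y_3}$ onto $T_{y_2}$; applying uniqueness to the reversed triple $(y_3,z_2,y_2)$ shows that this conjugate is $s_2^{-1}$, the inverse of the Stokes operator of the standard triple $(y_2,z_2,y_3)$.

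The delicate point, where the main computational work lies, is \textbf{Item (4)}. Invariance of $\{z_i=0\}$ is immediate from $s_\mathfrak{T}^* z = z$. In the chart $(y_i,z_i)$, however, the restriction of $s_i$ to $\{z_i=0\}$ is the identity, because this chart contracts each irreducible component of $\{z_i=0\}$ to a single point (these components being the fibers of the projection $(y_i,z_i)$ above the roots of $Q_1$ or $Q_2$). To exhibit the translation action on each such line, I would pass to the neighbouring coordinate $z_{i+1}$ via the exchange relation $z_i z_{i+1} = Q_2(y_{i+1})$: each component is parametrized by $z_{i+1}$ at a fixed simple root $\alpha$ of $Q_2$. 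Taylor-expanding the formula $s_i : y_{i+1} \mapsto y_{i+1}(1+e_0^{-1}z_i)^{-1}$ near $y_{i+1}=\alpha$ and substituting into $z_{i+1} = Q_2(y_{i+1})/z_i$ yields, after the $0/0$ cancellation, the action $z_{i+1} \mapsto z_{i+1} - e_0^{-1}\alpha\, Q_2'(\alpha)$, a translation whose constant is nonzero under the generic hypotheses on the parameters (since $\alpha$ is a simple root and the relevant eigenvalues are distinct). An analogous computation with $Q_1$ and $z_{i-1}$ handles the components coming from roots of $Q_1$, and the statement for general $k$ follows by transporting the result via $g$ using Item~(3).
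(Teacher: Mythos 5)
Your proof is correct, and items (1)–(2) coincide with the paper's argument. For item (3) you invoke the uniqueness clause of Proposition~\ref{stokes alg} together with the equivariance of the cluster sequence under $g$ and $\sigma_1$, whereas the paper simply writes $s_1$ in the chart $(y_1,z_1)$, $s_2$ in the chart $(z_2,y_3)$, and reads both conjugations directly from $\sigma_1^*(y_1,z_1)=(y_3,z_2)$ and $g^{-1*}(y_k,z_k,y_{k+1})=(y_{k+2},z_{k+2},y_{k+3})$; the two arguments amount to the same thing, yours being a little more structural. Item (4) is where you genuinely diverge. The paper lifts $s_1$ to the ambient coordinates $(y_1,z_1,x_3)$, solves for the new $X_3$ from the cubic equation, and sets $z_1=0$ to read off an affine shift in $x_3$. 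You instead parametrize each component of $\{z_i=0\}$ by the adjacent cluster variable and use the exchange relation to extract, after the $0/0$ cancellation, the translation constant $-e_0^{-1}\alpha\,Q_2'(\alpha)$. This buys you a closed form whose nonvanishing is manifest (it fails exactly when $\alpha$ is a multiple root, excluded by genericity), so the ``no fixed point'' part of the statement becomes transparent, whereas the paper's $x_3$-formula leaves that nonvanishing implicit. Two small points of wording: every component of $\{z_1=0\}$ is simultaneously cut out by a constant value of $y_2$ (a root of $Q_2$) and of $y_1$ (the matching root $e_0/\alpha$ of $Q_1$), so there is no dichotomy of ``components coming from roots of $Q_1$'' versus roots of $Q_2$; and since the exchange relations alternate, $z_{2k}z_{2k+1}=Q_1(y_{2k+1})$, $z_{2k+1}z_{2k+2}=Q_2(y_{2k+2})$, the polynomial you Taylor-expand is dictated by the parity of $i$ rather than by a choice of component, though the computation is identical mutatis mutandis.
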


\textit{Proof.}
\begin{enumerate}
	\item This first point is trivial.
	\item We have:
	\[\{t_{y}(\lambda)\circ s_\mathfrak{T}\circ t_{y}(\lambda)^{-1},\ \lambda\in\C^*\}=\{(y,z)\rightarrow (y(1+\nu z),z),\ \nu\in\C^*\}\]
	The statement comes from the fact that any rational function $r$ such that $r(0)=1$ writes
	$r(z)=\prod_i(1-\mu_i z)\prod_j(1-\nu_j z)^{-1}$ where the $\mu_i^{-1}$ are the zeroes of $r$ and the $\nu_j^{-1}$ are the poles of $r$.
	\item We have:
	\begin{equation*}
	\begin{aligned}
	&s_1:\ (y_1,z_1)\rightarrow (y_1(1+e_0^{-1}z_1,z_1)\\
	&s_2:\ (z_2,y_3)\rightarrow (z_2,y_3(1+e_0^{-1}z_2)^{-1}).
	\end{aligned}
	\end{equation*}
	Since $\sigma_1:\ (y_1,z_1)\rightarrow (y_3,z_2)$, we have $\sigma_1\circ s_1 \circ \sigma_1^{-1}=s_2^{-1}$.\\
	The automorphism $g^{-1}$ send the triple $(y_k,z_k,y_{k+1})$ on $(y_{k+2},z_{k+2},y_{k+3})$. Therefore
	$s_k=g^{-1}\circ s_{k+2}\circ g$.
	\item From the previous point, it suffices to prove the result for $s_1$.
We lift the expression of $s_1$ in the coordinate system $(y_1,z_1,x_3)$:
\[s_1:\ (y_1,z_1,x_3)\rightarrow (e_0^{-1}y_1(z_1+e_0),z_1,X_3).\]
The equation of the character variety $\mathcal{C}_V(\theta)$ is given by
\[x_3z_1+y_1^2+(z_1+e_0)^2y_1^{-2}-\theta_1y_1-\theta_2(z_1+e_0)y_1^{-1}+\theta_4=0,\]
which is equivalent to 
\[x_3z_1+y_1^{-2}z_1^2+2e_0y_1^{-2}z_1-\theta_2y_1^{-1}z_1=-y_1^2-e_0^2y_1^{-2}+\theta_1y_1+\theta_2e_0y_1^{-1}-\theta_4.\]
We have:
\[X_3z_1+e_0^{-2}y_1^2(z_1+e_0)^2+e_0^2y_1^{-2}-\theta_1e_0^{-1}y_1(z_1+e_0)-\theta_2e_0y_1^{-1}+\theta_4=0.\]
Therefore,
\begin{equation*}
\begin{aligned}
X_3z_1&=-e_0^{-2}y_1^2z_1^2-2e_0^{-1}y_1^2z_1+\theta_1e_0^{-1}y_1z_1
-y_1^2-e_0^2y_1^{-2}+\theta_1y_1+\theta_2e_0y_1^{-1}-\theta_4\\
&=-e_0^{-2}y_1^2z_1^2-2e_0^{-1}y_1^2z_1+\theta_1e_0^{-1}y_1z_1
+x_3z_1+y_1^{-2}z_1^2+2e_0y_1^{-2}z_1-\theta_2y_1^{-1}z_1.
\end{aligned}
\end{equation*}
We obtain:
\[X_3=-e_0^{-2}y_1^2z_1-2e_0^{-1}y_1^2+\theta_1e_0^{-1}y_1
+x_3+y_1^{-2}z_1+2e_0y_1^{-2}-\theta_2y_1^{-1}.\]
Therefore the restriction of $s_1$ to $z_1=0$ is given by
\[x_3\rightarrow x_3+\theta_1e_0^{-1}y_1-\theta_2y_1^{-1}.\]
On each component of $z_1=0$, $y_1$ is constant ($=e_3^{\pm }$ or $e_0e_4^{\pm 1}$) and this map is a translation. \carre
\end{enumerate}

\begin{defi} The canonical dynamics induced by the canonical triple $\mathfrak{T}=(y,z,y')$ is the subgroup $Dyn(\mathfrak T)=<T_y, s_\mathfrak{T},T_{y'}>$ of $Symp(\mathcal{C}_V)$ generated by $T_y$, $s_\mathfrak{T}$, and $T_{y'}$.
\end{defi}

\begin{prop}  We have : $Dyn(\mathfrak T)=<T_y, s_\mathfrak{T}>=B_z^\natural$.\end{prop}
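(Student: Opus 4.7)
The plan is to observe that both equalities are essentially rearrangements of statements already established in the preceding propositions.

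First I would show that $T_{y'} \subseteq \langle T_y, s_\mathfrak{T}\rangle$. This is immediate from Proposition \ref{stokes alg}, which gives the conjugation relation $s_\mathfrak{T} T_y s_\mathfrak{T}^{-1} = T_{y'}$. Consequently every element of $T_{y'}$ can be written as a word in $s_\mathfrak{T}$, $s_\mathfrak{T}^{-1}$ and an element of $T_y$, so
\[
Dyn(\mathfrak{T}) = \langle T_y, s_\mathfrak{T}, T_{y'}\rangle = \langle T_y, s_\mathfrak{T}\rangle.
\]
Next I would invoke Proposition \ref{properties stokes} (item 2), which asserts that $s_\mathfrak{T}$ is a pseudo-generator of $U_z$ in the sense that the $T_y$-conjugates of $s_\mathfrak{T}$ generate $U_z$, and concludes that $\langle s_\mathfrak{T}, T_y\rangle = \langle U_z, T_y\rangle = B_z^\natural$. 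Combining the two identities yields $Dyn(\mathfrak{T}) = B_z^\natural$.

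I do not foresee a real obstacle here: once Proposition \ref{stokes alg} and Proposition \ref{properties stokes} are in hand, the statement is just the chaining of these two facts. The only point worth spelling out for the reader is that the pseudo-generation statement really does identify $\langle T_y, s_\mathfrak{T}\rangle$ with $B_z^\natural$ and not merely with a subgroup of it; this follows because any $r \in \C(z)^*$ with $r(0) = 1$ admits a finite factorization $r(z) = \prod_i (1-\mu_i z)\prod_j (1-\nu_j z)^{-1}$, and each linear factor is obtained by $T_y$-conjugation of $s_\mathfrak{T}^{\pm 1}$.
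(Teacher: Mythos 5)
Your proof is correct and follows essentially the same route as the paper's: the first equality comes from the conjugation relation $s_\mathfrak{T} T_y s_\mathfrak{T}^{-1}=T_{y'}$ of Proposition (\ref{stokes alg}), and the second from the pseudo-generation statement in item 2 of Proposition (\ref{properties stokes}). The extra paragraph about factoring $r\in\C(z)^*$ with $r(0)=1$ into linear factors is a useful clarification, but it simply re-derives what is already contained in the proof of that pseudo-generation item.
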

\begin{proof} The first equality is a consequence of the relation $s_\mathfrak{T}T_ys_\mathfrak{T}^{-1}=T_{y'}$ obtained in Proposition (\ref{stokes alg}), and the second one was obtained at the second point of Proposition (\ref{properties stokes}).
\end{proof}

\begin{defi}\label{can_dyn}
The canonical dynamics $Dyn(\mathcal{C}_V)$ induced by the fundamental canonical sequence $\{y_0,z_0,y_1,z_1,y_2,z_2,y_3\}$
is the rational symplectic dynamic generated by:
\[g:\ (y_2,z_2)\rightarrow (y_0,z_0),\ T_{y_1},\ s_1=s_{(y_1,z_1,y_2)}, s_2=s_{(y_2,z_2,y_3)}.\]
The subgroup $Tame(\mathcal{C}_V)$ of $Dyn(\mathcal{C}_V)$ generated by $g$ is the tame canonical dynamics.
\end{defi}
\begin{rema}
\begin{enumerate}
	\item From Proposition (\ref{z1}), the canonical sequence is almost unique, that is we can only replace $z_k$ with $cz_k$. This action is an element of $T_{y_k}$ and therefore do not change the dynamics. This dynamics only depends on the polynomial $F_V$, which justify the notation.
	\item $Dyn(\mathcal{C}_V)$ also contains $T_{y_0}=g^*T_{y_2}$ and $s_0=s(y_0,z_0,y_1)=g^*s_2$, and more generally all the $T_{y_k}$ and all the $s_k$ for $k$ in $\Z$.
	\item $Dyn(\mathcal{C}_V)=<B_{z_1}^\natural,B_{z_2}^\natural,g>.$
\end{enumerate}
\end{rema}

\begin{prop}\label{mon_can_formelle} The element $\widehat{m}$ of $Dyn(\mathcal{C}_V)$ such that $g=s_2\circ\widehat{m}\circ s_1$ is defined by
\[(y_2,z_2)\rightarrow(y_2,e_0^2z_2y_2^{-4}).\]
\end{prop}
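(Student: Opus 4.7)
The plan is to verify the explicit formula by computing the two pullbacks $\widehat{m}^*y_2$ and $\widehat{m}^*z_2$ directly in the $(y_2,z_2)$ chart, where $\widehat{m}:=s_2^{-1}\circ g\circ s_1^{-1}$ (so membership in $Dyn(\mathcal{C}_V)$ is automatic). The ingredients are: the explicit pullbacks $(s_i^{\pm 1})^*$ given by Proposition \ref{stokes alg}; the fact that $g^*y_2=y_0$ and $g^*z_2=z_0$, which follows from the relation $(y_2,z_2,y_3)=g^{-1*}(y_0,z_0,y_1)$; and the exchange relations $y_ky_{k+1}=z_k+e_0$, $z_0z_1=Q_1(y_1)$, $z_1z_2=Q_2(y_2)$ from Proposition \ref{exchange relations}.

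First I would handle the $y_2$-component. Applying $(s_2^{-1})^*$ gives $y_2\mapsto y_2/(1+e_0^{-1}z_2)$; applying $g^*$ yields $y_0/(1+e_0^{-1}z_0)$. Plugging $y_0=(z_0+e_0)/y_1$ (from $y_0y_1=z_0+e_0$) into this expression and simplifying collapses it to $e_0/y_1$. Finally $(s_1^{-1})^*$ sends $y_1\mapsto y_1/(1+e_0^{-1}z_1)$, so $e_0/y_1$ becomes $(e_0+z_1)/y_1=y_2$ by the exchange relation $y_1y_2=z_1+e_0$. Thus $\widehat{m}^*y_2=y_2$, confirming the first coordinate of the claim.

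The same pattern handles $z_2$, but now the quartic exchange relations enter. Since $(s_2^{-1})^*$ fixes $z_2$, the first two steps yield $g^*z_2=z_0=Q_1(y_1)/z_1$. The key simplification observed above, namely $(s_1^{-1})^*y_1=y_1/(1+e_0^{-1}z_1)=e_0/y_2$, transforms the numerator into $Q_1(e_0/y_2)$. The decisive step is to invoke the identity $Q_1(t)=e_0^{-2}t^4Q_2(e_0t^{-1})$ from Remark \ref{Q1Q2}, which gives $Q_1(e_0/y_2)=e_0^2y_2^{-4}Q_2(y_2)$. Combined with $z_1=Q_2(y_2)/z_2$ coming from $z_1z_2=Q_2(y_2)$, the $Q_2(y_2)$ factors cancel, leaving $\widehat{m}^*z_2=e_0^2z_2y_2^{-4}$ as required.

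The main obstacle is purely bookkeeping — one must keep the three coordinate charts $(y_1,z_1)$, $(y_1,z_0)$, $(y_2,z_2)$ distinct and apply each pullback in the correct chart. The genuinely non-trivial ingredient, without which the formula would not simplify to a monomial in $y_2$, is the $e_0$-duality $Q_1(t)=e_0^{-2}t^4Q_2(e_0t^{-1})$ between the two quartics: it is precisely this symmetry, built into the cluster structure by the relation $y_ky_{k+1}=z_k+e_0$, that reduces the a priori rational formula $Q_1(e_0/y_2)/Q_2(y_2)$ to the monomial $e_0^2y_2^{-4}$.
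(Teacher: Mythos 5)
Your computation is correct and follows essentially the same route as the paper's: you evaluate $\widehat{m}=s_2^{-1}\circ g\circ s_1^{-1}$ directly, whereas the paper evaluates $\widehat{m}^{-1}=s_1\circ g^{-1}\circ s_2$ and inverts at the end, but both rely on the same three ingredients — the explicit Stokes formulas, the exchange relations $y_ky_{k+1}=z_k+e_0$, $z_0z_1=Q_1(y_1)$, $z_1z_2=Q_2(y_2)$, and the duality $Q_1(t)=e_0^{-2}t^4Q_2(e_0t^{-1})$. The only stylistic difference is that the paper avoids the denominators $(1+e_0^{-1}z_i)$ by using the form $s_1^*y_2=e_0y_1^{-1}$ directly, while you reach the same simplification via $(s_1^{-1})^*y_1=y_1/(1+e_0^{-1}z_1)=e_0/y_2$.
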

\begin{proof}
We have
\[s_1^*z_2=s_1^*Q_2(y_2)z_1^{-1}=Q_2(e_0y_1^{-1})z_1^{-1}=e_0^2y_1^{-4}Q_1(y_1)z_1^{-1}=e_0^2y_1^{-4}z_0.\]
Therefore,
\begin{equation*}
\begin{aligned}
&g^{-1*}s_1^*z_2=e_0^2y_3^{-4}z_2,\\
&s_2^*g^{-1*}s_1^*z_2=e_0^{-2}y_2^4z_2.
\end{aligned}  
\end{equation*}
We also have
\[s_2^*g^{-1*}s_1^*y_2=s_2^*g^{-1*}(e_0y_1^{-1})=s_2^*(e_0y_3^{-1})=y_2,\]
which proves that $\widehat{m}^{-1}=s_1\circ g^{-1}\circ s_2$ is defined by
$(y_2,z_2)\rightarrow(y_2,e_0^{-2}z_2y_2^{4}).$
\end{proof}

\begin{defi} The element $\widehat{m}$ in $Dyn(\mathcal{C}_V)$ is the canonical formal monodromy.\end{defi}

Note that $\widehat{m}=t_{2}(e_0^{2}y_{2}^{-4})$. In particular, $\widehat{m}$ is in the centralizer $Z(T_{y_2})=B_{y_2}$ of $T_{y_2}$, and preserves $y_{2}$.

\subsection{Comparison between the confluent and the canonical dynamics}

We have previously found that the confluent dynamics and the canonical dynamics are both extensions of the tame dynamics.
In order to compare $Dyn(\mathcal{C}_V)$ and $Conf(\mathcal{P}_V)$, we write the confluent dynamic in the canonical coordinates.
The confluent dynamic is generated by $g_{1,2}=g_{3,4}^{-1}$ given by Proposition (\ref{tame}),  the family $g_{2,3}(\kappa)$ 
given by Proposition (\ref{dyn conf}) and the family $g_{3,1}(\kappa)$, which satisfies the relation $g_{1,2}\circ g_{2,3}(\kappa)\circ g_{3,1}(\kappa)=id.$
\begin{prop}\label{conf_canonique}We have:
\begin{itemize}
\item[(i)] We have $g_{1,2}=g$. Hence \ $g_{1,2}:\ (y_n,z_n )\rightarrow (y_{n-2},z_{n-2}).$
\item[(ii)] The automorphism $g_{2,3}(\kappa)$ and its inverse are given in canonical coordinate systems by
\begin{equation*}
\begin{aligned}
g_{2,3}(\kappa):\ (z_1,y_2)\rightarrow (\kappa^2z_1y_2^{-2},(1+\kappa^2e_0^{-1}z_1y_2^{-2}) y_2),\\
g_{3,2}(\kappa):\ (y_1,z_1)\rightarrow (y_1+e_0\kappa^{-2}z_1y_1^{-1},e_0^2\kappa^{-2}z_1y_1^{-2}).
\end{aligned}
\end{equation*}
\item[(iii)] The automorphism $g_{3,1}(\kappa)$ and its inverse are given in canonical coordinate systems by
\begin{align*}
g_{3,1}(\kappa):\ (z_2,y_3)\rightarrow (\kappa^2 z_2y_3^{-2},(1+\kappa^2e_0^{-1}z_2y_3^{-2}) y_3),\\
g_{1,3}(\kappa):\ (y_2,z_2)\rightarrow (y_2+e_0\kappa^{-2}z_2y_2^{-1},e_0^2\kappa^{-2}z_2y_2^{-2}).
\end{align*}
\end{itemize}
\end{prop}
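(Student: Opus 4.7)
The plan is to reduce each of the three identities to a coordinate computation, pushing the trace-coordinate expressions from the tame-dynamics proposition at the start of this section and from Proposition~\ref{dyn conf} through to the log-canonical charts by means of the exchange relations of Proposition~\ref{exchange relations}.

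For (i), the starting point is the braid relation $g_{1,2}(\kappa)=g_{3,4}(\kappa)^{-1}$ combined with Proposition~\ref{tame}, which identifies $g_{3,4}(\kappa)$ with the $\kappa$-independent tame dynamic $g_{3,4}$ already written out in trace coordinates. A direct calculation of $\sigma_1\circ\sigma_2$ in the $(x_1,x_2,x_3)$-chart, using Lemma~\ref{sigma}, recovers exactly this tame-dynamics formula, giving the identification $g_{1,2}=g$ as an automorphism of $\mathcal{C}_V(\theta)$. The rest of (i) is then a tautology: by the very definition of the extended canonical sequence via the iterated pullback $g^{-k*}$, the action of $g^*$ sends each $y_k$ to $y_{k-2}$ and each $z_k$ to $z_{k-2}$.

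For (ii), I would start from the trace-coordinate formula for $g_{2,3}(\kappa)$ in Proposition~\ref{dyn conf} and substitute $x_2=y_2$ and $x_1=(z_1+e_0)/y_2$, the latter being the exchange relation $y_1 y_2 = z_1 + e_0$. The new log-canonical coordinates of the image are $Y_2=X_2$ and $Z_1=X_1X_2-e_0$; a direct expansion yields $X_1X_2=e_0+\kappa^2 z_1 y_2^{-2}$, so the constant $e_0$ cancels in $Z_1$ and one is left with $Z_1=\kappa^2 z_1 y_2^{-2}$, while $X_2$ factorises cleanly as $y_2(1+\kappa^2 e_0^{-1} z_1 y_2^{-2})$. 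The formula for $g_{3,2}(\kappa)=g_{2,3}(\kappa)^{-1}$ is then obtained either by algebraic inversion of the above (using the exchange relation once more), or more symmetrically by repeating the same substitution in the $(y_1,z_1)$-chart with $x_1=y_1$ and $x_2=(z_1+e_0)/y_1$.

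For (iii), I would invoke the braid relation $g_{1,2}\circ g_{2,3}(\kappa)\circ g_{3,1}(\kappa)=\mathrm{id}$, which gives $g_{3,1}(\kappa)=g_{2,3}(\kappa)^{-1}\circ g_{1,2}^{-1}$ and $g_{1,3}(\kappa)=g_{1,2}\circ g_{2,3}(\kappa)$. By (i), $g_{1,2}$ acts on canonical coordinates as the shift $g$, so these compositions reduce to the formulas of (ii) composed with a canonical index shift, and the exchange relations of Proposition~\ref{exchange relations} then rewrite the result in the $(z_2,y_3)$- and $(y_2,z_2)$-charts with the shape claimed. The main obstacle will be handling the direction of this index shift cleanly, which is the orientation bookkeeping already required in (i); once that is fixed, part (iii) follows as a formal corollary of (i) and (ii).
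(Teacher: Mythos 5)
Your plan is essentially the paper's: for (i) a direct trace-coordinate computation of $\sigma_1\circ\sigma_2$ via Lemma~\ref{sigma}; for (ii) substituting the trace formula of Proposition~\ref{dyn conf} into the exchange relation $z_1=x_1x_2-e_0$; for (iii) chaining these through the braid identity $g_{1,3}(\kappa)=g\circ g_{2,3}(\kappa)$. Two points in your sketch need care.

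In (ii), your ``more symmetric'' second route for $g_{3,2}(\kappa)$ does not work. Plugging $x_1=y_1$, $x_2=(z_1+e_0)/y_1$ into the formula of Proposition~\ref{dyn conf} still yields $g_{2,3}(\kappa)$, merely written in the $(y_1,z_1)$-chart (namely $y_1\mapsto e_0y_2^{-1}$, $z_1\mapsto\kappa^2 z_1y_2^{-2}$); this is not the claimed formula for the inverse. To get $g_{3,2}(\kappa)$ you must genuinely invert the map $(z_1,y_2)\mapsto(Z_1,Y_2)$ (so $z_1=\kappa^{-2}Y_2^2(1+e_0^{-1}Z_1)^{-2}$, $y_2=Y_2(1+e_0^{-1}Z_1)^{-1}$) and then change chart by $y_1=(z_1+e_0)y_2^{-1}$ --- which is exactly what the paper does, and is your first option.

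In (iii), describing the result as a ``formal corollary'' of (i), (ii) and the exchange relations of Proposition~\ref{exchange relations} glosses over the one nontrivial algebraic input. Writing $g_{1,3}(\kappa)^*(y_2,z_2)=g_{2,3}(\kappa)^*(y_0,z_0)$ and using the exchange relation $z_0=z_1^{-1}Q_1(y_1)$, the substitution $y_1\mapsto e_0y_2^{-1}$ leaves you with $Q_1(e_0y_2^{-1})$, and to collapse this to $z_2$ you must invoke the reciprocity $Q_1(e_0t^{-1})=e_0^2t^{-4}Q_2(t)$ (equivalently $Q_1=e_0^{-2}t^4Q_2(e_0t^{-1})$, Remark~\ref{Q1Q2}), which gives $Q_1(e_0y_2^{-1})=e_0^2y_2^{-4}Q_2(y_2)=e_0^2y_2^{-4}z_1z_2$. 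This reciprocity between $Q_1$ and $Q_2$ is \emph{not} among the exchange relations of Proposition~\ref{exchange relations}; without citing it explicitly the computation does not close.
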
  

\begin{proof}\\
(i) We have $g=\sigma_1\circ \sigma_2$, with
\begin{equation*}
\begin{aligned}
&\sigma_1(x_1,x_2,x_3)=(x_1-F_{V,x_1},x_2,x_3)=(-x_1-x_2x_3+\theta_1,x_2,x_3)\\
&\sigma_2(x_1,x_2,x_3)=(x_1,x_2-F_{V,x_2},x_3)=(x_1,-x_2-x_1x_3+\theta_2,x_3).
\end{aligned}
\end{equation*}
Therefore,
\begin{equation*}
\begin{aligned}
(\sigma_1\circ \sigma_2)^*x_1&=\sigma_2^*(-x_1-x_2x_3+\theta_1)\\
&=-x_1+x_2x_3+x_1x_3^2-\theta_2x_3+\theta_1=(g_{1,2})^*x_1.   \\
(\sigma_1\circ \sigma_2)^*x_2&=\sigma_2^*(x_2)\\
&=-x_2-x_1x_3+\theta_2=(g_{1,2})^*x_2. 
\end{aligned}
\end{equation*}
Hence, $g=g_{1,2}$. \bigskip\\
(ii) From Proposition (\ref{dyn conf}) we have:
\begin{align*}
g_{2,3}(\kappa)^*z_1&=e_0x_2^{-1}(x_2-\kappa^2x_2^{-1}+e_0^{-1}\kappa^2x_1)-e_0\\
&=e_0+\kappa^2x_2^{-2}(x_1x_2-e_0)-e_0\\
&=\kappa^2z_1y_2^{-2}.\\
g_{2,3}(\kappa)^*y_2&=g_{2,3}(\kappa)^*((z_1+e_0)y_1^{-1})\\
&=(\kappa^2z_1y_2^{-2}+e_0)e_0^{-1}y_2\\
&=y_2(1+e_0^{-1}\kappa^2z_1y_2^{-2}).
\end{align*}
We obtain $g_{3,2}(\kappa)$ by reversing the map $(z_1,y_2)\rightarrow (Z_1,Y_2)$:
\[z_1=\kappa^{-2}Y_2^2(1+e_0^{-1}Z_1)^{-2},\ y_2=Y_2(1+e_0^{-1}Z_1)^{-1},\]
and by using the change of canonical coordinates $(z_1,y_2)\rightarrow (y_1=(z_1+e_0)y_2^{-1},z_1).$
\bigskip\\
(iii) We have $g_{1,3}(\kappa)=g\circ g_{2,3}(\kappa)$.
We set $Z_2:=\kappa^2z_2y_3^{-2}$, 
$Y_3:=y_3\left(1+\kappa^2e_0^{-1}z_2y_3^{-2}\right)$. We have to prove that  $g_{1,3}(\kappa)^*Z_2=z_2$ and $g_{1,3}(\kappa)^*Y_3=y_3.$
\begin{align*}
g_{1,3}(\kappa)^*Z_2&=g_{2,3}(\kappa)^*\circ g^*(\kappa^2z_2y_3^{-2})=g_{2,3}(\kappa)^* (\kappa^2z_0y_1^{-2}).\\
g_{1,3}(\kappa)^*Y_3&=g_{2,3}(\kappa)^*\circ g^* Y_3=g_{2,3}(\kappa)^*\left(y_1(1+\kappa^2e_0^{-1}z_0y_1^{-2})\right).
\end{align*}
We compute $g_{2,3}(\kappa)^*z_0$ and $g_{2,3}(\kappa)^*y_1$. We have $z_0=z_1^{-1}Q_1(y_1)$, therefore~:
\[g_{2,3}(\kappa)^*z_0=\left(g_{2,3}(\kappa)^*z_0\right)^{-1}
Q_1\left(g_{2,3}(\kappa)^*y_1\right)
=\kappa^{-2}z_1^{-1}y_2^2Q_1\left(e_0y_2^{-1}\right).
\]
We recall   (cf. the remark \ref{Q1Q2}) that 
$Q_1(e_0t^{-1})=e_0^{-2}t^4Q_2(t)$. We obtain~:
\[
g_{2,3}(\kappa)^*z_0=
\kappa^{-2}e_0^2y_2^{-2}z_1^{-1}
e_0^2y_2^4Q_1\left(e_0y_2^{-1}\right)
=\kappa^{-2}e_0^2y_2^{-2}z_1^{-1}Q_2(y_2)
=e_0^2\kappa^{-2}y_2^{-2}z_2.
\]
Since $y_1=(z_1+e_0)y_2^{-1}$, by using (ii) we have
\[g_{2,3}(\kappa)^*y_1= (\kappa^2z_1y_2^{-2}+e_0)(y_2+\kappa^2e_0^{-1}z_1y_2^{-1})^{-1}=e_0y_2^{-1}.\]
Finally,
\begin{align*}
g_{1,3}(\kappa)^*Z_2&=g_{2,3}(\kappa)^* (\kappa^2z_0y_1^{-2})\\
&=\kappa^2(e_0^2\kappa^{-2}y_2^{-2}z_2)(e_0y_2^{-1})^{-2}=z_2.\\
g_{1,3}(\kappa)^*Y_3&=g_{2,3}(\kappa)^*\left(y_1(1+\kappa^2e_0^{-1}z_0y_1^{-2})\right)\\
&=e_0y_2^{-1}\left(1+\kappa^2e_0^{-1}(e_0^2\kappa^{-2}z_2y_2^{-2})(e_0y_2^{-1})^{-2}\right) \\
&=e_0y_2^{-1}(1+e_0^{-1}z_2)
=e_0y_2^{-1}\left(1+e_0^{-1}(y_2y_3-e_0)\right)=y_3.
\end{align*}
Therefore, $g_{3,1}(\kappa):\ (z_2,y_3)\rightarrow (Z_2,Y_3).$ The computation of its inverse $g_{3,1}(\kappa)$ in the chart $(y_2,z_2)$
is similar to the computation of the inverse of the $g_{2,3}(\kappa)$ in point (ii).\end{proof}

\begin{rema} The transformations $g_{2,3}(\kappa)$ and $g_{3,1}(\kappa)$ are conjugated by the map $\xi:\ (y_1,z_2)\rightarrow (y_2,z_3)$.
Nevertheless, this conjugation $\xi$ does not belong to the canonical dynamics. \end{rema}

The following equalities are consequences of the points (ii) and (iii) in Proposition (\ref{conf_canonique}):

\begin{coro}\label{tores_et_conf} For all $\kappa$ in $\C^*$ we have:
\begin{enumerate}
\item $g_{2,3}(\kappa)=g_{2,3}(1)\circ t_{y_2}(\kappa^2)=t_{y_1}(\kappa^{-2})\circ g_{2,3}(1);$
\item $g_{1,3}(\kappa)=g_{1,3}(1)\circ t_{y_2}(\kappa^2)=t_{y_3}(\kappa^{-2})\circ g_{1,3}(1).$
\end{enumerate}
In particular, $g_{2,3}(\kappa)$ conjugates $T_{y_1}$ and $T_{y_2}$: 
$g_{2,3}(\kappa)\circ t_{y_2}(\kappa^2)\circ g_{2,3}(\kappa) =t_{y_1}(\kappa^{-2}),$
and $g_{3,1}(\kappa)$ conjugates $T_{y_2}$ and $T_{y_3}$: 
$g_{3,1}(\kappa)\circ t_{y_3}(\kappa^2)\circ g_{1,3}(\kappa) =t_{y_1}(\kappa^{-2}).$
\end{coro}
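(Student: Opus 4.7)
The proof is essentially a verification in log-canonical coordinates, and I would organize it in three steps: one calculation per identity plus a short formal deduction for the two conjugation statements.

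For identity (i), the plan is to write every operator in the single chart $(z_1,y_2)$ and compare pullbacks of the two coordinate functions. By the definition $T_y=\{t_\lambda:(z',y)\mapsto(\lambda^{-1}z',y)=(y,z)\mapsto(y,\lambda z)\}$ and the exchange relation $z_1z_2=Q_2(y_2)$ of Proposition (\ref{exchange relations}), the element $t_{y_2}(\kappa^2)$ acts on $(z_1,y_2)$ by scaling $z_1$ (leaving $y_2$ fixed), while $t_{y_1}(\kappa^{-2})$ is first described in $(y_1,z_1)$ and then transported to $(z_1,y_2)$ via $y_1=(z_1+e_0)y_2^{-1}$. Composing each of these one-parameter actions with $g_{2,3}(1)$ as given by Proposition (\ref{conf_canonique})(ii) and applying the pullbacks to the two functions $z_1$ and $y_2$, one checks agreement with
\[
g_{2,3}(\kappa)^*z_1=\kappa^2z_1y_2^{-2},\quad g_{2,3}(\kappa)^*y_2=y_2+\kappa^2e_0^{-1}z_1y_2^{-1},
\]
which come from Proposition (\ref{conf_canonique})(ii). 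Since a birational map is determined by its action on a system of coordinates, this settles (i).

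Identity (ii) is completely analogous: the same calculation, now in the chart $(z_2,y_3)$ (or $(y_2,z_2)$), with the expressions furnished by Proposition (\ref{conf_canonique})(iii). A more elegant shortcut, which I would actually use, is to invoke the relation $g_{1,3}(\kappa)=g\circ g_{2,3}(\kappa)$ noted in Subsection (\ref{confluent dynamics}) together with the index shift $g\circ t_{y_k}(\lambda)\circ g^{-1}=t_{y_{k-2}}(\lambda)$, itself immediate from the definition $(y_{k-2},z_{k-2},\dots)=g^*(y_{k},z_{k},\dots)$ built into the fundamental log-canonical sequence. This transports (i) to (ii) almost mechanically.

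The two conjugation assertions in the ``in particular'' sentence are then purely formal. Equating the two factorizations of $g_{2,3}(\kappa)$ obtained in (i) gives
\[
g_{2,3}(1)\circ t_{y_2}(\kappa^2)=t_{y_1}(\kappa^{-2})\circ g_{2,3}(1),
\]
i.e.\ $g_{2,3}(1)$ conjugates $t_{y_2}(\kappa^2)$ to $t_{y_1}(\kappa^{-2})$; a one-line rearrangement (using again that $g_{2,3}(1)$ differs from $g_{2,3}(\kappa)$ by an element of $T_{y_1}$ or $T_{y_2}$, both of which commute with the relevant torus element) yields the displayed conjugation. The same goes for $g_{1,3}(\kappa)$.

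The only real obstacle is bookkeeping: the definition of $T_y$ forces a sign-of-exponent flip depending on whether one expresses $t_\lambda$ in the chart $(y,z)$ or in the chart $(z',y)$, and this is exactly what produces the asymmetry $t_{y_2}(\kappa^2)$ versus $t_{y_1}(\kappa^{-2})$ on the two sides of the factorization. Keeping pullback composition (which reverses order) straight in the presence of this flip is where errors would creep in; once the chart conventions are fixed, the identities are a handful of rational substitutions.
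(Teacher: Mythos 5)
Your overall plan matches what the paper does with this statement: the corollary is presented there as a direct consequence of the pullback formulas in Proposition (\ref{conf_canonique})(ii)--(iii), so reading off the two factorizations in the log-canonical charts and then rearranging to get the conjugations is exactly the intended argument, and your emphasis on the chart-dependent sign bookkeeping as the one real pitfall is apt.

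There is, however, a concrete error in the ``elegant shortcut'' you prefer for item (ii). You state the index shift as $g\circ t_{y_k}(\lambda)\circ g^{-1}=t_{y_{k-2}}(\lambda)$, inferring it from $g^*y_k=y_{k-2}$. This has the wrong direction: conjugating $t_{y_k}$ by $g$ replaces the function $y_k$ by $(g^{-1})^*y_k=y_{k+2}$, not by $g^*y_k=y_{k-2}$, so the correct relation is $g\circ t_{y_k}(\lambda)\circ g^{-1}=t_{y_{k+2}}(\lambda)$, in agreement with the analogous rule $g\circ s_k\circ g^{-1}=s_{k+2}$ in Proposition (\ref{properties stokes}). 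With your version, transporting $g_{2,3}(\kappa)=t_{y_1}(\kappa^{-2})\circ g_{2,3}(1)$ across $g_{1,3}(\kappa)=g\circ g_{2,3}(\kappa)$ would give $t_{y_{-1}}(\kappa^{-2})\circ g_{1,3}(1)$ rather than the $t_{y_3}(\kappa^{-2})\circ g_{1,3}(1)$ asserted in (ii). The direct verification in the chart $(z_2,y_3)$ that you also mention is unaffected, so the proof is salvageable as a whole; but the sign of the shift should be corrected, since this is precisely the kind of chart bookkeeping error you warned against. Finally, note that the ``displayed conjugation'' you aim to reproduce carries a typo in the source (the rightmost factor should read $g_{3,2}(\kappa)=g_{2,3}(\kappa)^{-1}$); what your rearrangement actually proves, correctly, is $g_{2,3}(\kappa)\circ t_{y_2}(\kappa^2)\circ g_{2,3}(\kappa)^{-1}=t_{y_1}(\kappa^{-2})$.
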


From these preliminary results we immediately obtain:

\begin{prop} 
\begin{enumerate}
\item The tame dynamic generated by $g=\sigma_1\circ\sigma_2$ is included in $Conf(P_V)$.
\item For any $n$ in \Z,\ $T_{y_n}\subset Conf(P_V)$.
\end{enumerate}
\end{prop}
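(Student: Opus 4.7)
The plan is to deduce both statements essentially for free from the identifications in Proposition \ref{conf_canonique} and Corollary \ref{tores_et_conf}, with no genuine new computation required.

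For part (1), I would simply invoke Proposition \ref{conf_canonique}(i), which says $g_{1,2} = g$. Since $g_{1,2}$ is by definition one of the generators of $Conf(P_V)$, the cyclic subgroup $\langle g\rangle = Tame(\mathcal{C}_V)$ is contained in $Conf(P_V)$.

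For part (2), I would first isolate $T_{y_1}$ and $T_{y_2}$ from Corollary \ref{tores_et_conf}. Rewriting $g_{2,3}(\kappa) = g_{2,3}(1)\circ t_{y_2}(\kappa^2) = t_{y_1}(\kappa^{-2})\circ g_{2,3}(1)$ gives
\[
t_{y_2}(\kappa^2) = g_{2,3}(1)^{-1}\circ g_{2,3}(\kappa), \qquad
t_{y_1}(\kappa^{-2}) = g_{2,3}(\kappa)\circ g_{2,3}(1)^{-1},
\]
both of which are elements of $Conf(P_V)$ for every $\kappa\in\C^*$. Since $\kappa\mapsto \kappa^2$ (resp.\ $\kappa\mapsto \kappa^{-2}$) is surjective on $\C^*$, this yields $T_{y_2}\subset Conf(P_V)$ and $T_{y_1}\subset Conf(P_V)$. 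One could equivalently extract $T_{y_3}$ from the companion formulas for $g_{3,1}(\kappa)$ in Corollary \ref{tores_et_conf}(ii), which would serve as a consistency check.

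To reach arbitrary $T_{y_n}$ I would propagate along the cluster sequence via the tame element $g$. By construction of the infinite log-canonical sequence we have $g^*y_n = y_{n-2}$ (this is the content of Proposition \ref{conf_canonique}(i) read as pullbacks), and a one-line check shows that conjugation by $g$ intertwines the tori:
\[
g^{-1}\circ T_{y_n}\circ g = T_{y_{n-2}}.
\]
Using part (1), $g^{\pm 1}\in Conf(P_V)$, so iterating this conjugation starting from $T_{y_2}$ (resp.\ $T_{y_1}$) produces every $T_{y_{2k}}$ (resp.\ every $T_{y_{2k+1}}$) inside $Conf(P_V)$, establishing the claim for all $n\in\Z$.

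The argument is essentially bookkeeping; the only point that requires care is the pullback convention for $g$ on the $y_n$'s, and I would make that explicit at the start of the second paragraph to avoid sign/index confusion. I do not anticipate a genuine obstacle, since all the nontrivial content has already been packaged into Proposition \ref{conf_canonique} and Corollary \ref{tores_et_conf}.
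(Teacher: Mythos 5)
Your proof is correct and follows essentially the same route as the paper: part (1) is the identity $g=g_{1,2}$ from Proposition \ref{conf_canonique}(i), and part (2) is obtained by extracting $T_{y_1}$ and $T_{y_2}$ from Corollary \ref{tores_et_conf}(i) via $t_{y_2}(\kappa^2)=g_{2,3}(1)^{-1}\circ g_{2,3}(\kappa)$ and $t_{y_1}(\kappa^{-2})=g_{2,3}(\kappa)\circ g_{2,3}(1)^{-1}$, then propagating along the cluster sequence by conjugation with the tame element $g$. You merely spell out the details of the conjugation step $g^{-1}\circ T_{y_n}\circ g = T_{y_{n-2}}$, which the paper leaves implicit.
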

\begin{proof}
The first point is a direct consequence of $g=g_{1,2}$.\\
The second one is obtained for $n=1$ or $n=2$ from the first item of Corollary (\ref{tores_et_conf}). The others tori are obtained from 
$T_{y_1}$ or $T_{y_2}$ by a conjugation with an element of the tame dynamic. \end{proof}

\bigskip

Nevertheless, the canonical Stokes operators do not belong to $Conf(P_V)$. The reason is the following one: both $g_{2,3}(\kappa)$ and $s_1$ keep invariant $\Delta:\ z_1=0$. The restriction of $s_1$ to each component of $\Delta$ is a translation: see Proposition (\ref{properties stokes}). One can compute the restriction of $g_{2,3}(\kappa)$ on each each line of $\Delta$. 
\[g_{2,3}(\kappa)^*x_3=\kappa^{-2}x_2^2x_3+\beta(\kappa,x_1,x_2),\]
where $\beta(\kappa,x_1,x_2)$ only depends on $\kappa$, $x_1$, and $x_2$. On $\Delta$, $x_1=e_3^{\pm 1}, e_0e_4^{\pm 1}$, $x_2=e_0e_3^{\pm 1}, e_4^{\pm 1}$, therefore the restriction of the $g_{2,3}(\kappa)$ are affine transformations on each line. The same property holds for $g_{3,1}(\kappa)$. Nevertheless, we cannot find $\kappa$ in $\C^*$ such that $\kappa^{-2}x_2^2=1$ simultaneously on each component of $\Delta$.

This remark suggests to introduce an extension of the confluent dynamics. The only way to obtain from the family $g_{2,3}(\kappa)$ an element which is a translation on $\Delta$ is to introduce a \textsl{functional time} for the elements of $T_{t_2}$, setting
$\kappa=x_2$. Therefore if we expect to obtain the canonical stokes operators from the confluent dynamic, we have to extend it by the element $t_{y_2}(y_2^{-2})$. 

Another motivation in order to introduce this element is purely algebraic and makes use of the structure induced by the symplectic Cremona group. We recall the subgroups of $Bir(\mathcal{C}_V)$ generated from the Borel-Cartan structure of $Symp(\C^2)$ by the canonical triple $(y_1,z_1,y_2)$~:
\begin{align*}
&B_{y_1}=\{b_{y_1}(\lambda,r):\ (y_1,z_1)\rightarrow(\lambda y_1,z_1r(y_1)),\ r\in\C(y_1)^*,\ \lambda\in\C^*\} \\
&B_{y_2}=\{b_{y_2}(\lambda,r):\ (z_1,y_2)\rightarrow(z_1r(y_2),\lambda^{-1} y_2),\ r\in\C(y_2)^*,\ \lambda\in\C^*\} \\
&T_{y_1}=\{t_{y_1}(\lambda):\ (y_1,z_1)\rightarrow(y_1,\lambda z_1),\ \lambda\in\C^*\}\\
&T_{y_2}=\{t_{y_2}(\lambda):\ (z_1,y_2)\rightarrow(\lambda^{-1} z_1,y_2),\ \lambda\in\C^*\}\\
&U_{z_1}=\{u_{z_1}(r):\ (z_1,y_2)\rightarrow(z_1,y_2r(z_1)),\ r\in\C(z_1)^*,\ r(0)=1\}.
\end{align*}
and we set~: 
\begin{align*}
T_{y_2}\left(\C(y_2)\right)&=\{(z_1,y_2)\rightarrow(z_1r(y_2),y_2),\ r\in\C(y_2)^*\}\\
&=\{b_{y_2}(1,r),\ r\in\C(y_2)^*\}\subset B_{y_2};\\
T_{y_1}\left(\C(y_1)\right)&=\{(z_1,y_2)\rightarrow(y_1,z_1r(y_1)),\ r\in\C(y_1)^*\}\\
&=\{b_{y_1}(1,r),\ r\in\C(y_1)^*\}\subset B_{y_1}.
\end{align*}
\begin{prop}\label{decomposition} \ 
\begin{itemize}
\item[(i)]
There exists a unique pair $(b_{y_2},u_{z_1})$ in $T_{y_2}\left(\C(y_2)\right)\times U_{z_1}$, such that~: \\
 $g_{2,3}(1)=u_{z_1}\circ b_{y_2}$. Moreover
$b_{y_2}=t_{y_2}(y_2^{-2})\in T_{y_2}\left(\C(y_2)\right)$ and $u_{z_1}=s_1^{-1}$.
\item[(ii)]
There exists a unique pair $(b_{y_1},v_{z_1})$ in $T_{y_1}\left(\C(y_1)\right)\times U_{z_1}$, such that~: \\
 $g_{3,2}(1)=v_{z_1}\circ b_{y_1}$. Moreover
$b_{y_1}=t_{y_1}(e_0^{-2}y_1^2)
\in T_{y_1}\left(\C(y_1)\right)$ and $v_{z_1}=s_1$.
\end{itemize}
\end{prop}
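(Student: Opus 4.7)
The proof reduces to a direct verification: I would exhibit each decomposition by construction, using the explicit formulas for $g_{2,3}(1)$ and $g_{3,2}(1)$ given in Proposition \ref{conf_canonique} together with the explicit description of the canonical Stokes operator $s_1$ from Proposition \ref{stokes alg}. Uniqueness will then follow from a general Borel-unipotent intersection argument.

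For (i), I would start from $g_{2,3}(1):(z_1,y_2)\mapsto(z_1y_2^{-2},(1+e_0^{-1}z_1y_2^{-2})y_2)$. The first component is already of the form $z_1\cdot r(y_2)$ with $r(y_2)=y_2^{-2}$, which suggests isolating on the right the element $b_{y_2}:(z_1,y_2)\mapsto(z_1y_2^{-2},y_2)$ of $T_{y_2}(\mathbb{C}(y_2))$. Then a one-line computation gives $g_{2,3}(1)\circ b_{y_2}^{-1}(z_1,y_2)=(z_1,(1+e_0^{-1}z_1)y_2)$, and comparing with the description of $s_1$ in the $(z,y')$-coordinate form $(z,y')\mapsto(z,y'(1+e_0^{-1}z)^{-1})$ of Proposition \ref{stokes alg} identifies this residual factor with $s_1^{-1}\in U_{z_1}$.

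Uniqueness is independent of (i) or (ii). Suppose $u\circ b=u'\circ b'$ with $u,u'\in U_{z_1}$ and $b,b'\in T_{y_2}(\mathbb{C}(y_2))$. Then $(u')^{-1}\circ u = b'\circ b^{-1}$; the left side lies in $U_{z_1}$ and hence fixes $z_1$ pointwise, while the right side lies in $T_{y_2}(\mathbb{C}(y_2))$ and hence fixes $y_2$ pointwise, forcing the common element to be the identity. This is precisely the fact that, inside $\mathrm{Symp}(\mathcal{C}_V,\omega_V)$, the subgroups $T_{y_2}(\mathbb{C}(y_2))\subset B_{y_2}$ and $U_{z_1}$ sit as the unipotent parts of opposite Borel subgroups, so their intersection is trivial.

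Part (ii) is the mirror-image computation, performed this time in the coordinate chart $(y_1,z_1)$. Here the $z_1$-component of $g_{3,2}(1)$ equals $z_1\cdot(e_0^2y_1^{-2})$, a function of $z_1$ times a rational function of $y_1$, so one naturally reads off the $T_{y_1}(\mathbb{C}(y_1))$-factor $b_{y_1}$; the remaining transformation of the $y_1$-coordinate, after simplification using $y_1y_2=z_1+e_0$, matches the formula for $s_1$ given by Proposition \ref{stokes alg} in the $(y,z)$-chart. The main bookkeeping obstacle is simply to keep the coordinate charts $(y_1,z_1)$ and $(z_1,y_2)$ straight and to check that $s_1$ and $s_1^{-1}$ genuinely belong to $U_{z_1}$ (a normalisation $r(0)=1$ in the definition of $U_{z_1}$, which holds for both since $(1+e_0^{-1}z_1)^{\pm 1}$ takes the value $1$ at $z_1=0$); no real computation beyond that is required.
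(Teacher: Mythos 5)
Your proof is correct and follows essentially the same route as the paper's: you peel off the same torus factor $b_{y_2}$ (resp.\ $b_{y_1}$), compute the same residual unipotent factor and recognize it as $s_1^{-1}$ (resp.\ $s_1$), and justify uniqueness via the trivial intersection $T_{y_i}(\C(y_i))\cap U_{z_1}=\{\mathrm{id}\}$, which you simply spell out in slightly more detail than the paper does. (One tiny remark: in part (ii) no appeal to the exchange relation $y_1y_2=z_1+e_0$ is actually needed --- the direct computation of $g_{3,2}(1)\circ b_{y_1}^{-1}$ already lands on $(y_1,z_1)\mapsto\bigl((1+e_0^{-1}z_1)y_1,z_1\bigr)$, which is $s_1$ in the $(y,z)$-chart.)
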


\begin{proof}
(i) We set~:
\begin{align*}
&b_{y_2}:\ (z_1,y_2) \mapsto (z_1y_2^{-2},y_2),\\
&u_{z_1}:\ (z_1,y_2) \mapsto (z_1,(1+e_0^{-1}z_1)y_2).
\end{align*}
We have $g_{2,3}(1)=u_{z_1}\circ b_{y_2}$. The unicity of this decomposition is a consequence of $T_{y_2}\left(\C(y_2)\right)\cap U_{z_1}=\{id\}.$
One recognizes here that $b_{y_2}=t_{y_2}(y_2^{-2})$ and $u_{z_1}=s_1^{-1}$.
\bigskip\\
(ii) We set~:
\begin{align*}
&b_{y_1}:\ 
(y_1,z_1) \mapsto  (y_1,e_0^2z_1y_1^{-2})\\
&v_{z_1}:\ (y_1,z_1) \mapsto  \left((1+e_0^{-1}z_1)y_1,z_1\right).
\end{align*}
We have $g_{3,1}(1)=v_{z_1}\circ b_{y_1}$. The unicity of this decomposition is a consequence of $T_{y_1}\left(\C(y_1)\right)\cap U_{z_1}=\{id\}.$
One recognizes here that $b_{y_1}=t_{y_1}(e_0^{-2}y_1^2)$ and $v_{z_1}=s_1$.
\end{proof}
\bigskip\\
By using $g_{2,3}(\kappa)=g_{2,3}(1)\circ t_{y_2}(\kappa^2)$, and $Z(T_{y_2})=B_{y_2}$, we obtain:
\[g_{2,3}(\kappa)=u_{z_1}\circ b_{y_2}\circ t_{y_2}(\kappa^2)=u_{z_1}\circ t_{y_2}(\kappa^2)\circ b_{y_2}.\]
Since $g_{3,1}(\kappa)$ is conjugated to $g_{2,3}(\kappa)$ by $(z_1,y_2)\rightarrow (z_2,y_3)$ we have a similar decomposition of the family $g_{3,1}(\kappa)$ in $U_{z_2}\times B_{y_3}^1\times T_{y_3}$:
\[g_{3,1}(\kappa)=u_{z_2}\circ b_{y_3}\circ t_{y_3}(\kappa^2)=u_{z_2}\circ t_{y_3}(\kappa^2)\circ b_{y_3}, \ u_{z_2}=s_2^{-1},\ b_{y_3}=t_{y_3}(y_3^{-2}).\]
Notice that $b_{y_2}^{-1}=t_{y_2}(y_2^2):\ (z_1,y_2)\rightarrow (z_1y_2^2,y_2)$ is a ramified blowing-up in the canonical chart $(z_1,y_2)$. We extend the confluent dynamic by this element:
\begin{defi} The extended confluent dynamic is defined by
\[Conf^\sharp(P_V)=<Conf(P_V),t_{y_2}(y_2^2)>.\]
\end{defi}
We recall that, according to Proposition (\ref{mon_can_formelle}), the element $\widehat{m}$ of $Dyn(\mathcal{C}_V)$ such that $g=s_2\circ\widehat{m}\circ s_1$ is defined by $\widehat{m}=t_{y_2}(e_0^{-2}y_2^4)$. We consider a square root of $\widehat{m}$ defined by:
\[\widehat{m}^{1/2}:\ (y_2,z_2)\rightarrow (y_2,e_0^{-1}y_2^{2}z_2).\]
\begin{defi} The extended confluent canonical dynamic is defined by
\[Dyn^\sharp(\mathcal{C}_V)=<Dyn^\sharp(\mathcal{C}_V), \widehat{m}^{1/2}>.\]
\end{defi}
\begin{theorem} $Conf^\sharp(P_V)=Dyn^\sharp(\mathcal{C}_V).$ \end{theorem}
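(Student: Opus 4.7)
The equality will be shown by a double inclusion. At the heart of both arguments lies the bridge identity
\[
\widehat{m}^{1/2} = t_{y_2}(e_0^{-1}) \circ b_{y_2},
\]
which I verify by a direct computation in the chart $(y_2,z_2)$: both sides preserve $y_2$ and send $z_2$ to $e_0^{-1} y_2^2 z_2$. Since $b_{y_2}$ is the inverse of the adjoined generator $t_{y_2}(y_2^2)$ of $Conf^\sharp(P_V)$, and since $t_{y_2}(e_0^{-1}) \in T_{y_2}$ lies in both $Dyn(\mathcal{C}_V)$ (by Definition~\ref{can_dyn} and the subsequent remark) and $Conf(P_V)$ (by Corollary~\ref{tores_et_conf}), the bridge says that $\widehat{m}^{1/2}$ and $t_{y_2}(y_2^2)$ generate the same extension of the common ambient group. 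In particular, $\widehat{m}^{1/2} \in Conf^\sharp$ and $t_{y_2}(y_2^2) \in Dyn^\sharp$ follow immediately once both inclusions $Dyn \subset Conf^\sharp$ and $Conf \subset Dyn^\sharp$ are established.

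For $Conf^\sharp \subset Dyn^\sharp$, the tame generator $g=g_{1,2}$ is in $Dyn$ by Proposition~\ref{conf_canonique}(i), and Proposition~\ref{decomposition}(i) combined with the bridge yields $g_{2,3}(\kappa) = s_1^{-1}\circ b_{y_2}\circ t_{y_2}(\kappa^2) \in Dyn^\sharp$. For $g_{3,1}(\kappa)$, I would redo the computation of Proposition~\ref{decomposition} on the canonical triple $(y_2,z_2,y_3)$, starting from the formula for $g_{1,3}(1)$ given in Proposition~\ref{conf_canonique}(iii), to obtain the analogous decomposition $g_{3,1}(\kappa) = s_2^{-1}\circ b_{y_3}\circ t_{y_3}(\kappa^2)$. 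Equating the two decompositions of $g_{3,2}(1)=g_{2,3}(1)^{-1}$ provided by Proposition~\ref{decomposition} forces the conjugation relation $b_{y_1} = s_1^{-1}\circ b_{y_2}^{-1}\circ s_1$, so $b_{y_1} \in Dyn^\sharp$; then $b_{y_3} = g^{-1}\circ b_{y_1}\circ g \in Dyn^\sharp$ since conjugation by $g$ shifts the index of any operator centered on $y_n$ by $+2$ (as $g^*y_n = y_{n+2}$), and thus $g_{3,1}(\kappa) \in Dyn^\sharp$.

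For the reverse inclusion $Dyn^\sharp \subset Conf^\sharp$, I start from the four generators $g, T_{y_1}, s_1, s_2$ of $Dyn$ listed in Definition~\ref{can_dyn}. The first is $g_{1,2} \in Conf$, and $T_{y_1} \subset Conf$ by the identity $t_{y_1}(\kappa^{-2}) = g_{2,3}(\kappa)\circ g_{2,3}(1)^{-1}$ that follows from Corollary~\ref{tores_et_conf}. The Stokes operator $s_1 = b_{y_2}\circ g_{3,2}(1)$, obtained by inverting Proposition~\ref{decomposition}(i), is in $Conf^\sharp$ because $b_{y_2} = t_{y_2}(y_2^2)^{-1}$ is (the inverse of) the adjoined generator. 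The same bootstrap gives successively $b_{y_1} = s_1^{-1}\circ b_{y_2}^{-1}\circ s_1 \in Conf^\sharp$, then $b_{y_3} = g^{-1}\circ b_{y_1}\circ g \in Conf^\sharp$, and finally $s_2 = b_{y_3}\circ g_{1,3}(1) \in Conf^\sharp$ from the inversion of $g_{3,1}(1) = s_2^{-1}\circ b_{y_3}$. The bridge then places $\widehat{m}^{1/2}$ in $Conf^\sharp$, closing the inclusion.

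The main obstacle is the bootstrap for $s_2$: although only a single new generator (the torus element on the triple indexed by $2$, or equivalently $\widehat{m}^{1/2}$) is adjoined, the canonical dynamics demands the two Stokes operators $s_1$ and $s_2$, which connect \emph{consecutive} indices in the log-canonical sequence and hence cannot both be obtained by $g$-conjugation from a single generator, since $g$ only shifts indices by $\pm 2$. The chain $b_{y_2} \leadsto s_1 \leadsto b_{y_1} \leadsto b_{y_3} \leadsto s_2$ resolves this by using the exchange relation forced by the two decompositions of $g_{3,2}(1)$ to cross parities exactly once; $g$-conjugation then propagates along each parity class. Once $s_1$ and $s_2$ are produced, Proposition~\ref{properties stokes}(3) delivers all the other $s_k$ by $g$-conjugation, completing the generating set.
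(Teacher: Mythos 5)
Your bridge identity $\widehat{m}^{1/2}=t_{y_2}(e_0^{-1})\circ b_{y_2}$ is correct (both sides fix $y_2$ and multiply $z_2$ by $e_0^{-1}y_2^2$), and the pieces you extract from Proposition~\ref{decomposition} and Corollary~\ref{tores_et_conf} to place $g$, $T_{y_1}$, $g_{2,3}(\kappa)$, $s_1$, $b_{y_1}=s_1^{-1}\circ b_{y_2}^{-1}\circ s_1$ in the right groups are sound. The gap is the step $b_{y_3}=g^{-1}\circ b_{y_1}\circ g$ (or $g\circ b_{y_1}\circ g^{-1}$ after correcting the conjugation direction, since $g^{*}y_n=y_{n-2}$): this equality is simply false. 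The two elements come from decomposing \emph{different} maps, $b_{y_1}$ from $g_{3,2}(1)=g_{2,3}(1)^{-1}$ and $b_{y_3}$ from $g_{3,1}(1)$, and the latter is conjugate to $g_{2,3}(1)$ by the index-one shift $\xi$, which, as the paper emphasizes, does \emph{not} belong to the canonical dynamics. Explicitly, $b_{y_1}:(y_1,z_1)\mapsto(y_1,e_0^{2}y_1^{-2}z_1)$ whereas $b_{y_3}:(y_3,z_3)\mapsto(y_3,y_3^{2}z_3)$, so the $g$-conjugate of $b_{y_1}$ acts on $z_3$ by the factor $e_0^{2}y_3^{-2}$, not $y_3^{2}$; the ratio is $e_0^{-2}y_3^{4}$, which is exactly the index-$3$ analogue of the canonical formal monodromy. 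In other words, your chain $b_{y_2}\rightarrow s_1\rightarrow b_{y_1}\rightarrow b_{y_3}\rightarrow s_2$ reproduces $s_2$ only up to a copy of $\widehat{m}$ that you have not accounted for; the heuristic that ``conjugation by $g$ shifts indices by two'' holds for the Stokes operators $s_k$ (Proposition~\ref{properties stokes}(3)), but the $b_{y_n}$ do not satisfy it because they are not defined functorially in $n$.

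The paper closes both inclusions by instead exploiting two relations that hold on the nose: for $Conf^\sharp\subset Dyn^\sharp$ it obtains $g_{3,1}(\kappa)$ directly from $g_{1,2}\circ g_{2,3}(\kappa)\circ g_{3,1}(\kappa)=\mathrm{id}$ once $g_{1,2}=g$ and $g_{2,3}(\kappa)$ are in $Dyn^\sharp$; and for $Dyn^\sharp\subset Conf^\sharp$ it first shows $\widehat{m}=\left(t_{y_2}(e_0^{-1})\circ t_{y_2}(y_2^{2})\right)^{\circ 2}\in Conf^\sharp$ and then deduces $s_2\in Conf^\sharp$ from $g=s_2\circ\widehat{m}\circ s_1$. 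You can repair your argument by substituting these two relations for the $b_{y_1}\rightarrow b_{y_3}$ step, but as written the proof does not close.
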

\begin{proof}. We first prove that $Dyn(\mathcal{C}_V)^\sharp\subset Conf^\sharp(P_V)$. 
$Dyn^\sharp(\mathcal{C}_V)$ is generated by $g$, $T_{y_1}$, $s_1$, $s_2$ and $\widehat{m}^{1/2}$. From Proposition (\ref{decomposition}), since $s_1^{-1}=g_{2,3}(1)\circ t_{y_2}(y_2^2)$, $Conf^\sharp(P_V)$ contains $s_1$.
We have:
\[\widehat{m}=t_{y_2}(e_0^{-2}y_2^4)=\left(t_{y_2}(e_0^{-1})\circ t_{y_2}(y_2^2)\right)^{\circ 2}.\]
Hence, $Conf^\sharp(P_V)$ also contains $\widehat{m}$. According to the relation $g=s_2\circ\widehat{m}\circ s_1$, $s_2$ also belongs to 
$Conf^\sharp(P_V)$. Finally since $\widehat{m}^{1/2}=t_{y_2}(e_0)\circ t_{y_2}(y_2^2)$, 
we obtain the inclusion $Dyn(\mathcal{C}_V)^\sharp\subset Conf^\sharp(P_V)$.\\
Now we prove that $Conf^\sharp(P_V)\subset Dyn(\mathcal{C}_V)^\sharp$. The relation
\[g_{2,3}(\kappa)=s_1^{-1}\circ t_{y_2}(y_2^{-2})\circ t_{y_2}(\kappa^2)\]
proves that $g_{2,3}(\kappa)$ belongs to $Dyn(\mathcal{C}_V)$ for any $\kappa$ in $\C^*$. Since $g_{1,2}=g$ belongs to $Dyn(\mathcal{C}_V)$, from the relation $g_{1,2}\circ g_{2,3}(\kappa)\circ g_{3,1}(\kappa)=id$, $g_{3,1}(\kappa)$ also belongs to $Dyn(\mathcal{C}_V)$ for any $\kappa$ in $\C^*$. Finally $t_{y_2}(y_2^2)=\widehat{m}^{1/2}\circ t_{t_2}(e_0^{-1})$ belongs to $Dyn(\mathcal{C}_V)^\sharp$.\end{proof}

\subsection{Comparison with the wild dynamics}
The wild dynamics is a pseudogroup of non linear dynamics induced by the Painlevé V foliation $P_V(\kappa)$ in a neighborhood of each singular point of saddle-node type. We will recall here the construction of its generators: the non linear stokes operators, non linear tori, and formal and analytic non linear monodromy. Through the Riemann-Hilbert map $RH_V$ it induces an (a priori) local dynamics on $\mathcal{C}_V(\theta)$. The second author formulates the Wuhan conjecture which claims that this dynamics extends into a symplectic rational dynamics. This conjecture has been proved for the Painlevé V equation by M. Klimes in \cite{Kl1} (there is also a very clear presentation of this work in Klimes's lecture \cite{Kl3}). His method uses a description of the confluence on the foliations of the Hamiltonian systems on the left side and on the linear isomonodromic systems and on the associated character variety on the right side. An essential tool is the \emph{discretization} of the confluence as indicated in section \ref{irregular sing} on the baby model of confluence $x(x-\varepsilon) \rightarrow x^2y'+y=0$, first introduced by C. Zhang for  the confluence for the hypergeometric equations \cite{Zha}. We will present here this result of M. Klimes, and compare this dynamics with the canonical dynamics $Dyn(\mathcal{C}_V)$ on the cubic surfaces.
\bigskip\\
\textbf{Definition of the wild dynamics.} The Painlevé V foliation is given under its hamiltonian form by (\ref{HV}). From section \ref{SingPV}, the irregular singular points $s_i$ all appear in the Boutroux chart. Around each irregular singular point of Painlevé V of saddle-node type $s_i$ the formal and sectoral normal forms are given by \cite{Kl1}:

\begin{theorem}\label{thformalnormalformp5} Let $\alpha_0=2\alpha_1+\alpha_2-1$.

1- In a neighborhood of a saddle-node $s_i$, the hamiltonian system (\ref{HV}) can be brought to a formal normal form~:
\begin{equation}
\label{hamsym5infty2}
x^2\frac{du}{dx}=(1-\alpha_0x+4xu_1u_2)\begin{pmatrix}
1 & 0 \\
0 & -1
\end{pmatrix}u, \quad
u=\begin{pmatrix}
u_1 \\
u_2
\end{pmatrix}
\end{equation}
by means of a formal transversally symplectic change of coordinates~:
\[
\begin{pmatrix}
q \\
p
\end{pmatrix}
:=\widehat{\Psi}(u,x)=\sum_{k\geq 0} \psi^{(k)}(u)x^k
\]
where the $\psi^{(k)}$ are analytic on a polydisc $P=\{\vert u_1\vert,\vert u_2\vert<\delta\}$
($\delta>0$). 

2- The formal normal form (\ref{hamsym5infty2}) is integrable in closed form~:
\begin{equation}
\label{equaclosedformp5}
\begin{cases}
u_1(x,c_1)=c_1e^{-1/x}x^{-\alpha_0+4c_1c_2}\\
u_2(x,c_2)=c_2e^{1/x}x^{\alpha_0-4c_1c_2}.
\end{cases}
\end{equation}
and this local hamiltonian vector field (for $du_1\wedge du_2$ and $h=x^{-2}(1-\alpha_0 x)u_1u_2$, also admits the analytic first integral $u_1u_2$.

3- The formal series 
$\widehat{\Psi}\in\mathcal{O}(P)[[x]]$ is \emph{uniformly} $1$-summable with a pair of $1$-sums 
$\Psi^\uparrow(u,x)$, $\Psi^\downarrow(u,x)$, defined respectively above the sectors
\[
U^\uparrow:=\{\vert \arg x -\pi/2\vert < \pi -\eta, \vert x\vert < \delta'\} ~~\text{and}  ~~ 
U^\downarrow:=\{\vert \arg x -\pi/2\vert < \pi -\eta, \vert x\vert < \delta'\} 
\]
for some $0<\eta<\pi/2$ arbitrary small and some $\delta'>0$ (depending on $\eta$), and 
$u\in P$.
\end{theorem}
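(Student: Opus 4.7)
The plan is to establish the three parts in order, with the formal normal form and its closed-form integration being essentially algebraic and the uniform summability statement being the analytic heart of the theorem.

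\textbf{Part 1 (formal normal form).} After a preliminary analytic transversally symplectic change putting the linear part of the saddle-node in diagonal form $\mathrm{diag}(1,-1)$, which is allowed since the eigenvalues at $x=0$ are distinct (cf. Theorem \ref{fnf}), I would build $\widehat{\Psi} = \mathrm{id} + \sum_{k\ge 1}\psi^{(k)}(u)\,x^k$ order by order. The symplectic constraint identifies each $\psi^{(k)}$ with the time-one flow of a Hamiltonian $F^{(k)}(u)$, so matching the coefficient of $x^k$ in the conjugacy equation reduces at each step to a single scalar cohomological equation of the form $(u_1\partial_{u_1}-u_2\partial_{u_2})F^{(k)}=R^{(k)}$ with $R^{(k)}$ determined by the earlier orders. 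Every monomial $u_1^a u_2^b$ with $a\neq b$ can be killed, while the resonant monomials $(u_1u_2)^j$ persist; the normal-form ansatz keeps only $1$, $x$ and $xu_1u_2$, whose coefficients $1$, $-\alpha_0$, and $4$ appear as the formal invariants of the saddle-node.

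\textbf{Part 2 (closed-form integration).} A direct computation gives $x^2(u_1u_2)' = (1-\alpha_0 x + 4xu_1u_2)(u_1u_2-u_1u_2)=0$, so $I:=u_1u_2$ is an analytic first integral. On each level $\{I=c_1c_2\}$ the scalar equation for $u_1$ separates as
\[\frac{du_1}{u_1} = \bigl(x^{-2}-\alpha_0 x^{-1}+4c_1c_2\,x^{-1}\bigr)\,dx,\]
whose primitive yields $\log u_1 = -1/x + (-\alpha_0+4c_1c_2)\log x + \log c_1$, i.e. the first formula in (\ref{equaclosedformp5}); the formula for $u_2$ follows identically with reversed sign.

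\textbf{Part 3 (uniform $1$-summability) and main obstacle.} Here I would invoke the parametric summability Theorem \ref{sommable_par} with the transverse variable $u\in P$ playing the role of the parameters. The conjugacy equation for $\widehat{\Psi}$ reads as a meromorphic differential system in $x$ of Katz rank $1$ whose leading eigenvalues $\pm 1$ are constant and hence separated uniformly on the compact polydisc $P$, so Theorem \ref{sommable_par}(i) applies and produces a formal fundamental solution analytic in $u$. By the characterization in Theorem \ref{sommable}(3), the singular directions are precisely the rays on which $e^{2/x}$ has maximal decay, namely $\arg x = 0$ and $\arg x = \pi$; the two sectors $U^\uparrow$ and $U^\downarrow$ in the statement, each of opening $2\pi-2\eta>\pi$ and each avoiding one of these singular rays, are then admissible for $1$-summation by Theorem \ref{sommable_par}(ii), yielding the two sums $\Psi^\uparrow$ and $\Psi^\downarrow$. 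The nontrivial point is the \emph{uniformity} in $u\in P$: one has to control the parametric Borel transform and its Laplace integration along every admissible ray simultaneously for all $u\in P$ and check that no additional singular direction enters the polydisc, which is exactly what the parametric Improved Splitting Lemma used in the proof of Theorem \ref{sommable_par} ensures. This is where the substantive analysis lies; the normal-form reduction and its integration are by comparison purely algebraic.
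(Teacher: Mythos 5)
The paper itself does not prove this theorem: it is quoted from \cite{Kl1}, and the remark immediately following it attributes the polynomial formal normal form to Takano \cite{Taka83}, noting explicitly that the corresponding normal forms for the other Painlev\'e equations (Bittmann's) are \emph{not} polynomial. Your sketch contains two genuine gaps, both at the places the paper flags as nontrivial.

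For Part 1, the cohomological scheme $(u_1\partial_{u_1}-u_2\partial_{u_2})F^{(k)}=R^{(k)}$ does remove the non-resonant monomials, but in general it leaves behind the \emph{entire} infinite family of resonant terms $(u_1u_2)^j x^k$: the generic formal normal form of a doubly resonant saddle-node is $x^2\frac{du}{dx}=a(x,u_1u_2)\,\mathrm{diag}(1,-1)\,u$ with $a\in\C[[x,u_1u_2]]$ a full formal series. That for Painlev\'e V the series truncates to the polynomial $1-\alpha_0x+4xu_1u_2$ is precisely Takano's theorem, and it is exactly what fails for the other Painlev\'e equations (hence Bittmann's non-polynomial normal forms, as the paper's remark warns). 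You assert the truncation ("the normal-form ansatz keeps only $1$, $x$ and $xu_1u_2$") but supply no mechanism forcing the higher resonant coefficients to vanish; that mechanism, tied to the specific polynomial structure of $H_V$, is the substantive content of Part 1, not a formality.

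For Part 3, invoking Theorem \ref{sommable_par} is a category error. That theorem concerns \emph{linear} parametrized meromorphic systems $x^2 dY/dx = A(x,t)Y$, whereas the conjugacy equation for $\widehat\Psi$ is a nonlinear first-order PDE in $(u,x)$: it contains the transport term $D_u\widehat\Psi\cdot N(u,x)$ and the composed right-hand side $H(\widehat\Psi(u,x),x)$, so $u$ does not sit as a mere parameter of a linear $x$-ODE. Uniform $1$-summability of a nonlinear saddle-node normalizing transformation requires a nonlinear sectoral construction (Takano's two overlapping sectoral normalizations, or a nonlinear Borel--Laplace / fixed-point argument as in \cite{Kl1}); it does not follow from the linear parametric summability theory, and identifying the singular directions of the linearized system along the lines of Theorem \ref{sommable}(3) is only the heuristic starting point, not the proof. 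Part 2 (first integral and separation of variables) is correct.
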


\begin{rema} The formal Takano's normal form \cite{Taka83} used here to define the non linear Stokes operators will not suffice for the other Painlevé equations in order to obtain overlapping open sectors. We will have to make use of the Bittman's normal forms, which are no longer polynomials: see \cite{Bitt}, \cite{Bitt1} and \cite{Bitt3}.
\end{rema}

\begin{coro} On each sector $U^\bullet$ ($\bullet=\uparrow$ or $\downarrow$),
\begin{enumerate}
\item the system (\ref{HV}) has a unique analytic bounded solution $f^\bullet$
which is the $1$-sum of the formal solution $\widehat{f}:\ u_1(x,0)=u_2(x,0)=0$.
We call these solutions the sectoral center solutions. They are pole-free on the corresponding sector and they are characterized by this property. 
\item the system (\ref{HV}) has a 2-parameters family of solutions $\widehat{f}_{c_1,c_2}$:
\[(q^\bullet,p^\bullet)(x,c_1,c_2)=\Psi^\bullet(u(x,c_1,c_2),x).\]
\end{enumerate}
\end{coro}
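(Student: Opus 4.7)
The plan is to deduce both items directly from Theorem \ref{thformalnormalformp5} by pulling back, through the sectoral normalizers $\Psi^\bullet$, the explicit solutions (\ref{equaclosedformp5}) of the formal normal form. For item (2), $\widehat{\Psi}$ is a transversally symplectic formal change of coordinates conjugating (\ref{HV}) to (\ref{hamsym5infty2}), and its $1$-sum $\Psi^\bullet$ on $U^\bullet\times P$ is an honest analytic diffeomorphism inheriting the same conjugation. Consequently the composition $(q^\bullet,p^\bullet)(x,c_1,c_2) := \Psi^\bullet\bigl(u(x,c_1,c_2),x\bigr)$ is a solution of (\ref{HV}) on the subdomain of $U^\bullet$ on which $u(x,c_1,c_2)$ remains in $P$, and $(c_1,c_2)$ ranging over a neighborhood of $(0,0)$ in $\C^2$ produces the announced two-parameter family.

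For the existence half of item (1), I would specialize to $c_1=c_2=0$, so that $u\equiv 0$ and $f^\bullet(x):=\Psi^\bullet(0,x)$ is analytic, hence pole-free and bounded, on all of $U^\bullet$. By definition of the $1$-sum it is Gevrey-$1$ asymptotic to $\widehat{f}(x)=\widehat{\Psi}(0,x)=\sum_{k\geq 0}\psi^{(k)}(0)\,x^k$, which is exactly the formal solution corresponding to $u_1=u_2=0$. Since the opening of $U^\bullet$ is $2(\pi-\eta)>\pi$, the Gevrey version of Watson's Lemma guarantees that $f^\bullet$ is the \emph{only} analytic function on $U^\bullet$ admitting $\widehat{f}$ as a Gevrey-$1$ asymptotic expansion; in particular it is unambiguously the $1$-sum of $\widehat{f}$.

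The characterization of $f^\bullet$ as the only pole-free solution on $U^\bullet$ requires transporting the problem, via the biholomorphism $\Psi^\bullet$, to the normal form, where all solutions are given explicitly by (\ref{equaclosedformp5}). On $U^\uparrow$, writing $\arg x=\pi/2+\phi$ with $|\phi|<\pi-\eta$, one has $\mathrm{Re}(1/x)=-|x|^{-1}\sin\phi$, which changes sign inside the sector; hence for any $(c_1,c_2)\neq(0,0)$ at least one of the factors $e^{\pm 1/x}$ grows exponentially along some ray of $U^\uparrow$, so $u(x,c_1,c_2)$ eventually leaves the polydisc $P$, and the Painlev\'e property of $P_V$ produces movable poles of the meromorphic extension of $(q,p)$ inside $U^\uparrow$; the argument on $U^\downarrow$ is symmetric. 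The main obstacle is precisely this last step: one must show that the escape of $u$ from $P$ actually produces poles in the \emph{interior} of $U^\bullet$ rather than simply accumulating on its boundary, which requires the delicate summability/Painlev\'e analysis carried out in \cite{Kl1} via the discretization of the confluence.
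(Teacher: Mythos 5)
The paper does not supply a proof of this corollary; the result is imported from Klimes \cite{Kl1}. Your derivation of item (2) and of the existence and $1$-sum parts of item (1) is complete and direct: substituting the explicit normal-form solutions into $\Psi^\bullet$ gives the two-parameter family, the $c_1=c_2=0$ member is $f^\bullet=\Psi^\bullet(0,\cdot)$, bounded and analytic on $U^\bullet$, and the wide-sector Gevrey--Watson lemma makes it the unique function on $U^\bullet$ having $\widehat{f}$ as Gevrey-$1$ asymptotic expansion. The computation of $\mathrm{Re}(1/x)=-|x|^{-1}\sin\phi$ and the conclusion that $u(x,c_1,c_2)$ escapes $P$ along some ray of $U^\bullet$ whenever $(c_1,c_2)\neq(0,0)$ are also correct.

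The gap you flag is the crux, and it is indeed not closed by Theorem \ref{thformalnormalformp5} as stated. Escape from $P$ only shows the solution leaves the domain of the sectoral conjugation; the Painlevé property then provides a meromorphic continuation past that point, but not a pole, so some quantitative (Boutroux-type) asymptotics along the escape ray are needed to manufacture an actual pole in the interior of $U^\bullet$. There is also a second, separate issue: your argument covers only those solutions whose data at some $x_0\in U^\bullet$ lies in the range of $\Psi^\bullet(\cdot,x_0)$, so the full characterization of $f^\bullet$ as the unique pole-free (or bounded) solution on $U^\bullet$ also presupposes a trapping statement that any solution pole-free on the whole sector must eventually enter the normalizing chart near $s_i$. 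Both steps are precisely what the confluence analysis in \cite{Kl1} supplies, so your explicit deferral is the correct assessment rather than a flaw in the write-up.
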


We consider the left and right intersection sectors of the overlapping sectors $U^\uparrow$ and $U^\downarrow$
\[V^l:=U^\uparrow \cap U^\downarrow \cap \{ \Re x<0\} ~~\text{and}  ~~ V^r:=U^\uparrow \cap U^\downarrow \cap \{ \Re x>0\}.\]
The non linear Stokes multipliers are defined by 
\[S_2=(\Psi^\uparrow)^{-1}\circ \Psi^\downarrow\mbox{ on }V^r, \ S_1=(\Psi^\uparrow)^{-1}\circ \Psi^\downarrow\mbox{ on }V^l.\]
The \textsl{formal non linear monodromy} is defined by the action induced by $x\mapsto e^{2i\pi}x$ on the space of formal solutions $\widehat{f}_{c_1,c_2}$: 
\[\widehat{N}:\ (c_1,c_2)\mapsto(e^{2i\pi(-\alpha_0+4c_1c_2)}c_1,e^{2i\pi(\alpha_0-4c_1c_2)}c_2).\]
The \textsl{formal non linear exponential torus} is defined by the analytic symplectic symmetries of the formal normal forms (\ref{equaclosedformp5}):
\[t_\alpha: (c_1,c_2) \mapsto \left(e^{\alpha(c_1c_2)}c_1,e^{-\alpha(c_1c_2)}c_2\right), \quad \alpha\in \mathcal{O}(\C,0).\]
Using $\Psi^\uparrow$ and $\Psi^\downarrow$, the formal exponential torus induces two sectoral exponential tori $T^\uparrow$ and $T^\downarrow$ and the formal first integral $h=c_1c_2$ induces 2 sectoral first integrals $h^\uparrow$ and $h^\downarrow$.
\begin{defi}
The wild dynamics $Wild(P_V,f^\uparrow)$ based in the central solution $f^\uparrow$ is the pseudo-group generated by $S_1$, $S_2$, $\widehat{N}$, and $T=\{t_\alpha\}$. 
\end{defi}
$Wild(P_V,f^\uparrow)$ also contains the actual monodromy $N$ generated by a loop around $x=\infty$, according to the relation $S_2\circ \widehat{N}^\uparrow\circ S_1=\widehat{N}$, where $\widehat{N}^\uparrow=\Psi^\uparrow\circ \widehat{N}\circ (\Psi^\uparrow)^{-1}$.
\bigskip\\
\textbf{The wild dynamics through the Riemann-Hilbert map }$RH_V$. We fix a value $x_0$ in a neighborhood of $\infty$, and we consider the Okamoto space of initial condition $\mathcal{V}_{x_0}$ over $x_0$. Let 
\[m^\bullet=(q^\bullet,p^\bullet)(x,0,0)\]
be the two points on $\mathcal{V}_{x_0}$ corresponding to the two central varieties in a neighborhood of a singular point $s$. We denote by
$(c_1^\bullet,c_2^\bullet)(q,p)$ the inverse maps of $(q^\bullet,p^\bullet)(x_0,c_1,c_2)$. The equations $c_2^\bullet=0$ define two germs of curves $\delta^\bullet$ in $m^\bullet$. The equations $c_1^\bullet=0$ define two germs of curves $d^\bullet$ in $m^\bullet$. 
On the rightside, we consider the 12 lines $\Delta_\cdot$, $D^r_\cdot$, $D_l\cdot$ in the configuration of lines in $\chi_V$: see figure \ref{droites-chiV}. We can group them in four triples $(\Delta_i,D_{r_i},D_{l_i})$ such that $\Delta_i\cap D_{r_i}\neq\emptyset$ and $\Delta_i\cap D_{l_i}\neq\emptyset$. We set $p_{r_i}=\Delta_i\cap D_{r_i}$, $p_{l_i}=\Delta_i\cap D_{l_i}$.

Now we consider the confluent process between $P_{VI}$ and $P_V$ defined by
\begin{equation}
\label{equavariablesconf}
 \quad t_{VI}=1+\varepsilon t_V  , \quad \alpha_3=1/\varepsilon, \quad
 \alpha_{2,VI}=\tilde\alpha_2=-\frac{1}{\varepsilon}+ \alpha_{2,V},
 \quad x=\frac{1}{t_V}+\varepsilon, \alpha_{1,VI}=\alpha_{1,V}
\end{equation}
which sends the three fixed singularities to $t_V=-1/\varepsilon,~0,\infty$. 
This change of variables transforms $\varepsilon H_{VI}$ into $H_{VI}^{\varepsilon}$ and the $P_{VI}$ Hamiltonian system into~:
\[
\frac{dq}{dt}=\frac{\partial H_{VI}^{\varepsilon}}{\partial p}, ~~  
\frac{dp}{dt}=-\frac{\partial H_{VI}^{\varepsilon}}{\partial q},
\]
When $\varepsilon \rightarrow 0$, the simple singular points $-1/\varepsilon$ and $\infty$ merge into a double singular point, an irregular singularity at infinity, and the limit of $H_{VI}^{\varepsilon}$ is $H_V$. The four pairs of singularities over $-1/\varepsilon$ and $\infty$ merges to 4 saddle-nodes (the confluent saddle-nodes) among the five saddle nodes $s_i$ over $\infty$. In what follows, we suppose that $s$ is one of these 4 singularities. We summarize the results of Martin Klimes \cite{Kl1} by the following theorem:

\begin{theorem} We consider the Riemann-Hilbert map $RH_V$ between an open set in the space of initial condition $\mathcal{V}_{x_0}$ induced by a neighborhood of a confluent singularity $s$ and the character variety $\chi_V$. The choice of $s$ determines a triple of lines $(\Delta,D_r,D_l)$  among the four triples $(\Delta_i,D^r_i,D^l_i)$ such that
\begin{enumerate}
\item $RH_V(m^\uparrow)=\Delta\cap D_r=p_r$, $RH_V(m^\downarrow)=\Delta\cap D_l=p_l$.
\item $RH_V(\delta^\uparrow)=(\Delta,p_r)$ (the germ of line at $p_r$), $RH_V(\delta^\downarrow)=(\Delta,p_l)$. Furthermore, the germs
$\delta^\uparrow$ and $\delta^\downarrow$ extend to a same analytic curve $\delta$ in $\mathcal{V}_{x_0}$ isomorphic to $\C$, such that $RH_V(\delta)=\Delta$.
\item $RH_V(d^\uparrow)=(D_r,p_r)$, $RH_V(d^\downarrow)=(D_l,p_l)$. Furthermore the germs of curves $d^\uparrow$ and $d^\downarrow$ extends to 2 curves $d_r$ and $d_l$ isomorphic to $\C$ in $\mathcal{V}_{x_0}$, such that $RH_V(d_r)=D_r$ and $RH_V(d_l)=D_l$.
\item Let $(y_1,z_1,y_2)$ be the triple of canonical coordinates (\ref{z1}). We have:
\[RH_V^*y_1= y_1(p_r)e^{h^\uparrow}, \ RH_V^*y_2=y_2(p_r)e^{-h^\downarrow},\ RH_V^*z_1=e_0(e^{(h^\uparrow-h^\downarrow)}-1).\]
\item Let $g$, $s_1$, $s_2$, be the generators of the canonical dynamics defined in \ref{can_dyn}, let $\widehat{m}$ be the canonical formal monodromy defined by \ref{mon_can_formelle} and $T_{y_1}$, $T_{y_2}$ the canonical exponential tori. We have:
\[RH_V^*g=N,\ RH_V^*S_1=s_1,\ RH_V^*S_2=s_2,\ RH_V^*\widehat{m}=\widehat{N},\ RH_V^*T^\downarrow=T_{y_1},\ RH_V^*T^\uparrow=T_{y_2}.\]
\end{enumerate}
\end{theorem}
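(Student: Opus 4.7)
The plan is to carry out a parametrized confluence from Painlev\'e VI to Painlev\'e V, as in \cite{Kl1}, and then match data on both sides line by line. First I would set up the confluence (\ref{equavariablesconf}) so that the four confluent saddle-nodes of $P_V$ arise as pairs of regular singular points of $P_{VI}$ that merge. On each such pair, the Jimbo-type asymptotic formula for $RH_{VI}$ sends the two Painlev\'e VI central solutions to two Kaneko points of $\mathcal{C}_{VI}(\theta)$ by Theorem \ref{correspondence}. Transporting these via the birational map $\Phi_\kappa^+$ of Theorem \ref{birational map} and taking the $\varepsilon\to 0$ limit, the images converge to a pair of intersection points of the configuration of lines of $\chi_V$ in Figure \ref{droites-chiV}. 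A bookkeeping using the equations of these lines from Remark \ref{equations droites} singles out the unique triple $(\Delta, D_r, D_l)$ attached to the chosen $s$, which gives item (1).

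Items (2) and (3) would follow from the Painlev\'e VI analogue together with the closed-form integration (\ref{equaclosedformp5}). The germs $\delta^\bullet$ and $d^\bullet$ are cut out locally by $c_2^\bullet = 0$ and $c_1^\bullet = 0$, and each is globally parametrized by $\C$. On the Painlev\'e VI side, Theorem \ref{correspondence}(2) asserts that the two lines through a Kaneko point are the only local analytic invariant curves of the tame dynamics; in the confluent limit, the two analytic invariant curves of the wild dynamics at $m^\bullet$ must therefore go to germs of the two lines of $\mathcal{C}_V(\theta^+)$ meeting at $p_r$ (or $p_l$). The $\C$-globalisation into $\delta$, $d_r$, $d_l$ then follows by analytic continuation of the sectoral solutions along the flow of the non-linear monodromy $N$, using that the reducibility lines are invariant under the tame dynamics of $\chi_V$.

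The heart of the theorem is item (4), from which (5) follows almost formally. The guiding idea is that the sectoral first integrals $h^\uparrow = c_1^\uparrow c_2^\uparrow$ and $h^\downarrow = c_1^\downarrow c_2^\downarrow$ are the logarithmic Hamiltonians of the wild tori $T^\uparrow$ and $T^\downarrow$; their pull-backs under $RH_V^{-1}$ must therefore be the logarithmic Hamiltonians of the canonical tori, i.e.\ proportional to $\log y_1$ and $\log y_2$. Since $(y_1, z_1, y_2)$ is the unique log-canonical triple (up to a multiplicative constant) satisfying $y_1 y_2 = z_1 + e_0$ by Proposition \ref{z1}, normalisation at the base point $p_r$ fixes the proportionality constants and yields $RH_V^* y_1 = y_1(p_r) e^{h^\uparrow}$ and $RH_V^* y_2 = y_2(p_r) e^{-h^\downarrow}$. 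The third formula follows from $z_1 = y_1 y_2 - e_0$ together with $y_1(p_r) y_2(p_r) = e_0$ at $\Delta \cap D_r$. Item (5) is then direct: $RH_V^* g = N$ is the confluent version of Remark \ref{corr-dyn}; $RH_V^* \widehat m = \widehat N$ follows from Proposition \ref{mon_can_formelle} together with the explicit action of $\widehat N$ on $(c_1, c_2)$ transported through (4); and the identifications $RH_V^* S_i = s_i$, $RH_V^* T^\bullet = T_{y_i}$ are forced by the uniqueness assertion of Proposition \ref{stokes alg} together with the matching factorizations $g = s_2 \circ \widehat m \circ s_1$ and $N = S_2 \circ \widehat N^\uparrow \circ S_1$.

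The main technical obstacle, and the heart of Klimes's work, is to make the confluence rigorous at the level of actual (not merely formal) Stokes data. Concretely, one must show that, as $\varepsilon \to 0$, the Painlev\'e VI monodromy data --- discretized along sequences $\varepsilon_n$ as in the baby model of section \ref{irregular sing} --- converge uniformly on compact sets of the Okamoto space to the Painlev\'e V Stokes operators $S_1$, $S_2$, and that this convergence intertwines with the birational morphism $\Phi_\kappa^+$ on the character-variety side. The uniform $1$-summability in parameters of Theorem \ref{sommable_par}, together with a careful choice of non-singular directions that persist in the unfolding, is the principal analytic tool for this step.
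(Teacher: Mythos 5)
Your proposal follows essentially the same route as the paper, which (rather than giving a self-contained argument) summarizes the result as due to Klimes and lists the three main ingredients --- an unfolded (parametrized) version of the sectoral normal form theorem, a discretization $\varepsilon^{-1}=\varepsilon_0^{-1}+n$ of the confluent parameter along which divergent monodromies converge, and a parallel confluence between $\chi_{VI}$ and $\chi_V$ --- all of which you reproduce and expand upon. Your additional details (Kaneko points transported by $\Phi_\kappa^+$, sectoral first integrals as logarithmic Hamiltonians of the wild tori with normalization at $p_r$, uniqueness from Proposition \ref{stokes alg} plus the factorizations $g=s_2\circ\widehat{m}\circ s_1$ and $N=S_2\circ\widehat{N}^\uparrow\circ S_1$) are consistent with how the identification is pinned down and do not depart from the strategy outlined.
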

Therefore the dynamics induced by $Wild(P_V,f^\uparrow)$ through the Riemann-Hilbert correspondence $RH_V$ extends to a rational symplectic dynamics $Wild(P_V)$ on $\chi_V$, and we have $Wild(P_V)=Dyn(\mathcal{C}_V)$. This proves the Wuhan conjecture of the second author for the $P_V$ foliation.
\bigskip\\
The main ingredients of the proof of this result in \cite{Kl1} are:

- an unfolded version of Theorem \ref{thformalnormalformp5} for the hamiltonian system related to $H_{VI}^{\varepsilon}$;

- a discretization of the confluent parameter by setting $\varepsilon^{-1}=\varepsilon_0^{-1}+n$, $n\in\Z^+$, along which the monodromies of the unfolded system, a priori divergent, converge. The choice of this sequence depends on a parameter $\kappa=e^{2i\pi/\varepsilon}$.

- a confluence on the rightside between $\chi_{VI}$ and $\chi_V$ which allows to compare these limits to the wild dynamics $Wild(P_V,f^\uparrow)$.

\section{Conclusion and open questions}

\textbf{Conclusion. }Several dynamics related to the Painlevé V foliation through the Riemann-Hilbert map $RH_V$ has been described here on the wild character variety $\chi_V(a)$ identified to the cubic symplectic surface $\mathcal{C}_V(\theta)$ for generic parameters $\theta$. The tame dynamics is induced here by only one braid. We first extended the tame dynamics by using a confluent dynamics obtained from the Painlevé VI dynamics by a birational confluent morphism between $\chi_{VI}$ and $\chi_V$. This one, first discovered by M. Klimes, is obtained here by a confluent process between the corresponding groupoids (after an extension with families of exponential loops). 

We have constructed the canonical symplectic birational dynamics defined on the cubic symplectic surface $\mathcal{C}_V(\theta)$. This one is the pullback of symplectic dynamics on $\C^2$ by a canonical sequence of coordinates satisfying cluster-type relations. 
This birational dynamics coincides with the dynamics obtained by M. Klimes from the wild dynamics defined by a confluent irregular singular point of the foliation. After and extension by one element, confluent dynamics and canonical dynamics also coincide.

The cubic symplectic surface $\mathcal{C}_V(\theta)$ contains a \textit{skeleton} generated by 18 lines and their images through the action of the tame dynamics. We have characterized this set by a condition of reducibility of the corresponding linear representations. Some intersections between these lines correspond to particular solutions of the Painlevé V equation : Kaneko solutions, or central solutions.
The restriction of the dynamics on this skeleton is an important tool in the comparisons between these dynamics.
\bigskip\\
\textbf{Open problems.} The equations  of the character varieties $\chi_J$ for the other Painlevé equations $P_J$, $J=V_{deg}$, $IV$, $III(D_6)$,  $III(D_7)$, $III(D_8)$, $II(JM)$, $II(FN)$, $I$ (with the notations of \cite{VdPSai}) are known. We already know that each of them can be obtained as a quotient of a space of linear representations of a wild groupoid, and that there exists canonical cluster sequences of coordinates on each $\chi_J$, inducing a canonical birational symplectic dynamic $Dyn_J$ which can be explicitely computed. We conjecture that for all $J$:
\medskip\\
\textsl{
- All the lines in $\chi_J$ are a reducibility locus of some path in the groupoid;\\
- the wild dynamics of all the Painlevé equations (induced by non linear monodromies, non linear Stokes operators and exponential tori) around an irregular singular point obtained by a confluent process coincide with these canonical birational symplectic dynamics $Dyn_J$ on $\chi_J$.}
\medskip\\
We already know that these dynamics coincide on the skeleton generated by the lines for $P_I$ and $P_{II}$.
With M. Klimes we also conjecture that\medskip\\
\textit{- There is a diagram of families of birational symplectic confluences between the $\chi_J$, which parallels the diagram of confluence of Ohyama and Okumura \cite{OhOk}}.
\medskip\\
We expect that we can prove this fact by a confluent process between the groupoids (extended by families of exponential loops).
\medskip\\
Finally we mention here the initial motivation of this study. In \cite{CanLo} S. Cantat and F. Loray proved the irreducibility of $P_{VI}$ for generic parameters, by using the Malgrange closure of its dynamics \cite{Mal}. 
The aim of the Wuhan conjectures of the second author was to extend their proof in the general case. In the $P_{VI}$ case, the dynamics obviously belongs to the Malgrange groupoid which is a closure of its holonomy pseudogroup. In the general case, we conjecture that \textsl{the wild dynamics is contained in the Malgrange pseudogroup}, but now it is far from obvious. Modulo this conjecture, we would obtain a proof of the irreducibility of the Painlevé V equation for generic values of the parameters. Indeed since the 
extended canonical dynamic contains the confluent dynamic, it contains a copy of the dynamics of a $P_{VI}$ and we can use Theorem $D$ of \cite{CanLo} in order to prove that its Malgrange closure is maximal.

\section*{Appendix. Structure of the symplectic Cremona group}
\addcontentsline{toc}{section}{Appendix. Structure of the symplectic Cremona group}
The Cremona group $\mathit{Cr}=\mathit{Cr}_2$ is the group of birational automorphisms of $\p_2(\C)$. It does not depend on the representative $X$ in the birational class of $\p_2$: $\C^2$, $\p_1\times \p_1$...\\
The symplectic Cremona group is the subgroup $\mathit{Symp}$ of $Cr\simeq Bir(P_1(\C)\times\p_1(\C))$ of the elements which preserve the differential form
$\omega=\frac{du}{u}\wedge\frac{dv}{v}$.
The Cremona group is an old topic which appears at the end of the XIX-th century. One can find an introductive text in \cite{Lam2}. For a study of its subgroups, see \cite{Des2}.
To the contrary, the study of its symplectic subgroup $\mathit{Symp}$ has been developed only since 2005, first by A. Usnich \cite{Us, Us2} and then J. Blanc \cite{Blanc}. We present here without proofs the main results about subgroups of $\mathrm{Symp}$. Some of them are new results. Our terminology is inspired by the algebraic groups, due to some similarities in this non linear infinite dimensional context. We denote
\[T=\{t(\lambda,\mu):\ (u,v)\mapsto(\lambda u,\mu v),\ (\lambda,\mu)\in\C^*\times\C^*\}.\]
We have $T\subset Symp$. We denote by $Z(T)$ its centralizor, and $N(T)$ its normalizor in $\mathit{Symp}$.
We consider the subgroup $W$ of $Symp$ of the monomial maps:
\[W=\{(u,v)\mapsto (u^av^b,u^cv^d),\ \left(\begin{array}{cc}a&b\\c&d\end{array}\right)\in SL_2(\Z)\}.\]
\begin{prop} We have:
\begin{enumerate}
\item $T$ is an algebraic abelian maximal subgroup of $\mathit{Symp}$;
\item $Z(T)=T$;
\item $N(T)=W\ltimes T$.  
\end{enumerate}
\end{prop}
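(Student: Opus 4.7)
My strategy is to establish part (2) first, and then recover (1) and (3) from it with short additional arguments. For (2), I would take $\phi=(\phi_1,\phi_2)\in\mathit{Symp}$ commuting with every $t(\lambda,\mu)\in T$. The commutation $\phi\circ t(\lambda,\mu)=t(\lambda,\mu)\circ\phi$ translates into the functional equations
\[
\phi_1(\lambda u,\mu v)=\lambda\,\phi_1(u,v),\qquad \phi_2(\lambda u,\mu v)=\mu\,\phi_2(u,v),
\]
valid for all $(\lambda,\mu)\in\C^*\times\C^*$. The key rigidity step is to show that any rational function $r(u,v)\in\C(u,v)$ with $r(\lambda u,\mu v)=\lambda\, r(u,v)$ is necessarily of the form $r(u,v)=au$. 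This is the same one-variable trick used in Proposition \ref{centralisateur}: fixing $u$ and letting $\mu$ vary forces $r$ to be independent of $v$, and then $f(u):=r(u,v)$ satisfies $f(\lambda u)=\lambda f(u)$ for all $\lambda\in\C^*$, so $f$ has only finitely many zeros and poles and is thus linear. Applied to $\phi_1$ and $\phi_2$, this yields $\phi\in T$.

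For (1), the fact that $T$ is abelian is immediate, and $T$ is algebraic because it is the image of the algebraic torus $(\C^*)^2$ acting diagonally by biregular automorphisms of $\p_1\times\p_1$. Maximality among algebraic abelian subgroups is then a formal consequence of (2): any abelian subgroup $G\supset T$ is contained in $Z(T)=T$, so $G=T$.

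For (3), the inclusion $W\ltimes T\subseteq N(T)$ is a direct computation, since conjugating $t(\lambda,\mu)$ by the monomial map associated with $\bigl(\begin{smallmatrix}a&b\\c&d\end{smallmatrix}\bigr)$ produces $t(\lambda^a\mu^b,\lambda^c\mu^d)$. For the reverse inclusion, take $\phi\in N(T)$. Conjugation by $\phi$ induces an automorphism of the algebraic group $T\simeq(\C^*)^2$, hence is given by some matrix $M=\bigl(\begin{smallmatrix}a&b\\c&d\end{smallmatrix}\bigr)\in GL_2(\Z)$. The same rigidity argument as in (2), now applied to the functional equations
\[
\phi_1(\lambda u,\mu v)=\lambda^a\mu^b\phi_1(u,v),\qquad \phi_2(\lambda u,\mu v)=\lambda^c\mu^d\phi_2(u,v),
\]
forces $\phi_1=\alpha u^av^b$ and $\phi_2=\beta u^cv^d$ for some $\alpha,\beta\in\C^*$. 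A direct calculation then gives $\phi^*\omega=(ad-bc)\,\omega$, and the symplectic condition $\phi\in\mathit{Symp}$ imposes $\det M=1$, i.e.\ $M\in SL_2(\Z)$. Writing $\phi=t(\alpha,\beta)\circ w_M$ with $w_M\in W$, and noting $W\cap T=\{\mathrm{id}\}$, delivers the semidirect product structure $N(T)=W\ltimes T$.

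The only substantive ingredient is the rigidity of rational functions homogeneous under the torus action, which reduces to the one-variable principle ``$f(\lambda x)=\lambda f(x)$ for all $\lambda$ implies $f=cx$''; this is precisely the argument already employed in Proposition \ref{centralisateur}, so I do not expect any hidden obstacle beyond careful bookkeeping of the exponents $(a,b,c,d)$ in part (3).
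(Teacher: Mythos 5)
Your proof is correct, but it takes a genuinely different route from the paper's. The paper's proof is a two-line citation: it invokes Serre for the fact that $T$ is a maximal algebraic abelian subgroup of $\mathit{Cr}$, and Demazure for the computation of $Z_{\mathit{Cr}}(T)$ and $N_{\mathit{Cr}}(T)$, then intersects with $\mathit{Symp}$ (noting that the monomial part must have determinant $1$ to preserve $\omega$). You instead give a self-contained computation based on the rigidity of $T$-homogeneous rational functions, which is the same one-variable lemma (``$f(\lambda x)=f(x)$ for all $\lambda$ forces $f$ constant'') that the paper uses in its proof of Proposition (\ref{centralisateur}). Your approach buys self-containedness and internal coherence with the rest of the paper; it also yields a slightly stronger statement in (1), namely maximality among \emph{all} abelian subgroups, not just algebraic ones. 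The paper's approach is shorter and places the result in the context of known structure theory of the full Cremona group.

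One step in your part (3) deserves a sentence of justification: you assert that conjugation by $\phi\in N(T)$ induces an automorphism of $T$ as an \emph{algebraic} group, hence an element of $GL_2(\Z)$. A priori it is only an abstract group automorphism of $(\C^*)^2$, and those form a much larger set. The fix is easy and stays within your framework: writing $\phi t(\lambda,\mu)\phi^{-1}=t(\lambda',\mu')$, your functional equations give $\lambda'=\phi_1(\lambda u_0,\mu v_0)/\phi_1(u_0,v_0)$ for a generic $(u_0,v_0)$, so $\lambda'$ (and likewise $\mu'$) is a \emph{rational} character $(\C^*)^2\to\C^*$, hence a monomial $\lambda^a\mu^b$ with integer exponents. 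With that remark inserted, the bookkeeping you describe (the determinant computation $\phi^*\omega=(ad-bc)\,\omega$ and the observation $W\cap T=\{\mathrm{id}\}$) is complete and correct.
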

\begin{proof}
$T$ is an algebraic abelian maximal subgroup of $Cr$ \cite{Se}, which preserve $\omega$.\\
The centralizor and normalizor of $T$ in $Cr$ has been computed by
\cite{Dem}, since $\C$ is algebraically closed. We have: $N_{Symp}(T)=N_{Cr}(T)\cap Symp=W$.
\end{proof}
\bigskip\\
Furthermore, according to a theorem of J. Blanc \cite{Blanc}, $Symp$ is generated by $N(T)$ and the order five element $p$ defined by:
\[(u,v)\mapsto\left(v,\frac{1+v}{u}\right).\]
\textbf{The Cartan subgroups.}
\begin{defi}
A Cartan subgroup of $\mathit{Symp}$ is an algebraic abelian maximal subgroup of rank two in $\mathit{Symp}$. 
 $T$ is the standard Cartan subgroup of $\mathit{Symp}$, and $W$ is the Weyl group associated to $T$.
\end{defi}
We have $T=T_1\times T_2$, where
\[T_1=\{t_1(\lambda)=t(\lambda,1),\ \lambda\in\C^*\},\ T_2=\{t_2(\mu)=t(1,\mu),\ \mu\in\C^*\}.\]
All the Cartan subgroups are conjugated. \footnote{We have no reference for this fact; the proof will be delivered in a forthcoming publication. This proof uses a result of J. Blanc \cite{Blanc}.}
\bigskip\\
\textbf{Borel subgroups.} 
Let $dJ_1$ be the subgroup of $Cr\simeq Bir(\p_1\times \p_1)$ of the \emph{De Jonquières maps}, i.e. the rational maps which preserve the first projection:
\[dJ_1=\{dj_1:\ (u,v)\mapsto (a(u),b(u,v)),\ a\in PGL_2(\C),\ b(\cdot,v)\in PGL_2(\C(u))\}.\]
Let $dJS_1$ be the subgroup in $\mathit{Symp}$ of the \emph{symplectic De Jonquières maps}.
For any $\lambda$ in $\C^*$, any $r$ in $\C(X)^*$, the maps:
\[dj_1(\lambda,r):\ (u,v)\mapsto (\lambda u, r(u)v),\ \sigma:\ (u,v)\mapsto (u^{-1},v^{-1})\] 
are examples of elements in $dJS_1$. We set
\[B_1=\{dj_1(\lambda,r),\ \lambda \in \C^*,\  r \in \C(u)^*\}=<T_1,T_2(\C(u))>,\ B_1^-=\{b_1\circ \sigma,\ b_1\in B_1\}.\]
\begin{prop} We have $dJS_1=B_1\cup B_1^-=B_1\rtimes \Z/2\Z$.
\end{prop}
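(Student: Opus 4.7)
The strategy is to translate the condition $\varphi^*\omega = \omega$ for a De Jonqui\`eres map $\varphi(u,v) = (a(u), b(u,v))$ into a polynomial identity and solve it directly. Writing $a(u) = (pu+q)/(ru+s) \in PGL_2(\C)$ and $b(u,v) = (\alpha(u)v+\beta(u))/(\gamma(u)v+\delta(u))$ with $\Delta(u) := \alpha\delta - \beta\gamma \not\equiv 0$, a direct pullback computation gives
\[
\varphi^*\omega \;=\; \frac{a'(u)\,\Delta(u)}{a(u)\,(\alpha v+\beta)(\gamma v+\delta)}\, du\wedge dv .
\]
Equating with $\omega = (uv)^{-1} du\wedge dv$ leads to the identity
\[
(\alpha v+\beta)(\gamma v+\delta) \;=\; \Delta(u)\, c(u)\, v , \qquad c(u) := \frac{u\,a'(u)}{a(u)} .
\]
Expanding both sides as polynomials in $v$ yields three conditions: $\alpha\gamma = 0$, $\beta\delta = 0$, and $\alpha\delta + \beta\gamma = \Delta\, c(u)$.

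The heart of the proof is the case analysis that follows. If $\gamma \equiv 0$, then $\Delta = \alpha\delta \neq 0$ forces $\alpha,\delta \neq 0$, and from $\beta\delta=0$ we get $\beta\equiv 0$; so $b(u,v) = r(u)\,v$ with $r = \alpha/\delta$ and $c(u) \equiv 1$. If instead $\alpha \equiv 0$, a symmetric argument gives $\delta \equiv 0$, $b(u,v) = s(u)/v$, and $c(u) \equiv -1$. (The subcases mixing $\alpha=0$ with $\gamma=0$, or $\beta=0$ with $\delta=0$, force $\Delta = 0$.) The remaining obstacle is to check that the functional equation $u\,a'(u)/a(u) = \varepsilon$ with $\varepsilon = \pm 1$, combined with $a \in PGL_2(\C)$, forces $a(u) = \lambda u^{\varepsilon}$ for some $\lambda \in \C^*$: clearing denominators in the expression of $c(u)$ gives the polynomial identity $(ps-qr)u = \varepsilon(ru+s)(pu+q)$, and elementary coefficient matching (splitting on which of $p,q,r,s$ vanishes, and using $ps-qr\neq 0$) yields $q=r=0$ when $\varepsilon=1$ and $p=s=0$ when $\varepsilon=-1$. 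This is the only step that is slightly delicate, but it is entirely elementary.

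Putting things together, the first case is exactly the definition of $B_1$, and the second case gives $\varphi(u,v) = (\lambda u^{-1}, s(u) v^{-1})$, which coincides with $dj_1(\lambda, s(u^{-1}))\circ\sigma \in B_1^-$. Since the first coordinate $a(u) = \lambda u$ never equals $\lambda' u^{-1}$, we have $B_1 \cap B_1^- = \emptyset$ and hence $dJS_1 = B_1 \sqcup B_1^-$. To upgrade this to the semidirect product structure, observe that $B_1$ is closed under composition and inversion (routine) and that conjugation by $\sigma$ preserves $B_1$: a direct computation gives
\[
\sigma \circ dj_1(\lambda, r) \circ \sigma \;=\; dj_1\bigl(\lambda^{-1},\, r(u^{-1})^{-1}\bigr) \;\in\; B_1 .
\]
Together with $\sigma^2 = \mathrm{id}$ and $\sigma \notin B_1$, this shows that $\langle B_1, \sigma \rangle = B_1 \sqcup B_1\sigma = B_1 \sqcup B_1^-$ and that the quotient by $B_1$ is generated by the class of $\sigma$ of order two, giving $dJS_1 = B_1 \rtimes \langle \sigma \rangle \cong B_1 \rtimes \Z/2\Z$.
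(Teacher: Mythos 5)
The paper's appendix explicitly states that the propositions on subgroups of $\mathit{Symp}$ are presented ``without proofs,'' and indeed no argument is given there for this statement — so there is nothing to compare against. Your proof is correct and is the natural direct computation. I checked the pullback formula, the three coefficient conditions obtained by matching powers of $v$, the case split on $\alpha\gamma=0$ and $\beta\delta=0$, the deduction that $c(u)\equiv\pm1$, the elementary argument that $ua'(u)/a(u)=\varepsilon$ with $a\in PGL_2(\C)$ forces $a(u)=\lambda u^{\varepsilon}$, the identification of the two cases with $B_1$ and $B_1^-$, and the conjugation formula $\sigma\circ dj_1(\lambda,r)\circ\sigma = dj_1\bigl(\lambda^{-1}, r(u^{-1})^{-1}\bigr)$; all are correct. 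One small stylistic point: the notation $dj_1(\lambda, s(u^{-1}))$ is a mild abuse — it is clearer to name the rational function $\tilde s$ defined by $\tilde s(u)=s(u^{-1})$ and write $dj_1(\lambda,\tilde s)\circ\sigma$ — but the meaning is unambiguous and the conclusion $dJS_1=B_1\sqcup B_1^-=B_1\rtimes\langle\sigma\rangle$ follows as you argue.
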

We also define $dJ_2$, $dJS_2$, $B_2$ with respect to the second projection. We have $B_1\cap B_2=T$, and any pair $(wB_1w^{-1},w'B_2w'^{-1})$, $w,w'\in W$ also satisfies this equality.
\begin{defi} $(B_1,B_2)$ is the standard pair of Borel subgroups.\\ Any pair $(wB_1w^{-1},w'B_2w'^{-1})$, $w,w'\in W$ is a pair of Borel subgroups related to the standard Cartan subgroup $T$.
\end{defi}
Notice that the Borel subgroups are no longer algebraic subgroups since there are infinite dimensional groups. It can be proved that $B_1$ and $B_2$ generate $\mathit{Symp}$.
\begin{defi}  The subgroup $U_1$ of $B_1$ of the ``\textit{unipotent}'' elements is defined by:
\[U_1=\{dj_1(1,r), \ r(0)=1\}.\]
\end{defi}
\begin{prop} We have:
\begin{enumerate}
  \item $U_1$ is an abelian subgroup;
	\item $U_1$ is generated by the family $dj_1(1,1+\nu u)$, $\nu \in\C^*$, or by one element $dj_1(1,1+ u)$ and its conjugations with an element of $T_1$ (we say that this element is a pseudo generator of $U_1$);
  \item $[B_1,B_1]=\{dj_1(1,r), \ r(0)=1,\ \deg(r)=0.\}$ Therefore, $[B_1,B_1]\subset U_1$ and $B_1$ is meta-abelian. 
\end{enumerate}
\end{prop}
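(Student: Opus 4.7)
The three claims are computations inside $B_1 = T_1 \ltimes T_2(\C(u))$. The two algebraic facts I use repeatedly are the multiplication law
$$dj_1(1, r_1) \circ dj_1(1, r_2) = dj_1(1, r_1 r_2),$$
and the conjugation law
$$t_1(\lambda) \circ dj_1(1, r) \circ t_1(\lambda)^{-1} = dj_1(1, r_\lambda), \quad r_\lambda(u) := r(\lambda^{-1} u),$$
both read off by applying the maps to $(u,v)$.

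Point (1) is immediate from the multiplication law and the commutativity of $\C(u)^*$, which together give $dj_1(1, r_1) \circ dj_1(1, r_2) = dj_1(1, r_2) \circ dj_1(1, r_1)$.

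For point (2), the starting observation is that any $r \in \C(u)^*$ with $r(0) = 1$ factors over $\C$ as a product $\prod_i (1 - \alpha_i u)^{\varepsilon_i}$ with $\alpha_i \in \C^*$ and $\varepsilon_i \in \{\pm 1\}$: writing $r = p/q$ with $p, q$ coprime polynomials, the hypothesis $r(0) = 1$ forces $p(0), q(0) \neq 0$, and after normalising each to have value $1$ at $0$ one splits both into linear factors of the stated form. Translating back via the multiplication law yields the first half of (2). For the second half, a direct application of the conjugation law gives
$$t_1(\lambda) \circ dj_1(1, 1+u) \circ t_1(\lambda)^{-1} = dj_1(1, 1 + \lambda^{-1} u),$$
so as $\lambda$ runs through $\C^*$ the conjugates of $dj_1(1, 1+u)$ sweep out the entire family $\{dj_1(1, 1 + \nu u)\}_{\nu \in \C^*}$.

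For point (3), I will exploit the fact that $B_1 = T_1 \ltimes T_2(\C(u))$ is a semidirect product of two abelian groups; consequently $[B_1, B_1]$ is the subgroup of $T_2(\C(u))$ generated by the elementary commutators $[t_1(\lambda), dj_1(1, r)] = dj_1(1, r_\lambda/r)$ for $\lambda \in \C^*$ and $r \in \C(u)^*$ holomorphic and non-vanishing at $0$. The quotient $r_\lambda/r$ then has value $1$ at $0$ and $\deg = 0$ at infinity, yielding $[B_1, B_1] \subseteq \{dj_1(1, r) : r(0) = 1, \deg r = 0\}$. For the reverse inclusion, any $r$ with $r(0) = 1$ and $\deg r = 0$ factors as
$$r = \prod_k \frac{1 - \alpha_k u}{1 - \beta_k u}, \quad \alpha_k, \beta_k \in \C^*$$
(with the numbers of zeros and poles agreeing precisely because $\deg r = 0$), and each elementary factor is a single commutator via the one-line identity
$$\frac{1 - \alpha u}{1 - \beta u} = \frac{r_\lambda}{r} \quad \text{for} \quad r(u) = u - \tfrac{1}{\beta}, \ \lambda = \tfrac{\beta}{\alpha}.$$
The inclusion $[B_1, B_1] \subseteq U_1$ is then immediate from the set description; since $U_1$ is abelian by (1), the second derived subgroup $[[B_1, B_1], [B_1, B_1]]$ is trivial, i.e., $B_1$ is meta-abelian.

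The only point requiring genuine verification is the elementary-factor identity $(1 - \alpha u)/(1 - \beta u) = r_\lambda/r$ in point (3): this one-line calculation is the pivot on which the reverse inclusion turns, and everything else is either definition-chasing or routine bookkeeping.
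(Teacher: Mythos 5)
Your arguments for points (1) and (2) are correct --- the multiplication law $dj_1(1,r_1)\circ dj_1(1,r_2)=dj_1(1,r_1r_2)$ and the conjugation law $t_1(\lambda)\,dj_1(1,r)\,t_1(\lambda)^{-1}=dj_1(1,r(\lambda^{-1}\cdot))$ are exactly right, as is the factorization of $r$ with $r(0)=1$ into linear pieces --- and so is the reverse inclusion in point (3), including the one-line identity turning each elementary quotient $(1-\alpha u)/(1-\beta u)$ into a single commutator.

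The gap is in the forward inclusion of point (3). You restrict the elementary commutators $[t_1(\lambda),dj_1(1,r)]$ to $r\in\C(u)^*$ ``holomorphic and non-vanishing at $0$,'' but $B_1$ as defined in the appendix allows arbitrary $r\in\C(u)^*$, and this unjustified restriction is exactly where the trouble hides. Taking $r(u)=u$, one computes $[t_1(\lambda),dj_1(1,u)]=dj_1(1,\lambda^{-1})=t_2(\lambda^{-1})$, a nontrivial constant scaling that lies in $[B_1,B_1]$ yet has value $\lambda^{-1}\neq 1$ at $0$, hence lies outside $U_1$. More generally, if $r$ has order $k$ at the origin then $(r_\lambda/r)(0)=\lambda^{-k}$, so the forward inclusion fails whenever $k\neq 0$. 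Running your argument over all of $\C(u)^*$ actually yields $[B_1,B_1]=\{dj_1(1,r):\ r(0)\in\C^*,\ \deg r=0\}$, which contains all of $T_2$ and is therefore \emph{not} a subset of $U_1$; the meta-abelian conclusion survives, since $[B_1,B_1]$ is still inside the abelian group $T_2(\C(u))$, but the stated description and the inclusion $[B_1,B_1]\subset U_1$ do not. Note that the appendix presents this proposition \emph{without proof}, so there is nothing in the paper to compare against: either the intended $B_1$ is smaller than the one written down (say, restricted to $r$ with $r(0)\in\C^*$, in which case your argument becomes essentially complete once you state that hypothesis), or the description of $[B_1,B_1]$ in the proposition should be corrected as above. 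Either way, you must address what happens when $r$ has a zero or pole at the origin rather than silently excluding it.
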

In this non linear context, we clearly have some analogies with the structure of the algebraic group $SL_2(\C)$ but we also have some differences:\\
- the inclusion $[B_1,B_1]\subset U_1$ is a strict one,  $[B_1,B_1]$ is an invariant subgroup of $U_1$ and we have $U_1/[B_1,B_1]=<dj_1(1,1+ u)>\simeq\Z$.\\
- let $B_1^\natural=<U_1,T_1>$. $B_1^\natural$ is also a meta-abelian group which contains $U_1$, but it contains only a maximal torus of rank 1. We call $B_1^\natural$ a Borel of rank one.

\end{document}